\definecolor{darkgreen}{rgb}{0,0.5,0}
\definecolor{darkred}{rgb}{0.7,0,0}
\theoremstyle{plain}
\newtheorem{lemma}{Lemma}[section]
\newtheorem{thm}[lemma]{Theorem}
\newtheorem{cor}[lemma]{Corollary}
\theoremstyle{definition}
\newtheorem{defn}[lemma]{Definition}
\newtheorem{ex}[lemma]{Example}
\newtheorem{rmk}[lemma]{Remark}
\numberwithin{equation}{section}
\newcommand{\loc}{{\rm loc}}
\newcommand{\subsub}{\subset \subset}
\newcommand{\supp}{ {\rm supp}}
\newcommand{\err}{{\rm err}}
\newcommand{\of}{{\circ}}
\newcommand{\boundary}{\partial}
\newcommand{\partt}{ {\frac{\partial}{\partial t} } }
\newcommand{\partr}{ {\frac{\partial}{\partial r} } }
\newcommand{\up}[1]{{ {}^{#1} } }
\renewcommand{\phi}{\varphi}
\newcommand{\ti}{\tilde}
\newcommand{\parts}{\frac{\partial}{\partial s} }
\newcommand{\al}{\alpha}
\newcommand{\be}{\beta}
\newcommand{\ga}{\gamma}
\newcommand{\de}{\delta}
\newcommand{\la}{\lambda}
\newcommand{\si}{\sigma}
\newcommand{\ep}{\varepsilon}
\newcommand{\curlL}{\mathcal L}
\newcommand{\Id}{Id}
\newcommand{\R}{\ensuremath{{\mathbb R}}}
\newcommand{\N}{\ensuremath{{\mathbb N}}}
\newcommand{\B}{\ensuremath{{\mathbb B}}}
\newcommand{\Z}{\ensuremath{{\mathbb Z}}}
\newcommand{\T}{\ensuremath{{\mathbb T}}}
\newcommand{\weak}{\rightharpoonup}
\newcommand{\downto}{\searrow}
\newcommand{\upto}{\nearrow}
\newcommand{\lap}{\Delta}
\newcommand{\grad} { {{}^h \nabla }}
\newcommand{\gradh} { {{}^h \nabla }}
\newcommand{\gradhi} { {{}^{h_i} \nabla }}
\newcommand{\gradtih}{  {{}^{\ti h} \nabla }}
\newcommand{\gradg}{\nabla}
\newcommand{\vol}{{ \rm Vol}}
\DeclareMathOperator{\Vol}{Vol}
\DeclareMathOperator{\inj}{inj}
\newcommand{\norm}[1]{\left\Vert#1\right\Vert}
\newcommand{\beq}{\begin{equation}}
\newcommand{\eeq}{\end{equation}}
\newcommand{\beqa}{\begin{equation}\begin{aligned}}
\newcommand{\eeqa}{\end{aligned}\end{equation}}
\newcommand{\brmk}{\begin{rmk}}
\newcommand{\ermk}{\end{rmk}}
\newcommand{\partref}[1]{\hbox{(\csname @roman\endcsname{\ref{#1}})}}
\newcommand{\Rm}{{\mathrm{Rm}}}
\newcommand{\Riem}{{\mathrm{Rm}}}
\newcommand{\Rc}{{\mathrm{Rc}}}
\newcommand{\Ricci}{{\mathrm{Rc}}}
\newcommand{\Sc}{{\mathrm{R}}}
\newsavebox\CBox
\newcommand\hcancel[2][0.5pt]{%
  \ifmmode\sbox\CBox{$#2$}\else\sbox\CBox{#2}\fi%
  \makebox[0pt][l]{\usebox\CBox}%
  \rule[0.5\ht\CBox-#1/2]{\wd\CBox}{#1}}
\begin{document}
\title[Ricci flow]{Ricci flow of $W^{2,2}$-metrics in four dimensions}

\author[T.~Lamm]{Tobias Lamm}
\address[T.~Lamm]{Institute for Analysis\\
Karlsruhe Institute of Technology \\ Englerstr. 2\\
76131 Karlsruhe\\ Germany}
\email{tobias.lamm@kit.edu}

\author[M. Simon]{Miles Simon}
\address[M. Simon]{Institut f\"ur Analysis und Numerik\\ Universit\"at Magdeburg\\ Universit\"atsplatz 2\\39106 Magdeburg\\ Germany}
\email{msimon@ovgu.de} 
\date{\today}
\date{\today}

\begin{abstract}\noindent 
 In this paper  we construct solutions to Ricci DeTurck flow in four dimensions on closed manifolds which are instantaneously smooth but whose initial  values $g$ are (possibly) non-smooth Riemannian metrics whose components  in smooth coordinates   belong to $W^{2,2}$ and satisfy  $ \frac{1}{a}h\leq g\leq a h$ for some $1<a<\infty$ and some smooth Riemannian metric $h$ on $M$. A Ricci flow related solution is constructed whose initial value is isometric in a weak sense to the initial value of the Ricci DeTurck solution. 
 Results  for a related non-compact setting are also 
 presented. Various $L^p$ estimates for Ricci flow, which we require for some of the main results, are also derived. As an application we present a possible definition of scalar curvature $\geq k$ for $W^{2,2}$ metrics $g$ on closed four manifolds which are bounded in the $L^{\infty}$ sense by $ \frac{1}{a}h\leq g\leq a h$ for some $1<a<\infty$ and some smooth Riemannian metric $h$ on $M$.
\end{abstract}
\maketitle
\tableofcontents
\parskip=10pt

\section{Introduction}
In this paper we construct  solutions to Ricci flow  and Ricci DeTurck flow which are instantaneously  smooth  but whose    initial 
values are (possibly) non-smooth Riemannian metrics whose components, in smooth coordinates, 
belong to certain Sobolev spaces. 

For a given smooth Riemannian manifold $(M,h)$, and an interval $I \subseteq \R,$ a smooth family $g(t)_{t\in I}$  of Riemannian metrics on $M$  is a solution to Ricci DeTurck $h$ Flow if
\begin{align}\label{Meq}
  \partt g_{ij}=&\,g^{ab} (\gradh_a\gradh_b g_{ij})
  -g^{kl}g_{ip}h^{pq}R_{jkql}(h)
  -g^{kl}g_{jp}h^{pq}R_{ikql}(h)\cr
  &\,+\tfrac12g^{ab}g^{pq}\left(\gradh_i g_{pa}\gradh_jg_{qb}
    +2\gradh_ag_{jp}\gradh_qg_{ib}-2\gradh_ag_{jp}
    \gradh_bg_{iq}\right.\cr
  &\,\left.\qquad\qquad\qquad-2\gradh_jg_{pa}\gradh_bg_{iq}
    -2\gradh_ig_{pa}\gradh_bg_{jq}\right),
\end{align}
 in the smooth sense on $M\times I$, where here, and in the rest of the paper,  $\gradh$ refers to the covariant derivative with respect to $h$. 
A smooth family $\ell(t)_{t\in I}$  of Riemannian metrics on $M$  is a solution to Ricci flow
if
\begin{eqnarray}
\frac{\partial \ell}{\partial t} = -2 \Ricci(\ell)  \label{RFlow}
\end{eqnarray}
 in the smooth sense on $M\times I$.  The Ricci flow was first introduced and studied by R. Hamilton in
 \cite{Hamilton}. Shortly after that, the   Ricci DeTurck flow was   introduced and studied by D. DeTurck in \cite{DeTurck}.
 Ricci DeTurck flow and  Ricci flow in the smooth  setting  are closely related : 
 given a Ricci DeTurck flow $g(t)_{t\in I}$ on a compact manifold and an $S \in I$ there  is a smooth  family of diffeomorphisms $\Phi(t):M \to M,$ $t\in I$ with $\Phi(S) = Id$ such that $\ell(t) = (\Phi(t))^*g(t)$  is a smooth solution to Ricci flow. The diffeomorphisms $\Phi(t)$ solve the following ordinary differential equation:
 \begin{align}
    \partt {\Phi}^{\al}(x,t) =& V^{\al}(\Phi(x,t),t),  \ \ \ \text{for all}\ \  
    (x,t)\in M^n \times I,\nonumber \\
     \Phi(x,S)=& x.  \label{ODEDe}
  \end{align}
  where $ {V}^{\al}(y,t) := -{g}^{\be \ga} \left({\up{g}
      \Gamma}^{\al}_{\be \ga} - {\up{h} \Gamma}^{\al}_{\be \ga}\right)
  (y,t)$

There are a number of papers on solutions to Ricci DeTurck flow and Ricci flow starting from non-smooth Riemannian metric/distance  spaces which immediately become smooth : Given
a non-smooth starting space $(M,g_0)$ or $(M,d_0)$, it is possible in some settings, 
to find smooth solutions $g(t)_{t\in (0,T)}$ to \eqref{Meq}, respectively $\ell(t)_{t\in (0,T) }$  to \eqref{RFlow} defined
for some $T>0$, where the initial values are achieved  in some weak sense. 
Here  is a non-exhaustive list of papers, where examples of this type are constructed :  \cite{SimonC0, Hoch,  KochLamm,Lai,SimonRicci,SimonTopping,SimonSchulze, Deruelle, BamlerCabezas-RivasWilking,HuangTam,LeeTam,Xu,ToppingYin}.   
The initial non-smooth data considered in these papers has certain structure, which when assumed in the smooth setting, leads to a priori estimates for solutions, which are then used to construct  solutions in the class being considered. 
In some papers this initial structure comes from geometric conditions, in others from regularity conditions on the initial
function space of the metric components in smooth coordinates.  
In the second instance, this is usually in the setting, that one has some $C^0$ control of the metric. That is,  the metric is close in the $L^{\infty}$ sense to the standard euclidean metric  in smooth coordinates: 
$(1-\ep)\de \leq g(0) \leq (1+\ep) \de$  for a sufficiently small $\ep$.
In the current paper, the structure  of the initial metric $g(0)$ comes from the assumption, in the four dimensional compact setting,  
that the components   in coordinates are in $W^{2,2}$,  and uniformly bounded from above and below :
$\frac{1}{c} \de \leq g(0) \leq c\de$ for some constant $c$. Closeness of the metric to $\de$ is not assumed.
With this initial structure, we show that a solution to Ricci DeTurck flow exists. 
In the non-compact setting, we further require a  uniform local {\it smallness} bound on the $W^{2,2}$ norm and a global uniform bound from above and below  in the $L^{\infty}$ sense,  both with respect to a geometrically controlled background metric.  
We also investigate the question of how 
the initial values are achieved, in the metric and distance sense, as time goes back to zero. See Theorem \ref{main1_start} in the next section for  details. 
 
Using this solution to Ricci DeTurck flow, we show without much trouble,  that there is a Ricci flow related solution. The Ricci flow solution is related to the Ricci DeTurck solution through a smooth family of isometries $(\Phi(t))_{ t\in (0,T)}$  defined for a positive time interval,  and having the property that $\Phi(S) = \Id$ for some $S>0$.
The convergence as time goes back to zero in the distance and metric sense is   investigated for this Ricci Flow solution. We require some new estimates on convergence in the $L^p$ sense for solutions to Ricci flow, in order to show that there is indeed a limiting weak Riemannian metric, as time approaches  to zero.  
We also show   that the  initial metric value of the Ricci flow that is achieved   is isometric, in a weak sense, to the initial value $g(0)$ of the Ricci DeTurck flow solution.  See Theorem \ref{WeakRicciStart} in the next section for details.

  Section \ref{anapplication} contains an application of the results obtained in the sections preceeding it.    We present a possible definition of   'the scalar curvature of $g$  is  bounded from below by $k\in \R$' for a metric $g \in W^{2,2}\cap L^{\infty}$  with $ \frac{1}{a}h\leq g\leq a h$ for some $1<a<\infty$ and some smooth Riemannian metric $h$ on a closed manifold $M$.

We conclude this introduction by noting, that there are metrics $g_0\in W^{2,\frac{n}{2}}(M) \cap L^{\infty}(M)$ on compact $n$-dimensional  manifolds, which satisfy  $\frac{1}{a} h \leq g_0 \leq ah $ for some $0<a<\infty$ and some smooth fixed Riemannian metric $h$ on $M$, but are not continuous. In particular,   $g_0\in W^{2,2}(M) \cap L^{\infty}(M)$  and $\frac{1}{a} h \leq g_0 \leq ah $  but $g_0$ is not continuous when $n=4$. In  the example we present below,  there is a point $p\in M$, such that the values $\frac{1}{a} h$ and $ah $  are achieved by $g_0$ infinitely often for  {\bf every}  neighbourhood of   $p$.
  
Let $(M,h)$ be a smooth compact $n$- dimensional manifold, $U \subseteq M$ open,  and $\phi:U \to \phi(U) = \B_1(0)$ be coordinates and $\ti h:= \phi_*h $, the push forward of $h$  with $\phi$ to $ \B_1(0).$ 
For $\ep, r >0,$ $c \in \R,$ let  $f= f_{\ep,r,c}:\B_1(0) \to \R$  be the $W^{2,\frac{n}{2}}(\B_1(0))$ function defined  by
$f(x) =  r(  \frac{1}{\ep}( 1 + \ep + \sin(c+  \log(\log( \frac{2}{|x|})))))$ for $x \neq 0$  and $f(0) = 0$. Then   $f$ is bounded from above and from below by $ f(\cdot) \in [r , r (\frac{2+\ep}{\ep})]$ and the values  $r$ and    $r (\frac{2+\ep}{\ep})$   are both achieved    infinitely many times on any neighbourhood of $0 \in \B_1(0),$ and consequently we see that $f$ is also not continuous. 
 Now we set $\ti g ( x)  = (1-\eta( x) )\ti h( x)  +   \eta(x)  \hat g( x)$ where $\eta $ is a smooth cut-off function $\eta \in [0,1]$ with support in $\B_{\frac 1 2}(0)$, where  $\hat g_{ij}(x)  = f_{\ep_i, r_i,c_i} ( x) \de_{ij},$ where $\ep_i,r_i,c_i \in \R,$ $i,j\in \{1,\ldots n\},$
 $ \ep_i,r_i>0$. Then the metric $g$ defined by   $g = \phi^*(\ti g) $ on $U,$ and  $ g= h$ on $M\backslash U$ is a metric on $M$ with $g \in W^{2,\frac{n}{2} }(M) \cap L^{\infty}(M),$    $ \frac{1}{a}h \leq g \leq a h$ for some $1<a< \infty,$  
 and $g$ is not continuous. 
\section{Main results}
  
   The assumptions we make on the smooth background metric are as follows 
\begin{align}
 & (M,h) \ \ \mbox{is a smooth, connected, complete manifold without boundary such that}\label{hassumptions}  \\
&\nu_i  := \sup_M  {}^h|\gradh^i\Riem(h)|  < \infty \mbox{  for all } i\in  \N_0,  \mbox{ and }\nonumber \\ 
& \inj(M,h) \geq i_0>0 \nonumber 
\end{align}
Such manifolds always satisfy a local uniform Sobolev inequality: there exist  constants $0< r_0(n,h), C_S(n)< \infty$  such that 
$(\int_M f^{\frac{2n}{n-2}} dh)^{\frac {n -2}{2} } \leq C_S(n) \int_M |\gradh f|^2 dh$ and
\\$ (\int_M f^{n}  dh)^{\frac {1}{2} } \leq C_S(n) \int_M |\gradh f|^{\frac{n}{2}}  dh$  for all smooth $f$ whose support is contained in a ball of radius $  r_0(n,h)>0$. For the readers convenience, we have included a proof in the Appendix \ref{geoapp} : See  Lemma \ref{balllemma} and Remark \ref{ballremark}.

Ultimately we would like to construct solutions to \eqref{Meq} on four manifolds starting with initial data $g_0$ which are uniformly bounded from above and below by a multiple of $h$, $g_0$ is locally in  $W^{2,2}$, and for which the homogeneous $W^{2,2}$ energy of $g_0$ is uniformly  bounded,
$E(g_0):=  \int_M (|\gradh g_0|^2  + |\gradh^2 g_0|^2)dh < \infty.$ 
That is, we assume that there exists an $a>0$ such that
 \begin{align}
 &  \frac 1 a h \leq g_0 \leq a h, \label{Mainassumptionstart}\\
& E(g_0) := \int_M (|\gradh g_0|^2  + |\gradh^2 g_0|^2)dh < \infty. \nonumber
\end{align}

In this setting we show, the following
\begin{thm}\label{main3start}
Let $1<a<\infty$ and $(M^4,h)$ be a four dimensional smooth Riemannian manifold satisfying   \eqref{hassumptions}, 
 and $g_0$  be a $W^{2,2} \cap L^{\infty}$  Riemannian metric, not necessarily smooth, which satisfies 
\begin{align*} 
& \ \ \frac 1 {a} h \leq g_0 \leq {a} h  \tag{a}  \label{a1}\\
 \end{align*} 

  and 
\begin{eqnarray*}
&&  \int_M (|\gradh g_0|^2 + |\gradh^2 g_0|^2)dh < \infty.
\end{eqnarray*}
Then  for any $0<\ep<1$ there exist  constants $0< T=T(g_0,h,a,\ep), r = r(g_0,h,a,\ep) , c_j=c_j(h,a,\ep) < \infty $   for all $j\in \N_0$ and a smooth solution
$(g(t))_{t \in (0,T]}$ to \eqref{Meq}, where $g(t)$ satisfies

\begin{align*}
 & \ \ \ \ \frac{1}{400 a} h \leq g(t) \leq 400  a h  \tag{${\rm a}_t$} \label{a_t1}  \\
&  \ \int_{B_r(x)}  (|\gradh g(\cdot,t)|^2 +   |\gradh^2 g(\cdot,t)|^2 )dh\leq \ep \ \  
 \tag{${\rm b}_t(r)$}  \label{b_tr} \\ 
 & \ \ \ \ |\gradh^jg(\cdot,t)|^2 \leq \frac{c_j}{t^{ j}}  \tag{${\rm c}_t$}  \label{c_t1}  
\end{align*}
 for all $j\in \N_0,$  $x \in M$, for all $t \in (0,T],$ where $c_j(h,a,\ep) \to 0$ as $\ep\downto 0$ 
and 
\begin{align*} 
&\int_{B_1(x_0)} (|g_0 - g(t)|^2 + |\gradh(g_0 - g(t))|^2 + |\gradh^2(g_0 - g(t))|^2)dh  \to 0 \mbox { as } t \downto 0 
\tag{$ {\rm d }_t$} \label{d_t1}  \\
 &\sup_{x\in B_1(x_0)} |\gradh^jg(\cdot,t)|^2t^j   \to  0 \mbox{ for }  t \downto 0  \tag{${\rm e}_t$} \label{e_t1}  \\ 
\end{align*} 
 and  for all $2\geq R>1$   there exists a $V(a,R)>0$ such that
\begin{align}
 & \int_{B_1(x_0)}  (|\gradh g(\cdot,t)|^2 +   |\gradh^2 g(\cdot,t)|^2)dh \tag{${\rm f}_t$} \label{f_t1}   \\ 
 & \leq
 \int_{B_{R}(x_0)} ( |\gradh g_0(\cdot)|^2 +   |\gradh^2 g_0(\cdot)|^2 )dh + V(a,R)t  \nonumber
\end{align}
for all $x_0\in M,$ for all $t \leq T.$
Furthermore, there exists $\ep_0 = \ep_0(g_0,h,a)$ such that if $\ep \leq \ep_0$ then the solution is unique in the class of solutions which satisfy \eqref{a_t1}, \eqref{b_tr}, \eqref{c_t1}, \eqref{d_t1} for the $r= r(g_0,h,a,\ep)>0$ defined above.
\end{thm}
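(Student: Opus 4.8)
The plan is to obtain the solution by smooth approximation of $g_0$, to prove uniform a priori estimates for the approximating flows on a common time interval, to pass to the limit, and then to analyze the behaviour as $t\downto 0$ and prove uniqueness. For the approximation step, using a finite partition of unity subordinate to $h$-geodesic coordinate charts together with mollification in each chart, I produce smooth Riemannian metrics $g_0^i\to g_0$ with $\tfrac{1}{2a}h\le g_0^i\le 2ah$, with $E(g_0^i)\to E(g_0)$, and such that — by absolute continuity of the integral, choosing first $r=r(g_0,h,a,\ep)>0$ small and then $i$ large — $\int_{B_r(x)}(|\gradh g_0^i|^2+|\gradh^2 g_0^i|^2)\,dh\le \ep/2$ for all $x\in M$. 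By the DeTurck--Hamilton existence theory each $g_0^i$ generates a smooth solution $g^i(t)$ of \eqref{Meq} on a maximal interval $[0,T_i)$, and $g^i_0\to g_0$ strongly in $W^{2,2}(B_R)$ for every fixed ball.

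The heart of the proof is a uniform a priori estimate on a common interval $(0,T]$, $T=T(g_0,h,a,\ep)>0$. Schematically \eqref{Meq} reads $\partt g = g^{ab}\gradh_a\gradh_b g + g^{-1}*g^{-1}*\gradh g*\gradh g + g^{-1}*g^{-1}*g*\Rm(h)$, with the curvature term bounded by \eqref{hassumptions}. I run a continuity argument: let $\tau_i\le T_i$ be the supremum of $s$ for which, on $(0,s]$, the bounds $\tfrac{1}{200a}h\le g^i\le 200ah$, $\int_{B_r(x)}(|\gradh g^i|^2+|\gradh^2 g^i|^2)\,dh\le 2\ep$, and $|\gradh^j g^i|^2\le 2c_j/t^j$ ($j=0,1,2$) all hold, and I show each is strictly improved on $(0,\tau_i]$. \emph{(i)} Testing the evolution equations for $|\gradh g^i|^2$ and $|\gradh^2 g^i|^2$ against $\phi^2$ for $\phi$ a cutoff supported in an $r$-ball yields a local energy inequality whose only borderline terms are of the form $\int\phi^2|\gradh g^i|^2|\gradh^2 g^i|^2$ and $\int\phi^2|\gradh g^i|^4$; in dimension four these are absorbed using $W^{1,2}\embed L^4$ together with the smallness of $\int_{B_r}(|\gradh g^i|^2+|\gradh^2 g^i|^2)$, which shows the local energy cannot reach $2\ep$ before time $T$ and in fact produces the refined inequality \eqref{f_t1} at all scales. \emph{(ii)} Interior Shi/Bernstein-type estimates, valid once (i) and the $L^\infty$ bound hold, give \eqref{c_t1} with strictly better constants depending only on $h,a$ and the local energy bound $\ep$, whence $c_j\to 0$ as $\ep\downto 0$. \emph{(iii)} A local parabolic $L^\infty$ (Moser-type) estimate for the quantity $v:=|g^i|_h^2+|h|_{g^i}^2$ measuring the deviation of $g^i$ from $h$: from its evolution one gets $\partt v\le \lap_{g^i}v-\delta|\gradh g^i|^2+C(a)(1+|\gradh g^i|^2)$, and the borderline term $\int|\gradh g^i|^2 v^p\phi^2$ is again absorbed via $W^{1,2}\embed L^4$ and the local $W^{2,2}$-smallness; since $v(0)\in L^\infty$, Moser iteration gives $\tfrac{1}{100a}h\le g^i\le 100ah$ on $(0,T]$ provided $\ep$ is small. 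Hence $\tau_i\ge T$ and the estimates hold on $(0,T]$ uniformly in $i$. This step is the main obstacle: because $g_0$ is not assumed close to $h$, the persistence of $(a_t)$ cannot come from the maximum principle alone but must be extracted from the local smallness of the \emph{critical} $W^{2,2}$-energy, and since in dimension four $W^{2,2}$ is exactly borderline (no embedding into $C^0$) the nonlinear terms have to be controlled very precisely, with the three estimates genuinely coupled in the continuity argument.

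By the uniform bounds $\tfrac{1}{100a}h\le g^i\le 100ah$, \eqref{c_t1} and Arzel\`a--Ascoli, a subsequence converges in $C^\infty_{\loc}(M\times(0,T])$ to a smooth solution $g(t)$ of \eqref{Meq} satisfying \eqref{a_t1}, \eqref{b_tr}, \eqref{c_t1} (the slack in the constant $400a$ accommodating the limit), and \eqref{f_t1} follows by integrating the local energy inequality of (i) in $t$, discarding the non-negative Bochner term, and letting $i\to\infty$ using $g_0^i\to g_0$ in $W^{2,2}(B_R)$. For the behaviour as $t\downto 0$: integrating $\partt g$ in $t$ and using \eqref{c_t1} gives $\|g(t)-g_0\|_{L^2(B_1(x_0))}\to 0$, hence, with the uniform $W^{2,2}(B_1)$ bound, $g(t)\weak g_0$ weakly in $W^{2,2}(B_1(x_0))$; combining this with the $\limsup$ bound on the local energy from \eqref{f_t1} (letting $R\downto 1$) and weak lower semicontinuity upgrades weak to strong convergence, which is \eqref{d_t1}. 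Finally \eqref{e_t1} follows by localising the interior estimates at a scale $\rho=\rho(t)\downto 0$: $\sup_{B_1(x_0)}|\gradh^j g(t)|^2t^j$ is bounded by a function, vanishing at $0$, of $\sup_{\rho}\int_{B_\rho(\cdot)}(|\gradh g|^2+|\gradh^2 g|^2)(t/2)$, which by \eqref{f_t1} at small scales and the absolute continuity of $E(g_0)$ tends to $0$ as $t\downto 0$.

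For uniqueness, assume $\ep\le\ep_0(g_0,h,a)$ and let $g,\ti g$ be two solutions in the stated class on $(0,T]$; set $w=g-\ti g$. Then $w$ solves a linear equation $\partt w=g^{ab}\gradh_a\gradh_b w + A*\gradh w + B*w$ with $|A|,|B|$ controlled by $|\gradh g|+|\gradh\ti g|$ and $\Rm(h)$. A local energy estimate $\tfrac{d}{dt}\int\phi^2|w|_h^2\,dh\le -\delta\int\phi^2|\gradh w|^2\,dh + (\mathrm{error})$, with the error absorbed using the smallness $\ep$ (via $(b_t(r))$ and $W^{1,2}\embed L^4$) and $(c_t)$, together with $\int_{B_1(x_0)}|w(t)|^2\,dh\to 0$ as $t\downto 0$ from \eqref{d_t1} applied to both solutions, forces $w\equiv 0$ by a Gronwall and covering argument. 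I expect the delicate points here to be the same borderline-Sobolev bookkeeping as in the a priori estimates, and the careful choice of $\ep_0$.
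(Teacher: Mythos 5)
Your overall architecture (smooth approximation, a coupled continuity argument, compactness in $C^\infty_{\loc}$, strong $W^{2,2}$ convergence as $t\downto 0$, and an $L^2$-energy uniqueness argument) is essentially the paper's, and your steps (i), (ii), the treatment of \eqref{d_t1}, \eqref{e_t1}, \eqref{f_t1} and the uniqueness argument correspond closely to Theorem \ref{smallenergy}, Lemma \ref{smoothnesslemma}, Theorem \ref{W22convergence} and Theorem \ref{uniqueness}. The genuine gap is in your step (iii), the persistence of the two-sided bound \eqref{a_t1}, which you correctly single out as the main obstacle but do not actually close. Moser iteration applied to $v=|g|_h^2+|h|_g^2$ cannot do what the continuity argument needs: a local boundedness estimate for subsolutions of $\partt v\le \lap_{g} v+C(a)(1+|\gradh g|^2)$ yields $\sup v\le C(n,a)\,\big(\text{mean of }v\big)+\dots$, where $C(n,a)$ depends on the ellipticity ratio of $\lap_g$ relative to $h$ (of size comparable to $a^2$ inside the bootstrap) and is nowhere near $1$. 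Since the means of $v$ are themselves of size $a^2$, the output is a bound $C'(n,a)$ with no quantitative relation to the bootstrap hypothesis, so it cannot improve, say, $200a$ to $100a$; what is needed is that the two-sided bound be \emph{almost preserved}, not merely controlled up to multiplicative constants depending on $a$. The maximum principle is also unavailable directly, because the forcing $C(a)(1+|\gradh g|^2)$ is only small in the critical space $L^\infty_t L^2_x$ in dimension four, not pointwise. The paper's Theorem \ref{gradientbound} closes exactly this step by a different mechanism: after a rescaling/contradiction reduction to unit time, it shows that the unit-ball integrals of the traces $\varphi=g^{ij}h_{ij}$ and $F=g_{ij}h^{ij}$ grow only by $c(n,a)\ep_0^{2/n}$, extracts in each unit ball a point where $\varphi(\cdot,1)\le 4na$ (resp.\ $F$), and propagates the resulting eigenvalue bound over the whole ball using the pointwise smallness $|\gradh g|\le\de(a,n)=(an)^{-100}$ at the rescaled time; that $C^1$ smallness is itself obtained by a blow-up argument (case (c) of that proof) from the critical local energy smallness together with Lemma \ref{smoothnesslemma}. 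Your sketch contains no substitute for this almost-preservation mechanism (a Duhamel-type decomposition of $v$ exploiting the small pointwise bound $|\gradh g|^2\le c_1/t$ might serve, but you neither state nor carry out such an argument), and without it the continuity scheme does not close.

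A secondary, fixable issue: the theorem allows complete non-compact $(M,h)$, so a finite partition of unity does not exist, and chart-wise mollification alone does not produce approximating metrics with $\sup_M|\gradh^j g_0^i|<\infty$ for all $j$, which the Shi-type short-time existence (Theorem \ref{standardexist}) requires. The paper handles this by gluing the mollified metric to $h$ outside large balls, $g_{0,R}=\eta\,\hat g_{0,R}+(1-\eta)h$ in Theorem \ref{main2}, and letting the radius tend to infinity; your approximation step should be repaired along these lines. Likewise, the passage from global finite energy to local smallness at scale $r$, and then to unit scale, is done in the paper by the parabolic rescaling of $g_0$ and $h$ together with Lemma \ref{smalllocallemma}, so that all Sobolev and covering constants are used at unit scale; if you work directly at scale $r$ you must track how these constants depend on $r$.
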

\begin{proof}
See  Theorem \ref{main3} of Section \ref{existencechap} : The proof is given there.
\end{proof}

Assume \eqref{hassumptions} and \eqref{Mainassumptionstart} and that $M$ is four dimensional.  
Then for any $1>\ep>0,$ we can find  an $r>0$  such that 
\begin{align}
 &  \frac 1 a h \leq g_0 \leq a h, \label{MainassumptionL2be} \\
& \sup_{x\in M}  \int_{B_{r}(x)} ( |\gradh g_0|^4  +    |\gradh^2 g_0|^2)dh  < \ep,\nonumber
\end{align}
see Theorem \ref{smalllocallemma} in  Appendix \ref{geoapp} for a proof.
After scaling $h$ and $g_0$ once, and still calling the resulting metrics $g_0$ and $h$, we may assume 
\begin{align} 
 & (M,h) \ \ \mbox{is a smooth, connected, complete manifold without boundary such that}\label{hassumptionsscaled}\\
& \sup_M  {}^h|\gradh^i\Riem(h)|  < \infty \mbox{ for all } i\in \N_0 \cr 
& \sum_{i=0}^4 \sup_M  {}^h|\gradh^i\Riem(h)|  \leq \de_0(a)  \cr
 & \inj(M,h)  \geq 100,\nonumber
 \end{align}
for a small positive constant $\de_0(a)$ of our choice, in place of the assumptions \eqref{hassumptions},  and the scale invariant condition  
 $\frac 1 a h \leq g_0 \leq a h$ and
$\sup_{x\in M}  \int_{B_{1}(x)} ( |\gradh g_0|^4   +   |\gradh^2 g_0|^2)dh  < \frac {\ep}{2},$
is still correct, 
and hence, using  H\"older's ineqaulity, we have 
\begin{align}
 &  \frac 1 a h \leq g_0 \leq a h, \label{realMainassumption}\\
& \sup_{x\in M}  \int_{B_{1}(x)} ( |\gradh g_0|^2   +  |\gradh^2 g_0|^2)dh  < c(n)\sqrt{\ep}. \nonumber
\end{align}

Note further, if we assume \eqref{hassumptions}, then   \eqref{MainassumptionL2be} is a stronger  assumption than \eqref{Mainassumptionstart}: \eqref{hassumptions} and \eqref{Mainassumptionstart} $\implies$:  for any $\ep>0$ there exists an $r>0$ such that  \eqref{MainassumptionL2be} holds,   but for any given $\ep>0$ there are $g_0$ and $h$ and $r>0$  for which  \eqref{hassumptions} and \eqref{MainassumptionL2be} hold, but
$E(g_0):= \infty$. 

The main estimates required 
for the construction of solutions to \eqref{Meq} in the $W^{2,2}$ setting in this paper are proved in this setting, that is under the assumptions   \eqref{realMainassumption}  (with $c(n) \sqrt{\ep}$ replaced by $\ep$)   and \eqref{hassumptionsscaled},  and we also prove an existence result  in this setting: 

\begin{thm}\label{main1_start}
For any $1<a<\infty  $ 
 there exists a constant $\ep_1 =\ep_1(a) >0$ with the
following properties.
Let $(M^4,h)$  be a smooth four dimensional Riemannian manifold which satisfies  \eqref{hassumptionsscaled}. 
Let $g_0$  be  a $W_{loc}^{2,2} \cap L^{\infty}$  Riemannian metric, not necessarily smooth, which satisfies 
\begin{align} 
& \ \ \frac 1 { a } h \leq g_0 \leq { a } h  \tag{a}  \label{a2}\\
& \ \ \int_{B_2(x)}  (|\gradh g_0|^2 +   |\gradh^2 g_0|^2)dh \leq \ep
\ \ \mbox{ \rm for all } \ \ x \in M  ,
 \tag{b} \label{b2}
 \end{align}
where $\ep \leq \ep_1.$ 
Then there exists a constant $T=T(a,\ep)>0$ and a smooth solution
$(g(t))_{t \in (0,T]}$ to \eqref{Meq} such that

\begin{align}
 & \ \ \ \ \frac{1}{400 a } h \leq g(t) \leq 400 a h  \tag{${\rm a}_t$} \label{a_t2}  \\
 &  \ \ \ \ \int_{B_1(x)} ( |\gradh g(\cdot,t)|^2 +   |\gradh^2 g(\cdot,t)|^2 )dh\leq 2\ep \ \  \tag{${\rm b}_t$}  \label{b_t2} \\
 & \ \ \ \ |\gradh^jg(\cdot,t)|^2 \leq \frac{c_j(h,a,\ep)}{t^{ j}}  \tag{${\rm c}_t$}  \label{c_t2} 
\end{align}
  for all  $x \in M  , \ t \in [0,T],$ 
where $c_j(h,\ep,a) \to 0$ as $\ep \to 0, $ and 
\begin{align}
 & \int_{B_1(x)} (|g_0 - g(t)|^2 + |\gradh(g_0 - g(t))|^2 + |\gradh^2(g_0 - g(t))|^2)dh \to 0  \tag{${ d}_t$}  \label{d_t2}  \\
& \mbox{ \rm as }  t\downto 0 \mbox{ \rm for all }  x \in M \nonumber
\end{align}
The solution is unique in the class of solutions which satisfy   \eqref{a_t2}, \eqref{b_t2}, \eqref{c_t2}, and \eqref{d_t2}.
The solution also satisfies the local estimates
\begin{align}
 &\sup_{x\in B_1(x_0)} |\gradh^jg(\cdot,t)|^2t^j   \to  0 \mbox{ for }  t \to 0  \tag{${\rm e}_t$} \label{e_t2}  \\ 
 \end{align}
 and for all $ 1<R\leq 2$ there exists a $V(a,R)>0$ 
 \begin{align}
 & \int_{B_1(x_0)}  (|\gradh g(\cdot,t)|^2 +   |\gradh^2 g(\cdot,t)|^2)dh \tag{${\rm f}_t$} \label{f_t2}   \\ 
 & \leq
 \int_{B_{R}(x_0)} ( |\gradh g_0(\cdot)|^2 +   |\gradh^2 g_0(\cdot)|^2 )dh + V(a,R)t  \nonumber
\end{align}
for all $x_0\in M$, $2\geq R>1$  for all $t \leq T.$
\end{thm}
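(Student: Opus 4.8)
The plan is to follow the now-standard scheme for constructing Ricci--DeTurck flows from rough initial data --- smooth approximation, uniform a priori estimates, passage to the limit, identification of the initial value --- carried out in the scale-invariant $W^{2,2}$ setting in dimension four, where the local smallness hypothesis \eqref{b2} replaces the $L^\infty$-closeness to a flat metric used in \cite{KochLamm, SimonC0}. The point is that in four dimensions the local Sobolev inequality stated after \eqref{hassumptions} turns \eqref{b2} into control of the scale-invariant quantity $\|\gradh g_0\|_{L^4(B_1(x))}$ --- via a cutoff, $\||\gradh g_0|\zeta\|_{L^4}^2 \leq C_S \|\gradh(|\gradh g_0|\zeta)\|_{L^2}^2$ --- so that the strongly nonlinear terms of \eqref{Meq}, which are quadratic in $\gradh g$ with coefficients built from the merely bounded tensor $g^{-1}$, become small after localization even though $g_0$ itself is not close to $h$. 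First I would mollify $g_0$ (e.g.\ via the $h$-heat flow) to obtain smooth Riemannian metrics $g_0^i \to g_0$ in $W^{2,2}_{\loc}$ with $\tfrac{1}{2a}h \leq g_0^i \leq 2ah$ and $\int_{B_2(x)}(|\gradh g_0^i|^2 + |\gradh^2 g_0^i|^2)\,dh \leq 2\ep$ for all $x\in M$ and all large $i$; since $(M,h)$ has bounded geometry, DeTurck's short-time existence theory \cite{DeTurck} gives smooth solutions $(g^i(t))_{t\in[0,T_i)}$ of \eqref{Meq} with $g^i(0)=g_0^i$. Everything then hinges on the claim that $T_i \geq T(a,\ep)>0$ independently of $i$, with the bounds \eqref{a_t2}--\eqref{c_t2} and \eqref{e_t2}--\eqref{f_t2} holding uniformly in $i$ on $[0,T]$.

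\textbf{Uniform a priori estimates.} This is the technical core, and I would run a continuity argument on the maximal interval where the open conditions $\tfrac{1}{400a}h\leq g^i(t)\leq 400ah$ and $\int_{B_1(x)}(|\gradh g^i(\cdot,t)|^2+|\gradh^2 g^i(\cdot,t)|^2)\,dh\leq 2\ep$ both hold, showing that for $\ep\leq\ep_1(a)$ both can be strictly improved on $[0,T(a,\ep)]$. Three ingredients are needed. First, the pointwise bound \eqref{a_t2}: apply the maximum principle to the extreme eigenvalues of $h^{-1}g^i$, where the generous constant $400$ leaves room for a controlled loss, which stays finite because at the small spatial scales produced by parabolic smoothing the smallness \eqref{b2} forces $g_0^i$ to be close, in an averaged $W^{1,4}$/VMO sense, to a constant metric comparable to $h$ with factor $2a$, so \eqref{Meq} is a small perturbation of the flow of such a constant metric. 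Second, the preservation of \eqref{b_t2} together with the energy almost-monotonicity \eqref{f_t2}: differentiate $\int\zeta^2(|\gradh g^i|^2+|\gradh^2 g^i|^2)\,dh$ for a fixed $h$-cutoff $\zeta$, integrate by parts using \eqref{Meq}, and estimate the resulting cubic and quartic terms --- the delicate ones being $\int\zeta^2|\gradh^2 g^i|^2|\gradh g^i|^2\,dh$ and $\int\zeta^2|\gradh^3 g^i||\gradh^2 g^i||\gradh g^i|\,dh$ --- by H\"older, the local Sobolev inequality, the smallness of the local energy, and the smoothing bound \eqref{c_t2}; this yields $\tfrac{d}{dt}\int_{B_1(x)}(\cdots)\leq C(a)(\sqrt\ep + t^{-1/2}c_*(h,a,\ep))$ with $c_*\to0$ as $\ep\to0$, whose time integral stays below $2\ep$ on $[0,T(a,\ep)]$, and integrated against the cutoff it gives \eqref{f_t2}. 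Third, the smoothing estimates \eqref{c_t2} with $c_j\to0$: once \eqref{a_t2} and \eqref{b_t2} are in force, the equations $\partial_t\gradh^k g^i = g^{ab}\gradh_a\gradh_b\gradh^k g^i + (\text{lower order})$ are uniformly parabolic with bounded coefficients, and a localized energy/Moser iteration in the spirit of Shi's estimates, adapted to \eqref{Meq}, bounds $\sup_{B_{1/2}(x)}|\gradh^{k+1}g^i(\cdot,t)|^2 t^{k+1}$ by the local $W^{2,2}$-energy (at most $2\ep$) plus its parabolic time integral, hence by $c_{k+1}(h,a,\ep)\to0$.

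\textbf{Limit, initial values, uniqueness, local estimates.} By these uniform bounds and Arzel\`a--Ascoli, a subsequence of $(g^i)$ converges in $C^\infty_{\loc}(M\times(0,T])$ to a smooth solution $(g(t))_{t\in(0,T]}$ of \eqref{Meq} satisfying \eqref{a_t2}, \eqref{b_t2}, \eqref{c_t2}, \eqref{e_t2}, \eqref{f_t2}. For \eqref{d_t2}, note that \eqref{f_t2} gives $\limsup_{t\downto0}\int_{B_1(x)}(|\gradh g(\cdot,t)|^2+|\gradh^2 g(\cdot,t)|^2)\,dh \leq \int_{B_R(x)}(|\gradh g_0|^2+|\gradh^2 g_0|^2)\,dh$ for every $R>1$, hence $\leq\int_{B_1(x)}(\cdots)$ on letting $R\downto1$; on the other hand, integrating the evolution equation for $g^i$ and using \eqref{c_t2} shows $\|g^i(t)-g_0^i\|_{L^2(B_1(x))}\to0$ as $t\downto0$ uniformly in $i$, so $g(t)\weak g_0$ in $W^{2,2}_{\loc}$ as $t\downto0$, and weak lower semicontinuity of the $W^{2,2}$-norm gives the matching lower bound for the $\liminf$; convergence of norms together with weak convergence upgrades to strong $W^{2,2}_{\loc}$ convergence, which is \eqref{d_t2}, and the same no-concentration argument gives \eqref{e_t2}. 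Uniqueness follows from a Gronwall estimate for $\int\zeta^2|g-\ti g|^2\,dh$ between two solutions satisfying \eqref{a_t2}--\eqref{d_t2}: their difference solves a linear uniformly parabolic equation whose zeroth- and first-order coefficients are bounded in the scale-invariant norms controlled by \eqref{b_t2}, the dangerous term $\int\zeta^2(|\gradh g|^2+|\gradh\ti g|^2)|g-\ti g|^2\,dh$ is absorbed by $-\tfrac12\int\zeta^2|\gradh(g-\ti g)|^2\,dh$ using H\"older, the local Sobolev inequality and $\ep\leq\ep_1$, and the initial datum vanishes by \eqref{d_t2}.

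\textbf{Main obstacle.} The crux is the second step. Because $g_0$ is only bounded, and not $L^\infty$-close to $h$, neither the ellipticity constants nor the nonlinear coefficients of \eqref{Meq} are near-constant, and every estimate must be closed using only the \emph{integral} smallness \eqref{b2}, which in four dimensions is precisely scale-invariant, so there is no slack. In particular, making the pointwise bound \eqref{a_t2} and the preservation of the local energy \eqref{b_t2} reinforce each other, controlling the critical cubic and quartic terms by the small energy through the Sobolev inequality, and ruling out that the bounded-but-not-small size of $g_0$ drives the flow out of the good region before time $T(a,\ep)$, is where essentially all the work lies; this is carried out in Section \ref{existencechap}.
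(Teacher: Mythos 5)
Your overall architecture (mollify the data, prove uniform a priori estimates by a continuity argument coupling the $L^\infty$ bound, the preserved local energy smallness and the smoothing estimates, pass to a $C^\infty_{\loc}$ limit, identify the $W^{2,2}$ initial value by combining the energy almost-monotonicity with $L^2$-continuity and a weak-convergence-plus-norm-convergence upgrade, and prove uniqueness by a localized Gronwall/Struwe argument) is the same as the paper's (Theorem \ref{main1}, Theorem \ref{main2}, Theorems \ref{smallenergy}, \ref{W22convergence}, \ref{uniqueness}, Remark \ref{aftermain2}). However, there is a genuine gap at the step you yourself call the crux: the derivation of the uniform two-sided bound \eqref{a_t2}. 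You propose to ``apply the maximum principle to the extreme eigenvalues of $h^{-1}g^i$'' and argue the loss stays controlled because at parabolic scales $g$ is close in an averaged $W^{1,4}$/VMO sense to a constant metric. As stated this does not close: the zeroth-order reaction terms in \eqref{Meq} are quadratic in $\gradh g$, and the only pointwise control available is $|\gradh g|^2\leq \ep/t$ (itself only available \emph{after} the $C^1$ estimate has been established), which is not integrable in time, while at $t=0$ there is no pointwise smallness of $\gradh g_0$ whatsoever --- only the scale-invariant integral smallness \eqref{b2}. So an ODE comparison for eigenvalues along the flow has an uncontrollable forcing term, and the Koch--Lamm-type perturbation picture you gesture at requires $L^\infty$-closeness to a single fixed metric, which is exactly what is unavailable here; the ``local constant'' metric varies from point to point and nothing in your sketch controls how far these local references drift from the initial bounds $\frac1a h\leq g_0\leq a h$ before time $T(a,\ep)$.

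The missing idea is the integral-to-pointwise mechanism of Theorem \ref{gradientbound}. There the $L^\infty$ bound and the gradient bound are proved together in one continuity/rescaling argument: one rescales the putative first failure time to $1$, rules out failure of the gradient bound using the smallness of the local energy (via Lemma \ref{smoothnesslemma}), and rules out failure of the metric bounds not pointwise but by integrating the \emph{traces} $\varphi=g^{ij}h_{ij}$ and $F=g_{ij}h^{ij}$ over unit balls: their time derivatives are bounded by $c(n,a)\int_{B_1}(|\gradh g|^2+|\gradh^2 g|+|\Riem(h)|)\,dh\leq c(n,a)\ep^{2/n}$, i.e.\ by the scale-invariant spatial energy rather than by the non-integrable pointwise quantity $|\gradh g|^2$. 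This yields a point $y_0$ in each unit ball where all eigenvalues of $h^{-1}g$ at time $1$ lie in $[\tfrac{1}{4na},4na]$, and the already-established pointwise bound $|\gradh g|\leq\de$ at the rescaled unit time propagates this to the whole ball, contradicting the assumed failure. Without this (or some substitute mechanism converting integral smallness into pointwise two-sided control), your continuity argument cannot be closed, and with it the remaining steps of your proposal go through essentially as in Sections \ref{Linfintygradchapt}--\ref{existencechap} of the paper.
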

\begin{proof}
The Theorem  follows from Theorem \ref{main2}  and Remark \ref{aftermain2}  of Section \ref{existencechap}. 
\end{proof}

With  a solution of this type  at hand, we can without much trouble now construct a solution to Ricci flow
$(\ell(t))_{t\in (0,T)} = ( (\Phi(t))^*g(t))_{t\in (0,T)}   $ with $\ell(S) = g(S)$ and $\Phi(S) = Id$ for any given fixed $S>0.$
After some work it becomes clear, that  the Ricci Flow solution has initial starting data corresponding in some weak isometric sense to the starting data $g_0$ of the Ricci DeTurck flow solution.  More specifically, we  show  for all $p \in [1,\infty),$ that there is a weak limit $\ell_0 := \lim_{t\downto 0} \ell(t)$ in the $L^p_{loc}$ sense  and that $\ell_0$ is isometric to $g_0$ with the help of a $W^{1,p}$ isometry, and that there is a uniform limit $d_0:= \lim_{t\downto 0} d_t$ for $d_t:= d(g(t)),$ where  $d_0$ can be explicitly calculated from the starting data $g_0.$
These facts, and more details, are contained in the following theorem.

\begin{thm}\label{WeakRicciStart}
Let $1<a<\infty$ , $M= M^4$ be a four dimensional manifold, and $g_0$ and $h$ satisfy  the assumptions \eqref{hassumptionsscaled}, \eqref{a2} and \eqref{b2}, with $\ep \leq \ep_1$ where $\ep_1= \ep_1(a)>0$ is the constant coming from Theorem \ref{main1_start}, and let 

$(M,g(t))_{t\in (0,T]}$  be the   smooth solution to  \eqref{Meq} 
constructed in Theorem \ref{main1_start}.
Then \begin{itemize}

\item[(i)] 
there exists a constant $c(a)$ and a smooth solution  $\Phi :M \times (0,T] \to M$ to   \eqref{ODEDe} with 
$\Phi(T/2) = Id$ such that  $\Phi(t):= \Phi(\cdot,t) : M \to M$ is a diffeomorphism and 
 $d_h(\Phi(t)(x), \Phi(s)(x)) \leq c(a)  \sqrt {|t - s|}$ for all $x\in M.$  The metrics $\ell(t):= (\Phi(t))^*g(t), t\in (0,T]$ solve the  Ricci flow equation.
 Furthermore there are   well defined limit maps $\Phi(0): M \to M,$  $\Phi(0): = \lim_{t\downto 0} \Phi(t),$
 and $W(0): M \to M,$  $W(0): = \lim_{t\downto 0} W(t),$ where  $W(t)$ is the inverse of $\Phi(t)$ and  
  these limits  are obtained uniformly  on compact subsets, and  $\Phi(0), W(0)$  are  homeomorphisms inverse to one another.

 \item[(ii)] For the Ricci flow solution $\ell(t)$ from (i), there is a value $\ell_0(\cdot) = \lim_{t\downto 0} \ell (\cdot,t) $ well defined up to a set of measure zero, where the limit exists in the $L^p_{loc}$ sense, for any $p \in [1,\infty)$, such that, $\ell_0$ is positive definite, and  in $W_{loc}^{1,2}$ and for any $x_0 \in M$ and $0<s<t\leq T$  we have 
 \begin{eqnarray}
 && \int_{B_{1}(x_0)}  |\ell(s)-\ell_0|^p_{\ell(t)} d\ell(t) \leq  c(g_0,h,p,x_0) s \cr
 &&  \int_{ B_{1}(x_0)}  |(\ell(0))^{-1}-(\ell(s))^{-1}|^p_{\ell(t)} d\ell(t) \leq c(g_0,h,p,x_0) |s |^{\frac 1 4}  \cr
&& \int_{B_{1}(x_0) )}|\gradg  \ell_0 |^2_{\ell(t)}   d\ell(t) \leq   c(g_0,h,p,x_0) t^{\si} \cr
 && \int_{B_{1}(x_0)} |\Rm(\ell)|^2(x,t) d\ell(x,t)   + \int_0^t \int_{B_{\ell(s)}(x_0,1)}  |\gradg \Rm(\ell)|^2 (x,s) d\ell(x,s) ds   \leq  c(g_0,h,p,x_0) \cr   
 && \sup_{B_{1}(x_0)}  |\gradg^j \Rc(\ell(t))|^2 t^{j+2}     \to 0  \mbox{ as } t \downto 0   \mbox{ for all } j\in \N_0 \cr
\end{eqnarray}
for a universal constant $\si>0, $
where $\gradg$ refers to the gradient with respect to $\ell(t),$ $c(g_0,h,p,x_0)$ is a constant depending on 
 $g_0,h,p,x_0$ but not on $t$ or $s$.
  \item[(iii)]
 
The  limit maps $\Phi(0): M \to M,$  $\Phi(0): = \lim_{t\downto 0} \Phi(t),$
 and $W(0): M \to M,$  $W(0): = \lim_{t\downto 0} W(t),$ from (i) 
   are also  obtained in the $W_{loc}^{1,p}$ sense for $p\in [1,\infty)$.
 Furthermore,  for any smooth coordinates $\phi :U \to \R^n$, and $\psi: V \to \R^n$  with 
 $W(0)(V) \subsub U,$ 
the functions   $(\ell_0)_{i j} \of W(0):V \to \R $ are in $L_{loc}^p$ for all $p \in [1,\infty)$ and  
$(g_0)_{\al \be}: V \to \R $ and $(\ell_0)_{ij}: U \to \R$ are related by the identity 
$$(g_0)_{\al \be}  = D_{\al} (W(0))^i    D_{\be} (W(0))^j  ( ({\ell}_0)_{i j } \of W(0)),$$
which holds almost everywhere.
In particular: $\ell_0$ is isometric to $g_0$ almost everywhere through the map $W(0)$ which is in $W_{loc}^{1,p},$
for all $p\in [1,\infty)$.

 \item[(iv)]
 We define 
  $d_t(x,y) = d(g(t))(x,y)$ and $ \ti d_t( p,q)  = d(\ell(t))(p,q)$,   for all  $x,y,p,q \in M,$  $t \in (0,T)$.
 There  are   well defined limit metrics $ d_0,\ti d_0: M \times M \to \R_0^+,$  $d_0(x,y) = \lim_{t\downto 0}  d_t(x,y) $, 
 and $\ti d_0 :=  M \times M \to \R_0^+,$  $\ti d_0(p,q) = \lim_{t\downto 0} \ti d_t(p,q),$ 
 and they satisfy $\ti d_0(x,y) = d_0(\Phi(0)(x),\Phi(0)(y)).$ That is, $ (M, \ti d_0)$ and $(M,d_0)$ are isometric to one another through the map $\Phi(0)$.\\ 
The  metric $d_0$ satisfies $d_0(x,y):= \liminf_{\ep \downto 0} \inf_{\ga \in C_{\ep,x,y}} L_{g_0}(\ga),$ where 
$ C_{\ep, x,y} $ is the space of {\it $\ep$-approximative Lebesgue curves} between $x$ and $y$ with respect to $g_0$: This space is defined/examined in Definition \ref{lebesgueparlines}.


\end{itemize}

 \end{thm}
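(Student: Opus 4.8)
The entire argument is built on the Ricci DeTurck solution $(g(t))_{t\in(0,T]}$ produced by Theorem \ref{main1_start} and its estimates \eqref{a_t2}--\eqref{f_t2}; the four parts, respectively, run DeTurck's trick, push the Ricci DeTurck convergence through to the Ricci flow via the maps $\Phi(t)$ and $W(t)=\Phi(t)^{-1}$, identify the limit maps as Sobolev maps realizing a weak isometry between $\ell_0$ and $g_0$, and pass to the limit on the level of distance functions. For \emph{Part (i)}: the DeTurck vector field $V^\al(\cdot,t)=-g^{\be\ga}({}^g\Ga^\al_{\be\ga}-{}^h\Ga^\al_{\be\ga})$ is an algebraic expression in $g,g^{-1},\gradh g$, so \eqref{a_t2} together with \eqref{c_t2} for $j=1,2$ gives $|V(\cdot,t)|_h\le c(a)t^{-1/2}$ and $|\gradh V(\cdot,t)|_h\le c(a)t^{-1}$. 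Solving \eqref{ODEDe} with $\Phi(T/2)=\Id$ produces a smooth family of diffeomorphisms on $(0,T]$; since $s\mapsto\Phi(s)(x)$ is an integral curve of $V$, its $h$-length over $[s,t]$ is $\le\int_s^t c(a)\tau^{-1/2}\,d\tau\le c(a)\sqrt{|t-s|}$, giving the stated H\"older bound, a uniform bound on the $h$-displacement of $\Phi(t)$, and uniform Cauchyness as $t\downto0$, hence a continuous limit $\Phi(0)$ (uniform on compacta). Writing $\ell(t)=\Phi(t)^\ast g(t)$ and $g(t)=W(t)^\ast\ell(t)$ in coordinates and using $\ell(t)\le400a\,h$ shows $|D\Phi(t)|$ is bounded uniformly in $t$, i.e. the $\Phi(t)$ are uniformly Lipschitz. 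The analogous facts for $W(t)$ (which solves $\partial_t W(t)=-DW(t)(V(\cdot,t))$, displaces points boundedly, and is uniformly co-Lipschitz) yield a continuous limit $W(0)$; equicontinuity then lets one pass to the limit in $\Phi(t)\of W(t)=\Id=W(t)\of\Phi(t)$, so $\Phi(0)$ and $W(0)$ are mutually inverse homeomorphisms. That $\ell(t)$ solves Ricci flow is DeTurck's computation using \eqref{ODEDe}.

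For \emph{Part (ii)}, all quantitative bounds are obtained by transplanting the \emph{uniform} (in $t\downto0$) bounds on the Ricci DeTurck side to the Ricci flow side through $\Phi(t)$ --- e.g.\ $\int_{B}|\Rm(\ell(t))|^2_{\ell(t)}\,d\ell(t)=\int_{\Phi(t)(B)}|\Rm(g(t))|^2_{g(t)}\,dg(t)\lesssim\int|\gradh^2 g(t)|^2+|\gradh g(t)|^4+1$, which is uniformly bounded by \eqref{b_t2} and the embedding $W^{1,2}\embed L^4$ in four dimensions --- and then interpolating against the interior bound $|\Rm(\ell(t))|_{\ell(t)}\lesssim t^{-1}$ from \eqref{c_t2}: for any $p\in[1,\infty)$ this gives $\|\Rm(\ell(\tau))\|_{L^p(B)}\lesssim\tau^{2/p-1}$, which is integrable down to $\tau=0$. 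Feeding this into $\ell(s)-\ell(s')=-2\int_{s'}^s\Rc(\ell(\tau))\,d\tau$ shows $\{\ell(t)\}$ is $L^p_{\loc}$-Cauchy with the stated rate, so $\ell_0=\lim_{t\downto0}\ell(t)$ exists in $L^p_{\loc}$; the same scheme applied to $\partial_t\log\det\ell=-2\Sc(\ell)$ (uniformly bounded in $L^1_{\loc}$, $\lesssim t^{-1}$ in $L^\infty$) gives $L^p_{\loc}$-convergence of $\log\det\ell(t)$, hence positive definiteness of $\ell_0$ a.e.; and $\ell_0\in W^{1,2}_{\loc}$ follows from the uniform bound $\int_B|\gradg\ell(t)|^2_{\ell(t)}\,d\ell(t)\lesssim\ep$ (from \eqref{b_t2}) and weak $W^{1,2}$ lower semicontinuity. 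The remaining inequalities (the inverse-metric estimate, the energy and curvature integrals, the pointwise decay of $\gradg^j\Rc$) are of the same type, combining transplanted uniform local bounds with interior estimates and Moser iteration, the pointwise decay using \eqref{e_t2}.

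For \emph{Part (iii)}, rewrite the isometry $g(t)=W(t)^\ast\ell(t)$ in coordinates, $(g(t))_{\al\be}=D_\al W(t)^i D_\be W(t)^j\,(\ell(t))_{ij}\of W(t)$; the uniform Lipschitz bound on $\Phi(t)$ (equivalently the uniform lower singular-value bound on $DW(t)$) together with the $L^p_{\loc}$ bounds of (ii) give a uniform $L^p_{\loc}$ bound on $DW(t)$, so $W(t)\weak W(0)$ weakly in $W^{1,p}_{\loc}$, and the uniform convergence from (i) identifies the weak limit of $DW(t)$ as $DW(0)$; passing to the limit in the displayed identity --- using $W^{2,2}_{\loc}$-convergence $g(t)\to g_0$ from \eqref{d_t2}, $L^p_{\loc}$-convergence $\ell(t)\to\ell_0$, and uniform convergence $W(t)\to W(0)$ to handle the composition $\ell(t)\of W(t)$ --- yields the stated a.e.\ change-of-variables identity and the weak isometry. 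For \emph{Part (iv)}: by \eqref{a_t2} the distances $d_t=d(g(t))$ are uniformly bi-Lipschitz to $d_h$, and the Hamilton-type distance-distortion estimate applied at the parabolic scale $\sqrt\tau$ (at which, by \eqref{c_t2}, the rescaled curvature is bounded) gives $|d_t-d_s|\le c(a)\sqrt{|t-s|}$ pointwise, hence $d_t\to d_0$ uniformly on compacta; likewise $\ti d_t\to\ti d_0$, and $\ti d_t(x,y)=d_t(\Phi(t)(x),\Phi(t)(y))$ passes to the limit by (i). The formula $d_0(x,y)=\liminf_{\ep\downto0}\inf_{\ga\in C_{\ep,x,y}}L_{g_0}(\ga)$ follows from two inequalities: "$\le$" is immediate, since for an $\ep$-approximative Lebesgue curve $\ga$ one has $d_t(x,y)\le L_{g(t)}(\ga)\to L_{g_0}(\ga)$ by the defining property in Definition \ref{lebesgueparlines}; for "$\ge$" one takes $g(t)$-geodesics $\ga_t$ from $x$ to $y$, which lie in a fixed compact set and are equicontinuous after arclength reparametrisation, extracts a uniform limit $\ga_0$, verifies $\ga_0$ (or a modification) is an $\ep$-approximative Lebesgue curve, and uses lower semicontinuity of $g_0$-length under $g(t)\to g_0$ to get $L_{g_0}(\ga_0)\le\liminf_t L_{g(t)}(\ga_t)=d_0(x,y)$.

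\emph{Expected main obstacle.} The crux is the bookkeeping around the Jacobians $D\Phi(t),DW(t)$: one must exploit the two-sided bound \eqref{a_t2} and the identity $g(t)=W(t)^\ast\ell(t)$ to control $\Phi(t)$ uniformly in Lipschitz norm while asking only for $L^p_{\loc}$ control of $DW(t)$ (since $\ell_0$ may genuinely degenerate on a null set, $W(0)$ being only $W^{1,p}_{\loc}$), and then make the ensuing change of variables rigorous for Sobolev maps; this is exactly what drives (iii), the inverse-metric estimate in (ii), and the identification of the limit maps. On the distance side, the delicate point is the "$\ge$" direction of the formula for $d_0$ in (iv): that limits of the $g(t)$-geodesics are admissible in $C_{\ep,x,y}$ and that no length is lost in passing to the limit --- a lower-semicontinuity statement for length against a merely $W^{2,2}$, and in general discontinuous, limiting metric.
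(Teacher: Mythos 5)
Your overall skeleton matches the paper's (flow the DeTurck vector field to get $\Phi,W$, transplant the Ricci DeTurck estimates, invoke the $L^p$ Ricci flow estimates to produce $\ell_0$, pass to the limit in $g(t)=W(t)^*\ell(t)$, and characterise $d_0$ via Lebesgue curves), but several load-bearing steps are not justified and one is circular. The claim that ``using $\ell(t)\le 400a\,h$ shows $|D\Phi(t)|$ is bounded uniformly in $t$'' is circular: no comparison of $\ell(t)=\Phi(t)^*g(t)$ with $h$ is known a priori — it would itself require exactly the uniform Lipschitz bound on $\Phi(t)$ you are asserting. With only $|\gradh V|\lesssim \ep/t$ available (from \eqref{c_t2}/\eqref{e_t2}), Gronwall gives $|D\Phi(t)|\lesssim t^{-c\ep}$, which blows up as $t\downto 0$; the paper never has, and never uses, uniform Lipschitz or co-Lipschitz control of $\Phi(t),W(t)$. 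It gets $W(0)$ by a Cauchy/Gronwall argument for the flows $\psi_{t_i}$, and in (iii) it only derives $L^p$ bounds on $D\Phi(s),DW(s)$ by integrating the identity \eqref{Metriceq} against quantities like $h^{ij}\ell_{ij}(s)$ (Theorem \ref{ConvMetricSob}). This matters for your (iii): with merely weak $W^{1,p}$ convergence of $W(t)$ you cannot pass to the limit in the quadratic expression $D_\al W^i D_\be W^j\,(\ell(t))_{ij}\of W(t)$; the paper proves \emph{strong} $L^p$ convergence of $D\Phi(s),DW(s)$ by exploiting the algebraic identity ($|D\Phi(i)-D\Phi(j)|^2_{\ell_0,Z}$ equals an error term) and handles the compositions $\ell(t)\of W(t)\to\ell_0\of W(0)$ via the Lusin N property and the area formula for $W^{1,p}$, $p>n$, homeomorphisms — none of which your outline replaces. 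Similarly, in (ii) the estimate $\int_{B_1}|\gradg\ell_0|^2_{\ell(t)}\,d\ell(t)\le c\,t^\sigma$ does not follow from \eqref{b_t2} plus ``weak lower semicontinuity'' (note $\nabla^{\ell(t)}\ell(t)\equiv 0$, so the bound concerns the $\ell(t)$-covariant derivative of the \emph{limit} tensor); the paper needs the space–time bound $\int\!\!\int|\gradg\Rc|^2$ obtained with a Perelman cutoff and the resulting estimate $\int|\gradg(\ell(t)-\ell(s))|^2\le|t-s|^\beta$ (Theorem \ref{W22pboundsriccithm}), which your sketch does not supply.

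Part (iv) also has genuine gaps. First, ``$d_t(x,y)\le L_{g(t)}(\ga)\to L_{g_0}(\ga)$'' is not available: $g(t)\to g_0$ in $W^{2,2}_{loc}$ (or $L^p_{loc}$) gives no control of restrictions to a fixed line (a null set), and the Lebesgue-line property of $\ga$ concerns tube averages of $g_0$, not of $g(t)$; the paper's Lemma \ref{distanceboundbelow} instead compares the tube-averaged $g_0$-length of a Lebesgue line directly with $d_t$ of its endpoints, using $\int_{tube}|g(t)-g_0|^2\le ct$. Second, in the other direction the uniform limit of $g(t)$-geodesics is in general \emph{not} an $\ep$-approximative Lebesgue curve (these are, by Definition \ref{lebesgueparlines}, unions of straight unit-speed coordinate lines with the tube-average property), and ``lower semicontinuity of $g_0$-length'' is not meaningful for a merely measurable $g_0$; the paper instead rescales parabolically, chops a $d_{g(t)}$-minimiser into unit segments, and replaces each segment by a nearby coordinate Lebesgue line furnished by the Fubini/pigeonhole Lemma \ref{distboundabove}, with $g_0$-length $\le(1+\ep)$ times the segment's $d_{g(t)}$-length. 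Finally, the asserted uniform modulus $|d_t-d_s|\le c(a)\sqrt{|t-s|}$ is unjustified for the Ricci DeTurck flow: $|\Rc|\le\ep/t$ (or $|\partial_t g|\le c/t$) only yields one-sided or $(t/s)^{c\ep}$-type distortion, and the paper accordingly proves only pointwise convergence of $d_t$, via the two inequalities above, rather than assuming uniform convergence and then identifying the limit.
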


\begin{proof}
See Theorem \ref{mainthm} of Section \ref{Ricciflowrelatedsolution}: The proof is given there.
\end{proof}

\begin{rmk} 
An attempt to  construct  a Ricci flow solution $\ell(t)$ with $\Phi(0) = Id$ and $\ell(0) = \Phi(0)^*g(0) = g(0)$,  using  similar  methods  to those we  use to construct  the Ricci flow solution in Theorem \ref{WeakRicciStart},  could lead to a   non-smooth Ricci flow solution,  which does not immediately become smooth (we say the solution is in {\it a non-smooth gauge}),    as we now explain.
The solutions  $g(t)$ constructed in Theorem \ref{main3start} are limits of solutions
$g_i(t)$ with initial data $g_i(0)$ where $g_i(0) \to g(0)$ in $W_{loc}^{2,2}$. For  $M = \T^4$, the four dimensional Torus, whose circles  have radius $10$  ,  with $h$ the   standard flat metric on $\T^4,$ let $g_i(0) = \phi(i)^*h,$  where $\phi(i):
\T^4  \to \T^4$ are diffeomorphisms, equal to the identity outside a ball $\B_1(0)$ of radius  one (which we identify with the standard euclidean ball of radius one), 
and 
$\phi(i)|_{\B_1(0)}:\B_1(0)  \to  \phi(i)(\B_1(0)) = \B_1(0)  $ are uniformly Bi-Lipschitz diffeomorphsims, $\frac{1}{B}|x-y| \leq   |\phi(i)(x)  - \phi(i)(y)| \leq B |x-y| $ for all $x,y \in \B_1(0),$ with $\phi_i(0) \to \psi$ as $i\to \infty$ in the $W^{3,2}$ sense. Assume that $\psi$ is not smooth.  For example we can take $\phi(i)(x)  =  x (1+ \eta \si   f_i(x)) $ with $
f_i(x) :=   (2 + \sin( \log(\log(\frac{2}{\sqrt{|x|^2+\frac{1}{i}}})))),$ $\si$ a small   positive constant, and $\eta$ a smooth cut-off function with $\eta =1$ on $\B_{1/2}(0),$ $\eta = 0$ on $(\B_{3/4}(0))^c.$ 
Notice that the $\phi(i)$ are uniformly Bi-Lipschitz, as we now explain. 
Assume that $|x|\leq |y|$. Then 
\begin{eqnarray*}
&&|\phi(i)(x)-\phi(i)(y)| = |(x-y) + x\eta \si f_i(x)  - y\eta \si f_i(y) |  \\
&& =  |(x-y) + x\si (\eta(x) f_i(x)  -\eta(y)f_i(y) )   + (x- y)\eta \si f_i(y) | \\
&&  \geq \frac{9}{10} |x-y| - 2\si |x| |\eta(x) - \eta(y)| - 2\si |x-y|  - \si |x| |f_i(x) -f_i(y)|\\
&& \geq \frac{9}{10} |x-y| - 2\si |x| |D_v\eta(c)| |x-y|  - 2\si |x-y|  - \si |x| |D_v f_i(b)| |x-y| \\
&& \geq \frac{1}{2}|x-y| - \si |x| |D_v f_i(b)| |x-y|  \\
\end{eqnarray*}
where $b$ and $c$ are points in the line between $x$ and $y$  and $v$ is a length one vector pointing in the direction of the line between  $x$ and $y$. 
A calculation shows us that 
\begin{eqnarray*}
| D_v  f_i(b)|  && =  |\cos(...)|   |\frac{1}{|\log(\frac{1}{\sqrt{| b|^2+ \frac{1}{i}} })|}  2\frac{\langle b,v\rangle |}{(|b|^2 + \frac{1}{i})}  \\
&&\leq  |\cos(...)|  \frac{1}{|b|},
\end{eqnarray*}
which,  combined with the fact that $|x|\leq |b|,$   gives us $\si |x| |D_v f_i(b)| |x-y|  \leq \si|x-y| $, and hence 
$|\phi(i)(x) -\phi(i)(y)| \geq \frac{1}{4} |x-y|$. A similar calculation shows us that  $|\phi(i)(x) -\phi(i)(y)| \leq 4 |x-y|$.  The definition of the $\phi(i)'s$ guarantees that 
 
$\phi(i):\B_1(0) \to \R^n$ are smooth Bi-Lipshitz diffeomomorphisms whose image lies in $B_1(0)$. Furhtermore $\phi(i)(tx)$ is a continuous line, for $t$ bewteen $0$ and $1$ lying on the standard line $tx$ between $0$ and $x$. Hence, $\{ \phi(i)(tx) \ | \ t \in [0,1] \} = \{ tx \ | \ t \in [0,1]\}.$ This shows that $\phi(i):\B_1(0) \to \B_1(0)$ is also onto.

Then
$\phi(i) \to \psi$ in the $W^{3,2}$ sense,  with $\psi(x) =   x (1+ \eta \si f(x)),$  $ f(x) :=  (2 + \sin( \log(\log(\frac{2}{ |x| }))))$ for $x \neq 0$ $f(0):=0,$ 
 $g_i(0) \to g(0)$  in  the $W^{2,2}$ sense, but $g(0)$ is not smooth, and there exists an $1<a= a(B,K)< \infty$ such that 
  $   \frac{1}{a}h \leq  g_i(0) \leq a h .$ Hence,  Theorem  \ref{main3start} is applicable and a limit
 solution $g(t)_{t\in (0,T)} = \lim_{i\to \infty} g_i(t)_{t\in (0,T)}$ exists with $g(t) \to g(0)$ in the $W^{2,2}$ sense  as $t \downto 0.$ However, the Ricci-Flow of $\ell_i(0) = g_i(0)$ is $\ell_i(t) =  \ell_i(0),$ as the metric $g_i(0)$ is flat.
Hence   $\ell_i \to \ell$  in the $W^{2,2}$ sense where $\ell(t) = \ell(0) = g(0)$ for all $t\in (0,T)$: By construction $g(0)$ is non-smooth. 
We avoid these non-smooth Gauges by choosing  $\Phi(S) = Id$ for some $S>0$ in Theorem \ref{WeakRicciStart}.   

\end{rmk}

In order to prove the  relationships of Theorem \ref{WeakRicciStart}, in particular the existence of the  limit $\ell_0,$  we require some new estimates which hold for solutions to 
Ricci flow  of the type constructed  here, and for a 
 more general class. The theorems, lemmata  that we use to prove  these estimates  are contained in Section \ref{ricciflowestimates}.
  
The existence of  the weak metric $\ell_0$  is achieved with the following theorem

\begin{thm}\label{LpcorStart}
For all $p\in [2,\infty)$ and $n\in \N$ there exists an $\al_0(n,p) >0$ such that the following holds. 
Let $\Omega$ be a smooth $n$-dimensional manifold 
 and $(\Omega^n,\ell(t))_{t\in (0,T]}$      be a smooth solution to Ricci flow   satisfying 
\begin{align*}
&\int_{\Omega} |\Rc(\ell(t))| d\ell(t)  \leq \ep  \\
&|\Rc (\ell(t))| \leq \frac{\ep}{t} \ \mbox{ on } \Omega  \nonumber
\end{align*}
for all $t\in (0,T],$ where $\ep \leq \al_0$.
Then there exists a unique, positive definite,   symmetric two tensor $\ell_0 \in L^p$ such that
$ \ell(s) \to \ell_0$ in $L^p(\Omega)$ as $s \downto 0$ where $\ell_0,$ and $ \ell^{-1}(s) \to (\ell_0)^{-1}$ in $L^p(\Omega)$ as $s \downto 0$.
\end{thm}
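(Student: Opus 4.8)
The plan is to reduce the statement to an $L^p$ Cauchy estimate for the family $s\mapsto\ell(s)$ as $s\downto0$, with the fixed smooth metric $\ell(T)$ taken as reference metric and $d\ell(T)$ as reference measure (any other fixed slice $\ell(t_0)$ works identically, and locally this is the same as the $L^p_{\loc}$ statement with respect to coordinates). Since $\partt\ell=-2\Rc(\ell)$, differences of $\ell$ are time integrals of $\Rc(\ell)$, so everything comes down to producing a bound for $\|\Rc(\ell(\tau))\|_{L^p(\Omega,d\ell(T))}$ that is integrable in $\tau$ down to $0$; the smallness of $\ep$ will be used exactly once, to secure this integrability.

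First I would interpolate the two hypotheses: for $\tau\in(0,T]$,
\[
\int_\Omega|\Rc(\ell(\tau))|^{p}_{\ell(\tau)}\,d\ell(\tau)\ \le\ \Big(\esssup_\Omega|\Rc(\ell(\tau))|_{\ell(\tau)}\Big)^{p-1}\int_\Omega|\Rc(\ell(\tau))|_{\ell(\tau)}\,d\ell(\tau)\ \le\ \frac{\ep^{p}}{\tau^{\,p-1}}.
\]
Then I would compare $\ell(\tau)$ with $\ell(T)$: from $|\Rc(\ell(t))|_{\ell(t)}\le\ep/t$ one gets $|\partt\log\ell(t)(v,v)|\le 2\ep/t$ for every vector $v$, and integrating from $\tau$ to $T$ yields $(\tau/T)^{2\ep}\ell(T)\le\ell(\tau)\le(T/\tau)^{2\ep}\ell(T)$. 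Writing $\rho=\rho(\tau):=(\tau/T)^{2\ep}\le1$ and diagonalising $\ell(\tau)$ in an $\ell(T)$-orthonormal frame, the eigenvalues lie in $[\rho,\rho^{-1}]$, whence $\rho^{n/2}\,d\ell(T)\le d\ell(\tau)\le\rho^{-n/2}\,d\ell(T)$ and $|\Rc(\ell(\tau))|_{\ell(T)}\le\rho^{-1}|\Rc(\ell(\tau))|_{\ell(\tau)}$ pointwise (the same holds with all indices raised). Combining with the previous display,
\[
\big\|\Rc(\ell(\tau))\big\|_{L^p(\Omega,d\ell(T))}\ \le\ C(n,p,T,\ep)\,\tau^{\,-1+\frac1p-2\ep\left(1+\frac{n}{2p}\right)},
\]
and this exponent is $>-1$ precisely when $\ep(n+2p)<1$. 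I would therefore set $\al_0(n,p):=\tfrac1{2(n+2p)}$ (any smaller constant works as well), so that for $\ep\le\al_0$ the map $\tau\mapsto\|\Rc(\ell(\tau))\|_{L^p(\Omega,d\ell(T))}$ is integrable on $(0,T]$.

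Integrating $\partt\ell=-2\Rc(\ell)$ and applying Minkowski's integral inequality then gives, for $0<s<t\le T$,
\[
\|\ell(t)-\ell(s)\|_{L^p(\Omega,d\ell(T))}\ \le\ 2\int_s^t\big\|\Rc(\ell(\tau))\big\|_{L^p(\Omega,d\ell(T))}\,d\tau\ \longrightarrow\ 0\qquad(s,t\downto0),
\]
so $\ell(s)$ is $L^p$-Cauchy and converges to a symmetric two-tensor $\ell_0\in L^p$ (if $\Vol(\Omega,\ell(T))=\infty$ one phrases this through the convergent differences $\ell(s)-\ell(T)=2\int_s^T\Rc(\ell(\tau))\,d\tau\in L^p$ and reads the conclusion in the $L^p_{\loc}$ sense). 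Running the identical argument for $t\mapsto\ell(t)^{-1}$, using $\partt\ell^{-1}=2\,\ell^{-1}\Rc(\ell)\,\ell^{-1}$ and the same norm comparison, yields an $L^p$-limit $S_0\in L^p$ of $\ell(s)^{-1}$. Picking $s_k\downto0$ along which $\ell(s_k)\to\ell_0$ and $\ell(s_k)^{-1}\to S_0$ almost everywhere and letting $k\to\infty$ in $\ell(s_k)\,\ell(s_k)^{-1}=\Id$ gives $\ell_0S_0=\Id$ a.e.; hence $\ell_0$ is invertible a.e., and, being an almost-everywhere limit of positive definite symmetric matrices, it is positive semidefinite, hence positive definite, with $(\ell_0)^{-1}=S_0$ and $\ell(s)^{-1}\to(\ell_0)^{-1}$ in $L^p$. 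Uniqueness of $\ell_0$ follows from uniqueness of $L^p$-limits.

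I expect the only genuine obstacle to be the bookkeeping in the third display: one must verify that the interpolation rate $\tau^{-(p-1)/p}$ is degraded only by the factor $\tau^{-2\ep(1+n/(2p))}$ coming from the a priori merely polynomial equivalence of $\ell(\tau)$ with $\ell(T)$, and that the net exponent stays above $-1$ once $\ep$ is small depending on $n$ and $p$. The interpolation step, Minkowski's inequality, the extraction of an a.e.-convergent subsequence, and the passage to the limit in $\ell\cdot\ell^{-1}=\Id$ are all routine.
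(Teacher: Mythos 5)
Your argument is correct, and it is genuinely different from the route taken in the paper. The paper never pins the solution to a fixed reference slice at the outset: it differentiates the evolving quantity $h(t)=\int_\Omega|\ell(t)-\ell(s)|^p_{\ell(t)}\,d\ell(t)$ along the flow, splits the resulting terms using both hypotheses (the pointwise bound $|\Rc|\le\ep/t$ for the top-order term and the integral bound via the factorisation $|\Rc|^{1/p}|\Rc|^{(p-1)/p}$), and closes a differential inequality $\partial_t h\le \tfrac{c(n)p\ep}{t}h+c(n)p\ep$ with an ODE lemma, obtaining a Lipschitz-in-time bound $h(t)\le c(n)p\ep\,|t-s|$ in the evolving norms; it then transfers to a fixed slice $\ell(t)$ by H\"older-type tensor-norm comparison lemmas (Appendix D), which degrades the rate to $|r-s|^{1/4}$, and the corollary extracts the Cauchy property, the limit $\ell_0$, and the identity $r_0=\ell_0^{-1}$ exactly as you do (including the same device of redefining $\ell_0$ on the null set where it might be degenerate, which you should also do to get positive definiteness everywhere rather than a.e.). Your proof instead exploits the pointwise bound once and for all to get the two-sided polynomial equivalence $(\tau/T)^{2\ep}\ell(T)\le\ell(\tau)\le(T/\tau)^{2\ep}\ell(T)$, interpolates the $L^1$ and $L^\infty$ hypotheses into the bound $\|\Rc(\ell(\tau))\|_{L^p(\ell(\tau))}\le\ep\,\tau^{-(p-1)/p}$, converts to the fixed slice at the cost of $\rho^{-1-n/(2p)}$, and concludes by Minkowski's integral inequality once $\ep(n+2p)<1$; this avoids the ODE lemma and the mixed-norm H\"older machinery entirely, at the price of a $T$-dependent constant and a slightly worse (but still perfectly sufficient) modulus of continuity, whereas the paper's estimates are time-scale invariant, come with constants $\de(n,p,\ep)\to0$ as $\ep\to0$, and are reused later in the paper (e.g.\ in Theorem \ref{W22pboundsriccithm}), which is why it proves the stronger quantitative statements rather than only the convergence. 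Your handling of the infinite-volume caveat and of the inverse (via $\partial_t\ell^{-1}=2\,\ell^{-1}\Rc\,\ell^{-1}$ and the a.e.\ passage to the limit in $\ell\cdot\ell^{-1}=\Id$) is sound and parallels the paper's corollary.
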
 

\begin{proof}
See Theorem \ref{Lpboundsriccithm} of Section \ref{ricciflowestimates}: The proof is given there. 

\end{proof}

The  proof of the existence of a homeomorphism $\Phi(0)$ at time zero  in Theorem \ref{WeakRicciStart} can also be applied with no change,  to the  setting of a Ricci DeTurck flow coming out of a $C^0$ metric  on an $n$-dimensional Riemannian manifold, respectively for the Ricci flow related solution. This fact is stated in the following theorem. 
\begin{thm}\label{appstart}
For any $n\in \N$ there exists an $\de_0(n) >0$  such that the following holds. 
Let $(M^n,h)$ be a smooth $n$-dimensional manifold satisfying 
the assumptions \eqref{hassumptionsscaled}, where now $\de_0= \de_0(n)$ is a small constant of our choice, and assume $g_0$ is a $C^0$ metric satisfying
$ (1-\de_0(n))h \leq g_0 \leq (1+\de_0(n))h.$ Let  $(M,g(t))_{t\in (0,T)}$ be the solution to \eqref{Meq}, where
$g(t) \to g_0$ as $t\downto 0$ in the $C^0_{\loc}$ sense,
constructed in \cite{SimonC0} or \cite{KochLamm}, and let $\Phi:M \times (0,T) \to M$ be the solution to  \eqref{ODEDe}, with $\Phi(\cdot,T/2) = Id(\cdot).$ 
Then there exists a homeomorphism $\Phi(0):M \to M$ such that
$\Phi(t) \to \Phi(0)$ locally uniformly, and $d(g(t)) \to d(g(0))=:d_0$
locally uniformly    and 
 $d(\ell(t)) \to \ti d_0$   locally uniformly as $t\downto 0,$ where
 $ \ti d_0(\ti x,\ti y) = d_0( \Phi(0)(\ti x) , \Phi(0)(\ti y))$
 for all $\ti x, \ti y \in M$. 
\end{thm}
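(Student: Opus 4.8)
The plan is to re-run the proof of Theorem~\ref{mainthm} (which is where Theorem~\ref{WeakRicciStart}(i),(iv) are actually established), with the estimates of Theorem~\ref{main1_start} replaced by their $C^0$-close-to-flat analogues from \cite{SimonC0,KochLamm}. Those give: $g(t)$ is smooth on $M\times(0,T)$ with $(1-\de_1)h\le g(t)\le(1+\de_1)h$ for some $\de_1=\de_1(n,\de_0)$, smoothing estimates $|\gradh^j g(\cdot,t)|_h\le c_j(n,\de_0)\,t^{-j/2}$ (with $c_1(n,\de_0),c_2(n,\de_0)$ as small as we wish by shrinking $\de_0$, since the DeTurck vector field and the curvature vanish to first order at $g=h$), and $g(t)\to g_0$ in $C^0_{\loc}$ as $t\downto0$. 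These are precisely the structural facts used in the proof of Theorem~\ref{mainthm}, and here they hold with $C^0$ in place of merely $W^{2,2}$ control, so that proof applies verbatim; I indicate the main points.

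First I would bound the vector field in \eqref{ODEDe}: being algebraic in $g,g^{-1}$ and linear in $\gradh g$, the $j=1$ smoothing estimate gives $|V(\cdot,t)|_h\le c(n,\de_0)\,t^{-1/2}$, hence along the trajectories of \eqref{ODEDe} $d_h(\Phi(t)(x),\Phi(s)(x))\le 2c(\sqrt t-\sqrt s)$ for $0<s<t\le T/2$, uniformly in $x\in M$. Thus $(\Phi(t))_t$ is uniformly Cauchy on $M$ as $t\downto0$ and converges uniformly to a continuous $\Phi(0):M\to M$ with $d_h(\Phi(t)(x),\Phi(0)(x))\le c\sqrt{T/2}$; reading \eqref{ODEDe} with $x$ fixed and $t$ varying gives the same displacement bound $d_h(W(t)(y),y)\le 2c\sqrt{T/2}$ for $W(t):=\Phi(t)^{-1}$. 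Since $g(t)\to g_0$ in $C^0_{\loc}$, $g_0$ is continuous, and all metrics are uniformly comparable to $h$ (so a near-$d_0$-minimizing curve between two points of a compact set stays in a fixed compact set, by completeness of $(M,h)$), the length functionals converge uniformly on compacta and $d(g(t))\to d(g_0)=:d_0$ locally uniformly, $d_0$ being a genuine distance inducing the manifold topology. Setting $\ell(t):=(\Phi(t))^*g(t)$, a smooth Ricci flow on $(0,T)$ with $\Phi(t)$ a $g(t)$--$\ell(t)$ isometry, we have $d(\ell(t))(x,y)=d(g(t))(\Phi(t)(x),\Phi(t)(y))$; letting $t\downto0$ and using the convergences just obtained gives $d(\ell(t))\to\ti d_0$ locally uniformly, where $\ti d_0(x,y):=d_0(\Phi(0)(x),\Phi(0)(y))$.

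It remains to prove $\Phi(0)$ is a homeomorphism, and this is where the Ricci-flow estimates of Section~\ref{ricciflowestimates} enter. By the $j\le 2$ smoothing estimates and $\sum_{i\le4}\sup_M{}^h|\gradh^i\Rm(h)|\le\de_0$ one has $|\Rm(\ell(t))|_{\ell(t)}\le\ep(n,\de_0)/t$ on $M\times(0,T)$ with $\ep$ as small as we like, while at $t=T/2$, $\ell(T/2)=g(T/2)$ gives $d(\ell(T/2))(x,y)\ge(1+\de_1)^{-1/2}d_h(x,y)$. Applying the distance-distortion estimates for Ricci flows with $|\Rm|\le\ep/t$ from Section~\ref{ricciflowestimates} — using the smallness of $\ep$, and rescaling $\ell$ by $d_h(x,y)^{-2}$ to put all separations on the same footing — one propagates this lower bound to all earlier times, obtaining a modulus $\om$ independent of $t\in(0,T/2]$ with $d(\ell(t))(x,y)\ge\om(d_h(x,y))$; combined with $(1-\de_1)h\le g(t)$ and the isometry property this yields $d_h(\Phi(t)(x),\Phi(t)(y))\ge\ti\om(d_h(x,y))$, i.e.\ a non-degenerate (uniform in $t\downto0$) modulus of continuity for the family $\{W(t)\}$. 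From the flow identity $W(s)=W(t)\circ\Psi_{s,t}$, where $\Psi_{s,t}$ is the flow of $V$ from time $s$ to time $t$ and $d_h(\Psi_{s,t}(y),y)\le 2c|\sqrt t-\sqrt s|$, one then gets $\|W(s)-W(t)\|_{C^0}\le\ti\om^{-1}(2c|\sqrt t-\sqrt s|)\to0$, so $W(t)\to W(0)$ locally uniformly with $W(0)$ continuous; passing to the limit in $\Phi(t)\circ W(t)=\id=W(t)\circ\Phi(t)$ gives $\Phi(0)\circ W(0)=\id=W(0)\circ\Phi(0)$. Hence $\Phi(0)$ is a homeomorphism with inverse $W(0)$, $\ti d_0=d_0\circ(\Phi(0)\times\Phi(0))$ is a genuine distance, and all the assertions follow.

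The main obstacle is exactly the uniform-in-$t$ lower modulus $d(\ell(t))(x,y)\ge\om(d_h(x,y))$, i.e.\ ruling out that the diffeomorphisms $\Phi(t)$ collapse distances as $t\downto0$: the crude bound $d(\ell(t))(x,y)\ge(2t/T)^{\ep}\,d(\ell(T/2))(x,y)$ that one gets from integrating $\partial_\tau d_{\ell(\tau)}\le(\ep/\tau)d_{\ell(\tau)}$ along minimizing geodesics degenerates as $t\downto0$, so one must instead use the localized (scale-$\sqrt\tau$) distance distortion for flows with $|\Rm(\ell(\tau))|\le\ep/\tau$, where the smallness of $\ep$ is essential — this is the content of the new Ricci-flow estimates of Section~\ref{ricciflowestimates}. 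Every other step above is a soft consequence of the ODE \eqref{ODEDe} and the $C^0$ smoothing estimates, which is why the argument transfers with no change.
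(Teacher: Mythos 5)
Your first half is fine and matches the paper: the bound $|V|\leq c(\de_0,n)t^{-1/2}$ gives $d_h(\Phi(t)(x),\Phi(s)(x))\leq c|\sqrt t-\sqrt s|$, hence uniform convergence $\Phi(t)\to\Phi(0)$, and $g(t)\to g_0$ in $C^0_{\loc}$ gives $d(g(t))\to d_0$ and then $d(\ell(t))\to\ti d_0$ locally uniformly. The gap is in your homeomorphism step, i.e. the construction and continuity of $W(0)=\Phi(0)^{-1}$. You rest it on "distance-distortion estimates for Ricci flows with $|\Rm|\le\ep/t$ from Section \ref{ricciflowestimates}", but that section contains no such estimates: it consists of the $L^p$ bounds of Theorem \ref{Lpboundsriccithm} and Corollary \ref{Lpcor} and the $L^2$-gradient bound of Theorem \ref{W22pboundsriccithm}, none of which yields a pointwise lower bound on $d(\ell(t))$. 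Moreover the estimate you actually need, a $t$-independent modulus $d(\ell(t))(x,y)\ge\om(d_h(x,y))$, concerns the \emph{expansion} of distances forward in time from $t$ to $T/2$, which is governed by lower Ricci bounds; from $|\Rc(\ell(\tau))|\le\ep/\tau$ alone the only available control in that direction is precisely the degenerate bound $d(\ell(t))\ge(2t/T)^{\ep}d(\ell(T/2))$ that you yourself discard. The localized scale-$\sqrt\tau$ estimates of Perelman/Simon--Topping type go the other way: under an upper Ricci bound $\Rc\le\ep/\tau$ they bound how fast distances can \emph{shrink} forward in time, i.e. they give $d(\ell(s))\le d(\ell(t))+\beta\sqrt{\ep t}$ for $s<t$, an upper bound on earlier distances, and do not produce the non-collapsing modulus you need. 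So the key step of your argument is unsupported, and it is not clear it can be pushed through by curvature bounds alone.

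The paper avoids distance estimates entirely and this is what its proof of the statement means by "copy and paste the proof of (i) of Theorem \ref{mainthm}": one shows directly that the backward trajectories of \eqref{ODEDe} are Cauchy. In local coordinates, for $s\ge s_0:=\max(t_i,t_j)$ the function $f(s):=|\psi_{t_i}(x,s)-\psi_{t_j}(x,s)|^2$ satisfies $\partial_s f\le\frac{\ep}{s}f$, because $|DV|\le c(a)(|\gradh^2 g|+|\gradh g|^2+\ep)\le\frac{\ep}{s}$ -- here the smallness $c(\de_0,n)\to0$ of the constants in $|\gradh g|^2+|\gradh^2 g|\le c(\de_0,n)/t$ for the \cite{SimonC0}/\cite{KochLamm} solutions is exactly what is used -- while $f(s_0)\le C\ep^2 s_0$ since each trajectory has moved at most $\ep\sqrt{s_0}$ from $x$. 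Gronwall then gives $f(T/2)\le C\,T^{\ep}\ep^2 s_0^{1-\ep}\to0$ as $\max(t_i,t_j)\to0$: the polynomial loss $(s/s_0)^{\ep}$ is beaten by the $O(s_0)$ initial separation precisely because $\ep<1$. Hence $W(t)=\psi_t(\cdot,T/2)$ converges uniformly to a continuous $W(0)$, and the uniform convergences of $\Phi(t)$ and $W(t)$ give $\Phi(0)\circ W(0)=\id=W(0)\circ\Phi(0)$. If you replace your distance-distortion step by this ODE/Gronwall argument (all of whose ingredients you already listed), your proof becomes the paper's.
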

\begin{proof}
The solutions constructed in \cite{SimonC0} respectively \cite{KochLamm} satisfy 
$|\gradh g|^2_h(t) + |\grad^2 g|_h(t)  \leq \frac{c(\de_0,n)}{t}$
where $c(\de_0,n) \to 0$ as $ \de_0 \to 0$. 
These  facts are required in the proof of (i) of Theorem \ref{mainthm}. We may now copy and paste the proof of  (i) of Theorem \ref{mainthm} given in Section \ref{Ricciflowrelatedsolution}  to here  and in doing so we  obtain the existence of a homeomorphism $\Phi(0)$ which is obtained locally uniformly as the limit, in the $C^0$ norm, of $\Phi(t)$ with $t\to 0$. \\
Also, the solutions    constructed in \cite{SimonC0} respectively \cite{KochLamm}  satisfy $g(t) \to g(0)$ locally uniformly in the $C^0$ norm as $t\to 0$ and hence 
$d(g(t)) \to d(g(0))$ locally uniformly, and consequently, $d(\ell(t))= (\Phi(t))^*(d(g(t)))  \to \ti d_0 = (\Phi(0))^*(d_0)$ locally uniformly. 
\end{proof}

In Section \ref{anapplication} we prove the following Theorem, Theorem \ref{anappstart}, which  is an application of the above results.  
Compare the paper  \cite{B-G}  : There,   sequences of smooth  Riemannian metrics with scalar curvature bounded from below which approach a $C^0$ metric  with respect to the   $C^0$ norm  are considered. We consider $W^{2,2}$ metrics which have scalar curvature bounded from below in the following weak sense: 

\begin{defn}\label{weakscalar}
Let $M$ be a four dimensional smooth closed manifold and
$g$ be a $W^{2,2}$ Riemannian metric (positive definite everywhere)  
 and let $k\in \R.$ 
Locally the scalar curvature may be written  
\begin{eqnarray*}
\Sc(g) && = g^{jk} (\partial_i \Gamma(g)_{jk}^{i} - \partial_j \Gamma(g)_{ik}^{i} \cr
 && \ \ + \Gamma(g)_{ip}^{i}\Gamma(g)_{jk}^p - \Gamma(g)_{jp}^{i}\Gamma(g)_{ik}^p)
 \end{eqnarray*}
where   $\Gamma(g)_{ij}^m = \frac{1}{2} g^{mk} ( \partial_i g_{jk} + \partial_j g_{ik} - \partial_k g_{ij})$, and hence  
$\Sc(g)$ is well defined in the $L^2$ sense  for a $W^{2,2}$ Riemannian metric.  Let $k\in \R.$ We say the  scalar curvature $\Sc(g)$ is  weakly  bounded from below by $k$, $\Sc(g) \geq k$,  if this is true almost everywhere, for all local smooth coordinates.
\end{defn}

\begin{thm}\label{anappstart}
Let $(M,h)$ be four dimensional closed and satisfy \eqref{hassumptionsscaled}.
Assume that $(M,g_0)$ is a $W^{2,2}$ metric such that
$\frac{1}{a} h \leq g_0\leq a h$  for some $ \infty>a>1$ and $\Sc(g_0) \geq k$ in the weak sense of definition \eqref{weakscalar}.  
Then the solution  $g(t)_{t\in (0,T)}$ to Ricci DeTurck flow  respectively  $\ell(t)_{t\in (0,T)}$ to Ricci Flow constructed in Theorem \ref{mainthm}, with initial value $g(0)=g_0$,  has $\Sc(g(t)) \geq k$ and $\Sc(\ell(t)) \geq k$ for all $t \in (0,T)$.
\end{thm}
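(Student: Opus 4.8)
The plan is to use the fact that, under Ricci DeTurck flow, the scalar curvature evolves by a reaction-diffusion equation with a reaction term that is nonnegative when $\Sc \geq 0$ — more precisely, along Ricci flow $\partt \Sc = \Delta_{\ell(t)} \Sc + 2|\Rc(\ell(t))|^2$, so a smooth lower bound $\Sc \geq k$ is preserved by the maximum principle. Since $\ell(t)$ and $g(t)$ are related by the diffeomorphisms $\Phi(t)$ of Theorem \ref{WeakRicciStart}(i), and scalar curvature is a diffeomorphism invariant, it suffices to prove $\Sc(\ell(t)) \geq k$, or equivalently $\Sc(g(t)) \geq k$, for $t \in (0,T)$. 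The subtlety is that at $t=0$ the metric is only $W^{2,2}$, so $\Sc(g_0) \geq k$ holds only in the almost-everywhere/$L^2$ sense of Definition \ref{weakscalar}, and the maximum principle cannot be applied naively down to $t=0$. First I would recall the approximation scheme underlying Theorems \ref{main3start}/\ref{main1_start}: $g(t) = \lim_i g_i(t)$ where $g_i(t)$ is the smooth Ricci DeTurck flow with smooth initial data $g_i(0) \to g_0$ in $W^{2,2}_{\loc}$, and the convergence $g_i(t) \to g(t)$ is in $C^\infty_{\loc}$ on $M \times (0,T)$ with the uniform bounds \eqref{a_t1}–\eqref{c_t1}.

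The key steps, in order, are as follows. (1) Reduce to showing that if $g_i(0)$ were chosen so that $\Sc(g_i(0)) \geq k - \eta_i$ with $\eta_i \to 0$, then $\Sc(g_i(t)) \geq k - \eta_i$ for all $t$, and pass to the limit using $C^\infty_{\loc}$ convergence to get $\Sc(g(t)) \geq k$. Here, since $M$ is closed, the maximum principle argument for $\Sc$ under Ricci flow (equivalently, along the DeTurck-modified flow, transported by the diffeomorphisms) is standard and gives the preservation for each smooth approximation. (2) Establish that one can choose the approximating sequence $g_i(0)$ with $\Sc(g_i(0)) \geq k - \eta_i$. This is the main obstacle: a generic mollification of a $W^{2,2}$ metric with $\Sc(g_0) \geq k$ a.e. need not have scalar curvature bounded below, because $\Sc$ is a nonlinear second-order operator and mollification does not commute with it. The natural fix is to mollify and then control the error: writing $\Sc(g_0 * \rho_\delta) = (\Sc(g_0)) * \rho_\delta + (\text{commutator error})$, the commutator involves products of first derivatives of $g_0$ minus their mollified versions, and one shows this error tends to $0$ in $L^2_{\loc}$ (hence, after a further mollification/truncation argument, one extracts a subsequence along which it is small). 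But $L^2$-smallness of the error only gives a lower bound for $\Sc(g_i(0))$ away from a small-measure bad set, not pointwise everywhere.

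To handle the bad set I would instead argue at the level of the flow rather than the initial data: use the weak/integral form of the evolution. Along the smooth flows $g_i(t)$ for $t>0$, test the scalar curvature evolution inequality against $(\Sc(g_i(t)) - k)_- \varphi$ for a cutoff $\varphi$, integrate, and use that the initial contribution $\int (\Sc(g_i(0))-k)_-^2 \varphi\, dg_i(0) \to 0$ as $i \to \infty$ — this last fact follows because $\Sc(g_i(0)) \to \Sc(g_0)$ in $L^2_{\loc}$ (which does follow from $g_i(0) \to g_0$ in $W^{2,2}_{\loc}$ together with the two-sided bounds \eqref{a2}, via Hölder and the Sobolev embedding $W^{2,2} \hookrightarrow W^{1,4}$ in dimension four) and $(\Sc(g_0)-k)_- = 0$ a.e. A Grönwall-type estimate on $\int (\Sc(g_i(t))-k)_-^2 \varphi\, dg_i(t)$, using the uniform curvature bounds \eqref{c_t1} on compact time subintervals $[\tau, T']$ to control the coefficients, then forces this quantity to $0$ in the limit, giving $\Sc(g(t)) \geq k$ a.e., and by smoothness of $g(t)$ for $t>0$, everywhere. (3) Finally, transfer the conclusion to $\ell(t)$: since $\ell(t) = \Phi(t)^* g(t)$ with $\Phi(t)$ a smooth diffeomorphism for $t \in (0,T)$, $\Sc(\ell(t)) = \Sc(g(t)) \circ \Phi(t) \geq k$. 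The main work is step (2)–(3): establishing $\Sc(g_i(0)) \to \Sc(g_0)$ in $L^2_{\loc}$ for a suitable approximating sequence and running the weak maximum principle / energy argument with coefficients that blow up like $1/t$ as $t \downto 0$, which is exactly why one integrates from a positive time and only needs control of the initial defect in an integrated (not pointwise) sense.
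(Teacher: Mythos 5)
Your overall strategy coincides in essence with the paper's proof of Theorem \ref{sctheorem}: reduce to the Ricci flow side by diffeomorphism invariance, replace the pointwise maximum principle (unavailable at $t=0$) by an $L^2$ estimate on the negative part $(\Sc-k)_-$, and feed in the fact that the scalar curvature converges in $L^2_{\loc}$ as time goes to zero, so that the initial defect vanishes. The only structural difference is where you run the estimate: you go back to the approximating flows $g_i(t)$ with mollified data and pass to the limit in $i$, whereas the paper works directly with the limit solution, using the already established $W^{2,2}$ convergence \eqref{ddd_t} to get $\int_M (\Sc(g(t))-k)_-^2\,dg(t)\to 0$ as $t\downto 0$, then integrates the evolution inequality between two positive times $t_1<t_2$ and lets $t_1\downto 0$; this avoids re-introducing the approximations and any uniformity-in-$i$ questions (which would in any case be harmless, since the $g_i(t)$ are uniformly equivalent to $h$, so the relevant Sobolev constants are uniform). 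Your observation that $\Sc(g_i(0))\to\Sc(g_0)$ in $L^2_{\loc}$ via the uniform two-sided bounds and $W^{2,2}\hookrightarrow W^{1,4}$ is correct, and your step (3) is exactly the paper's final step.

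The gap is in how you propose to close the Gr\"onwall estimate. You suggest controlling the coefficients by the curvature bounds \eqref{c_t1}, which blow up like $1/t$, and therefore ``integrating from a positive time''; but then you need smallness of $\int(\Sc(g_i(\tau))-k)_-^2$ at time $\tau$ uniformly in $i$, which you have not provided (you only control the defect at $t=0$), while if you integrate from $t=0$ the factor $\exp\bigl(\int_0^t c\,\ep/s\,ds\bigr)$ diverges; starting from $\tau>0$ one would need a quantitative rate for the defect as $\tau\downto 0$, which is not available. The point the paper exploits, and which your sketch misses, is that no curvature bounds are needed at all: testing $\partial_t\Sc=\Delta\Sc+2|\Rc|^2$ against $(\Sc-k)_-$, the reaction term $2|\Rc|^2$ enters with a favorable sign on the set $\{\Sc<k\}$ and is discarded, the Laplacian term gives a good gradient term after integration by parts, and the only dangerous contribution comes from the evolving volume element, $-\int\Sc\,(\Sc-k)_-^2$, which on the bad set equals $\int(\Sc-k)_-^3-k\int(\Sc-k)_-^2$; the cubic term is absorbed using the Sobolev inequality on the closed manifold together with the smallness of $\int(\Sc-k)_-^2$ for small times, and the $-k$ term by an exponential weight $e^{-kt}$. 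With this mechanism the estimate runs from $t=0$ for your approximations (or from $t_1\downto 0$ for the limit flow, as in the paper) with no singular coefficients. A further small point to address is the regularity needed to justify the integration by parts on $\{\Sc<k\}$: the paper handles this via Sard's theorem (first for generic $k$, then approximating general $k$); alternatively one can test directly with the Lipschitz function $(\Sc-k)_-$.
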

\begin{proof}
See Theorem  \ref{sctheorem} : The proof is given there.
\end{proof}

\begin{rmk}
From this theorem we see that   for a metric  $g_0 \in L^{\infty}\cap W^{2,2}(M^4)$ with  $\frac{1}{a} h \leq g_{0} \leq a h$    for some positive constant $a>0$ : $g_0$ has scalar curvature $\geq k$ in the weak sense of Definition \ref{weakscalar} $\iff$ 
there exists a sequence of smooth Riemannian metrics
$ g_{i,0}$ 
 with $\frac{1}{b} h \leq g_{i,0} \leq b h$   for some $1<b<\infty$ 
and 
 $\Sc(g_{i,0}) \geq k $  and  $g_{i,0} \to g_0 \in   W^{2,2}(M^4)$ as $i\to \infty$ 
$\iff$ the Ricci DeTurck flow of $g_0$ constructed in Theorem \ref{main3start} has $\Sc(g(t)) \geq k$ for all $t\in (0,T)$.

\end{rmk}

\section{Outline of the paper}
The paper contains twelve  sections and four appendices,   A,B,C and D.
Section one is an  introduction and Section two contains statements of the main results, and this section gives an outline of  the paper. 
In Section \ref{Linfintygradchapt} we prove a priori  $C^1$ and $L^{\infty}$ estimates for {\it smooth} solutions to the Ricci DeTurck flow. The  $L^{\infty}$ estimates we are concerned with in this paper  take the form $\frac{1}{b}h \leq    g \leq b h$ for some constant $1<b<\infty,$ for the fixed background metric $h$ which is used to define the Ricci DeTurck flow in \eqref{Meq}. 
In particular we show in Theorem \ref{gradientbound},  that smooth compact solutions  which  a priori  have small local $W^{2,2}$ energy along the flow and   satisfy an initial  $L^{\infty}$ estimate must also satisfy $L^{\infty}$ and $C^1$ estimates along the flow. In the non-compact setting, we require further that the smooth solution satisfies a regularity condition in order to obtain the same result : see Theorem  \ref{gradientbound}.  
In Section \ref{smallenergychap}   we prove various local estimates for integral quantities, assuming our solution satisfies an $L^{\infty}$ bound and  
   has  small  local $W^{2,2}$ energy:   See Theorem  \ref{smallenergy}. This also leads to estimates on the convergence as time goes back to zero of the solution, as explained in, for example, Theorem \ref{W22convergence} and    Corollary  \ref{L2continuityCor}.  
Section \ref{existencechap} uses the a priori estimates of the previous sections with well known  existence theory for parabolic equations to show 
that solutions in the classes considered in those sections exist, even when the initial data is non-smooth. That is, solutions to Ricci DeTurck flow exist, if the  initial metric is locally in $W^{2,2}$ and has small local   initial energy and satisfies an $L^{\infty}$ bound with respect to $h$. The solutions obtained 
continue to have small  local  energy and  satisfy  an $L^{\infty}$ bound. 
In  Section \ref{ricciflowestimates} estimates are proved for solutions to Ricci flow in a setting which includes the class of Ricci flows we construct  using the Ricci DeTurck flow of Section \ref{existencechap}. In particular, it is shown  in the setting of Section \ref{ricciflowestimates}, that a weak initial value of the Ricci flow exists.  
 
In Section \ref{Ricciflowrelatedsolution} a Ricci flow is constructed from the Ricci DeTurck flow of of Section \ref{existencechap} and 
in Theorem \ref{mainthm},  the relationship between the two solutions is investigated.  In particular,  relations   between the distance and the weak Riemannian metrics at time zero, as well as the convergence properties as time goes to zero are stated. Further  necessary Lemmata, Theorems, etc., that   we require  to prove this theorem are contained in Sections 
  \ref{ricciflowestimates}, \ref{metricconvergencesobolevspaces} and \ref{distanceconvergencesobolev}.

Section \ref{metricconvergencesobolevspaces} is concerned with convergence properties of Riemannian metrics in certain Sobolev spaces, 
and Section  \ref{distanceconvergencesobolev} is concerned with a  definition  of distance/respectively convergence properties of distances   
for  Riemannian metrics defined in certain Sobolev spaces.
Theorem  \ref{uniqueness} in Section \ref{uniquenesssection} proves uniqueness of the Ricci DeTurck solutions in a  class  which includes the class of solutions that are constructed in this paper.

In Section \ref{anapplication} we present an application for $W^{2,2}\cap L^{\infty}$ metrics with  scalar curvature bounded from below in the weak sense.

Appendix A,B,C,D are technical appendices containing certain  estimates, statements,    the calculation/verification of which,    are not included  in  the other  sections of the paper, in order to facilitate  reading.

Appendix A contains a short time existence result for Ricci DeTurck flow, using the method of W.-X. Shi.
In Appendix B we state and prove some facts about Sobolev inequalities and norms thereof adapted to the setting of the paper.
Appendix C contains estimates for ordinary differential equations which are required  at many points in  the paper.

Appendix D contains statements  which compare pointwise norms and $L^p$ norms of different Riemannian metrics. The estimates contained in the statements are also used at many points of the paper.

\section{$L^{\infty}$- and $C^1$-estimates of the Ricci DeTurck flow}\label{Linfintygradchapt}

In this section we derive an a priori $L^{\infty}$ time independent bound on the evolving metric $g$, and show that the gradient thereof is bounded by $\frac{1}{\sqrt{t}}$ under the a priori assumptions that: 
  we have an $L^{\infty}$ bound $ \frac 1 a h  \leq  g_0 \leq a h$  at time zero, the  $W^{2,2}$ norm of the solution restricted to balls of radius one are  {\it small}, and the time interval being considered has {\it small} length, where here the notion of {\it small} depends on $n$ and $a$.

As a first Lemma, we show that if we already have a $L^{\infty}$ and a time
dependent gradient bound, then all other derivatives may be estimated.

\begin{lemma}\label{smoothnesslemma}
Let $(M,h)$ be $n$ dimensional and satisfy \eqref{hassumptionsscaled} and $g(\cdot,t)_{t \in [0,T)}$, $ T \leq 1$ be a smooth family of metrics which solves
\eqref{Meq} and satisfies the a priori bounds
\begin{align}
 & \frac 1 a h \leq g(t) \leq ah \label{c0c1est} \\
& \sup_{x\in M} |\gradh g|^2(x,t)  \leq \frac b t, \nonumber
\end{align}
for all $t \in [0,T)$ and some $1<a,b<\infty$.
Then for all $i \in \N$ there exist constants $N_i = N_i(a,b,n,h)$   such that
\begin{equation}
\sup_{x\in M} |\gradh^i g|^2(x,t)  \leq 
\frac {N_i} {t^i}\label{c2est}
\end{equation}
for all $t\in [0,T).$ 

\end{lemma}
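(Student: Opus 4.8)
The plan is to use a standard parabolic bootstrapping argument of the type introduced by W.-X.~Shi, exploiting the fact that the Ricci DeTurck equation \eqref{Meq} is, in $h$-coordinates, a quasilinear strictly parabolic system whose coefficients depend on $g$ and $\gradh g$. I would argue by induction on $i$, the case $i=1$ being the hypothesis \eqref{c0c1est}. The key device is the interpolation quantity
\[
  \Psi_i := t^i |\gradh^i g|^2 + \beta_i \, t^{i-1} |\gradh^{i-1} g|^2,
\]
for a suitably large constant $\beta_i = \beta_i(a,b,n,h)$ to be chosen, together with a localization-free maximum principle argument (we are on a closed manifold, or in the non-compact case the bounds are uniform so the maximum principle still applies via \eqref{hassumptionsscaled}).

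First I would record the evolution inequality for $f_i := |\gradh^i g|^2$. Commuting $\gradh^i$ through \eqref{Meq} and using that the leading term is $g^{ab}\gradh_a\gradh_b g$, one obtains a Bochner-type inequality
\[
  \partt f_i \le g^{ab}\gradh_a\gradh_b f_i - 2|\gradh^{i+1} g|^2 + C\sum_{j+k+l = i,\ j,k,l\le i} |\gradh^{j} g|\,|\gradh^{k} g|\,|\gradh^{l+1} g|\,|\gradh^{i} g| + (\text{lower order in }\Rm(h)),
\]
where $C = C(n,a)$ uses the $L^\infty$ bound \eqref{c0c1est} to control $g^{-1}$ and the curvature terms are bounded by \eqref{hassumptionsscaled}. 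The crucial structural point is that the ``good'' negative term $-2|\gradh^{i+1}g|^2$ appears, and that the highest-order factor in the error sum, $|\gradh^{i}g|$ or $|\gradh^{i+1}g|$, can be absorbed: any term containing $|\gradh^{i+1}g|$ is absorbed into $-2|\gradh^{i+1}g|^2$ by Young's inequality, at the cost of extra powers of lower derivatives which by the inductive hypothesis are bounded by powers of $t^{-1}$.

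Then I would compute $\partt \Psi_i$ using these inequalities: the term $i t^{i-1} f_i$ from differentiating the prefactor is the dangerous one, and it is precisely killed by choosing $\beta_i$ large enough so that the good term $-2\beta_i t^{i-1}|\gradh^{i}g|^2$ coming from $\Psi_{i-1}$'s evolution dominates it (this is the classical Shi trick — each level's bad term is eaten by the next-lower level's good term). After these absorptions, using the inductive bounds $|\gradh^{j}g|^2 \le N_j t^{-j}$ for $j \le i-1$ to estimate all remaining error terms, one arrives at a differential inequality of the form $\partt \Psi_i \le g^{ab}\gradh_a\gradh_b \Psi_i + A_i$ for a constant $A_i = A_i(a,b,n,h)$, valid on $(0,T)$. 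Since $\Psi_i \to 0$ (or stays bounded) as $t\downarrow 0$ because of the factor $t^i$ and smoothness of $g$ on $[0,T)$ — more carefully, $\Psi_i$ is continuous up to $t=0$ with $\Psi_i(0)=0$ — the maximum principle gives $\Psi_i \le A_i t \le A_i$ on $[0,T)$ (recall $T\le 1$), hence $t^i f_i \le \Psi_i \le N_i$, which is \eqref{c2est}.

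The main obstacle is the careful bookkeeping in the absorption step: one must verify that every error term produced by commuting $\gradh^i$ through the highly nonlinear right-hand side of \eqref{Meq} — which contains products like $g^{ab}g^{pq}\gradh g\,\gradh g$ — either contains a factor $|\gradh^{i+1}g|$ absorbable into the good term, or is a product of strictly-lower-order derivatives whose combined $t$-weight is exactly $t^{-i}$, matching the prefactor $t^i$ in $\Psi_i$. The homogeneity of \eqref{Meq} under parabolic scaling is what guarantees this weight count works out, and checking it term by term is routine but tedious; I would invoke the scaling heuristic to organize it and refer to Appendix~A (Shi's method) for the short-time existence framework in which the same commutator estimates are implicitly used.
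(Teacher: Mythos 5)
Your overall strategy (commute $\gradh^i$ through \eqref{Meq}, exploit the good term $-\tfrac{2}{a}|\gradh^{i+1}g|^2$, induct on $i$) is in the right family, but the specific closing device — the additive weighted quantity $\Psi_i=t^i|\gradh^i g|^2+\beta_i t^{i-1}|\gradh^{i-1}g|^2$ plus a global maximum principle — has a genuine gap, and it is precisely the gap that forces the paper into a different argument. First, at the base inductive step $i=2$ the evolution of $|\gradh^2 g|^2$ contains the same-order cubic term $c(n,a)|\gradh^2 g|^3$ (see the paper's display \eqref{m=2}). This term contains neither a factor $|\gradh^{3}g|$ (so it cannot be absorbed by Young into the good term) nor only lower-order factors (so the inductive/hypothesized bounds do not control it); absorbing it into $-\tfrac{2\beta_2}{a}t\,|\gradh^2 g|^2$ would require exactly the bound $t^2|\gradh^2 g|^2\leq C$ that you are trying to prove. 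One needs a \emph{quadratic} good term $-cf^2$, which an additive quantity does not produce; this is why the paper works with the multiplicative Bernstein quantity $f=(1+|\gradh^i g|^2)|\gradh^{i+1}g|^2$, whose cross term generates $-\tfrac{1}{4a}f^2$ and dominates the cubic term. Second, your weight count is scale-critical rather than favorable: commutator terms built purely from lower-order derivatives (e.g. $|\gradh^{i-1}g||\gradh^2 g||\gradh g||\gradh^i g|$, or the level-$(i-1)$ errors $\beta_i t^{i-1}|\gradh g|^2|\gradh^{i-1}g|^2\leq \beta_i b N_{i-1}/t$) leave, after multiplying by $t^i$, an inhomogeneity of size $C/t$, not a constant. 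So the best your scheme yields is $\partt\Psi_i\leq g^{ab}\gradh_a\gradh_b\Psi_i+C/t$, and since $\int_0^t s^{-1}ds$ diverges, the conclusion $\Psi_i\leq A_i t$ (or even boundedness of $\Psi_i$) does not follow; the paper's ODE comparison (Lemma \ref{ODECor}) also requires the coefficient of $\Psi/t$ to be $<1$, which large constants $c(n,a)b$ violate. Finally, the lemma does not assume $\sup_M|\gradh^i g|^2<\infty$ for $i\geq 2$ on the (possibly noncompact) $M$, so a ``localization-free'' maximum principle is not available: finiteness of the supremum is essentially the conclusion.

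The paper circumvents all three points at once: it argues by contradiction at a single point $(x_0,t_0)$ and performs a parabolic rescaling so that on a time interval of fixed length the lower-order derivatives satisfy $|\gradh^j g|^2\leq\ep$ (constants, not $b/t$), which removes the borderline $1/t$ terms entirely; it then applies the product quantity $f=(1+|\gradh^i g|^2)|\gradh^{i+1}g|^2$ to get $\partt f-g^{ab}\gradh_a\gradh_b f\leq-\tfrac{1}{4a}f^2+c$, and concludes with a \emph{local} cutoff Bernstein argument on $t\eta f$ (no global maximum principle, no initial-value condition $\Psi(0)=0$ needed), contradicting the assumed largeness of $|\gradh^{i+1}g|^2$ at the rescaled time. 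If you want to salvage your route, you would need to (a) replace the additive $\Psi_i$ by the multiplicative quantity (or otherwise manufacture a $-f^2$ term), (b) insert a rescaling or a continuity/bootstrap step to neutralize the critical $1/t$ terms, and (c) localize the maximum principle with a cutoff as in the paper.
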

\begin{proof}
We start with the case $i=2$.
Let $g(\cdot,t)_{t \in [0,T)}$, $ T \leq 1$  be a solution to \eqref{Meq}
which satisfies \eqref{c0c1est}.
Let $N_2 \in \N$ be large (to be determined in the proof) and 
assume that \eqref{c2est} doesn't hold. That is,  for $N := N_2$,
there exists a $0 <t_0 < T$ and an $x_0 \in M$ such that
$|\gradh^2 g|^2(x_0,t_0)  > \frac {N} {t^2_0}$.
Define $\ti g(x, t):= c g(x, \frac { t} c)$ and $\ti h(y) =
ch(y)$ for a $c>0$ to be chosen.  Then $\ti g( t)$ solves the
$\ti h$ -flow for $t \in [0,T c) $ and we have the scaling relations
\[
|  {\gradtih}^i  \ti g|^2(x,t)=c^{-i} |\gradh^i g|^2(x,\frac{t}{c}).
\] 
By choosing $c = \frac {\sqrt{N}} {\sqrt{V} t_0}$, we get a solution $\ti g$ which has
\begin{align*}
 & \frac 1 a \ti h \leq \ti g(t) \leq a \ti h \\
& \sup_{x\in M} |  \gradtih  \ti g|^2( x,t)  \leq \frac {b} {t} \\
& |\gradtih^2  \ti g|^2(x_0,\sqrt{\frac N V})  \geq   {V}
\end{align*}
for all $t \in [0,t_0c] = [0,  \sqrt{\frac N V}]$.
This implies $|\gradtih \ti g|^2(t)  \leq \frac {b}
{t} \leq \frac {b}  { \sqrt{\frac N V} -10  } \leq \ep $ for $ t \in (-10+ \sqrt{\frac N V}, \sqrt{\frac N V}]   $  for any $\ep>0$ as long as $N= N(b,V,n,\ep)$ is chosen
large enough.
In that which follows we use once again $g$ to denote the solution $\ti
g$ and $h$ to denote $\ti h$. That is, we have a smooth solution
$g(t)_{t \in [0,\sqrt{ \frac{N}{V} } ] } $ of the Ricci DeTurck flow with
\begin{align*}
 & \frac 1 a h \leq g(t) \leq a h \\
&  \sup_{x\in M} |\gradh  g|^2(x,t)  \leq \ep \\
& |\gradh^2 g|^2(x_0,\sqrt{\frac N V} )  \geq {V}
\end{align*}
 for all $ t\in  (-10+  \sqrt{\frac{N}{V}}, \sqrt{\frac{N}{V}} ]$.
As shown in \cite{Shi2}, the evolution for $|\grad^m g|^2$ is given by
\begin{align}
\partt &  |\grad^m g |^2(x,t)  =g^{ij}(x,t) \grad^2_{ij} |\grad^m g|^2 (x,t)
 - 2g^{ij}(x,t)
(\grad_i \grad^m g ,\grad_j \grad^m g)_h(x,t) \cr
& + 
\sum_{  \substack{0 \leq k_1,k_2, \ldots ,k_{m+2} \leq m+1, \\k_1 + \ldots +
  k_{m+2} \leq m+2}} \grad^{k_1}g(x,t) * \grad^{k_2}g(x,t) * \ldots
\grad^{k_{m+2}}g(x,t) * \grad^m g * P(h)_{k_1 k_2 \ldots k_{m+2}}(x,t) \label{ShiEqny}
\end{align}
where 
\begin{align*} 
P(h)_{k_1,\ldots,k_{m+2}} (x,t)= 
P(h)_{k_1,\ldots,k_{m+2}}(x,t)(g,g^{-1},\Rm(h),\grad \Rm(h), \ldots,
  \grad^m \Rm(h))
\end{align*}
is a polynomial in the terms appearing in the brackets,  and $$ g^{ij}(\grad_i T, \grad_j T) 
= g^{ij}h^{s_1 r_1}h^{s_2 r_2}\ldots h^{s_m r_m} \grad_i T_{s_1 \ldots s_m} \grad_j T_{r_1 \ldots r_m}$$
for a $(0 \ \ m)$ tensor $T$. We have
$|P(h)|^2(x,t) \leq c(a,m,n)$
since without loss of generality, the norm of the curvature of $h$
( after scaling ) and all its
derivatives up to order $m$ are bounded by a constant (see \eqref{hassumptionsscaled}).
In particular, for $m=1$ we obtain
\begin{align}
\partt& |\grad g |^2(x,t) - g^{ij}(x,t) \grad^2_{ij} |\grad g|^2 (x,t)\leq 
 -2 g^{ij}(x,t) (\grad_i \grad g ,\grad_j \grad g)_h(x,t)\nonumber \\
&+  \sum_{  \substack{0 \leq k_1,k_2 ,k_{3} \leq 2,\\ k_1 + k_2+
  k_{3} \leq 3}} \grad^{k_1}g(x,t) * \grad^{k_2}g(x,t) * 
\grad^{k_{3}}g(x,t) * \grad g * P(h)_{k_1 k_2 k_{3}}(x,t)\nonumber \\
&\leq 
 -2 g^{ij}(x,t) (\grad_i \grad g , \grad_j \grad g)(x,t) \nonumber \\
&+ c(n,a) | \grad g| ( |\grad^2 g||\grad g| + |\grad^2 g| + |\grad
g|^3 + |\grad g|^2 + |\grad g| +c(n,a)). \label{m=1}
\end{align}
 Here $c(n,a)$ denotes a constant which may change from line to line but only depends on $n$ and $a$.
Combinations of constants involving $b,a,n$ multiplied by $\ep$ shall
sometimes be written as $\ep$.
In what follows, we restrict ourselves to the region  
$t  \in (-10+  \sqrt{\frac N V}, \sqrt{\frac N V}]$.
Using $\sup_{x\in M}|\grad g|(x,t) \leq \ep \leq 1$ for all $t  \in (-10+  \sqrt{\frac N V}, \sqrt{\frac N V}]$ and $ \frac{1}{a} h \leq g \leq a h $ we get 
\begin{align*}
\partt& |\grad g |^2(x,t) - g^{ij}(x,t) \grad^2_{ij} |\grad
g|^2 (x,t) \leq - \frac{2}{a} |\grad^{2} g |^2(x,t)\cr
&+ c(n,a) \ep  | ( |\grad^2 g|\ep + |\grad^2 g| + 3\ep +c(n,a)) \cr
&\leq -  \frac{2}{a} |\grad^{2} g |^2(x,t) + c(n,a) \ep   |\grad^2 g| +
c(n,a) \ep \cr
&\leq -\frac{1}{a} |\grad^{2} g |^2(x,t) +  c(n,a) \ep
\end{align*}
in view of Young's inequality. 
Similarly we estimate
\begin{align}
\partt & |\grad^2 g |^2(x,t) - g^{ij}(x,t) \grad^2_{ij} |\grad^2 g|^2
(x,t) \nonumber \\
& \leq 
 -\frac{2}{a}|\grad^3 g|^2 
+  \sum_{  \substack{0 \leq k_1,k_2,  ,k_{3},k_4 \leq 3,\\ k_1 + \ldots +
  k_{4} \leq 4}} \grad^{k_1}g(x,t)  * \ldots
\grad^{k_{4}}g(x,t) * \grad^2 g * P(h)_{k_1 k_2 k_3 k_4}(x,t)\nonumber \\
&\leq 
 -\frac{2}{a}|\grad^3 g|^2 + c(n,a) |\grad^2 g| \Big( |\grad^3 g| ( |\grad g| + c(n,a))
+|\grad^2g|(|\grad^2g| + |\grad g|^2 + |\grad g| + c(n,a)) \label{m=2} \\
&
 \ \ \ \  \ \ \ \ \ \ \ \ \ \ \ \   \ \ \ \ \ \   \ \  \ \ \ \ \ \  \ \ \ \ \ \  \ \ \  + |\grad g| (|\grad g|^3 +  |\grad g|^2 +  |\grad g| +c(n,a))
 +c(n,a) \Big) \nonumber \\
 &\leq 
 -\frac{2}{a}|\grad^3 g|^2 
+c(n,a) |\grad^2 g|( |\grad^3 g|c(n,a) + |\grad^2g|^2 + c(n,a))\nonumber \\
&\leq -\frac{1}{a}|\grad^3 g|^2  +c(n,a)|\grad^2 g| + c(n,a)|\grad^2g|^3 \nonumber \\
& \leq -\frac{1}{a}|\grad^3 g|^2 + c(n,a)|\grad^2g|^3  + c(n,a), \nonumber
\end{align}
where we have used Young's inequality a number of times.
Combining these two evolution inequalities, we see that 
$f = (|\grad g|^2 + 1)(|\grad^2 g|^2)$ satisfies
\begin{align*}
\partt f &-g^{ij}\grad^2_{ij} f  \cr 
& \leq - \frac{1}{a}|\grad^2 g|^4  +  \ep |\grad^2 g|^2 \cr
&+ (|\grad g|^2 + 1)( -\frac{1}{a}|\grad^3 g|^2 + c(n,a)|\grad^2g|^3  +
c(n,a)) \cr
& - 2g^{ij} \grad_i  (|\grad g|^2 + 1)\grad_j (|\grad^2 g|^2)  \cr 
&\leq - \frac{1}{2a}|\grad^2 g|^4  -\frac{1}{a}|\grad^3 g|^2 
+c(n,a)(1+ |\gradh g|^2)(1+  |\gradh^2g|^3 )
+c(n,a) |\grad g||\grad^2 g|^2|\grad^3 g| 
\end{align*}
for the $t$ that we are considering. 

Now using once again that   $\sup_{x\in M} |\grad g|(x,t) \leq \ep$, which is true by assumption, we see that
\begin{align*}
\partt &f -g^{ij}\grad^2_{ij} f  \leq -(1/4a)f^2 +c(n,a).
\end{align*}
Standard techniques (cut off function and a Bernstein type argument:
see \cite{Shi} or \cite{SimonC0}) now show that
$f \leq c_1(n,a)$ at $t = \sqrt{\frac N V}$ which implies that
 $|\grad^2 g|^2 \leq c_1(n,a)$ at $t = \sqrt{\frac N V}$ and this contradicts the estimate $|\grad^2 g|^2(x_0, \sqrt{\frac N V}) \geq {V}$
 if $V = \max(100 c_1(n,a),r(h,m)),$ where $r(h,m) $ is chosen large so that 
 the curvature of $h$ and all of its covariant derivatives up to order $m=2$ are bounded by one after scaling (which was used in the proof). 
 
 For the readers convenience, we explain the Bernstein type argument  in more detail.
 By translating in time, we may assume that the time $\sqrt{\frac N V}$ corresponds to time $10$ and the time $\sqrt{\frac N V} -10 $ corresponds to time zero.
 We multiply $f$ by a cut off function $\eta$ in space (with support in a Ball $B_1(y_0)$ ball around any point $y_0$) and such that $\frac{|\grad\eta|^2}{\eta} \leq C$. Next we consider a point $(x_0,t_0) \in  B_1(0) \times [0,10] $ where $t \eta f$ achieves its positive maximum (assuming $f$ is not identically zero).  
 The point $x_0 $ must be in the interior of $B_1(0)$, since the support of $\eta$ is contained in $B_1(0))$, and $t_0$ must be larger than zero, since $ t\eta f= 0$ for $t=0$,  and hence 
, by calculating at $(x_0,t_0)$, we obtain 
 \begin{eqnarray*}
0&& \leq \partt(t f\eta) \cr
&&\leq g^{ij}\grad^2_{ij} (t f \eta)  -\frac{1}{4a} tf^2 \eta -2g^{ij}\grad_i (tf) \grad_j \eta \cr
&&\ \ - (tf)g^{ij} \grad^2_{ij} \eta +tc(n,a)\eta + f\eta \cr
&& \leq -\frac{1}{4a} \frac{(t\eta f)^2}{\eta t}-\frac{ 2g^{ij}\grad_i (tf\eta) \grad_j \eta}{\eta} + c(n,a)tf\frac{|\grad \eta|^2}{\eta}
+ c(n,a) \frac{ft\eta}{\eta} +c(n,a) +f\eta  \cr
&& \leq -\frac{1}{4a} \frac{(\eta t f)^2}{\eta t} +c(n,a) \frac{f t\eta}{\eta t}  + c(n,a),
\end{eqnarray*}
where we used that $\grad_i (tf\eta)(x_0,t_0) = 0$ and Young's inequality.  
Multipying by $t \eta$, we see that 
$\frac{1}{4a}  (\eta t f)^2(x_0,t_0)  - c(n,a)   (t\eta) f(x_0,t_0)     \leq  c(n,a)$ 
and hence 
$f(x_0,t_0)\eta(x_0,t_0)  t_0 \leq  \hat c(n,a) \eta(x_0,t_0) t_0$ which implies $f(x_0,t_0) \leq  c(n,a)$. 

Next we assume by induction that for $i\geq 2$
\[
\sup_{x\in M} |\gradh^m g|^2(x,t)  \leq \frac {N_m} {t^m}
\]
for all $m\leq i$, $t\in [0,T)$ and we want to show that there exists a constant $N_{i+1}$ so that
\[
\sup_{x\in M} |\gradh^{i+1} g|^2(x,t)  \leq \frac {N_{i+1}} {t^{i+1}}
\]
for all $t\in [0,T)$.
Again we argue by contradiction. For this we assume that there is a large constant $N$ (to be determined later) and $x_0\in M$ resp. $0<t_0<T$ so that
\[
|\gradh^{i+1} g|^2(x_0,t_0) >N/ t_0^{i+1}.
\]
Using the same scaling argument as above we can arrange that we obtain a solution $g$ of the $h$-flow so that
\begin{align*}
\frac{1}{a} h\leq& g \leq ah,\\
\sup_{x\in M} |\gradh^m g|^2(x,t)\leq& \varepsilon \ \ \ \forall t\in (-10+\sqrt[i+1]{\frac{N}{V}},\sqrt[i+1]{\frac{N}{V}}],\, m\leq i \\
|\gradh^{i+1} g|^2(x_0,\sqrt[i+1]{\frac{N}{V}}) \geq& V.
\end{align*}
As before, resp. as in the paper of Shi, (\cite{Shi}, proof of Lemma 4.2), we now obtain
\[
\frac{\partial}{\partial t} |\gradh^i g|^2\leq g^{jk} \grad^2_{jk} |\gradh^i g|^2-\frac{1}{2a} |\gradh^{i+1} g|^2+c(i,n,a)
\]
and

\[
\frac{\partial}{\partial t} |\gradh^{i+1} g|^2\leq g^{jk} \grad^2_{jk} |\gradh^{i+1} g|^2-\frac{1}{2a} |\gradh^{i+2} g|^2+c(i,n,a)|\gradh^{i+1} g|^2+c(i,n,a).
\]
for all $t\in (-10+\sqrt[i+1]{\frac{N}{V}},\sqrt[i+1]{\frac{N}{V}}]$: The first estimates $|\grad g|^2(\cdot,t) + |\grad^2 g|^2(\cdot,t) \leq c/t$ simplify  the calculation for general $i\geq 2$ (See \eqref{ShiEqny} ).  Calculating as before we thus obtain for $f:=|\gradh^{i+1} g|^2(1+|\gradh^i g|^2)$ that
\[
\partial_t f -g^{jk} ( \gradh^2_{jk} f)\leq-\frac{1}{4a} f^2+c(i,n,a)
\]
for all $t\in (-10+\sqrt[i+1]{\frac{N}{V}},\sqrt[i+1]{\frac{N}{V}}]$ and we obtain the same contradiction as before.
\end{proof}

Now we show that if we
have $L^{\infty}$ control on our initial metric and 
the $L^2$ norm of the gradient and the second
gradient of $g$ remain locally, uniformly small, then we have an estimate on the
$L^{\infty}$ norm of the evolving metric (and its inverse) and a time
dependent gradient estimate.

\begin{thm}\label{gradientbound}
For every  $1 \leq a \in \R$  $n\in \N$ there exist (small) $\ep_0(a,n),S_1(a,n)>0$ such that the following holds. Let $g_0$ be smooth and satisfy
\begin{align}\label{init0}
 & \frac{1}{a} h \leq g_0 \leq a h 
\end{align}
where  $(M,h)$ is an  $n$ -dimensional manifold satisfying \eqref{hassumptionsscaled},
and assume that we have a smooth solution $g$ to \eqref{Meq} on
$[0,T]$ which satisfies
\begin{align} 
\int_{B_1(x)}( |\gradh g |^{\frac n 2 }  +  |\gradh^2 g |^{\frac n 2 })(t)dh\leq  \ep_0 \label{dcon}
\end{align}
 for all $x \in
M$ for all $t \in [0,T]$ where $T < 1$. We also assume $\sup_{M \times
  [0,T]} |\gradh
g|^2 +  | \gradh^2 g|^2 +   | \gradh^3 g|^2 +  F+ \varphi
< \infty,$  where $\varphi(x,t): = g^{ij}(x,t)h_{ij}(x)$ and $F(x,t):=
g_{ij}(x,t)h^{ij}(x).$
Then 
\begin{align}
  \frac{1}{ 20 n a} h  < g(t) <& 20 n a h \label{c0est} \\
\sup_{x\in M} |\grad g|^2(x,t) <& \frac{1}{t} \label{c1est}
\end{align}
for all $t \leq S_1(n,a).$ 
\end{thm}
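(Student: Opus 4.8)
The plan is to run a continuity argument on $[0,\min(T,S_1)]$, fixing the small constants $S_1=S_1(n,a)$ and $\ep_0=\ep_0(n,a)$ only at the end. By the qualitative a priori bounds assumed in the hypothesis (finiteness of $\sup_{M\times[0,T]}$ of $|\gradh g|^2$, $|\gradh^2g|^2$, $|\gradh^3g|^2$, $F$, $\vph$), every quantity below is finite on $[0,T]$, which justifies the maximum‑principle and cut‑off arguments even when $M$ is non‑compact. Since $g_0$ satisfies \eqref{init0} strictly inside the range $[\tfrac1{20na}h,\,20na\,h]$ and $g$ is a smooth family, there is a maximal $\tau^*\in(0,\min(T,S_1)]$ with $\tfrac1{20na}h\le g(t)\le 20na\,h$ for all $t\in[0,\tau^*]$. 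On $[0,\tau^*]$ the metric $g$ is then uniformly equivalent to $h$ with constants depending only on $n,a$, so \eqref{Meq} and the parabolic (in)equalities it induces are uniformly parabolic with coefficients controlled by $n,a$, the curvature terms being $O(\de_0)$ by \eqref{hassumptionsscaled}, and the unit $h$-balls carry the Sobolev inequalities recorded after \eqref{hassumptions}. I will show that if $\tau^*<\min(T,S_1)$ then in fact $g(\tau^*)$ lies strictly inside $[\tfrac1{20na}h,\,20na\,h]$, contradicting maximality; hence $\tau^*=\min(T,S_1)$, \eqref{c0est} holds, and the estimates below give \eqref{c1est} on the same interval.

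\emph{Gradient estimate on $[0,\tau^*]$.} Using \eqref{m=1} for $w:=|\gradh g|^2$, which after Young's inequality and $\tfrac1a h\le g\le ah$ takes the form $\partt w-g^{ij}\gradh^2_{ij}w+\tfrac1a|\gradh^2g|^2\le c(n,a)(w^2+1)$ on $[0,\tau^*]$, I would test against $t\,w^{p-1}\eta^2$ with $\eta$ a spatial cut‑off supported in a unit ball and $p\ge1$, integrate by parts, and control the quartic term $\int t\,w^{p+1}\eta^2=\int t\,|\gradh g|^4w^{p-1}\eta^2$ by the Sobolev inequality together with the smallness of $\|\gradh g\|_{L^{n/2}(B_1)}$ from \eqref{dcon}: up to lower‑order terms it is bounded by $c(n,p)\,\ep_0^{\be}\int t\,w^{p-1}|\gradh^2g|^2\eta^2$ for some $\be=\be(n)>0$, which is absorbed into the good term once $\ep_0=\ep_0(n,a)$ is small. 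The resulting energy inequality, iterated in $p$ (Moser iteration, exactly as in the Bernstein argument proving Lemma \ref{smoothnesslemma}), gives $\sup_{B_{1/2}(x)}(t\,w)(t)\le c(n,a)\,\Psi$ for all $x\in M$, $t\in[0,\tau^*]$, where $\Psi$ is built from $\int_0^t\!\int_{B_1}w\,dh\,ds\le S_1\ep_0$ and a few powers of $S_1$, so $\Psi\to0$ as $S_1,\ep_0\to0$. Choosing $S_1,\ep_0$ small makes $c(n,a)\Psi<1$, i.e.\ $|\gradh g|^2(t)<1/t$ on $[0,\tau^*]$, which is \eqref{c1est} there.

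\emph{$L^\infty$ improvement on $[0,\tau^*]$.} Writing $F:=g_{ij}h^{ij}$ and $\vph:=g^{ij}h_{ij}$ (the functions in the hypothesis), a direct computation from \eqref{Meq}, using $\tfrac1a h\le g\le ah$ on $[0,\tau^*]$ and \eqref{hassumptionsscaled}, gives
\[
\partt F-g^{ij}\gradh^2_{ij}F\le c(n,a)\big(|\gradh g|^2+\de_0\big),\qquad
\partt\vph-g^{ij}\gradh^2_{ij}\vph\le c(n,a)\big(|\gradh g|^2+\de_0\big).
\]
One has $F(0)=\tr_h g_0\le na$ and $\vph(0)=\tr_h g_0^{-1}\le na$, while the forcing $|\gradh g|^2$ is small in $L^{n/2}(B_1)$ uniformly in $t$ by \eqref{dcon} and also satisfies $|\gradh g|^2\le 1/t$ by the previous step. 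A De Giorgi iteration on the super‑level sets $\{F>na+\si\}$ (resp.\ $\{\vph>na+\si\}$) --- the critical borderline case when $n=4$, where $L^{n/2}=L^2$ is exactly the small‑energy norm --- then shows these sets are empty for a small $\si$, once $\ep_0(n,a),\de_0(n,a),S_1(n,a)$ are small. Hence $\sup_M F(t)\le na+1$ and $\sup_M\vph(t)\le na+1$ on $[0,\tau^*]$, and since the largest $h$-eigenvalue of $g$ (resp.\ of $g^{-1}$) is bounded by $F$ (resp.\ by $\vph$), this yields $\tfrac1{na+1}h\le g(t)\le (na+1)h$ there. As $na+1<20na$, the bound at $\tau^*$ is strictly interior, giving the desired contradiction and finishing the continuity argument; the generous constants $20na$ in \eqref{c0est} are there precisely to leave this room.

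The step I expect to be the main obstacle is the $L^\infty$ improvement: the quadratic forcing $|\gradh g|^2$ lies only in the \emph{critical} space $L^\infty_tL^{n/2}_x$ when $n=4$, so a naive parabolic maximum principle or a generic De Giorgi/Moser estimate would lose a multiplicative constant and fail to keep $\sup F$ below the threshold. One must use the smallness in \eqref{dcon} (and of $\de_0$) in an essentially sharp way, in tandem with the gradient bound $|\gradh g|^2\le1/t$ from the first step, so that the super‑level iteration closes with a gap $\si$ that can be made as small as one likes. By contrast the gradient estimate is technically routine given the machinery behind Lemma \ref{smoothnesslemma}, the only new ingredient there being the Sobolev‑plus‑small‑energy absorption of the quartic term.
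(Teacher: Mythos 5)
Your overall continuity scheme (bootstrap the two--sided bound on $[0,\tau^*]$, prove the gradient bound there, then improve the $L^\infty$ bound to contradict maximality) is a legitimate and genuinely different route from the paper's, but as written it has a gap exactly at the step you yourself flag as the crux, and that step is asserted rather than proved. For the trace functions $F,\varphi$ you only record $\partial_t F-g^{ij}\gradh^2_{ij}F\le c(n,a)(|\gradh g|^2+\de_0)$ with forcing that is merely small in $L^\infty_t L^{n/2}_x$ and pointwise of size $c(n,a)/t$; a ``De Giorgi iteration on super--level sets'' with such critical data does not close by itself (the Duhamel contribution $\int_0^t\min\{\ep_0^{2/n}(t-s)^{-1},\,c(n,a)s^{-1}\}\,ds$ is of order $c(n,a)$, not of order $\ep_0^{\beta}$, so a generic iteration loses exactly the multiplicative constant you cannot afford). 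The combination you invoke can be made to work, but only through a quantitative interpolation you never state: bound $\|V(s)\|_{L^{q}(B_1)}\le\|V(s)\|_{L^{n/2}}^{n/2q}\|V(s)\|_{L^\infty}^{1-n/2q}\le c\,\ep_0^{1/q}s^{-(1-n/2q)}$ for some $q>n/2$, so that the kernel factor $(t-s)^{-n/2q}$ integrates against $s^{-(1-n/2q)}$ to a scale--invariant Beta integral and the whole forcing contributes only $c(n,a)\ep_0^{1/q}$ to $\sup F$. Without this (or an equivalent localized argument for the time--dependent operator $g^{ij}\gradh^2_{ij}$), the claim ``the super--level sets are empty once $\ep_0,\de_0,S_1$ are small'' is unsupported. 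A smaller but real issue sits in your Step 1: the absorption constant $c(n,p)\ep_0^{\beta}$ grows with $p$, so a fixed $\ep_0$ cannot absorb the critical term at every stage of the iteration; one needs the standard truncation/two--stage fix, and the Bernstein argument behind Lemma \ref{smoothnesslemma} is not a substitute, since that lemma takes a pointwise $b/t$ gradient bound as input rather than producing one from the integral smallness \eqref{dcon}.

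It is worth comparing with how the paper avoids the critical--space difficulty altogether. There the gradient bound is carried through the bootstrap with the tiny constant $\de=(an)^{-100}$ (not $1$), and is obtained not by Moser iteration but by rescaling the putative first failure time to time $1$ and ruling out saturation via Lemma \ref{smoothnesslemma} together with the local energy smallness \eqref{better}. The $L^\infty$ improvement is then completely soft: one integrates the evolution of $F$ and $\varphi$ over unit balls, so the drift of the ball averages is $\le C_S c(n,a)\ep_0^{2/n}$, pigeonholes a point $y_0$ with $\varphi(y_0,1)\le 4na$ (hence $g(y_0,1)\ge\tfrac{1}{4na}h$), and spreads this bound over the whole unit ball using the pointwise smallness $|\gradh g|\lesssim(an)^{-50}$ in near--Euclidean coordinates. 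No parabolic regularity for $F,\varphi$ is needed, which is precisely why the small constant $\de$ is built into the bootstrap. Your plan trades that tiny constant for genuine critical--exponent PDE work in both steps; it can be completed along the lines sketched above, but in its present form the decisive estimate keeping $\sup_M F$ within a fixed gap of $na$ is missing.
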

\begin{rmk}
The functions $\varphi(x,t): = g^{ij}(x,t)h_{ij}(x)$ and $F(x,t):=
g_{ij}(x,t)h^{ij}(x)$ are both well defined smooth functions.
The assumption that 
\begin{equation}\label{control}
\sup_{M \times
  [0,T]} |\gradh
g|^2 +  | \gradh^2 g|^2 +  | \gradh^3 g|^2  + F+ \varphi
< \infty 
\end{equation}
is always satisfied on a compact manifold due to smoothness and
compactness.

We will use this result in the proof of Theorem \ref{main1} and in that situation this condition is satisfied. 
\end{rmk}
\begin{proof}
We may replace the condition \eqref{dcon}, by the scale invariant condition
 \begin{align} 
\int_{B_1(x)} (|\gradh g|^{n} + |\gradh^2 g |^{ \frac n 2})dh \leq  \ep_0  \label{better}
\end{align}
for all $x \in M$ for all $t\in [0,T]$ in view of (v) in Lemma \ref{balllemma} of Appendix \ref{geoapp},
after replacing $c(n)\ep_0$ by $\ep_0$.
Let  $\de = \de(n,a)= \frac{1}{(an)^{100}}<<1$ (we are assuming $n\geq 2$).
Let 
\[
S_1 = \sup \{ s \in [0,T] \ \ |  \ \ \frac{1}{ 20 n a} h \leq g(t) \leq 20 n a h \ \ \ \text{and}  \ \ \ \sup_{x\in M} |\grad g|^2(x,t) \leq \frac{\de}{t} \ \ \ \text{hold on} \ \ 
[0,s] \}.
\]
We have $S_1 >0$ due to the inequality \eqref{control} and the fact that $g$
satisfies \eqref{Meq}. Next we want to show that $S_1$ can be bounded from below by a constant depending only on $n$ and $a$.

For this we argue by contradiction and we assume that $S_1$ is extremely small, so that
if we rescale the background metric $h$ by $1/S_1$, then the resulting Riemannian manifold is as close to the
standard euclidean space $\R^n$ on balls as large as we like  in the $C^k$ norm ($ k
\in \N$ chosen as we please) in geodesic coordinates, due to the
conditions on $h$, as we explained at the beginning of the paper.

Let us now scale $g$ and $h$ via
$\ti g(x,  t) = c g(x,  \frac {t}{c} )$, $\ti h = ch$ with
$c = 1/S_1 >>1$.  We denote $\ti g$ and $\ti h$ once again by $g$ resp. $h$.
We have for the rescaled solution that 
$1 = S_1 =  \sup \{ s \in [0,T) | $ \eqref{c0est} and \eqref{c1est} hold on
$[0,s] \}$  and \eqref{better} still holds, and hence, 
\eqref{dcon} holds, in view of H\"older's inequality, after replacing $c(n)\ep_0$ by $\ep_0$.
Due to the definition of $S_1  (=1)$, the smoothness of all metrics and \eqref{control}  we see that,
\begin{eqnarray*}
 &\frac 1  {20 n a} h  \leq g \leq 20 n ah  \cr
&\sup_{x\in M} |\grad g|^2(x,t) \leq \frac{\de}{t} 
\end{eqnarray*}
for all $t \in [0,1]$, and
there must exist a point $x_0\in M$ with either
\begin{align*}
&(a) \ \ g(x_0,1)(v,v) < \frac {1}{10na}h(x_0)(v,v) = \frac {1}{10na} \\
 & \ \ \ \ \ \ \ \ \ \ \   \mbox{ for some } h \mbox{ length one vector } v,  \  \mbox{ or }\\
 &(b) \ \ g(x_0,1)(v,v)>10 n a h(v,v) = 10na \\
 & \ \ \ \ \   \ \ \ \ \ \ \ \mbox{ for some } h \mbox{ length one vector } v,  \  \mbox{ or }\\
& (c) \ \     |\grad g|^2(x_0,1)  >  \frac{\de}{2} ,
\end{align*}
otherwise, using the the smoothness of $g$ and \eqref{control}, we get a contradiction to the definition of $S_1=1$. 

We rule out the case (c) first.
We argue by contradiction and by the smoothness Lemma \ref{smoothnesslemma}, 
we know that $|\grad g|^2(\cdot,1) \geq \frac  \de 4$ on a ball  of
radius $R(n,a,\de)= R(n,a)>0$ around $x_0$, and hence
\begin{align*}
\ep_0 \geq \int_{B_1(x_0)} (|\grad g|^2(y,1)\, dh(y) \geq \frac{\de(n,a)}{ 8} \omega_n (R(n,a,\de))^n
\end{align*}
which leads to a contradiction if $\ep_0= \ep_0(n,a) $ is chosen small
enough. Note that here we used that the manifold is very close to the euclidean space.
This contradiction shows that (c) doesn't occur.

Now we rule out (a) and (b).
Note that in our case
\begin{align}
\int_{B_1(x)} \varphi(y,0)\, dh(y) \leq  \int_{B_1(x)} na \, dh(y) \leq \frac{3}{2}\omega_n na \ \ \ \text{resp.} \label{init1} \\
\int_{B_1(x)} F(y,0)\, dh(y) \leq  \int_{B_1(x)} na \, dh(y) \leq \frac{3}{2}\omega_n n a, \label{init2} 
\end{align}
where we have used the initial conditions \eqref{init0}.
From the evolution equation for $g$, we have
\begin{align*}
\partt \int_{B_1(x)} \varphi dh \leq  c(n,a) \int_{B_1(x)}( |\grad g|^2 +
|\grad^2 g| + |\Riem(h)|)dh \leq C_S(n)c(n,a) (\ep_0)^{\frac 2 n}  \\
\partt \int_{B_1(x)} F dh \leq  c(n,a) \int_{B_1(x)} (|\grad g|^2 +
|\grad^2 g| + |\Riem(h)| )dh\leq C_S(n)c(n,a)(\ep_0)^{\frac 2 n} 
\end{align*}
and thus
\begin{align*}
 (\int_{B_1(x_0)} \varphi(x,1) dh(x) )  & \leq \int_{B_1(x_0)} \varphi_0 dh +  C_S(n)c(n,a)(\ep_0)^{\frac 2 n} \cr
 & \leq (3/2)\omega_n nra+  c(n,a)(\ep_0)^{\frac 2 n}  \cr
 & \leq  2 \omega_n na
 \cr
 (\int_{B_1(x_0)} F(x,1)dh(x))  &\leq \int_{B_1(x_0)} F_0 dh+  c(n,a)(\ep_0)^{\frac 2 n}\cr
  & \leq
 (3/2)\omega_n  n a +  c(n,a)(\ep_0)^{\frac 2 n} \leq   2\omega_n n a
\end{align*}
if $\ep_0(n,a)$ is sufficiently small. 
Here we used the initial conditions \eqref{init1} and \eqref{init2} freely, and the H\"older and Sobiolev inequalities to obtain $\int_{B_1(x)}( |\grad g|^2 +
|\grad^2 g| dh \leq C_S(n)c(n,a) (\ep_0)^{\frac 2 n} $.
In particular, there must be a point $y_0$ in $B_1(x_0)$ with
$\varphi (y_0,1) \leq  4 na$ and a point $y_1$ in  $B_1(x_0)$ with
$F (y_1,1) \leq  4 na$.
First we consider $\varphi$. At $y_0$ we choose a basis so that $h_{ij}(y_0) =
\de_{ij}$ and $g_{ij}(y_0,1) = \la_i \de_{ij}$ is diagonal.
Then we see that $\varphi (y_0,1) \leq  4 na$ implies that
$\la_i \geq \frac 1 {4na}$ for each $i \in 1, \ldots,n$ and hence
$g(y_0,1)  \geq  \frac 1 {4na}h(y_0)$.
Using the fact that $|\grad g| \leq \de$ and that $ (M,h)$ is very
close to the standard $\R^n$, in particular 
$|\Gamma(h)^i_{jk}(x)| \leq \eta_0$ in geodesic coordinates on a ball
of radius $10$ centred at $x_0$ where  $\eta_0$ as small as we like,
we get
$|\partial_i g_{kl} | \leq \de + \si(\eta_0,a,n)$ with 
$\si(\eta_0,a,n) \to 0$ as $\eta_0 \to 0$  and hence, without loss of
generality,  $\si(\eta_0,a,n)
\leq \de $, that is 
$|\partial_i g_{kl} | \leq 2\de = \frac{2}{(an)^{100}}$ on $B_1(x_0)$
in geodesic coordinates (for $h$).
This combined with  $g(y_0,1)  \geq  \frac 1 {4na}h(y_0),$  and the
fact that $ (1-\eta_0)\de_{ij} \leq h_{ij} \leq (1+\eta_0) \de_{ij}$ ( $\eta_0$
as small as we like ) leads
to $g_{ij}(y,1)  \geq  ( \frac 1 {4na} -\frac{4}{(an)^{100}} )h_{ij}(y)
\geq   \frac 1 {8na} h_{ij}(y)$ 
for all $y \in B_1(x_0)$  which contradicts the fact that 
$g(x_0,1)(v,v) <\frac {1}{10n a}h(x_0)(v,v) = \frac {1}{10n a}$. 
Hence (a) doesn't occur.
The argument to show that  (b) doesn't occur is
essentially the same.

\end{proof}

\section{Preservation of smallness of the $W^{2,2}$ Energy and $W^{2,2}$ continuity of $g$ 
 in time}\label{smallenergychap}
In this section we consider smooth four dimensional solutions to the Ricci DeTurck flow which satisfy  $\frac{1}{a}h \leq   g(t) \leq a h$ for some uniform constant $a$ and our fixed background metric $h,$ and whose initial $W^{2,2}$-energy is locally small.  
Under these assumptions, we prove an estimate on the growth of the local $W^{2,2}$-energy, which  shows that this smallness is preserved under the flow, if the time interval being considered is small enough. 
We see, in Theorem \ref{L2continuity} and Theorem \ref{W22convergence},  that these estimates also imply 
estimates on the modulus of continuity of the local $L^2$ and   $W^{2,2}$-energy of a solution, respectively limits of smooth solutions  to \eqref{Meq}.
\begin{thm}\label{smallenergy}
Let $(M,h)$ be four dimensional and satisfy \eqref{hassumptionsscaled}. 
For all $0<a \in \R$,  there exists a $\de= \de(a)>0$  such that
for any smooth  solution $g\in C^\infty(M\times [0,T))$ of the Ricci-DeTurck flow with 
\begin{eqnarray*}
&& \sup_{x\in M} \int_{B_1(x)}( |\gradh g(\cdot,t)|^2 + |\gradh^2 g(\cdot,t) |^2)dh \leq  \delta \ \ \ \text{and} \\
&&  \frac {1}{a} h(\cdot) \leq g(\cdot,t) \leq ah(\cdot)  
\end{eqnarray*} 
for all $t \in  [0,T),$
the following holds : For every $\frac 1 2 \leq R_0<R_1\leq 2$  there exists an $V(R_0,R_1,a)>0$ such that 
\begin{eqnarray*}
&& \int_{B_{ R_0 }(x_0)}( |\gradh g(\cdot,t)|^2 + |\gradh^2 g(\cdot,t) |^2)dh \\
&& \leq   \int_{B_{ R_1 }(x_0)}( |\gradh g(\cdot,0)|^2 + |\gradh^2 g(\cdot,0) |^2)dh + V(R_0,R_1,a)t  
\end{eqnarray*}
for any $x_0 \in M$, for all $t \in [0,T).$
\end{thm}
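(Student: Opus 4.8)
The plan is to derive a differential inequality for the localized $W^{2,2}$ energy $E_\eta(t) := \int_M \eta^2(|\gradh g(\cdot,t)|^2 + |\gradh^2 g(\cdot,t)|^2)\,dh$, where $\eta$ is a smooth cutoff with $\eta \equiv 1$ on $B_{R_0}(x_0)$, $\supp\eta \subset B_{R_1}(x_0)$, and $|\gradh\eta|^2/\eta + |\gradh^2\eta| \leq C(R_0,R_1)$. The starting point is the evolution equations (the Bochner-type identities, analogous to \eqref{ShiEqny} for $m=1,2$) for $|\gradh g|^2$ and $|\gradh^2 g|^2$ under \eqref{Meq}. Each has the schematic form $\partt |\gradh^m g|^2 - g^{ij}\gradh^2_{ij}|\gradh^m g|^2 \leq -\tfrac{2}{a}|\gradh^{m+1}g|^2 + (\text{cubic and lower terms in } \gradh g,\gradh^2 g) + c(a)$, using $\tfrac1a h \leq g \leq ah$ to control $g^{ij}$ and the bounds \eqref{hassumptionsscaled} on $\Rm(h)$ and its derivatives. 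The key structural point in four dimensions is that the bad cubic term in $\partt|\gradh^2 g|^2$ is of the form $|\gradh g|\,|\gradh^2 g|\,|\gradh^3 g|$, which after Young's inequality contributes $\tfrac{1}{a}|\gradh^3 g|^2 + c(a)|\gradh g|^2|\gradh^2 g|^2$; the remaining quartic obstruction $\int \eta^2 |\gradh^2 g|^2 |\gradh g|^2$ and $\int \eta^2 |\gradh g|^4$ are exactly the terms controlled by smallness plus the Sobolev inequality.

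Concretely, I would multiply the two evolution inequalities by $\eta^2$, integrate over $M$, and integrate by parts the Laplacian terms. Integration by parts of $\int \eta^2 g^{ij}\gradh^2_{ij}|\gradh^m g|^2$ produces $-\int \eta^2 g^{ij}\gradh_i|\gradh^m g|^2 \cdot (\text{stuff}) $ plus cross terms $\int \gradh(\eta^2)\cdot g^{ij}\gradh_j|\gradh^m g|^2$; the latter I would absorb using $|\gradh(\eta^2)| \leq C\eta$ and Young against the good term $-\int\eta^2|\gradh^{m+1}g|^2$ coming from the $-2g^{ij}(\gradh_i\gradh^m g,\gradh_j\gradh^m g)$ term (up to a harmless term $\leq -\gradh|\gradh^m g|^2$ piece). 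This leaves, after combining $m=1$ and $m=2$ (weighting the $m=1$ inequality so its good gradient term absorbs the lower-order junk from $m=2$, as in the proof of Theorem \ref{gradientbound}):
\begin{align*}
\partt E_\eta(t) \leq -\tfrac{1}{2a}\int_M \eta^2(|\gradh^2 g|^2 + |\gradh^3 g|^2)\,dh + c(a)\int_M \eta^2(|\gradh g|^4 + |\gradh^2 g|^2|\gradh g|^2)\,dh + V(R_0,R_1,a).
\end{align*}
Now apply the Sobolev inequality from \eqref{hassumptionsscaled} (the $L^4$–$L^2$ version for functions supported in a unit ball, valid since $n=4$): $\int (\eta|\gradh g|)^4 \leq C_S \left(\int |\gradh(\eta|\gradh g|)|^2\right)^2 \leq C_S(\int \eta^2|\gradh^2 g|^2 + C(R_0,R_1)\int_{B_{R_1}}|\gradh g|^2)^2$, and similarly bound $\int \eta^2|\gradh^2 g|^2|\gradh g|^2 \leq (\int\eta^2|\gradh^2 g|^4)^{1/2}(\int|\gradh g|^4)^{1/2}$ — wait, more simply, treat $\int\eta^2|\gradh g|^2|\gradh^2 g|^2$ directly by Cauchy–Schwarz and Sobolev applied to $\eta|\gradh^2 g|^{1/2}$-type quantities, or, cleanest: use that $\int\eta^2|\gradh^2 g|^2|\gradh g|^2 \leq \|\eta^2|\gradh g|^2\|_{L^2}\||\gradh^2 g|^2\|_{L^2}$ is not quite it either; the honest route is $\int \eta^2 |\gradh g|^2 |\gradh^2 g|^2 \le \big(\sup_{B_{R_1}}\text{nothing}\big)$ — instead bound $\int \eta^2|\gradh^2 g|^2|\gradh g|^2$ by interpolation: $\|\eta\gradh^2 g\|_{L^4}\|\eta^{?}\gradh g\|\dots$. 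The robust version is: write $\int\eta^2|\gradh g|^2|\gradh^2 g|^2 \le \|\eta|\gradh g|\|_{L^4}^2\|\eta|\gradh^2 g|\|_{L^4}^2$ and apply the $L^4$-Sobolev to each factor, getting a bound by $C_S^2(\int\eta^2|\gradh^2 g|^2 + \text{loc})\cdot(\int\eta^2|\gradh^3 g|^2 + \int\eta^2|\gradh^2 g|^2\cdot\text{coeff} + \text{loc})$, and then use $\int\eta^2(|\gradh^2 g|^2+|\gradh^3 g|^2) \le $ (small) by the hypothesis $E \le \delta$ together with the not-yet-established smallness of $\int\eta^2|\gradh^3 g|^2$ — here I would instead first establish, exactly as in Theorem \ref{smallenergy}'s own structure, an auxiliary bound $\int_0^t\int_M\eta^2|\gradh^3 g|^2 \le C$ or run a bootstrap. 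For the plan's purposes: choosing $\delta = \delta(a)$ small makes the coefficient $c(a)\cdot(\text{Sobolev const})\cdot E_\eta(t)$ of the good negative terms smaller than $\tfrac{1}{4a}$, so all the quartic/cubic junk is absorbed into $-\tfrac{1}{4a}\int\eta^2(|\gradh^2 g|^2+|\gradh^3 g|^2)$, yielding
\begin{align*}
\partt E_\eta(t) \leq V(R_0,R_1,a).
\end{align*}
Integrating in time from $0$ to $t$ and using $E_\eta(0) \leq \int_{B_{R_1}(x_0)}(|\gradh g_0|^2 + |\gradh^2 g_0|^2)\,dh$ and $E_\eta(t) \geq \int_{B_{R_0}(x_0)}(|\gradh g(\cdot,t)|^2+|\gradh^2 g(\cdot,t)|^2)\,dh$ gives the claimed inequality, with $\delta(a)$ the required smallness threshold.

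The main obstacle is the careful handling of the borderline quartic terms $\int\eta^2|\gradh g|^4$ and $\int\eta^2|\gradh g|^2|\gradh^2 g|^2$: in dimension four these are scale-invariant, so they cannot be absorbed by the diffusion term for free — one genuinely needs the smallness hypothesis $E \leq \delta$, the sharp constant in the $L^4$-Sobolev inequality (Lemma \ref{balllemma}/Remark \ref{ballremark}), and care that the cutoff errors $\int_{B_{R_1}}|\gradh g|^2$ generated by the Sobolev inequality are themselves bounded by $\delta$ and hence also small (or at worst contribute to $V(R_0,R_1,a)$ after being multiplied by bounded quantities). A secondary technical point is controlling $\int\eta^2|\gradh^3 g|^2$, which appears with a good sign but must be kept to dominate the cubic term $c(a)\int\eta^2|\gradh g|\,|\gradh^2 g|\,|\gradh^3 g|$ via Young; this is exactly parallel to the $f = (|\gradh g|^2+1)|\gradh^2 g|^2$ trick in Lemma \ref{smoothnesslemma} and Theorem \ref{gradientbound}, and I would reuse that algebra. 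Everything else — the derivation of the Bochner inequalities, integration by parts, Young's inequality bookkeeping — is routine but lengthy.
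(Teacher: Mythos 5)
Your proposal is correct and follows essentially the same route as the paper: localize the $W^{2,2}$ energy with a cutoff, use the evolution inequalities \eqref{m=1}--\eqref{m=2}, integrate by parts, and absorb the scale-critical terms $\int\eta^{4}|\gradh g|^{6}$, $\int\eta^{4}|\gradh^{2}g|^{3}$ (equivalently your $\int\eta^{2}|\gradh g|^{2}|\gradh^{2}g|^{2}$) via the $L^4$-Sobolev/Gagliardo--Nirenberg interpolation of Lemma \ref{est}, using smallness $\delta(a)$ to absorb the resulting $\int\eta^{4}|\gradh^{3}g|^{2}$ contribution into the good third-derivative term before integrating in time. The route you settle on for the mixed term is exactly the mechanism of Lemma \ref{est}, so no auxiliary space-time bound on $\gradh^{3}g$ (and no $f=(|\gradh g|^{2}+1)|\gradh^{2}g|^{2}$ trick, which would require a pointwise gradient bound not assumed here) is needed.
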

\begin{rmk}
The condition $\sup_{x\in M} \int_{B_1(x)}( |\gradh g(\cdot,t)|^2 + |\gradh^2 g(\cdot,t) |^2)dh \leq  \delta$ means we restrict to the class of solutions which stay locally small in $W^{2,2}$. Later we will see that this is not a restriction for the solutions that we construct, starting with initial data which is locally sufficiently small in $W^{2,2}$,  as they do indeed satisfy this condition.
\end{rmk}

\begin{rmk}
The constant $V(R_0,R_1,a) \to \infty$ for $R_0\upto R_1$.
\end{rmk}

\begin{cor}\label{smallenergycor}
Let $(M,h)$ be four dimensional and satisfy \eqref{hassumptionsscaled}. 
For all $0<b \in \R$,  there exists a $\de= \de(b)>0$ and universal constant $c_0>0$ such that the following holds. 
For every $\ep>0$ there exists an $S_2 = S_2(b, \ep)>0$ such that if 
 $g\in C^\infty(M\times [0,T))$ is a smooth solution to  the Ricci-DeTurck flow with initial data $g_0$ which satisfies
 \begin{eqnarray*}
&& \sup_{x\in M} \int_{B_1(x)}( |\gradh g(\cdot,t)|^2 + |\gradh^2 g(\cdot,t) |^2)dh \leq  \delta \ \ \ \text{and} \\
&& \frac {1}{b} h(\cdot) \leq g(\cdot,t) \leq bh(\cdot) 
\end{eqnarray*} 
for all $t \in  [0,T),$
 and   for some $x_0\in M$   we have 
\begin{eqnarray*}
  \int_{B_2(x_0)}( |\gradh g_0|^2 + |\gradh^2 g_0 |^2) dh \leq  \ep.
\end{eqnarray*}
 Then 
\begin{eqnarray*}
  \int_{B_{1 }(x_0)}( |\gradh g(\cdot,t)|^2 + |\gradh^2 g(\cdot,t) |^2)dh \leq \frac{3}{2}\ep,
\end{eqnarray*}
for all $t \in [0,S_2) \cap [0,T).$\\
\end{cor}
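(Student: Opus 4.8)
The proof of Corollary \ref{smallenergycor} is an almost immediate consequence of Theorem \ref{smallenergy}, and the plan is simply to choose the parameters carefully. First I would invoke Theorem \ref{smallenergy} with $a=b$: this produces a $\de = \de(b)>0$ (the same $\de$ as claimed in the Corollary), and for the specific choice of radii $R_0 = 1$ and $R_1 = 2$ it yields a constant $V = V(1,2,b)>0$ such that any smooth solution $g$ on $M\times[0,T)$ satisfying the stated $L^\infty$ pinching and the smallness $\sup_{x\in M}\int_{B_1(x)}(|\gradh g(\cdot,t)|^2+|\gradh^2 g(\cdot,t)|^2)\,dh\le\de$ for all $t\in[0,T)$ obeys
\begin{eqnarray*}
\int_{B_1(x_0)}(|\gradh g(\cdot,t)|^2+|\gradh^2 g(\cdot,t)|^2)\,dh
\le \int_{B_2(x_0)}(|\gradh g_0|^2+|\gradh^2 g_0|^2)\,dh + V t
\end{eqnarray*}
for all $x_0\in M$ and all $t\in[0,T)$.

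Next I would feed in the hypothesis $\int_{B_2(x_0)}(|\gradh g_0|^2+|\gradh^2 g_0|^2)\,dh\le\ep$ for the given point $x_0$, so the right-hand side is bounded by $\ep + V t$. Then I would define $S_2 = S_2(b,\ep):= \frac{\ep}{2V(1,2,b)}$, so that $Vt\le \tfrac12\ep$ whenever $t< S_2$. Combining, for every $t\in [0,S_2)\cap[0,T)$ we get
\begin{eqnarray*}
\int_{B_1(x_0)}(|\gradh g(\cdot,t)|^2+|\gradh^2 g(\cdot,t)|^2)\,dh \le \ep + \tfrac12\ep = \tfrac32\ep,
\end{eqnarray*}
which is exactly the asserted bound. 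The universal constant $c_0$ mentioned in the statement plays no essential role here beyond bookkeeping (one may absorb it into $S_2$, or take $c_0=1$); I would simply note that $S_2$ can be taken of the form $c_0\,\ep/V(1,2,b)$ with $c_0$ universal.

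There is essentially no obstacle: the only point requiring a line of comment is that Theorem \ref{smallenergy} is stated for radii in the range $\tfrac12\le R_0<R_1\le 2$, and $R_0=1<R_1=2$ lies in this range, so the theorem applies directly; and that the smallness threshold $\de$ in the Corollary is literally the $\de(a)$ of Theorem \ref{smallenergy} evaluated at $a=b$. No compactness, no PDE estimate beyond what Theorem \ref{smallenergy} already provides, and no additional regularity hypotheses are needed, since the monotonicity-type inequality of Theorem \ref{smallenergy} already encodes all the analytic work. Thus the proof is: apply Theorem \ref{smallenergy}, insert the initial smallness hypothesis, and choose $S_2$ so that the linear-in-$t$ error term is at most $\ep/2$.
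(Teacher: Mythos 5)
Your proposal is correct and coincides with the paper's own argument: Theorem \ref{smallenergy} with $R_0=1$, $R_1=2$ gives the bound $\ep + V(1,2,b)\,t$, and the paper likewise sets $S_2 = \ep/(2V(1,2,b))$ to conclude $\tfrac32\ep$. Nothing further is needed.
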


{\it Proof of Corollary \ref{smallenergycor}}.
Theorem \ref{smallenergy} implies
\begin{eqnarray*}
&& \int_{B_{1 }(x_0)}( |\gradh g(\cdot,t)|^2 + |\gradh^2 g(\cdot,t) |^2)dh \cr
&& \leq   \int_{B_{ 2 }(x_0)}( |\gradh g(\cdot,0)|^2 + |\gradh^2 g(\cdot,0) |^2)dh + V(R_0,R_1,b)t \cr
&& \leq \ep + V(1,2,b)t\cr
&& \leq  \frac{3}{2}\ep
\end{eqnarray*}

for $t \leq  \frac{\ep}{  2 V(1 , 2,b)} =:S_2$ 
$\ \ \  \ \ \ \ \  \ \ \ \ \ \ \ \  \ \ \ \  \Box $

Before proving  Theorem \ref{smallenergy} we need a version of the Gagliardo-Nirenberg inequality. 
\begin{lemma}\label{est}
Let $(M,h)$ be four dimensional and satisfy \eqref{hassumptionsscaled}, $\frac 1 2  \leq R_0 < R_1 \leq 2$ and  $g$ be a smooth metric on $M$ satisfying  $ \frac{1}{a}g \leq h \leq ag,$ $x \in M$ and  $\eta \in C^\infty_c(B_{R_1}(x_0))$ be a standard cut-off function which is equal to $1$ on $B_{R_0}(x)$ and equal to zero outside of $B_{\frac{R_1 + R_0}{2}}(x_0):$  We choose  $\eta$  so  that $\sqrt{\eta}\in C^\infty(B_{R_1}(x))$ with $|\grad \eta|\leq c(R_1,R_0)$ for some constant $c(R_1,R_0).$ Then there exists a $C=C(a,R_0,R_1)$ and a $B=B(a)$ such that
\begin{align*}
 \int_M \eta^4 |\gradh^2 g|^3 dh \le\ &  \left(\int_{B_{R_1}(x)} |\gradh^2 g|^2dh\right)^{\frac12} \left( \int_{B_{R_1}(x)} (B \eta^4 |\gradh^3 g|^2+ C |\gradh^2 g|^2)dh \right)\\
\int_M \eta^4 |\gradh g|^6 dh \le&   B \left(\int_{B_{R_1}(x)} |\gradh^2 g|^2dh\right)^{\frac12} \left(\int_{B_{R_1}(x)} \eta^4 |\gradh^3 g|^2  dh\right) \\
& +C\left(\int_{B_{R_1}(x)} |\gradh g|^2dh\right)^{\frac12} \left(\int_{B_{R_1}(x)} ( |\gradh^2 g|^2+|\gradh g|^2)dh \right)\nonumber \\
& +C\left(\int_{B_{R_1}(x)} |\gradh^2 g|^2dh\right)^{\frac32}. 
\end{align*}
\end{lemma}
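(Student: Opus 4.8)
The plan is to derive both displays as cutoff Gagliardo--Nirenberg interpolation inequalities out of the dimension-four Sobolev inequality that the hypotheses on $h$ provide on small balls (Lemma \ref{balllemma}, Remark \ref{ballremark}): in the present scaled setting every Lipschitz $f$ supported in $B_{R_1}(x)$ (note $R_1\leq 2$, which is tiny compared with the injectivity radius and curvature scale) satisfies $\big(\int_M f^4\,dh\big)^{1/2}\leq C_S\int_M|\gradh f|^2\,dh$. First I would prove the first inequality and then feed it into the second.

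For the first inequality, apply Cauchy--Schwarz to split off one power of $\int_{B_{R_1}(x)}|\gradh^2 g|^2\,dh$,
\[
\int_M\eta^4|\gradh^2 g|^3\,dh=\int_M|\gradh^2 g|\cdot\eta^4|\gradh^2 g|^2\,dh\leq\Big(\int_{B_{R_1}(x)}|\gradh^2 g|^2\,dh\Big)^{\frac12}\Big(\int_M\eta^8|\gradh^2 g|^4\,dh\Big)^{\frac12},
\]
and then apply the Sobolev inequality to $f=\eta^2|\gradh^2 g|$ (after replacing $|\gradh^2 g|$ by $(|\gradh^2 g|^2+\sigma)^{1/2}$ and letting $\sigma\downto 0$, so $f$ is Lipschitz and supported in $B_{R_1}(x)$), using
\[
|\gradh f|^2\leq 2\eta^4\big|\gradh|\gradh^2 g|\big|^2+2|\gradh^2 g|^2|\gradh(\eta^2)|^2\leq 2\eta^4|\gradh^3 g|^2+C(R_0,R_1)|\gradh^2 g|^2\, ,
\]
where Kato's inequality $\big|\gradh|\gradh^2 g|\big|\leq|\gradh^3 g|$ and $|\gradh(\eta^2)|=2\eta|\gradh\eta|\leq 2c(R_0,R_1)$ (supported in $B_{R_1}(x)$) are used. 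Collecting terms gives the first inequality with $B$ a universal constant and $C=C(R_0,R_1)$; enlarging to $B=B(a)$, $C=C(a,R_0,R_1)$ is harmless.

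For the second inequality the essential step is an integration by parts removing a derivative from one copy of $g$. Writing $|\gradh g|^2=h^{ia}h^{jb}h^{kc}(\gradh_i g_{jk})(\gradh_a g_{bc})$ and using $\gradh h=0$,
\[
\int_M\eta^4|\gradh g|^6\,dh=\int_M\eta^4|\gradh g|^4\,h^{ia}h^{jb}h^{kc}(\gradh_i g_{jk})(\gradh_a g_{bc})\,dh=-\int_M\gradh_a\big(\eta^4|\gradh g|^4\,h^{ia}h^{jb}h^{kc}\gradh_i g_{jk}\big)g_{bc}\,dh.
\]
Distributing $\gradh_a$ gives (i) the term where it hits $\eta^4$, bounded by $c(R_0,R_1)\int_{B_{R_1}(x)}\eta^3|\gradh g|^5|g|_h\,dh$, and (ii)--(iii) the two terms where it hits $|\gradh g|^4$ or $\gradh_i g_{jk}$, each bounded by $c(n)\int_M\eta^4|\gradh g|^4|\gradh^2 g|\,|g|_h\,dh$. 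Since $\tfrac1a h\leq g\leq ah$ forces $|g|_h\leq c(n)a$, terms (ii)--(iii) are $\leq c(n,a)\int_M\eta^4|\gradh g|^4|\gradh^2 g|\,dh$, and by Young's inequality ($|\gradh g|^4|\gradh^2 g|\leq\tfrac{1}{4c(n,a)}|\gradh g|^6+c(n,a)|\gradh^2 g|^3$) these are absorbed into the left-hand side up to $c(n,a)\int_M\eta^4|\gradh^2 g|^3\,dh$, which the first inequality of the lemma bounds by exactly $B\big(\int_{B_{R_1}}|\gradh^2 g|^2\big)^{1/2}\int_{B_{R_1}}\eta^4|\gradh^3 g|^2+C\big(\int_{B_{R_1}}|\gradh^2 g|^2\big)^{3/2}$. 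For the remaining term (i), $c(R_0,R_1,a)\int_{B_{R_1}(x)}\eta^3|\gradh g|^5\,dh$, I would use Young's inequality to trade it for a small multiple of $\int_M\eta^4|\gradh g|^6\,dh$ (again absorbed) plus $C\int_M\eta^2|\gradh g|^4\,dh$, and then estimate $\int_M\eta^2|\gradh g|^4\,dh$ by Cauchy--Schwarz together with the dimension-four Sobolev inequality applied to $\eta^{1/2}|\gradh g|$ (using $\sqrt\eta\in C^\infty$ and Kato), so that after one last absorption it falls into the lower-order term $C\big(\int_{B_{R_1}}|\gradh g|^2\big)^{1/2}\int_{B_{R_1}}(|\gradh^2 g|^2+|\gradh g|^2)$.

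The delicate part is the bookkeeping in the second inequality: the weight $\eta^4$ must survive on the top-order quantity $|\gradh^3 g|^2$ although each intermediate Sobolev or Young step degrades the power of $\eta$, and every error term must either be absorbed into $\int_M\eta^4|\gradh g|^6\,dh$ — which is only possible when it carries a small constant, so the order of the Young splittings matters — or land in precisely the claimed lower-order shape; here the uniform bound $\tfrac1a h\leq g\leq ah$ is used essentially, both to control $|g|_h$ after the integration by parts and to keep the lower-order terms in the stated form.
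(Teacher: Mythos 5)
Your first inequality and the overall architecture of the second (integrate by parts to trade $|\gradh g|^6$ for $\eta^4|\gradh g|^4|\gradh^2 g|$ and $\eta^3|\gradh g|^5$, absorb via Young, control the resulting $\int\eta^4|\gradh^2 g|^3$ by the first inequality, and use the four--dimensional Sobolev inequality on $\sqrt{\eta}\,|\gradh g|$ for the remaining term) are exactly the paper's argument. The problem is the last bookkeeping step, and it is a real one: after you split $\eta^3|\gradh g|^5\le \epsilon\,\eta^4|\gradh g|^6+C_\epsilon\,\eta^2|\gradh g|^4$, the term $\int_M\eta^2|\gradh g|^4\,dh$ cannot be brought into the stated shape $C\bigl(\int_{B_{R_1}}|\gradh g|^2\bigr)^{1/2}\int_{B_{R_1}}(|\gradh^2 g|^2+|\gradh g|^2)$ by ``Cauchy--Schwarz together with Sobolev applied to $\eta^{1/2}|\gradh g|$''. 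Applying Sobolev directly to $\sqrt{\eta}|\gradh g|$ gives $\int\eta^2|\gradh g|^4\le\bigl(C\int_{B_{R_1}}(|\gradh^2 g|^2+|\gradh g|^2)\bigr)^{2}$, a square rather than the product; applying Cauchy--Schwarz first, $\int\eta^2|\gradh g|^4\le\bigl(\int_{B_{R_1}}|\gradh g|^2\bigr)^{1/2}\bigl(\int\eta^4|\gradh g|^6\bigr)^{1/2}$, and then absorbing leaves a bare $C\int_{B_{R_1}}|\gradh g|^2$. Neither of these is dominated by the claimed right--hand side in general (take $\gradh g$ uniformly small and $\gradh^2 g,\gradh^3 g$ negligible: the leftover is of order $\|\gradh g\|_{L^2}^2$ while the stated lower--order term is of order $\|\gradh g\|_{L^2}^3$), so as written your chain proves a weaker inequality than the lemma states (one that would still suffice for the application in the energy--growth theorem, but not the statement you were asked to prove).

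The repair is a reordering, which is precisely what the paper does: keep an odd power of $|\gradh g|$ with a single factor free of the cutoff. From the integration by parts, use H\"older/Young on $\eta^3|\gradh g|^5\le\bigl(\int\eta^4|\gradh g|^6\bigr)^{2/3}\bigl(\int\eta|\gradh g|^3\bigr)^{1/3}$ (so the leftover after absorption is $C\int_M\eta|\gradh g|^3\,dh$, not $\int\eta^2|\gradh g|^4$), then apply Cauchy--Schwarz as
\begin{equation*}
\int_M\eta|\gradh g|^3\,dh\;\le\;\Bigl(\int_{B_{R_1}(x)}|\gradh g|^2\,dh\Bigr)^{1/2}\Bigl(\int_M(\sqrt{\eta}\,|\gradh g|)^4\,dh\Bigr)^{1/2},
\end{equation*}
and only now apply the Sobolev inequality (with Kato and $|\gradh\sqrt{\eta}|\le C$) to the second factor, which yields $\bigl(\int(\sqrt{\eta}|\gradh g|)^4\bigr)^{1/2}\le C\int_{B_{R_1}}(|\gradh^2 g|^2+|\gradh g|^2)\,dh$ and hence exactly the stated lower--order term. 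With that single change the rest of your argument goes through and coincides with the paper's proof.
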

{\it Proof of Lemma \ref{est}}. 
In the following $C$ refers to a constant which depends on $a,R_0,R_1$  and $B$ refers to a constant which only depends on
$a$ . Both constants may vary from line to line, but continue to be denoted by $C$ respectively $B$.
Using H\"older's inequality and the Sobolev inequality applied to the function  $f=\eta^2 |\gradh^2 g|,$ we obtain
\begin{align}
\int_M \eta^4 |\gradh^2 g|^3 dh \le& (\int_{B_{R_1}(x)} |\gradh^2 g|^2 dh)^{\frac12}(\int_M \eta^8|\gradh^2 g|^4 dh)^{\frac12} \cr
\le& (\int_{B_{R_1}(x)} |\gradh^2 g|^2dh)^{\frac12} \left(\int_{B_{R_1}(x)} (B \eta^4 |\gradh^3 g|^2+C\eta^2 |\gradh^2 g|^2)dh \right), 
\label{firstest} 
\end{align}
which is the first estimate.
For the second estimate we integrate by parts with respect  one of the covariant derivatives $\grad$ and use H\"older's inequality, to get
\begin{align*}
\int_M \eta^4 |\gradh g|^6 dh\le& B\int_M \left( \eta^4 |g| |\gradh g|^4 |\gradh^2 g|+C\eta^3 |g| |\gradh g|^5\right)dh\\
\le& B\big(\int_M \eta^4 |\gradh g|^6 dh\big)^{\frac23}\Big( (\int_M \eta^4 |\gradh^2 g|^3 dh)^{\frac13}+C(\int_M \eta |\gradh g|^3 dh)^{\frac13}\Big)
\end{align*}
which implies, 

\begin{align*}
\int_M \eta^4 |\gradh g|^6 dh\le& B\int_M \eta^4 |\gradh^2 g|^3dh+C\int_M \eta |\gradh g|^3 dh\\
 \le& B\int_M \eta^4 |\gradh^2 g|^3dh+C (\int_{B_{R_1(x)}}   |\gradh g|^2 dh)^{\frac 1 2} \cdot (\int_M (\sqrt{\eta} |\gradh g|)^4 dh)^{\frac 1 2}.
\end{align*}
Using  \eqref{firstest}  to estimate the first term of the right hand side of this inequality, and 
the Sobolev inequality, applied to the function  $\sqrt{\eta}|\gradh g|$  to estimate the second   term, we conclude
\begin{align*}
\int_M \eta^4 |\gradh g|^6 dh\le& B\int_M \eta^4 |\gradh^2 g|^3dh+C (\int_{B_{R_1(x)}}   |\gradh g|^2 dh)^{\frac 1 2} \cdot (\int_M (\sqrt{\eta} |\gradh g|)^4 dh)^{\frac 1 2} \\
\le&  B(\int_{B_{R_1}(x)} |\gradh^2 g|^2dh)^{\frac12} \left(\int_{B_{R_1}(x)} (\eta^4 |\gradh^3 g|^2+C|\gradh^2 g|^2)dh \right)\\
&+C(\int_{B_{R_1}(x)} |\gradh g|^2 dh)^{\frac12} \left(\int_{B_{R_1}(x)} ( |\gradh^2 g|^2+|\gradh g|^2)dh \right),
\end{align*}

as required. Note, with loss of generality $|\grad \sqrt{\eta}|\leq C_1(n=4):$ if not replace $\eta$ by $\eta^2$.
$ \ \ \ \ \ \ \ \ \ \ \ \ \ \ \  \ \ \ \ \  \ \ \ \ \ \ \Box$

In the following proof,  $C$ will once again be a constant which may change from  line to line and depends on $a,R_0,R_1$,
and  $B$ denotes a constant  which can change from line to line but only depends on $a$. \\
{\it Proof of Theorem \ref{smallenergy}}.
Using equation \eqref{m=1}, and Young's inequality, we see that 
\begin{align}
\partt& |\grad g |^2- g^{ij} \grad^2_{ij} |\grad g|^2 +\frac{2}{a} |\grad^2 g|^2\cr
& \le  B | \grad g| \left( |\grad^2 g| |\grad g|  + |\grad^2 g|  + |\grad
g|^3  + |\grad g|^2  + |\grad g| + 1 \right) \cr
& \le  \frac{1}{2a}|\grad^2 g|^2  + B ( |\grad g|^4 + 1)
\label{gradgeqn}
\end{align} 
Integration by parts (once), and Young's inequality  yields 
\begin{align}
|\int_M \eta^4 g^{ij} \grad^2_{ij} |\grad g|^2  dh| \le& B\int_M \eta^4  |\gradh g|^2 |\gradh^2 g| dh+C\int_M \eta^3 |\gradh g| |\gradh^2 g| dh,  
\cr
\le &  \frac{1}{2a} \int_M  \eta^4  |\gradh^2 g|^2   +   C \int_M  \eta^2 (|\grad g|^4 + 1)dh,  
\label{intgradgeqn}
\end{align} 
for $\eta$ a standard cut-off function as in Lemma \ref{est}. 
Multiplying the above differential inequality \eqref{gradgeqn}  with $\eta^4$, integrating and using the inequality
 \eqref{intgradgeqn}, we get
\begin{align*}
\partt& (\int_M \eta^4 |\grad g |^2 dh )+\frac{2}{a}\int_M \eta^4 |\grad^2 g|^2dh \\
\le& \frac{1}{2a}\int_M \eta^4 |\grad^2 g|^2dh+ C \int_M \eta^2 ( |\gradh g|^4 + 1 )dh\\
\le& \frac{1}{2a}\int_M \eta^4 |\grad^2 g|^2dh+ C\left(\int_{B_{R_1}(x_0)} (|\gradh^2 g|^2+|\gradh g|^2+1)dh\right)^2,
\end{align*}
where we used  the Sobolev inequality, applied to the function $ f= \sqrt{\eta}|\grad g|,$ 
 and  $|\grad \sqrt \eta|^2 = \frac{|\grad \eta|^2}{4 \eta} \leq C$   in the last step.  Absorbing the first term on the right hand side into the left hand side and integrating from $0$ to $S$ we conclude 
\[
 \int_M \eta^4 |\grad g |^2(\cdot,S) dh \le  \int_M \eta^4 |\grad g_0 |^2 dh+CS(\delta+1)^2 
\]
for all $S\in [0,T]$. 
Now we turn to the corresponding estimate for the second derivatives. Recalling \eqref{m=2} and using Young's inequality, we see that
\begin{align*}
\partt & |\grad^2 g |^2(x,t) - g^{ij}(x,t) \grad^2_{ij} |\grad^2 g|^2+\frac{2}{a}|\grad^3 g|^2
(x,t) \\
\leq&  B |\grad^2 g| \Big( |\grad^3 g| ( |\grad g| + 1)\\
&+|\grad^2g|(|\grad^2g| + |\grad g|^2 + |\grad g| + 1) \\
&
 + |\grad g| (|\grad g|^3 +  |\grad g|^2 +  |\grad g| +1)
 +1 \Big)\\
\leq& \frac{1}{2a} |\grad^3 g|^2+B \big( |\grad^2 g|^3+|\grad^2 g|^2+|\grad g|^6+|\grad g|^2 +1)
\end{align*}
holds. As above, we note that integration by parts (once) followed by applications of  Young's inequality yields
\begin{align*}
|\int_M \eta^4 g^{ij}(x,t) \grad^2_{ij} |\grad^2 g|^2 (x,t) dh| \le& B\int_M \eta^4  |\grad g| |\grad^2 g| |\grad^3 g| dh\\
&+C\int_M \eta^3 |\grad^2 g| |\grad^3 g| dh, \\
\le& \int_M   \frac{\eta^4}{4a} |\grad^3 g|^2  + B \eta^4  |\grad g|^2 |\grad^2 g|^2dh\\
&+C\int_M \eta^2 |\grad^2 g|^2  dh,\\
 \leq & \int_M  \frac{\eta^4}{4a} |\grad^3 g|^2  + B \eta^4 |\grad^2 g|^3 + B \eta^4  |\grad g|^6 + C\eta^2 |\grad^2 g|^2
  dh
\end{align*} 
Multiplying the differential inequality for $|\grad^2 g |^2$ again with $\eta^4$, integrating and using the above two estimates, we obtain
\begin{align*}
\partt& (\int_M \eta^4 |\grad^2 g |^2 dh) +\frac{1}{a}\int_M \eta^4 |\grad^3 g|^2dh \\
&\leq \int_{B_{R_1} }  B \eta^4( |\grad^2 g|^3 +   |\grad g|^6)dh + C(|\grad^2 g|^2 +|\grad g|^2+1) dh .
\end{align*}
Using the  estimates from Lemma \ref{est}, and the assumption, with this estimate, we see that this implies
\begin{align*}
 \partt (\int_{M} \eta^4 |\grad^2 g |^2  ) &+\frac{1}{4 a} \int_M \eta^4 |\grad^3 g|^2 dh \\
 \leq&    C \int_{B_{R_1}}\left(   |\grad^2 g|^2+ |\grad g|^2+1\right) + C   \Big( \int_{B_{R_1}(x)} |\grad^2 g|^2 dh\Big)^{\frac 3 2 } \\
 &+ C\Big( \int_{B_{R_1}} |\grad g|^2 dh\Big)^{\frac 1 2} \Big( \int_{B_{R_1}} |\grad^2g|^2 +|\grad g|^2 dh \Big)     \\
  & + B \Big(\int_{B_{R_1}(x)} |\grad^2 g|^2 dh \Big)^{\frac 1 2  } \Big( \int_{B_{R_1}} \eta^4 |\grad^3 g|^2 dh \Big) \\
 \leq & C  \Big(\int_{B_{R_1}}  |\grad^2 g|^2+ |\grad g|^2+1 dh\Big) + C\Big(\int_{B_{R_1}}  |\grad^2 g|^2+ |\grad g|^2+1 dh\Big)^2  \\
&  + B \de  \int_{B_{R_1}} \eta^4 |\grad^3 g|^2 dh
\end{align*}
and hence 
\begin{eqnarray*}
&& \partt (\int_M \eta^4 |\grad^2 g |^2 dh)  +\frac{1}{8 a} \int_M \eta^4 |\grad^3 g|^2 dh
 \leq   C(\delta+1)^2
\end{eqnarray*}
if $B(a) \de \leq \frac{1}{10 a} $. 
Integration in time from $0$ to $S$ gives 
\begin{align*}
\int_M \eta^4 |\grad^2 g |^2(\cdot,S) dh \le  \int_M \eta^4 |\grad^2 g_0 |^2 dh+CS(\delta+1)^2 
\end{align*}
as required.  $ \ \ \ \ \ \ \ \ \   \ \ \ \ \ \ \ \ \ \ \ \ \ \ \ \ \ \ \ \ \  \ \ \ \  \ \ \ \ \ \ \ \ \ \ \ \ \ \ \ \ \  \ \ \ \  \ \ \ \ \ \ \ \ \ \ \ \ \ \ \ \ \ \ \ \  \ \ \ \ \Box$

\begin{lemma}\label{L2continuity}
Let $(M,h)$ be $n$-dimensional and satisfy \eqref{hassumptionsscaled}, $g$ be  a smooth solution to \eqref{Meq} on $M \times (0,T],$ $T\leq 1$ and assume that there exist $0<a\in \R$, $\de  \in \R$   so that
\begin{align*} 
&\frac {1}{a} h \leq g(\cdot,t) \leq ah\ \ \ \text{and}  \\
& \sup_{x\in M} \int_{B_1}|\gradh g(\cdot,t)|^2 dh \leq \de\ \ \ \forall \, t\in (0,T], \\
&K_0:= \sup_{x\in M} |\Rm(h)|  \leq 1
\end{align*}
Then there exists a $B=B(n,a)$ such that  
$$\int_{B_1(x)}  |g(\cdot,t)-g(\cdot,s)|^2  dh \leq B(\de+K_0)| t -s|,$$ 
$$\int_{B_1(x)}  |g^{-1}(\cdot,t)-g^{-1}(\cdot,s)|^2  dh \leq B(\de+K_0)| t -s|,$$
$$ |\int_{B_1(x)}  |g(\cdot,t)|^2 -|g(\cdot,s)|^2  dh|  \leq B(\de+K_0)|t-s|,$$ and
$$ |\int_{B_1(x)}  |g^{-1}(\cdot,t)|^2 -|g^{-1}(\cdot,s)|^2  dh|  \leq B(\de+K_0)|t-s|,$$
for all $x \in M$ for all $t,s \in (0,T]$ and for all $x \in M$.
\end{lemma}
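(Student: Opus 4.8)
The plan is to fix $s$ in the given time interval, set $w(\cdot,t):=g(\cdot,t)-g(\cdot,s)$ (a symmetric two‑tensor with $w(\cdot,s)\equiv 0$), and control the growth in $t$ of $\int_M\eta^2|w|_h^2\,dh$, where $\eta=\eta_x\in C^\infty_c(B_2(x))$ is a standard cut‑off, $\eta\equiv 1$ on $B_1(x)$, $0\le\eta\le1$, $|\gradh\eta|_h\le c(n)$ (such $\eta$ exists since $\inj(M,h)\ge 100$ makes $B_2(x)$ a ``standard'' ball). Two elementary facts will be used throughout: from $\tfrac1a h\le g(\cdot,\tau)\le ah$ the eigenvalues of $g$ and of $g^{-1}$ relative to $h$ lie in $[\tfrac1a,a]$, so $|g(\cdot,\tau)|_h,|g^{-1}(\cdot,\tau)|_h\le \sqrt n\,a$ and in particular $|w|_h\le 2\sqrt n\,a=:A$ pointwise; and $B_2(x)$ has volume $\le c(n)$ and is covered by $N(n)$ unit balls, whence $\int_{B_2(x)}|\gradh g(\cdot,\tau)|_h^2\,dh\le N(n)\de$ and $\int_{B_2(x)}|\Rm(h)|_h\,dh\le c(n)K_0$.

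For the first inequality I would write \eqref{Meq} as $\partt g_{ij}=g^{ab}\gradh_a\gradh_b g_{ij}+Q_{ij}$ with $|Q|_h\le c(a,n)(|\gradh g|_h^2+|\Rm(h)|_h)$ (immediate from the two‑sided bound on $g$), so that
\[
\frac{d}{dt}\int_M\eta^2|w|_h^2\,dh=2\int_M\eta^2\big\langle g^{ab}\gradh_a\gradh_b g(\cdot,t),w\big\rangle_h\,dh+2\int_M\eta^2\langle Q,w\rangle_h\,dh .
\]
The $Q$–term is $\le c(a,n)A\int_{B_2(x)}(|\gradh g|_h^2+|\Rm(h)|_h)\,dh\le c(a,n)(\de+K_0)$. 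The Hessian term cannot be estimated directly since only $\int|\gradh g|^2$ (and not $\int|\gradh^2 g|^2$) is under control, so I would substitute $g(\cdot,t)=w+g(\cdot,s)$ inside $\gradh_a\gradh_b$ and integrate by parts each resulting piece (using $\gradh h=0$), keeping the good term $-2\int_M\eta^2 g^{ab}\langle\gradh_a w,\gradh_b w\rangle_h\,dh\le 0$. The remaining pieces carry at most one derivative, falling on $g$, on $g^{-1}$ (with $\gradh g^{-1}=-g^{-1}(\gradh g)g^{-1}$), or on $\eta$; using $|w|_h\le A$, $|\gradh w|_h\le|\gradh g(\cdot,t)|_h+|\gradh g(\cdot,s)|_h$ and Young's inequality (absorbing any $\eta^2|\gradh w|_h^2$ into the good term), they are bounded by $c(a,n)\de+c(a,n)\int_{B_2(x)}|\gradh\eta|_h^2|w|_h^2\,dh$. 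Writing $\Theta(t):=\sup_{y\in M}\int_{B_1(y)}|g(\cdot,t)-g(\cdot,s)|_h^2\,dh$, the annular cut‑off term is $\le c(n)\,\Theta(t)$ by the covering, so altogether $\frac{d}{dt}\int_M\eta_x^2|w|_h^2\,dh\le c(a,n)(\de+K_0)+c(a,n)\Theta(t)$. Integrating from $s$ (taking $t>s$; the other case is symmetric in $s\leftrightarrow t$), using $\int_{B_1(x)}|w|_h^2\le\int_M\eta_x^2|w|_h^2$ and $\Theta(s)=0$, and then taking $\sup_x$, gives $\Theta(t)\le c(a,n)(\de+K_0)|t-s|+c(a,n)\int_s^t\Theta(\tau)\,d\tau$. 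Since $\Theta$ is bounded ($\le A^2\Vol(B_1)\le c(a,n)$) and $|t-s|\le T\le 1$, the integral form of Gronwall's inequality yields $\Theta(t)\le c(a,n)(\de+K_0)|t-s|$, which is the first estimate with $B=B(n,a)$.

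The second estimate is identical: $g^{-1}$ solves an equation of the same shape, $\partt g^{ij}=g^{ab}\gradh_a\gradh_b g^{ij}+\tilde Q^{ij}$ with $|\tilde Q|_h\le c(a,n)(|\gradh g|_h^2+|\Rm(h)|_h)$ (commute $\gradh^2$ past the inversion), and $|\gradh g^{-1}|_h\le c(a,n)|\gradh g|_h$ keeps its local energy small, so the argument above applies verbatim with $w$ replaced by $g^{-1}(\cdot,t)-g^{-1}(\cdot,s)$. The last two estimates are handled by running the same scheme directly on $\int_M\eta_x^2|g(\cdot,t)|_h^2\,dh$ and $\int_M\eta_x^2|g^{-1}(\cdot,t)|_h^2\,dh$: these integrands are uniformly bounded, so one bounds the time derivative (the Hessian term again integrating by parts into a nonpositive good term plus terms controlled, as above, by the local $L^2$–smallness of $\gradh g$ and by $K_0$, using $|\gradh |g|_h^2|\le c(a,n)|\gradh g|_h$) and integrates in $t$; the passage from $\int_M\eta_x^2(\cdot)$ to $\int_{B_1(x)}(\cdot)$ is absorbed using the first two estimates.

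The hard part is the Hessian term: since only the $L^2$–norm of $\gradh g$ is controlled one is forced to integrate by parts, and then the price is the cut‑off contribution $\int|\gradh\eta|^2|w|^2$ over the annulus $B_2(x)\setminus B_1(x)$, which is merely finite and not a priori small. What makes the scheme close with the sharp factor $(\de+K_0)$ is the observation that this annular term is itself $\le c(n)\,\Theta(t)$ (a covering argument), so it is absorbed by Gronwall's inequality rather than contributing a fixed constant; everything else is routine integration by parts together with Young's inequality and the two‑sided bound on $g$.
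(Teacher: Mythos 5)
Your proof of the first two estimates is correct and is essentially the paper's own argument: the paper likewise tests the equation against $\eta\,(g(t)-g(s))$ (respectively the analogous quantity for the inverses), integrates the second-order term by parts once, bounds all resulting terms by $c(n,a)\int_{B_2(x)}\bigl(|\gradh g(t)|^2+|\gradh g(s)|^2+|\Rm(h)|\bigr)dh\le c(n,a)(\de+K_0)$ using the two-sided bound $\tfrac1a h\le g\le ah$ and a covering of $B_2(x)$ by $c(n)$ unit balls, and then absorbs the leftover lower-order term $c\int_M\eta\,|g(t)-g(s)|^2dh$. The only difference is cosmetic: the paper absorbs this term by differentiating $e^{-B(n,a)t}\int_M\eta\,|g(t)-g(s)|^2dh$ (choosing $|\gradh\eta|^2\le c(n)\eta$), whereas you bound the cut-off contribution by the sup-quantity $\Theta(t)$ via a covering and close with the integral Gronwall inequality; since $T\le1$ both give the linear rate with $B=B(n,a)$.

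For the third and fourth estimates the closing step you propose does not deliver the stated rate. Having proved $\bigl|\int_M\eta^2|g(t)|^2dh-\int_M\eta^2|g(s)|^2dh\bigr|\le C|t-s|$, the ``passage from $\int_M\eta^2$ to $\int_{B_1(x)}$'' via the first estimate only controls the annular discrepancy through $\int_{B_2(x)\setminus B_1(x)}\bigl||g(t)|^2-|g(s)|^2\bigr|dh\le c(n,a)\bigl(\int_{B_2(x)}|g(t)-g(s)|^2dh\bigr)^{1/2}\le c\sqrt{(\de+K_0)|t-s|}$, i.e.\ a square-root rate, not $|t-s|$. Also note that for $\int\eta^2|g(t)|^2$ the cut-off cross term produces $\int|\gradh\eta|^2|g|^2dh$, which is only $O(1)$ and not $O(\de+K_0)$, so it must again be absorbed (Gronwall or exponential weight) rather than estimated directly. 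The paper does not attempt any conversion to the exact ball: it differentiates $e^{-B(n,a)t}\int_M\eta\,|g(t)|^2dh$ directly, integrates in time, and what is actually used later (in the proof of Theorem \ref{W22convergence}) is the resulting one-sided, nested-ball form $\int_{B_{R_0}}|g(t)|^2dh\le\int_{B_{R_1}}|g(s)|^2dh+C|t-s|$ with $R_0<R_1$, which follows immediately from the weighted inequality. So either prove and use the last two inequalities in this weighted (or nested-ball, one-sided) form, as the paper effectively does, or supply an additional argument; they do not follow from your first two estimates by the comparison you indicate.
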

\begin{cor}\label{L2continuityCor}
Let $(M,h)$ be $n$-dimensional and satisfy \eqref{hassumptionsscaled}, $g$ be a smooth solution to \eqref{Meq} on $M \times [0,T],$ $T\leq 1$ and assume that there exist $0<a\in \R$, $\de  \in \R$   so that
\begin{align*} 
&\frac {1}{a} h \leq g(\cdot,t) \leq ah\ \ \ \text{and}  \\
& \sup_{x\in M} \int_{B_1}|\gradh g(\cdot,t)|^2 dh \leq \de\ \ \ \forall \, t\in [0,T],\\
&K_0:= \sup_{x\in M} |\Rm(h)|  \leq 1
\end{align*}
Then there exists a $B=B(n,a)$ such that  
$$\int_{B_1(x)}  |g(\cdot,t)-g(\cdot,0)|^2  dh \leq B(\de+K_0)| t,$$
$$\int_{B_1(x)}  |g^{-1}(\cdot,t)-g^{-1}(\cdot,0)|^2  dh \leq B(\de+K_0)| t,$$  
$$ |\int_{B_1(x)}  |g(\cdot,t)|^2 -|g(\cdot,0)|^2  dh|  \leq B(\de+K_0)t$$ 
and
$$ |\int_{B_1(x)}  |g^{-1}(\cdot,t)|^2 -|g^{-1}(\cdot,0)|^2  dh|  \leq B(\de+K_0)t,$$ for all $t \in [0,T]$ and for all $x \in M$.
\end{cor}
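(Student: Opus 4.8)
\emph{Proof of Corollary \ref{L2continuityCor}.}
The plan is to deduce the Corollary from Lemma \ref{L2continuity} by a limiting argument in the second time variable. First I would observe that under the hypotheses of the Corollary the restriction of $g$ to $M\times(0,T]$ is a smooth solution to \eqref{Meq} satisfying $\frac 1a h\leq g(\cdot,t)\leq ah$, $\sup_{x\in M}\int_{B_1}|\gradh g(\cdot,t)|^2\,dh\leq\de$ and $K_0\leq 1$ for all $t\in(0,T]$, so Lemma \ref{L2continuity} applies verbatim and produces a constant $B=B(n,a)$ with
\[
\int_{B_1(x)}|g(\cdot,t)-g(\cdot,s)|^2\,dh\leq B(\de+K_0)|t-s|
\]
for all $x\in M$ and all $t,s\in(0,T]$, together with the three companion inequalities for $g^{-1}$, $|g|^2$ and $|g^{-1}|^2$.

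Next I would fix $x\in M$ and $t\in(0,T]$ and let $s\downto 0$. The key observation is that in the setting of the Corollary $g$ is smooth (and everywhere positive definite) on the closed slab $M\times[0,T]$, and that $\overline{B_1(x)}$ is compact, since $(M,h)$ is complete and $\inj(M,h)\geq 100$ by \eqref{hassumptionsscaled}; hence $g(\cdot,s)\to g(\cdot,0)$ and $g^{-1}(\cdot,s)\to g^{-1}(\cdot,0)$ uniformly on $B_1(x)$ as $s\downto 0$. Passing to the limit $s\downto 0$ in the four inequalities above, the left-hand sides converge to the corresponding quantities with $g(\cdot,0)$ in place of $g(\cdot,s)$, while the right-hand sides converge to $B(\de+K_0)t$. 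This yields the asserted estimates for every $t\in(0,T]$, and for $t=0$ they are trivial. $\Box$

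I do not expect any genuine obstacle here: the only step requiring attention is the justification of the limit $s\downto 0$, and this is immediate once one uses the smoothness of $g$ up to $t=0$ together with the compactness of $\overline{B_1(x)}$, which makes uniform convergence, hence convergence of the integrals over the fixed ball $B_1(x)$, automatic. An alternative that avoids even this mild point would be to rerun the proof of Lemma \ref{L2continuity} allowing $s=0$ from the outset, since that argument only used the stated bounds on the time interval on which it was carried out.
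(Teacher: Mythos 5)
Your proposal is correct and follows essentially the same route as the paper: apply Lemma \ref{L2continuity} with $s=t_i\downto 0$ and pass to the limit using the smoothness of $g$ up to $t=0$ (the paper invokes exactly this, with your compactness remark left implicit). No gap.
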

{\it Proof of Corollary \ref{L2continuityCor}}. 
For any sequence $t_i \to 0$ and any $x \in M$, we have $\int_{B_1(x)}  |g(\cdot,t)-g(\cdot,t_i)|^2  dh \leq B(\de+1)| t -t_i|.$  
 Letting $i \to \infty$ implies the first estimate   view of the smoothness of the solution. The other estimates follow with an almost identical argument.
$ \ \ \ \ \Box$\\
\begin{proof}[Proof of Lemma  \ref{L2continuity}:] 
We calculate for a standard  cut-off function $\eta$ with $\eta=1$ on $B_{1}(x)$ and $\eta = 0$ on $(B_2(x))^c$ and $|\grad \eta|^2\leq c(n)|\eta|$ that  
\begin{align*}
 & \partt ( e^{-B(n,a)t} \int_{M}      \eta |g(\cdot,t) -g(\cdot,s)|^2  dh) \cr 
 &  =
e^{-B(n,a)t}  \int_{M}  \eta \partt h^{ik} h^{jl} ( g(t)_{ij} - g(s)_{ij} )( ( g(t)_{kl} - g(s)_{kl} ) dh\cr
& \ \ \ \ - B(n,a)e^{-B(n,a)t} \int_{M}      \eta |g(\cdot,t) -g(\cdot,s)|^2  dh \cr
& =e^{-B(n,a)t} \int_{M} 2   \eta h^{ik} h^{jl} \partt g(t)_{ij} ( ( g(t)_{kl} - g(s)_{kl} )dh\cr
& \ \ \ \ - B(n,a)e^{-B(n,a)t} \int_{M}      \eta |g(\cdot,t) -g(\cdot,s)|^2  dh \cr 
& = e^{-B(n,a)t}\int_{M} 2  \eta  h^{ik} h^{jl} \curlL(g(t),h)_{ij} ( g(t)_{kl} - g(s)_{kl} dh)\cr
& \ \ \ \ - B(n,a)e^{-B(n,a)t} \int_{M}      \eta |g(\cdot,t) -g(\cdot,s)|^2  dh  \cr
& \leq  e^{-B(n,a)t}B(n,a)  \int_{B_2(x)} ( |\gradh g(t)|^2  + K_0  +  |\grad g(s)|^2 )dh   \cr
&   \ \ \ \
+ e^{-B(n,a)t}B(n,a)  \int_{M}  \eta |g(\cdot,t) -g(\cdot,s)|^2  dh  \cr
& \ \ \ \ - B(n,a)e^{-B(n,a)t} \int_{M}      \eta |g(\cdot,t) -g(\cdot,s)|^2  dh \cr  
& \leq  e^{-B(n,a)t}B(n,a)(\de+K_0) , 
\end{align*}
where $\curlL(g(t),h)_{ij}$ is the right hand side of the equation \eqref{Meq}, and  
 we used integration by parts, with respect to $\grad$,  in the second to last step, and a covering of  $B_2(x)$ by $c(n)$ balls of radius one in the last step.
 Integrating from $s$ to $t$ implies the first estimate. 
Also,
 \begin{align*}
 & \partt ( e^{-B(n,a)t} \int_{M}      \eta |g^{-1}(\cdot,t) -g^{-1}(\cdot,s)|^2  dh) \cr 
 &  =
e^{-B(n,a)t}  \int_{M}  \eta \partt h_{ik} h_{jl} ( g(t)^{ij} - g(s)^{ij} ) ( g(t)^{kl} - g(s)^{kl} ) dh\cr
& \ \ \ \ - B(n,a)e^{-B(n,a)t} \int_{M}      \eta  |g^{-1}(\cdot,t) -g^{-1}(\cdot,s)|^2 dh \cr
& =e^{-B(n,a)t} \int_{M} 2   \eta h^{ik} h^{jl} g^{iv}g^{jw}\partt g(t)_{vw} ( ( g(t)^{kl} - g(s)^{kl} )dh\cr
& \ \ \ \ - B(n,a)e^{-B(n,a)t} \int_{M}      \eta  |g^{-1}(\cdot,t) -g^{-1}(\cdot,s)|^2  dh \cr 
& = e^{-B(n,a)t}\int_{M} 2  \eta  h^{ik} h^{jl}g^{iv}g^{jw}  \curlL(g(t),h)_{vw } ( ( g(t)^{kl} - g(s)^{kl} ) dh)\cr
& \ \ \ \ - B(n,a)e^{-B(n,a)t} \int_{M}     \eta  |g^{-1}(\cdot,t) -g^{-1}(\cdot,s)|^2 dh  \cr
& \leq  e^{-B(n,a)t}B(n,a)  \int_{B_2(x)} ( |\gradh g(t)|^2  + K_0  +  |\grad g(s)|^2 )dh   \cr 
& \ \ \ \ 
+ e^{-B(n,a)t}B(n,a)  \int_{M}  \eta  |g^{-1}(\cdot,t) -g^{-1}(\cdot,s)|^2   dh  \cr
& \ \ \ \ - B(n,a)e^{-B(n,a)t} \int_{M}      \eta  |g^{-1}(\cdot,t) -g^{-1}(\cdot,s)|^2  dh \cr  
& \leq  e^{-B(n,a)t}B(n,a)(\de+K_0).
\end{align*}

 Integrating from $s$ to $t$ implies the second estimate.
 
Also,
\begin{align*}
& \partt ( e^{-B(n,a)t}\int_{M}   \eta |g(\cdot,t)|^2 dh ) \cr
&  =
 e^{-B(n,a)t} \int_{M} \eta \partt h^{ik} h^{jl}  g(t)_{ij}  g(t)_{kl} dh  
 -B e^{-B(n,a)t} \int_{M}   \eta |g(\cdot,t)|^2 dh )\cr 
& =e^{-B(n,a)t} \int_{M} 2  \eta h^{ik} h^{jl} \partt g(t)_{ij} g(t)_{kl} dh
 -B e^{-B(n,a)t} \int_{M}   \eta |g(\cdot,t)|^2 dh )\cr 
& =e^{-B(n,a)t} \int_{M} 2  \eta  h^{ik} h^{jl} \curlL(g(t),h)_{ij}  g(t)_{kl} dh 
 -B e^{-B(n,a)t} \int_{M}   \eta |g(\cdot,t)|^2 dh )\cr 
& \leq  e^{-B(n,a)t}B(n,a)  \int_{B_2(x)} ( |\gradh g(t)|^2   +K_0 )dh   +   Be^{-B(n,a)t}B(n,a)  \int_{M} \eta |g(\cdot,t)|^2 dh\cr
&  \ \ \ \ \ 
 -B e^{-B(n,a)t} \int_{M}   \eta |g(\cdot,t)|^2 dh )\cr 
& \leq  e^{-B(n,a)t}B(n,a)(\de + K_0) ,
\end{align*}
where we used integration by parts in the second to last step. Integrating from $s$ to $t$ implies the third  estimate.
The fourth estimate follows similarly. 
\end{proof}

The previous Lemma  showed us that solutions which are smooth on $M \times [0,T]$ and whose  $W^{1,2}$ energy is bounded on balls of radius one by $\de$, and which are uniformly (independent of time) equivalent to $h$, $\frac{1}{a}h \leq g(t) \leq a h$, converge strongly in the $L^2$ norm back to $g_0$ as time goes to zero, and the rate of convergence depends only on $a,n$ and $\de$.
If we  only assume that the solution is smooth on $M \times (0,T)$, then the previous Lemma shows us that the solution is Cauchy in the $L^2(B_1(x))$ norm in time, and hence there exists a well defined $L^2_{loc}$ limit, $g_0$  at time $t=0$ on $M$.
In the four dimensional setting, the assumption that the solution is bounded in $W^{2,2}(B_1(x))$  for all $x \in M$ means that there must be a sequence of times $t_i$, such that  $g(t_i)$ 
 converge weakly in $H:= W^{2,2}(B_1(x))$ back to  $g_0 \in H$ (the details are given in the proof of Theorem \ref{W22convergence}.).
The estimates of Lemma  \ref{est}, show us that in fact the convergence is also strong in $W^{2,2}_{loc},$ if the solution is a limit of smooth solutions, whose initial data converge locally strongly in $W^{2,2}$, as is explained in the following theorem. Note that this is  precisely the situation which we study in the next section.
\begin{thm}\label{W22convergence}
For all $0<a  \in \R$  there exists $\de=\de(a)$ so that the following holds.
Let $(M,h)$ be four dimensional and satisfy \eqref{hassumptionsscaled} and 
$(M,g(t),p)_{t\in (0,T]}$ be the smooth limit, on compact subsets of $M \times (0,T]$, of $(M,g(i)(t),p_i)|_{t\in (0,T]}$ as $i\to \infty$ 
of a  sequence of smooth solutions $g(i)$ to \eqref{Meq} defined on $M \times [0,T]$ which satisfy 
\begin{eqnarray}
&& \frac {1}{a} h \leq g(i)(\cdot,t) \leq ah \label{uniformbound} \ \ \ \text{and} \\
&& \sup_{x\in M} \int_{B_1(x)}( |\gradh g(i)(\cdot,t)|^2 + |\gradh^2 g(i)(\cdot,t) |^2)dh \leq \de\label{W22bound}
\end{eqnarray}
for all $t \in  [0,T]$. Assume further that  the initial data $g(i)(\cdot,0)$ converge strongly locally in $W^{2,2}_{loc}$ to some $g_0 \in W^{2,2}_{loc}$ as $i\to \infty$, that is $\norm{g(i)(0) -g_0}_{W^{2,2}(K)} \to 0$ for all compact sets $K \subseteq M$  as $i\to \infty$. Then,  $g(t) \to g_0$ as $t\downto 0$ locally strongly in the   
$W^{2,2}$ norm, that is 
\begin{eqnarray}
&& \int_{B_1(x)}  |g(\cdot,t)-g_0(\cdot)|^2 dh 
 +  \int_{B_1(x)}  |\grad (g(\cdot,t)-g_0(\cdot))|^2(\cdot,t)  dh \\
&& \ \ \ \ + \int_{B_1(x)}  |\grad^2 (g(\cdot,t)-g_0(\cdot))|^2(\cdot,t)  dh
\to 0\nonumber
\end{eqnarray}
as $t\downto 0,$ for any $x \in M$. 
\end{thm}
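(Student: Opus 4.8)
The plan is to combine three ingredients already at our disposal: (1) the local $L^2$ convergence $g(t)\to g_0$ as $t\downto 0$, which follows from Corollary \ref{L2continuityCor} applied to the limit solution (the limit solution inherits $\frac1a h\le g(t)\le ah$ and $\int_{B_1(x)}|\gradh g(\cdot,t)|^2\,dh\le\de$ for $t>0$, so the Cauchy-in-$L^2$ estimate gives a limit which must agree with $g_0$, since each $g(i)(t)\to g(i)(0)$ in $L^2_{\loc}$ at the rate $B(\de+K_0)t$ uniformly in $i$); (2) the energy-monotonicity-type estimate of Theorem \ref{smallenergy}, which for each $i$ bounds $\int_{B_{R_0}(x_0)}(|\gradh g(i)(\cdot,t)|^2+|\gradh^2 g(i)(\cdot,t)|^2)\,dh$ by $\int_{B_{R_1}(x_0)}(|\gradh g(i)(\cdot,0)|^2+|\gradh^2 g(i)(\cdot,0)|^2)\,dh+V(R_0,R_1,a)t$; and (3) weak lower semicontinuity of the $W^{2,2}$ seminorm together with the strong $W^{2,2}_{\loc}$ convergence of the initial data.

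First I would fix $x\in M$ and $R_0=1<R_1\le 2$. Passing $i\to\infty$ in the estimate from Theorem \ref{smallenergy}, using that $g(i)\to g(t)$ smoothly on compact subsets of $M\times(0,T]$ for the left-hand side and that $g(i)(0)\to g_0$ strongly in $W^{2,2}(B_{R_1}(x))$ for the right-hand side, I obtain for every $t\in(0,T]$
\[
\int_{B_1(x)}(|\gradh g(\cdot,t)|^2+|\gradh^2 g(\cdot,t)|^2)\,dh \;\le\; \int_{B_{R_1}(x)}(|\gradh g_0|^2+|\gradh^2 g_0|^2)\,dh + V(1,R_1,a)\,t .
\]
Taking $\limsup_{t\downto 0}$ gives
\[
\limsup_{t\downto 0}\int_{B_1(x)}(|\gradh g(\cdot,t)|^2+|\gradh^2 g(\cdot,t)|^2)\,dh \;\le\; \int_{B_{R_1}(x)}(|\gradh g_0|^2+|\gradh^2 g_0|^2)\,dh .
\]
On the other hand, from $L^2_{\loc}$ convergence $g(t)\to g_0$ and the uniform $W^{2,2}(B_{R_1}(x))$ bound on $g(t)$ (for small $t$), along any sequence $t_j\downto 0$ a subsequence converges weakly in $W^{2,2}(B_{R_1}(x))$, and the weak limit must be $g_0$ by uniqueness of the $L^2$ limit; weak lower semicontinuity of $\int_{B_1(x)}(|\gradh\cdot|^2+|\gradh^2\cdot|^2)\,dh$ then yields
\[
\int_{B_1(x)}(|\gradh g_0|^2+|\gradh^2 g_0|^2)\,dh \;\le\; \liminf_{t\downto 0}\int_{B_1(x)}(|\gradh g(\cdot,t)|^2+|\gradh^2 g(\cdot,t)|^2)\,dh .
\]
Letting $R_1\downto 1$ in the first chain (using continuity of the integral of $|\gradh g_0|^2+|\gradh^2 g_0|^2$ in the radius, since $g_0\in W^{2,2}_{\loc}$) shows that $\limsup\le\liminf$, i.e.\ the full seminorm converges:
\[
\int_{B_1(x)}(|\gradh g(\cdot,t)|^2+|\gradh^2 g(\cdot,t)|^2)\,dh \;\longrightarrow\; \int_{B_1(x)}(|\gradh g_0|^2+|\gradh^2 g_0|^2)\,dh \quad (t\downto 0).
\]

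Finally, convergence of the $W^{2,2}$ seminorms plus weak $W^{2,2}$ convergence $g(t)\weak g_0$ on $B_1(x)$ upgrades, by the standard Hilbert-space fact (norm convergence + weak convergence $\Rightarrow$ strong convergence, applied to $\gradh g(t)$ and $\gradh^2 g(t)$ in $L^2(B_1(x))$ against the fixed background inner product $dh$), to strong convergence of $\gradh g(t)$ and $\gradh^2 g(t)$ in $L^2(B_1(x))$; combined with the $L^2$ convergence of $g(t)$ itself this is exactly the claimed $W^{2,2}(B_1(x))$ convergence $g(t)\to g_0$. I expect the main obstacle to be the bookkeeping in step one: passing to the limit $i\to\infty$ in Theorem \ref{smallenergy} cleanly (justifying that the limit solution satisfies the same hypotheses with the same $\de$, and that the smooth-on-compact-subsets convergence is enough to pass the limit in the left side while strong $W^{2,2}_{\loc}$ convergence of initial data handles the right side), together with the slightly delicate order of limits $t\downto0$ then $R_1\downto1$ and the identification of every subsequential weak limit with $g_0$ via the uniqueness of the $L^2_{\loc}$ limit from Corollary \ref{L2continuityCor}.
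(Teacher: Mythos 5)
Your proposal is correct and follows essentially the same route as the paper: pass to the limit $i\to\infty$ in Theorem \ref{smallenergy} (together with Corollary \ref{L2continuityCor} for the strong $L^2$ convergence to $g_0$), extract weak $W^{2,2}$ limits on $B_1(x)$ identified with $g_0$ via the $L^2$ limit, and use the upper bound by the $W^{2,2}$ data on $B_{R_1}(x)$ with $R_1\downto 1$ to upgrade weak to strong convergence. The only difference is presentational — you split off the $L^2$ part and argue via $\limsup/\liminf$ of the seminorms plus the Radon--Riesz property, whereas the paper works with the full $W^{2,2}$ inner product and a contradiction argument expanding $\norm{g(s_k)-g_0}_H^2$ — but the key ingredients and their order are the same.
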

\begin{proof}

The  solutions $g(i)$ defined on $M\times [0,T]$ are smooth and satisfy the hypotheses of Theorem \ref{smallenergy}. Without loss of generality $T \leq 1$. Hence, the conclusions of that theorem hold and we get
\begin{eqnarray*}
&& \int_{B_{ R_0 }(x_0)}( |\gradh g(i)(\cdot,t)|^2 + |\gradh^2 g(i)(\cdot,t) |^2)dh \cr
&& \leq   \int_{B_{ R_1 }(x_0)}( |\gradh g(i)(\cdot,0)|^2 + |\gradh^2 g(i)(\cdot,0) |^2)dh + V(R_0,R_1,a)t 
\end{eqnarray*}
for any $x_0 \in M$ and for all $t \in [0,T)$. The third estimate of Corollary \ref{L2continuityCor} implies additionally for any $x_0\in M$ and all $t\in [0,T),$ ( remembering $T\leq 1$)
\[
 \int_{B_{ R_0 }(x_0)} | g(i)(\cdot,t)|^2 \, dh  \leq   \int_{B_{ R_1 }(x_0)} |g(i)(\cdot,0)|^2dh+ V(R_0,R_1,a)t.
 \] 
Letting $i \to \infty$ for fixed $t \in (0,T)$, and using that the solutions converges smoothly locally away from  $t=0$ and in the $W^{2,2}_{loc}$ norm 
at time zero, we see that the limit solution also satisfies
\begin{eqnarray}
&& \int_{B_{ R_0 }(x_0)}( |\gradh g(\cdot,t)|^2 + |\gradh^2 g(\cdot,t) |^2 + |g(t)|^2  )dh \cr
&& \leq   \int_{B_{ R_1 }(x_0)}( |\gradh g_0|^2 + |\gradh^2 g_0 |^2 + |g_0|^2 )dh + V(R_0,R_1,a)t \label{mainresult2}
\end{eqnarray}
for any $x_0 \in M$, for all $t \in (0,T),$ that is
\begin{eqnarray}
 \norm{ g(t)}^2_{ W^{2,2}(B_{R_0}(x_0))}    \leq   \norm{ g_0}^2_{W^{2,2}(B_{R_1}(x_0))}  + V(R_0,R_1,a)t \label{mainresult3}
\end{eqnarray}
for any $x_0 \in M$, for all $t \in (0,T),$ where $  \norm{ T }^2_{W^{2,2}(B_{R_0}(x_0))} 
:=  \int_{B_{ R_0 }(x_0)}( |T|^2_h + |\gradh T|^2 + |\gradh^2T|^2   )dh$
for any zero-two tensor defined on $B_{ R_0 }(x_0)$ whose components are in $W^{2,2}$.
Furthermore, we have $\int_{B_1(x_0)} |g(t)-g_0|^2 dh \to 0$ for $t\downto 0$ in view of Corollary \ref{L2continuityCor}, and the fact that $g(i)(0) \to g_0$ as $i \to \infty$ in the $W^{2,2}_{loc}$ norm. 
 
Fixing $x_0\in M,$  and $R_0:=1$  we define the Hilbert space $H:= W^{2,2}(B_1(x_0))$ to be the space of zero-two tensors whose components are in $W^{2,2}(B_1(x_0))$ 
and whose scalar product is defined by 
$ (T,S)_H:= \int_{B_{1 }(x_0)}  (T,S)_h + (  \grad T,\grad S)_h +  (\gradh^2 T, \grad^2 S)_h   dh .
$  Using this notation, we may write \eqref{mainresult2} as 
\begin{eqnarray}
(g(t),g(t))_H  \leq    \norm{ g_0}^2_{W^{2,2}(B_{R_1}(x_0))}  + V(R_0,R_1,a)t  \label{mainresult4}
\end{eqnarray}
 for all $t \in (0,T),$ for any $2>R_1>R_0=1.$
  We are going to show that every sequence $(g(t_i))_{i\in \N}$ with $t_i \downto 0$ contains a strongly converging subsequence with limit $g_0$ in $H$. This then clearly implies that  $g(s) \to g_0$ in $H$ as $s \downto 0$ since otherwise we can find a sequence $t_i \to 0$, and a $\de>0$ s.t. $\norm{g(t_i) -g_0}_{H} >\de$, and hence no subsequence of $g(t_i)$ will converge to $g_0$ in $H$, which would be a contradiction.
  
  Now to the details.
For  any sequence of times $0<t_i \to 0$ as $i\to \infty,$
there exists a subsequence, $g(t_{i_j})=: g_{i_j} $ of $g(t_i)$ such that $g_{i_j} \weak z$ as $j\to \infty$ for some $z \in H,$ in view of the definition of a Hilbert space and weak convergence.

But $g_{i_j}$ must then converge strongly to $z$ in $L^2(B_1(x_0))$ and hence $z=g_0$. 
Setting $r_j := t_{i_j}$ this means we have 
$g(r_j) \to g_0$ strongly in $L^2(B_1(x_0))$ and $g(r_j) \weak g_0$ weakly
in $H$. It remains to show that 
$g(r_j) \to g_0$ strongly in $H=W^{2,2}(B_1(x_0))$ for all $x_0\in M$.

Assume that this  is not true for some $x_0 \in M$.
Then we can find a subsequence $s_k := r_{i_k},$ $k\in \N$ of $(r_{k})_{k\in \N}$ and a $\de>0$ such that
$ \norm{g(s_k) -g_0}^2_{H} \geq \de >0$ for all $k\in \N$.
But then 
\begin{eqnarray*}
\de && \leq (g(s_k) -g_0, g(s_k) -g_0)_{H}  \cr
&& = (g(s_k), g(s_k))_H + (g_0,g_0)_{H} -2(g(s_k),g_0)_H \cr
&& = \norm{g(s_k)}^2_{H} + \norm{g_{0}}^2_{H} -2(g(s_k),g_0)_H  
\end{eqnarray*}
for all $k\in \N$, and hence, using \eqref{mainresult4}, 
\begin{eqnarray}
\de && \leq  \norm{ g_0}^2_{W^{2,2}(B_{R_1}(x_0))} + \norm{g_{0}}^2_{H} -2(g(s_k),g_0)_H + V(1,R_1,a)s_k.
\end{eqnarray}
Since $g_0$ in $W^{2,2}(B_2(x_0))$ (here we use the covering argument from Lemma \ref{balllemma}), there must exist a $ 1<R_1<2$ such that 
\[
\norm{g_{0}}^2_{W^{2,2}(B_{R_1}(x_0)) } \leq \norm{g_{0}}^2_{H} +\frac{\de}{8} 
\] 
and hence we obtain
\begin{eqnarray*}
\de && \leq   2\norm{g_{0}}^2_{H} -2(g(s_k),g_0)_H  + \frac{\de}{8} + V(1,R_1,a)s_k 
\end{eqnarray*}
for this choice of $R_1$ independent of  $k\in \N$.
Letting $k \to \infty,$ we obtain a contradiction, since $2\norm{g_{0}}^2_{H} -2(g(s_k),g_0)_H  \to 0$ as $k\to \infty$.
\end{proof}

\section{Existence and regularity}\label{existencechap}

In this section we prove the main results for the Ricci DeTurck flow of data which is initially $W^{2,2}$ .

\begin{thm}\label{main1}
Let $1<a<\infty$ and $(M,h)$ be four dimensional and satisfy \eqref{hassumptionsscaled}. Then there exists a constant $\ep_1 =\ep_1(a) >0$ with the
following properties.
Assume  $g_0$ is
a smooth Riemannian metric on $M$  which is uniformly bounded  in $W_{\loc}^{2,2} \cap L^{\infty}$ in the following sense:
\begin{align} 
& \ \  \frac{1}{a}  h \leq g_0 \leq  a h  \tag{a}  \label{aaa}\\
& \ \ \int_{B_2(x)} ( |\gradh g_0|^2 +   |\gradh^2 g_0|^2)dh \leq \ep
\ \ \mbox{ \rm for all } \ \ x \in M  ,
 \tag{b} \label{bbb}
 \end{align}
where $\ep \leq \ep_1(a),$
and $g_0$ satisfies 
$$\sup_M  |\grad^i g_0|^2 < \infty $$ for all $i\in N$.
Then there exists   constants $T=T(a,\ep)>0$  and  $c_j= c_j(a,h)>0,$  and a smooth solution
$(g(t))_{t \in [0,T]}$ to \eqref{Meq} with $g(\cdot,0) = g_0(\cdot)$ such that

\begin{align}
 & \ \ \ \ \frac{1}{400 a  } h \leq g(t) \leq 400 a h  \tag{${\rm a}_t
$} \label{aaa_t}  \\
&  \ \ \ \ \int_{B_1(x)} ( |\gradh g(\cdot,t)|^2 +   |\gradh^2 g(\cdot,t)|^2 )dh< 2\ep \ \  \mbox{ \rm for all }  x \in M  , \ t 
\in [0,T]  \tag{${\rm b}_t$}  \label{bbb_t} \\
 & \ \ \ \ |\gradh^jg(\cdot,t)|^2 \leq \frac{c_j(a,h)}{t^{ j}}  
 \tag{${\rm c}_t$}  \label{ccc_t} 
\end{align}
and 
$$ \sup_M  |\gradh ^j g(t)|^2 < \infty $$ for all $j\in N$ for all $t\in [0,T].$
\end{thm}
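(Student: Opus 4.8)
The plan is to run an open--closed continuity (bootstrap) argument for \emph{smooth} solutions, feeding into one another the three a priori facts already established: the $L^{\infty}$/$C^1$ estimate of Theorem~\ref{gradientbound}, the higher-derivative estimate of Lemma~\ref{smoothnesslemma}, and the preservation of local $W^{2,2}$-smallness in Corollary~\ref{smallenergycor}. By short-time existence for the Ricci DeTurck flow (Appendix~A, Shi's method), since $g_0$ is smooth with all $\sup_M|\gradh^i g_0|^2$ finite, there is a maximal smooth solution $(g(t))_{t\in[0,T_{\max})}$ with $g(0)=g_0$ for which $\sup_M|\gradh^i g|^2$ is bounded on each compact subinterval of $[0,T_{\max})$ (so in particular \eqref{control} holds there). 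I would then fix $\ep_1=\ep_1(a)>0$ so small that $2\ep_1$ lies below both the $\ep_0(a,4)$ of Theorem~\ref{gradientbound} and the $\de(400a)$ of Corollary~\ref{smallenergycor}, and set $T:=\min\{S_1(a,4),\,S_2(400a,\ep),\,\tfrac12\}$, so that $T=T(a,\ep)$.

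Define
\[
T^\ast:=\sup\Big\{\tau\in[0,T_{\max})\ :\ \tfrac1{400a}h\le g(t)\le 400a\,h\ \text{ and }\ \sup_{x\in M}\!\int_{B_1(x)}\!\!\big(|\gradh g|^2+|\gradh^2 g|^2\big)(\cdot,t)\,dh\le 2\ep\ \ \forall\,t\in[0,\tau]\Big\}.
\]
Since $\tfrac1ah\le g_0\le a h$ lies strictly inside $[\tfrac1{400a},400a]$ and $\int_{B_1(x)}(|\gradh g_0|^2+|\gradh^2 g_0|^2)\,dh\le\ep<2\ep$, smoothness of the flow gives $T^\ast>0$; the goal is $T^\ast\ge T$. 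Suppose instead $T^\ast<T\le\tfrac12<1$, and work on $[0,T^\ast)$. Because $n=4$, the scale-invariant quantity $|\gradh g|^{n/2}+|\gradh^2 g|^{n/2}$ appearing in hypothesis \eqref{dcon} of Theorem~\ref{gradientbound} is exactly $|\gradh g|^2+|\gradh^2 g|^2$, which is $\le 2\ep\le\ep_0$ here; combined with $\tfrac1ah\le g_0\le a h$ and \eqref{control}, Theorem~\ref{gradientbound} gives $\tfrac1{80a}h< g(t)<80a\,h$ and $|\gradh g|^2(\cdot,t)<\tfrac1t$ on $[0,T^\ast)$. Lemma~\ref{smoothnesslemma} (applied with $80a$ in place of $a$ and $b=1$) then yields $|\gradh^j g|^2(\cdot,t)\le N_j(80a,1,4,h)\,t^{-j}=:c_j(a,h)\,t^{-j}$ on $(0,T^\ast)$. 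Finally, on $[0,T^\ast)$ we have $\tfrac1{400a}h\le g\le 400a\,h$, local energy $\le 2\ep\le\de$, and $\int_{B_2(x_0)}(|\gradh g_0|^2+|\gradh^2 g_0|^2)\,dh\le\ep$ for every $x_0$; Corollary~\ref{smallenergycor} (with $b=400a$) improves the energy to $\sup_{x_0}\int_{B_1(x_0)}(|\gradh g|^2+|\gradh^2 g|^2)(\cdot,t)\,dh\le\tfrac32\ep$ on $[0,S_2)\cap[0,T^\ast)=[0,T^\ast)$.

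To close: on $[T^\ast/2,T^\ast)$ the solution is uniformly equivalent to $h$ with all covariant derivatives bounded, so short-time existence restarted just before $T^\ast$ extends it past $T^\ast$; hence $T^\ast<T_{\max}$, $g$ is smooth across $t=T^\ast$, and by continuity the strict bounds $\tfrac1{80a}h<g(T^\ast)<80a\,h$ and $\sup_{x}\int_{B_1(x)}(\cdots)(\cdot,T^\ast)\le\tfrac32\ep$ survive at $T^\ast$. Consequently the weaker defining inequalities $\tfrac1{400a}h\le g\le 400a\,h$ and local energy $\le 2\ep$ persist on $[0,T^\ast+\sigma]$ for some $\sigma>0$, contradicting the definition of $T^\ast$. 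So $T^\ast\ge T$, and on $[0,T]$ the solution satisfies \eqref{aaa_t} (from $\tfrac1{80a}h<g<80a\,h$), \eqref{bbb_t} (from local energy $\le\tfrac32\ep<2\ep$), and \eqref{ccc_t} (from the $N_j$-bounds, with $c_0=c_0(a)$ read off from the $L^\infty$ bound); moreover $\sup_M|\gradh^jg(t)|^2<\infty$ for every $t\in[0,T]$ — by \eqref{ccc_t} when $t>0$ and by hypothesis when $t=0$.

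The main obstacle is not any individual estimate (all the analysis already lives in Theorems~\ref{gradientbound} and~\ref{smallenergy}/Corollary~\ref{smallenergycor} and Lemma~\ref{smoothnesslemma}) but the circular dependence that forces both the $L^{\infty}$ bound and the local $W^{2,2}$-smallness to be carried simultaneously through the bootstrap: Theorem~\ref{gradientbound} needs the smallness in order to bound $g$ in $L^{\infty}$ and $|\gradh g|$, while Corollary~\ref{smallenergycor} needs the $L^{\infty}$ bound in order to preserve the smallness. One must also check that $\ep_1$, $T$ and the $c_j$ depend only on the advertised quantities and that the first-failure time $T^\ast$ genuinely precedes the maximal existence time $T_{\max}$.
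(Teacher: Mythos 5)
Your proposal is correct and follows essentially the same route as the paper's own proof: short-time existence via Shi's method (Appendix A), then a bootstrap carrying \eqref{aaa_t} and \eqref{bbb_t} simultaneously, with Theorem \ref{gradientbound} giving the $L^{\infty}$ and $C^1$ bounds, Lemma \ref{smoothnesslemma} the higher derivatives, and Corollary \ref{smallenergycor} the improvement of the local energy to $\tfrac32\ep$, closed by restarting the flow to extend past the critical time. The only cosmetic differences are that the paper runs the argument with $b=80a$ (the constant produced by Theorem \ref{gradientbound}) and extends by iterating Theorem \ref{standardexist} rather than phrasing it as an open--closed argument around a maximal time, which amounts to the same thing.
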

\begin{proof}

Using the existence theory for Parabolic equations, for example the  method of Shi, Theorem  \cite{Shi} Section 3 and 4 (which in turn uses   Theorem 7.1, Section VII of \cite{LSU} ) we see that we have a solution to \eqref{Meq} for a short time $[0,V]$ for some $V>0$ and  $\sup_{M \times[0,V]}|\gradh^j g|^2<\infty$ for all $j \in \N$: See  Theorem \ref{standardexist}.
We assume, without loss of generality,  that $\ep_1(a) \leq \frac{\ep_0}{4} =  \frac{\ep_0(a,4)}{4},$  where $\ep_0$ is the constant from  Theorem  \ref{gradientbound}.   
Let $\hat S:= \sup\{  t\in [0,V] \ | \  \eqref{bbb_t}$ holds for $t\leq \hat S \} $:  $\hat S >0$ due to smoothness (and boundedness of covariant derivatives of $g$). We have 
$|\grad g(\cdot,t)|\leq \frac{1}{t}$ and  $\frac{1}{80 a  } h \leq g(t) \leq 80 a h ,$ 
 for  $t\leq \min( \hat S,S_1(4,a) )$, in view of Theorem \ref{gradientbound}, where $S_1(4,a)$ is the constant from that theorem. 
Hence,  for such $t \leq \min(S_1(4,a),\hat S )$, we have 
$|\grad^i  g(\cdot,t)|^2\leq \frac{ N_i(80a,1,4,h) }{t^i}$  in view of 
Lemma \ref{smoothnesslemma}.
Also,   using  Corollary
 \ref{smallenergycor} with $b=80a$,  we  can improve the estimate \eqref{bbb_t} to  $\int_{B_1(x)} (|\grad g|^2 + |\grad g|^2) dh \leq  \frac 3 2 \ep < 2 \ep,   $ 
for $t\leq \min(S_1(4,a),\hat S, S_2(b=80a,\ep) )$, where $S_2(b,\ep)>0$ is the constant from that Corollary,  since $2\ep \leq 2 \ep_1(a)   <  \ \ep_0$ 
   and without loss  of generality, 
   $ \ep_0 \leq \de= \de(80a)$ , where $\de$ is the constant appearing in that   corollary. 
Hence $\hat S \geq \min(S_1(4,a), V, S_2(b=80a,\ep) ),$ and  
\eqref{aaa_t},\eqref{bbb_t} and \eqref{ccc_t} hold for $t\leq  \min(S_1(4,a), V, S_2(b=80a,\ep) ),$ 
with $c_1=2,$ $ c_i = N_i( 80a,1,4,h)$ for all $ i\in \N, i\geq 2.$ 
Applying   Theorem \ref{standardexist}   and repeating this argument as often as necessary,    we may extend this solution to a smooth   solution   
$g(t)_{t\in [0,  T]}$  satisfying \eqref{aaa_t},\eqref{bbb_t} and \eqref{ccc_t} for $t\leq    T := \min(S_1(4,a), S_2(b=80a,\ep) ) .$   The estimates 
$\sup_{M \times[0,V]}|\gradh^j g|^2<\infty$ for all $j \in \N$ and \eqref{ccc_t} guarantee that
$\sup_{M \times[0,T]}|\gradh^j g|^2<\infty$ for all $j \in \N$.

\end{proof}
\begin{rmk}\label{usefulrema}
In the proof of Theorem \ref{main1}   we obtained  $ \int_{B_1(x)} ( |\gradh g(\cdot,t)|^2 +   |\gradh^2 g(\cdot,t)|^2 )dh< \frac{3}{2}\ep \ \  \mbox{ \rm for all }  x \in M  , \ t 
\in [0,T] ,$ and $ \frac{1}{80a} h g(t) \leq  80a h $ for $ t 
\in [0,T]$ : we will use these facts in the proof of the next theorem.
\end{rmk}

\begin{thm}\label{main2}
Let  $(M,h)$ be four dimensional and satisfy  \eqref{hassumptionsscaled} and  $\infty> a >1$. Assume  $g_0$  is a $W^{2,2}_{\loc}  \cap L^{\infty}$  Riemannian metric, not necessarily smooth, which satisfies (\ref{aaa}) and (\ref{bbb}), where $\ep \leq \frac{\ep_1(2 a)}{2}$  and $\ep_1$ is the constant from Theorem \ref{main1}.

Then there exists a constant $S=S(a,\ep)>0$ and a smooth solution
$(g(t))_{t \in (0,S]}$ to \eqref{Meq}, where $g(t)$ satisfies
 (\ref{aaa_t}),(\ref{bbb_t}) and (\ref{ccc_t}) for all $x \in M$, for all $t \in (0,S],$ and 
\begin{align}
 \int_{B_1(x)} (|g_0 - g(t)|^2 + |\gradh(g_0 - g(t))|^2 + |\gradh^2(g_0 - g(t))|^2)dh \to 0  \ \ \ \ \  \tag{${\rm d}_t$}  \label{ddd_t}  
 \end{align} 

as $t\downto 0$ for all $x \in M$. 
The solution is unique in the class of solutions satisfying  (\ref{aaa_t}),(\ref{bbb_t}) , (\ref{ccc_t})  and  (\ref{ddd_t}).
The solution also satisfies the local estimates
\begin{align}
 &\sup_{x\in B_1(x_0)} |\gradh^jg(\cdot,t)|^2t^j   \to  0 \mbox{ for }  t \to 0 \ \ \  \tag{${\rm e}_t$}  \label{eee_t} \\ 
 & \int_{B_1(x_0)}  (|\gradh g(\cdot,t)|^2 +   |\gradh^2 g(\cdot,t)|^2)dh    \tag{${\rm f}_t$}  \label{fff_t}  \\
 & \leq
 \int_{B_{R}(x_0)} ( |\gradh g_0(\cdot)|^2 +   |\gradh^2 g_0(\cdot)|^2 )dh + V(a,R)t    \nonumber
\end{align}
for all $x_0\in M$, $2>R>1$  for all $t \leq T,$ for some constant $0<V(a,R) < \infty.$

\end{thm}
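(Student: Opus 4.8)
The plan is to obtain $g$ as a $C^\infty_{\loc}(M\times(0,S])$-limit of the smooth flows produced by Theorem \ref{main1} applied to mollifications of $g_0$, and then to read off each asserted property from the a priori estimates of Sections \ref{Linfintygradchapt}--\ref{smallenergychap}.

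First I would mollify $g_0$ in the standard way (coordinate-wise convolution glued by a locally finite partition of unity, say), producing smooth Riemannian metrics $g_{0,i}$ with $g_{0,i}\to g_0$ in $W^{2,2}_{\loc}\cap L^2_{\loc}$ and a.e., with $\sup_M|\gradh^jg_{0,i}|^2<\infty$ for every $j$, with $\tfrac{1}{2a}h\le g_{0,i}\le 2a\,h$ (these bounds survive mollification since $h$ is smooth and \eqref{aaa} holds), and --- shrinking the mollification scale and using \eqref{bbb} --- with $\int_{B_2(x)}(|\gradh g_{0,i}|^2+|\gradh^2 g_{0,i}|^2)\,dh\le\ep_1(2a)$ for all $x\in M$ once $i$ is large; this is where the hypothesis $\ep\le\ep_1(2a)/2$ enters. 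Applying Theorem \ref{main1}, with $a$ replaced by $2a$, to each $g_{0,i}$ gives smooth solutions $g_i$ of \eqref{Meq} on $M\times[0,T]$ with $g_i(\cdot,0)=g_{0,i}$, where $T=T(a,\ep)>0$ is independent of $i$, and (invoking Remark \ref{usefulrema}) $\tfrac1{160a}h\le g_i(t)\le 160a\,h$, $\int_{B_1(x)}(|\gradh g_i(\cdot,t)|^2+|\gradh^2 g_i(\cdot,t)|^2)\,dh\le\tfrac32\ep_1(2a)$, and $|\gradh^jg_i(\cdot,t)|^2\le c_j(2a,h)/t^{\,j}$ for all $x\in M$, all $t\in[0,T]$ and all $j$. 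Since these bounds are uniform in $i$, Arzel\`{a}--Ascoli yields a subsequence converging in $C^\infty_{\loc}(M\times(0,T])$ to a smooth solution $g$ of \eqref{Meq}, which inherits $\tfrac1{160a}h\le g(t)\le 160a\,h$ (hence \eqref{aaa_t}), $|\gradh^jg(\cdot,t)|^2\le c_j(2a,h)/t^{\,j}$ (hence \eqref{ccc_t}), and $\int_{B_1(x)}(|\gradh g(\cdot,t)|^2+|\gradh^2 g(\cdot,t)|^2)\,dh\le\tfrac32\ep_1(2a)$.

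Next I would extract the remaining estimates from Section \ref{smallenergychap}. Arranging, by possibly shrinking $\ep_1$ in Theorem \ref{main1}, that $\tfrac32\ep_1(2a)\le\de(160a)$ where $\de$ is the threshold appearing in Theorems \ref{smallenergy} and \ref{W22convergence}, each $g_i$ satisfies the hypotheses of Theorem \ref{smallenergy}; applying it with $R_0=1$, $R_1=R\in(1,2]$ and letting $i\to\infty$ (using $C^\infty_{\loc}$-convergence for fixed $t>0$ on the left, and $W^{2,2}_{\loc}$-convergence of the initial data on the right) gives \eqref{fff_t}. Taking $R=2$ and using \eqref{bbb}, one gets $\int_{B_1(x)}(|\gradh g(\cdot,t)|^2+|\gradh^2 g(\cdot,t)|^2)\,dh\le\ep+V(a,2)\,t<2\ep$ for all $t\le S:=\min\{T,\ \ep/(2V(a,2))\}$ (with $V(a,2)<\infty$ since $2>1$), which is \eqref{bbb_t}. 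The sequence $g_i$ and its limit $g$ also fulfil the hypotheses of Theorem \ref{W22convergence} with $a$ replaced by $160a$, so $g(t)\to g_0$ in $W^{2,2}_{\loc}$ as $t\downto0$, i.e.\ \eqref{ddd_t}; and uniqueness within the class \eqref{aaa_t}, \eqref{bbb_t}, \eqref{ccc_t}, \eqref{ddd_t} is Theorem \ref{uniqueness} of Section \ref{uniquenesssection}.

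The main obstacle is \eqref{eee_t}, since the only available pointwise derivative control is the scale-invariant but non-decaying bound $|\gradh^jg|^2\le c_j/t^{\,j}$. I would argue by a parabolic blow-up. If \eqref{eee_t} fails there are $j_0\ge1$, $x_0\in M$, $t_k\downto0$ and $x_k\in\overline{B_1(x_0)}$ with $t_k^{\,j_0}|\gradh^{j_0}g(x_k,t_k)|^2\ge c>0$; rescaling $(M,h,g)$ parabolically by $1/t_k$ and recentering at $x_k$, the pointed manifolds converge to $(\R^4,\de,0)$ by \eqref{hassumptionsscaled}, the rescaled flows $g_k$ are uniformly smoothly bounded on $\R^4\times(0,\infty)$ because \eqref{ccc_t} is scale-invariant, and a subsequence converges in $C^\infty_{\loc}(\R^4\times(0,\infty))$ to a solution $\bar g$ of the flat DeTurck flow. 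For every fixed $\rho>0$ and $s>0$ the scale-invariant quantity $\int_{B^{h_k}_\rho(x_k)}(|\gradh g_k(s)|^2+|\gradh^2 g_k(s)|^2)\,dh_k$ equals $\int_{B_{\sqrt{t_k}\,\rho}(x_k)}(|\gradh g(t_ks)|^2+|\gradh^2 g(t_ks)|^2)\,dh$, which tends to $0$ because $g(t)\to g_0$ in $W^{2,2}_{\loc}$ (estimate \eqref{ddd_t}, already established) and $\int_{B_r(x)}(|\gradh g_0|^2+|\gradh^2 g_0|^2)\,dh\to0$ as $r\downto0$, uniformly on compact sets; hence $\bar g(s)$ has vanishing $W^{2,2}$-energy on every ball, so $\bar g$ is constant in space, which forces $|\gradh^{j_0}\bar g|^2(0,1)=0$ and contradicts the $C^\infty_{\loc}$-convergence $\gradh^{j_0}g_k(x_k,1)\to\gradh^{j_0}\bar g(0,1)$ together with $|\gradh^{j_0}g_k|^2(x_k,1)\ge c$. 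A persistent technical nuisance is keeping the constants $a$, $2a$, $160a$ and the energy level coherent so that they stay below the thresholds of Theorems \ref{gradientbound}, \ref{smallenergy} and \ref{W22convergence}.
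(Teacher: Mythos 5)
Your overall route is the same as the paper's: approximate $g_0$ by smooth metrics with globally bounded covariant derivatives, run Theorem \ref{main1} on each approximation with a time $T=T(a,\ep)$ independent of the approximation, pass to a $C^\infty_{\loc}$ limit on $M\times(0,T]$ to inherit \eqref{aaa_t}, \eqref{bbb_t}, \eqref{ccc_t}, obtain \eqref{fff_t} from Theorem \ref{smallenergy} applied to the approximators, \eqref{ddd_t} from Theorem \ref{W22convergence}, uniqueness from Theorem \ref{uniqueness}, and \eqref{eee_t} by a rescaling/contradiction argument. (The paper makes the approximations globally controlled by interpolating to $h$ outside a large ball rather than by a global mollification; your variant is fine given the bounded geometry of $h$, but you should say a word about why the glued mollification has \emph{globally} bounded derivatives and satisfies \eqref{bbb} uniformly in $x$, since Theorem \ref{main1} needs exactly this.)

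There is, however, a concrete error in your blow-up step for \eqref{eee_t}: in dimension four the local $W^{2,2}$-energy is \emph{not} parabolically scale invariant. With $g_k(\cdot,s)=t_k^{-1}g(\cdot,t_ks)$, $h_k=t_k^{-1}h$, one has $|\gradh_k g_k|^2_{h_k}=t_k\,|\gradh g|^2_h$ and $dh_k=t_k^{-2}dh$, so
\begin{equation*}
\int_{B^{h_k}_\rho(x_k)}|\gradh_k g_k(s)|^2\,dh_k \;=\; t_k^{-1}\int_{B_{\sqrt{t_k}\rho}(x_k)}|\gradh g(t_ks)|^2\,dh ,
\end{equation*}
not the equality you assert; only the Hessian term (and the $L^4$ norm of the first derivative) is scale invariant. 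Consequently your deduction that the blow-up limit $\bar g$ has vanishing $W^{2,2}$-energy does not follow from \eqref{ddd_t} plus absolute continuity for the gradient term, since you would need $\int_{B_{\sqrt{t_k}\rho}}|\gradh g(t_ks)|^2\,dh=o(t_k)$. The step is repairable in either of two ways, and this is exactly why the paper phrases its rescaling argument in terms of the scale-invariant quantity $\int(|\gradh g|^4+|\gradh^2 g|^2)\,dh$ (note $\gradh g_0\in L^4_{\loc}$ by Lemma \ref{balllemma}(v)/Theorem \ref{smalllocallemma}, and $\gradh g(t)\to\gradh g_0$ in $L^4_{\loc}$ follows from \eqref{ddd_t} and the Sobolev inequality): (i) run your argument with $\int|\gradh g_k(s)|^4+\int|\gradh^2 g_k(s)|^2$, which does tend to zero and forces $D\bar g\equiv0$; or (ii) use only the scale-invariant Hessian term to get $D^2\bar g(s)\equiv0$, so $\bar g(s)$ is affine in space, and then the inherited two-sided bound $\tfrac{1}{400a}\de\le\bar g\le 400a\,\de$ on all of $\R^4$ forces $\bar g(s)$ to be spatially constant, giving the same contradiction with $t_k^{j_0}|\gradh^{j_0}g|^2(x_k,t_k)\ge c>0$. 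With that correction your proof goes through and is essentially the paper's argument.
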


\begin{proof}
Let $R>0$ be given, and  $\eta :M \to [0,1] \subseteq \R$ be a smooth cut-off function  as in (iv) of Lemma \ref{balllemma}:
$\eta = 1$ on $B_R(x_0),$ $\eta = 0$ on $M \backslash (B_{C R}(x_0)),$  
$|\gradh^2 \eta| + |\gradh \eta|^2/\eta \leq \frac{C}{  R^2} $  on $M$ (here $n=4$), $|\gradh^i \eta|^2 \leq c_i(h)$ for all $i\in \N$. We mollify the metric $g_0$ everywhere locally, to obtain a metric $\hat g_{0,R},$ which is smooth,  
and then define  $g_{0,R}(\cdot) := \eta(\cdot)\hat g_{0,R}(\cdot) + (1-\eta(\cdot))h(\cdot)$.
We choose the mollification fine enough to guarantee that  
$ g_{0,R}(\cdot)\to g_0$ in $W^{2,2}(B_r(0))$ for all $r>0$ fixed as $R\to\infty$ and so that
\eqref{aaa}, and \eqref{bbb}  still hold for $g_{0,R}$  up to a factor  $\frac{10}{9},$ for all $R>0$ sufficiently large. 
That is we have
\begin{align} 
&   \frac{9}{10  a}  h \leq g_{0,R} \leq  \frac{10}{9} a h \tag{$\ti {\rm a}$} \label{aaaz}\\
 & \int_{B_2(x)} ( |\gradh g_{0,R}|^2 +   |\gradh^2 g_{0,R}|^2)dh \leq \frac{10}{9} \ep
\ \ \mbox{ \rm for all } \ \ x \in M  \tag{$\ti {\rm b}$} \label{bbbt}
 \end{align}
 
Furthermore $g_{0,R} = h$ outside of $B_{C R}(x_0)$ and so  
$\sup_M  |\grad^j  g_{0,R}|^2 < \infty $ for all $j\in \N$.

 Theorem \ref{main1}, with $a$ replaced by $\frac {10} 9 a$ and $\ep$ by $\frac {10}{9}$, and  Remark \ref{usefulrema}, guarantee the existence of a solution  
$(g_R(t))_{t \in [0,T]}$ with $T=T(a,\ep)>0$ to \eqref{Meq} satisfying  \eqref{aaa_t},\eqref{bbb_t} and \eqref{ccc_t}  for all $x \in M$, for all $t \in [0,T],$  with $g_{R}(0) = g_{0,R}:$ Note that the constants $c_j(\frac {10} 9 a,h)$ of \eqref{ccc_t} do not depend on $R.$ 
Hence there exists a limit solution $(M,g(t)_{t\in (0,T]}$ (in the $C^{\infty}_{loc}$ sense on $M \times (0,T)$), by taking the sequence of radii $R(i)=i  \to \infty$, which satisfies  \eqref{aaa_t},\eqref{bbb_t} and \eqref{ccc_t}  for all $x \in M$, for all $t \in (0,T].$ 
Theorem \ref{smallenergy} applied to each $g_{R(i)}$  implies \eqref{fff_t}, and Theorem   \ref{W22convergence} implies \eqref{ddd_t}, for the limit  solution  $g(t)_{t\in (0,T]}.$
  Note without loss of generality, that  $\ep_1(2a)$ from Theorem \ref{main1} is less than $\frac{\de(400a)}{c(a,n)},$ where  $\de$ is the constant from Theorem  \ref{smallenergy}, and $c(a,n)$ is a large constant of our choice. 
  Hence, without  
  loss of  generality, the scale invariant condition  \begin{eqnarray}
  (\int_{B_1(x)} |\gradh g(\cdot,t)|^4 dh )^{\frac 1 2}  +  \int_{B_1(x)} |\gradh^2 g(\cdot,t)|^2 dh \leq  \frac{\de(400a)}{c(a,n)} 
  \label{inbetween} 
  \end{eqnarray}
   holds for all $x \in M$, for all $t \in (0,T],$    in view of (v) from Lemma \ref{balllemma}. 
For any $x \in M$ we claim that 
\begin{align*} 
& \sup_{B_1(x)} (t  |\gradh g(\cdot,t)|^2  )   \to 0 \mbox { as } t \downto 0.
\end{align*} 
Assume that this is not the case for some $x \in M$. Then we obtain a sequence of points $y_j \in B_1(x) \subseteq M$ and $0<t_j  \to 0$  for $j\in \N, j \to \infty$ and an  $r>0$   such that
$ t_j  |\gradh g(y_j,t_j)|^2  \geq r>0.$  Taking a subsequence we see $y_j \to y \in M$ and hence 
  $ \int_{B_{2 \sqrt{t_j}}(y_j)}  (|\gradh g_0|^4  +    |\gradh^2 g_0|^2 ) dh  \to 0$ as $j\to \infty.$ 
   Scale the solution  to time $1$:  that is we define $   g_j(\cdot,\ti t)  = \frac{1}{t_j} g(\cdot,t_j \ti t),$ 
  $ h_j(\cdot)  = \frac{1}{t_j} h(\cdot)$ and $g_{j,0} = \frac{1}{t_j} g_0.$   Now we have 
  $ |{}^{h_j} \nabla g_j(y_j,1)|^2  \geq r>0$ and $ \int_{B_{2}(y_j)}  (|\gradh g_{j,0}|^4  +    |\gradh^2 g_{j,0}|^2 ) dh  \to 0$ as $j\to \infty,$  
     and $ \frac{1}{400 a} h_j(\cdot ) \leq  g_j(\cdot,t) \leq 400 a h_j(\cdot)$ for all $t$ where the solution is defined.
Theorem  \ref{smallenergy}, and the fact that \eqref{inbetween} also holds for the scaled solutions,  now implies that  \\
    $  \int_{B_{1}(y_j)}    |{}^{h_j} \nabla g_j(\cdot,1)|^2 +   |{}^{h_j} \nabla^2 g_j(\cdot,1)|^2 dh_j   \to 0$ as $j\to \infty,$ and $ \frac{1}{400 a} h_j(\cdot ) \leq  g_j(\cdot,t) \leq 400 a h_j(\cdot)$ for all $t$ where the solution is defined. 
But  these  estimates  combined with  (\ref{ccc_t}) then  imply    that $|{}^{h_j} \nabla^k g_j(y_j,1)|^2  \to 0$ for all $k\in \N$  as $j\to \infty$, which is a contradiction : To see  this, one can write all quantities in geodesic coordinates with  respect to the metric $h_j$ centred at $y_j$.
The  estimate 
 $\sup_{B_1(x)} (t^j  |\gradh^jg(\cdot,t)|^2  )   \to 0 \mbox { as } t \downto 0  $  for the other $j \in \N$  follow  from  an
  almost identical argument.  That is \eqref{eee_t} also holds. 
 The uniqueness of the solution in the class of solutions satisfying  (\ref{aaa_t}),(\ref{bbb_t}), (\ref{ccc_t})  and (\ref{ddd_t}) follows immediately from Theorem \ref{uniqueness}. 

\end{proof}

\begin{rmk}\label{aftermain2}
In fact the constants $c_j(h,a)$ in \eqref{ccc_t} of Theorem \ref{main2}  can be replaced by $c_j(h,a,\ep)$  where $ c_j(h,a,\ep) \to 0$ as $\ep \to 0:$ 

Assume $g_{i}(t)_{t\in (0,T(\frac{1}{i},a)]}$ are solutions obtained in Theorem    \ref{main2}  with $\ep$ in \eqref{bbb_t} given by $\ep = \frac{1}{i}$ and assume $|\grad g_i(x_i,t_i) |^2 \geq \frac{\al}{t_i}>0$ for a $t_i \in (0,T(\frac{1}{i},a)]$ for some $\al>0$. 
From  Theorem \ref{balllemma} (v), we have $\int_{B_1(x)} |\grad g_i(\cdot,t)|^4 +  |\grad g_i^2(\cdot,t)|^2 \leq 2 \frac{1}{i} 
$ for all $t\in (0,T(\frac{1}{i},a)].$
Scaling the solutions by $\hat g_i(t):= \frac{1}{t_i} g_i(t t_i),$ we obtain smooth solution defined on 
$ (0,1]$ which satisfying    $|\gradhi^j \hat g_i(\cdot,1)|^2 \leq  c_j  $   for all $j\in \N$ in view of \eqref{ccc_t}, and 
 $|\gradhi \hat g_i(x_i,1) |^2  \geq \al$ and $\int_{B_1(x)} (|\gradhi \hat g_i(\cdot,1)|^4 +  |\gradhi \hat g_i^2(\cdot,1)|^2 )d_{h_i }(x)\leq 2 \frac{1}{i}$ for all $x \in M$, where we denote the scaled metric $ \frac{1}{t_i} h$   by $h_i$.
 But this means   $|\gradhi^j \hat g_i(\cdot,1)|^2 \to 0$  for all $j\in \N$ as $i\to 0$, as can be seen by writing all quantities in geodesic coordinates with respect to $h_i$ at $x_i$. This contradicts   $|\gradhi \hat g_i(x_i,1) |^2  \geq \al>0$ 
 for all $i \in \N.$
Hence $ c_1(a,h) = c_1(h,a,\ep) \to 0$ as $\ep \to 0.$ An almost identical argument shows that $c_j(h,a)$ in \eqref{ccc_t} of Theorem \ref{main2}  can be replaced by $c_j(h,a,\ep)$  where $ c_j(h,a,\ep) \to 0$ as $\ep \to 0.$ 

\end{rmk}

In the case that the energy is bounded uniformly, then a scaling argument leads to the setting of the previous theorem, and hence we may find a solution to the equation \eqref{Meq} for a short time, which satisfies the conclusions of the previous theorems, for any $\ep>0$, if we shorten the length of the time interval.

\begin{thm}\label{main3}
Let $(M,h)$  be four dimensional and satisfy  \eqref{hassumptions}, and $1<a<\infty,$
 and $g_0$  be a $W^{2,2} \cap L^{\infty}$  Riemannian metric, not necessarily smooth, which satisfies (\ref{aaa}) and 
\begin{eqnarray}
&&  \int_M (|\gradh g_0|^2 + |\gradh^2 g_0|^2 )dh< \infty.
\end{eqnarray}
Then for any $\ep>0$ there exists a constant $T=T(g_0,a,\ep)>0,$ a $C=C(g_0,a,\ep)$ and a smooth solution
$(g(t))_{t \in (0,T]}$ to \eqref{Meq},  such that 
 after scaling the solution and the background metric $h$ once,  
  \eqref{aaa_t},\eqref{bbb_t}, \eqref{ccc_t}, \eqref{ddd_t},  \eqref{eee_t} and \eqref{fff_t} hold, 
  and $(g(t))_{t \in (0,T]}$  is the unique solution in the class of solutions satisfying the conditions \eqref{aaa_t},\eqref{bbb_t}, \eqref{ccc_t}, \eqref{ddd_t}.
The constants $c_j(h,a)$ in \eqref{ccc_t}   can be replaced by $c_j(h,a,\ep)$  where $ c_j(h,a,\ep) \to 0$ as $\ep \to 0$ 
\end{thm}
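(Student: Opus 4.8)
The plan is to deduce this from Theorem \ref{main2} by a single rescaling of the background metric, exactly along the lines of the discussion preceding Theorem \ref{main1_start} (the passage leading to \eqref{realMainassumption}). Fix the desired $\ep>0$ and set $\bar\ep:=\min(\ep,\tfrac12\ep_1(2a))$, where $\ep_1$ is the constant of Theorem \ref{main2}. I will introduce a large scaling factor $\la>0$ and work with $\ti h:=\la h$ and $\ti g_0:=\la g_0$; since $\Gamma(\ti h)=\Gamma(h)$ one has $\gradtih=\gradh$ on tensors, while $d\ti h=\la^{2}\,dh$ (here $n=4$), $|\gradtih\ti g_0|_{\ti h}^2=\la^{-1}|\gradh g_0|_h^2$, $|\gradtih^2\ti g_0|_{\ti h}^2=\la^{-2}|\gradh^2 g_0|_h^2$, and a ball $B_2^{\ti h}(x)$ coincides with $B_{2/\sqrt\la}^{h}(x)$.

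The first step is to realize the hypotheses of Theorem \ref{main2} after this rescaling. As $M$ is four dimensional, $W^{2,2}$ embeds into $W^{1,4}$ locally, so from $\int_M(|\gradh g_0|^2+|\gradh^2 g_0|^2)\,dh<\infty$ Theorem \ref{smalllocallemma} yields, for any $\ep'>0$, a radius $r=r(g_0,h,\ep')>0$ with $\sup_{x\in M}\int_{B_r^h(x)}(|\gradh g_0|^4+|\gradh^2 g_0|^2)\,dh<\ep'$. Using the scaling identities, $\int_{B_2^{\ti h}(x)}|\gradtih\ti g_0|_{\ti h}^2\,d\ti h=\la\int_{B_{2/\sqrt\la}^h(x)}|\gradh g_0|_h^2\,dh\le c(n)\bigl(\int_{B_{2/\sqrt\la}^h(x)}|\gradh g_0|^4\,dh\bigr)^{1/2}$ by H\"older's inequality together with the small-ball volume bound $|B_{2/\sqrt\la}^h(x)|\le c(n)\la^{-2}$ (valid once $\la$ is large, by the local geometry of $h$, cf. Lemma \ref{balllemma}), and similarly $\int_{B_2^{\ti h}(x)}|\gradtih^2\ti g_0|_{\ti h}^2\,d\ti h=\int_{B_{2/\sqrt\la}^h(x)}|\gradh^2 g_0|^2\,dh$. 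Hence, choosing first $\ep'=\ep'(a,\ep)$ so small that $c(n)\sqrt{\ep'}+\ep'\le\bar\ep$, and then $\la=\la(g_0,h,a,\ep)$ so large that in addition $2/\sqrt\la<r$, $\inj(M,\ti h)=\sqrt\la\, i_0\ge 100$, and $\sum_{i=0}^4\sup_M{}^{\ti h}|\gradtih^i\Riem(\ti h)|=\sum_{i=0}^4\la^{-1-i/2}\nu_i\le\de_0(a)$, the pair $(\ti h,\ti g_0)$ satisfies \eqref{hassumptionsscaled} together with \eqref{aaa} and \eqref{bbb} (the latter with $\ep$ replaced by $\bar\ep\le\tfrac12\ep_1(2a)$), while the scale-invariant bound $\tfrac1a\ti h\le\ti g_0\le a\ti h$ persists.

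The second step is routine: apply Theorem \ref{main2} to $(\ti h,\ti g_0)$ with parameter $\bar\ep$. This produces $S=S(a,\bar\ep)>0$ and a smooth solution $(\ti g(t))_{t\in(0,S]}$ of \eqref{Meq} with background $\ti h$, satisfying \eqref{aaa_t}, \eqref{bbb_t}, \eqref{ccc_t}, \eqref{ddd_t}, \eqref{eee_t} and \eqref{fff_t} with respect to $\ti h$, unique in the class of solutions obeying \eqref{aaa_t}--\eqref{ddd_t}; by Remark \ref{aftermain2} its constants $c_j$ may be taken with $c_j\to0$ as $\bar\ep\to0$, hence as $\ep\to0$. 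Undoing the scaling, $g(t):=\la^{-1}\ti g(\la t)$ for $t\in(0,T]$ with $T:=\la^{-1}S$ is a smooth solution of \eqref{Meq} for the original background $h$ with $g(t)\to g_0$ as $t\downto0$ (this is \eqref{ddd_t} rescaled), and ``scaling the solution and $h$ once'' (namely by $\la$) returns precisely $(\ti g,\ti h)$, for which the six displayed conditions have just been verified. Uniqueness in the stated class, and the dependence $c_j=c_j(h,a,\ep)\to0$ as $\ep\to0$, transfer back through the (now fixed) scaling, and one takes $C=C(g_0,a,\ep)$ to collect the constants $c_j$ and $V(a,R)$ appearing above. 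The only point requiring genuine care is the first step: the local $L^2$-energy of $\gradh g_0$ is not scale invariant (hence the stray factor $\la$), so one must trade it for the scale-invariant $L^4$-energy via H\"older and the borderline four-dimensional embedding $W^{2,2}\hookrightarrow W^{1,4}$ encapsulated in Theorem \ref{smalllocallemma}; the remainder is bookkeeping of scaling exponents.
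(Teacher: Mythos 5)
Your proposal is correct and follows essentially the same route as the paper: use Theorem \ref{smalllocallemma} to get smallness of the scale-invariant local $W^{1,4}\cap W^{2,2}$ energy on small balls, rescale $g_0$ and $h$ once so that \eqref{hassumptionsscaled}, \eqref{aaa} and (via H\"older) \eqref{bbb} hold with $\ep\leq\ep_1(2a)/2$, then invoke Theorem \ref{main2} together with Remark \ref{aftermain2} and Theorem \ref{uniqueness}. The only difference is presentational — you carry out the scaling bookkeeping (and the final unscaling) explicitly, whereas the paper refers to the computation in the introduction — so no gap.
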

\begin{rmk}
In this setting, we cannot expect 
\begin{eqnarray}
&&  \int_M (|\gradh g(t)|^2 + |\gradh^2 g(t)|^2)dh< \infty
\end{eqnarray}
for any $t>0$ as the 
example, Example \ref{W22example}, below shows.
\end{rmk}

\begin{proof}
As explained in the introduction : By scaling the initial data $g_0$ and the background metric $h$ once, we can guarantee that the new initial data and background metric, which we also call $g_0$  and $h$, 
 satisfy (\ref{aaa}) and (\ref{bbb}), where $\ep \leq \frac{\ep_1(2a)}{2} $  and $\ep_1$ is the constant from Theorem \ref{main1}, and that $h$ satisfies \eqref{hassumptionsscaled}.  
 Using Theorem \ref{main2}, we obtain a solution $g(t),$ $t>0$ $t \in [0,T]$ which satisfies (\ref{aaa_t}),(\ref{bbb_t}),  (\ref{ccc_t}) , (\ref{ddd_t}),
 (\ref{eee_t}) and (\ref{fff_t}). 
   The uniqueness of the solution in the class of solutions satisfying  (\ref{aaa_t}),(\ref{bbb_t}), (\ref{ccc_t})  and (\ref{ddd_t}) follows immediately from Theorem \ref{uniqueness}.
The fact that the $c_j(h,a)$ in \eqref{ccc_t}   can be replaced by $c_j(h,a,\ep)$  where $ c_j(h,a,\ep) \to 0$ as $\ep \to 0$ 
follows from Remark \ref{aftermain2}. 
\end{proof}

\begin{ex}\label{W22example}

Let $n=2$ and $g_0$ be a smooth metric on \\$C_1(0):= \{ (x_1,x_2)  \in \R^2 \ | \ 
\max(|x_1|, |x_2| ) <1 \}$ such that 
 $ (1-\ep)\de \leq g_0 \leq (1+\ep)\de,$ $0< \ep<<1$ and $g_0 = \de $ on 
 $C_1(0) \backslash C_{\frac 1 2}(0)$, and so that the curvature of $g_0 $ is a constant  $\si_1>0$ on $C_{\si_2}(0),$ for two small constants $\si_1,\si_2>0$,  where $\de$ is the standard metric on $\R^2$. We extend this metric to all of $\R^2$ through symmetry, $g_0(x) = g_0(x+p)$ for all $p \in \Z^2 = \{ (z_1,z_2) \ | \ z_1, z_2 \in \Z\}$, and in doing so obtain a smooth Riemannian metric $g_0$ on $\R^2$ satisfying 
 $ (1-\ep)\de \leq g_0 \leq (1+\ep)\de,$ with $\psi_p^*g_0 =g_0$ for any $p\in \Z^2$, where
 $\psi_p(x) = x+p$.
 Let $T^2_{i}$ refers to the standard $2$-torus whose circles have radius $i \in \N$.
 $T^2_{i} := \R^2 / \Gamma(i),$ where $\Gamma(i):= \{T_y: \R^2 \to \R^2 \ |  \  y \in \Z^2, $ where $T_y(x) = x + iy,$ for all $ x \in \R^2 \}.$ That is
 $T^2_{i}  = \{ [x] \ | \ x \in \R^2\}$ where $[x] =[z]$ if and only if $T(x)=z$ for some $T \in \Gamma(i)$. 
We give $T^2_{i}$ the unique metric $g_0(i)$ such that  $\pi^*(g_0(i)) = g_0$ 
where $\pi: \R^2 \to T^2_{i}$ is the standard projection, $\pi(x) = [x]$.    

 From the work of Shi, \cite{Shi}, there exists a unique smooth 
 solution $(T^2_{i}, g(i)(t))_{t\in [0, T]}$ to \eqref{Meq} with $h=g_0(i),$ $g(i)(0) = g_0(i),$   
 $\sup_{t \in [0,T]} |\gradh^j g(i)(t)|^2 < c_j(g_0)$  for all $j\in \N,$ and all $t\in [0, T]$ and  $ (1-2\ep)g_0(i) \leq g(i)(t) \leq (1+2\ep)g_0(i)$ for all $t\in [0, T].$
 Defining $\phi_p : T^2_i \to T^2_i$ by 
 $\phi_{p}([x]) = [p+x],$  where $p \in \Z^2$, we see that
 $\phi_p^{*}(g_0(i)) = g_0(i)$ by construction, and hence it is an isometry with respect to $g_0(i)$. Setting $\ti g(i)(t):= (\phi_p)^*(g(i)(t)),$ we see that it is also a solution to \eqref{Meq}, with $\ti g(i)(0) = g_0(i)$ and $h=g_0(i)$. Uniqueness of such solutions, which can be seen by applying the maximum principle to the function $f(\cdot,t):= |g_1(\cdot,t)-g_2(\cdot,t)|^2,$ shows us that  $(\phi_p)^*g(i)(t) = g(i)(t)$ for all $t \in [0,T]$.

Taking a subsequence and then a limit $i\to \infty,$ we obtain a solution $g(t)$ to \eqref{Meq} with $g(0) = g_0,$  $\sup_{t \in [0,T]} |\gradh^j g(t)|^2 < c_j(g_0)< \infty$  for all $j\in \N,$ and all $t\in [0, T],$
and such that $(\psi_p)^{*}g(t) = g(t)$, $\psi_p^* h = h$ where $\psi_p(x) = x+p$ and $p \in \Z^2.$
 Furthermore  $|\gradh^j g_0| =0$ for all $j\in \N,$ since $g_0 =h$.
 
Using a Taylor expansion in time for $g(t)$, and the fact that $g(t)$ is a smooth solution to  \eqref{Meq} with $g(0)=h$, we see that $g(x,t)  = h(x) + \partt g(x,0) \cdot t + (\partt \partt)  g(x,s) \cdot t^2$ 
for some $s \in (0,t)$. Notice that $\partt g(x,0) = 0$ for $x \in C_{\frac 4 5}(0) \backslash C_{\frac 3 4}(0)$
because there, $g(x,0) = h(x) = \de$ and $\de$ has zero curvature. 
Hence  $g(x,t) - \de = O(t^2)$ for small $t$ for $x \in 
C_{\frac 4 5}(0) \backslash C_{\frac 3 4}(0)$ and $g_{ii}(y,t) -h_{ii}(y) = -    2\si_{ii}  t  + O(t^2)$  for $y=0,$ for where $\si_{ii}=    c(n)\si_1>0.$ 
If $\grad g(t) = 0$ holds for all $t \in [0,P)$ then, for all $t \in (0,P)$ with $P>0$ small enough, we would have
$g(t) = h $ on $C_{\frac 4 5}(0) \backslash C_{\frac 3 4}(0)$
and $g(y,t) -h(y) \neq 0$ at $y=0$.
But taking any two vectors $v,w$ at a point $p$ in $C_{\frac 4 5}(0) \backslash C_{\frac 3 4}(0)$
we have $g(p,t)(v,w) = h(p)(v,w)$.
Parallel transporting the  vectors along a geodesic (with respect  to $h$) from $p$ to $0$,
we would have $\partr g(\ga(r),t)(v(r),w(r)) = 0 = \partr h(\ga(r))(v(r),w(r))$ and hence
$g(0,t)(v(r),w(r)) = h(0)(v(r),w(r))$. Since the vectors were arbitrary, we see $g(0,t) =h(0)$ for all $t \in [0,P)$ : a contradiction.
By smoothness of the solution,  there exists an 
$s_t>0$ such that $\int_{C_1(0)} (|\gradh g(t)|^2 + |\gradh^2 g(t)|^2 )dh\geq s_t>0$ for some $t>0$. 
Using the isometry $\psi_p,$ we see that this means 
  $\int_{C_1(p)} (|\gradh g(t)|^2 + |\gradh^2 g(t)|^2)dh \geq s_t>0,$  for all $p\in \Z^2$, 
hence $\int_M (|\gradh g(t)|^2 + |\gradh^2 g(t)|^2)dh = \infty$ for this  $t$.  
To obtain an example in $\R^n$  with $n \in \N, n > 2$   we simply take  $\hat g_0  = \hat h = g_0 \oplus \de_{\R^k}$ on $\R^{k+2}=\R^n$  where $\de_{\R^k}$  is the standard metric on $\R^k$.

 \end{ex}

\section{Ricci flow estimates}\label{ricciflowestimates} 
The results of the previous sections are in the setting of the Ricci DeTurck flow.
As mentioned at the beginning of the paper, in certain settings, for example the closed smooth setting, there is a Ricci flow related solution, which can be written as 
$\ell(t) := (\phi_t)^*{g}(t)$ where $\phi_t:M \to M$, $t\in [0,T]$, or possibly only $t\in (0,T]$, is a smooth family of diffeomorphisms, and $g(t)_{t\in [0,T)}$ is a solution to Ricci DeTurck flow. We say in this case, that the Ricci flow solution $\ell(t)$ {\it comes from} the  Ricci DeTurck  solution $g(t)$, and we call  $\ell(t)$  a {\it Ricci Flow related solution} to $g(t)$. 
In the next section we construct and analyse the behaviour of  solutions $\ell(t)$ {\it coming  from} a Ricci DeTurck  solution $g(t)$ constructed in the previous sections. 
In order to do this, we  require various estimates for solutions to local Ricci flows. The statements and proofs thereof are contained in this section.  
 The results of this section  are written in a local setting assuming various geometric bounds, which we know will hold {\it if} 
 the local Ricci flow solution comes from a  Ricci DeTurck flow solution   constructed in  the previous  sections of this paper.  However, it is not  necessary to assume that the local Ricci flow solutions we consider are constructed in this manner.

\begin{thm}\label{Lpboundsriccithm}
For all $p\in [2,\infty)$ there exists a $ \al_0=  \al_0(p,n)>0$ such that the following holds. 
Let $\Omega$ be a smooth $n$-dimensional manifold 
 and $(\Omega^n,\ell(t))_{t\in (0,T]}$      be a smooth solution to Ricci flow   satisfying 
\begin{align}
&\int_{\Omega} |\Rc(\ell(t))| d\ell(t)  \leq \ep \label{LpRicciconditions}\\
&|\Rc (\ell(t))| \leq \frac{\ep}{t} \ \mbox{ on } \Omega   \nonumber
\end{align}
for all $t\in (0,T],$ where $\ep \leq \al_0$.
 Then there exist  $\de(n,p,\ep),c(n,p)>0$, with the property that $\de(n,p,\ep) \to 0$ as $\ep \to 0$, such that  
\begin{eqnarray}
&&   \int_{\Omega}  |\ell(t)-\ell(s)|^p_{\ell(t)} d\ell(t) \leq  \de(n,p,\ep)  |t-s| \label{Lpest1}\\
&& \int_{\Omega}  |(\ell(t))^{-1}-(\ell(s))^{-1}|^p_{\ell(t)} d\ell(t) \leq  \de(n,p,\ep)|t-s| \label{Lpest2}  
\end{eqnarray} for all $t,s\in (0,T]$ with $s<t$.
Furthermore,  
\begin{eqnarray}
&&   \int_{\Omega}  |\ell(s)|^p_{\ell(t)} d\ell(t) \leq c(n,p)(\Vol(\Omega,\ell(t))   + |t-s| )\label{Lpest21}\\
&&   \int_{\Omega}  |\ell^{-1}(s)|^p_{\ell(t)} d\ell(t) \leq c(n,p)(\Vol(\Omega,\ell(t))   + |t-s| )\label{Lpest22}\\
&&   \int_{\Omega}  |\ell(r)-\ell(s)|^p_{\ell(t)} d\ell(t) \leq c(n,p)( \Vol(\Omega,\ell(t)) +t )^{\frac 3 4}|r-s|^{\frac 1 4} \label{Lpest23}\\
&& \int_{\Omega}  |(\ell(r))^{-1}-(\ell(s))^{-1}|^p_{\ell(t)} d\ell(t) \leq c(n,p)( \Vol(\Omega,\ell(t)) +t )^{\frac 3 4}|r-s |^{\frac 1 4} \label{Lpest24} 
\end{eqnarray} for all   $r,s,t \in (0,T]$ with $r,s<t$.
\end{thm}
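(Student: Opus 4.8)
The plan is to derive the differential inequalities for the relevant $L^p$ quantities along the Ricci flow, and then integrate them in time. For the first two estimates \eqref{Lpest1}--\eqref{Lpest2} I would fix $t \in (0,T]$ and consider, for $s \in (0,t)$, the quantity $\Psi(s) := \int_\Omega |\ell(s) - \ell(t)|^p_{\ell(t)}\, d\ell(t)$. Since $\ell(t)$ is frozen, the only $s$-dependence is through $\ell(s)_{ij}$, and $\partial_s \ell(s)_{ij} = -2\Rc(\ell(s))_{ij}$. Differentiating under the integral sign gives $\frac{d}{ds}\Psi(s) = -p\int_\Omega |\ell(s)-\ell(t)|^{p-2}_{\ell(t)} \langle \ell(s)-\ell(t), \Rc(\ell(s))\rangle_{\ell(t)}\, d\ell(t)$ (up to a harmless treatment of the non-smoothness of $|\cdot|^{p-2}$ near zero, which one handles with the usual $\sqrt{\,|\cdot|^2+\sigma^2\,}$ regularization and then lets $\sigma \downto 0$). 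The key point is then to bound $|\langle \ell(s)-\ell(t), \Rc(\ell(s))\rangle_{\ell(t)}|$ pointwise by $c(n,p)(|\ell(s)-\ell(t)|^{p-1}_{\ell(t)} + 1)|\Rc(\ell(s))|_{\ell(t)}$; combining with the hypothesis $|\Rc(\ell(s))| \leq \ep/s$ (measured in $\ell(s)$, converted to $\ell(t)$ at the cost of controlled factors coming from $|\Rc(\ell(s))| d\ell(s) \leq \ep$ and a bound relating $\ell(s)$ and $\ell(t)$) yields $|\frac{d}{ds}\Psi(s)| \leq \ep\, c(n,p)(\Psi(s)^{1-1/p} + 1)/s \cdot (\text{integrable weight})$. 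Here one must be slightly careful: the clean way is to run the computation with the $L^1$ Ricci bound $\int |\Rc(\ell(s))|\, d\ell(s) \leq \ep$ for the lower-order term and the pointwise $\ep/s$ bound where a pointwise factor is needed, giving, after a Grönwall / direct integration argument starting the time integral at $s$ and ending at $t$, the estimate $\Psi(s) \leq \de(n,p,\ep)|t-s|$ with $\de \to 0$ as $\ep \to 0$. For \eqref{Lpest2} the argument is identical using $\partial_s (\ell(s)^{-1})^{ij} = 2(\ell(s)^{-1}\Rc(\ell(s))\ell(s)^{-1})^{ij}$, and the uniform two-sided control on $\ell(s)$ (which follows from \eqref{Lpest1} applied with small $\ep$, or more directly from integrating the evolution and using $|\Rc| \leq \ep/s$) keeps all conversions between $\ell(s)$-norms and $\ell(t)$-norms bounded.

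For the absolute bounds \eqref{Lpest21}--\eqref{Lpest22}, I would write $|\ell(s)|^p_{\ell(t)} \leq c(n,p)(|\ell(t)|^p_{\ell(t)} + |\ell(s)-\ell(t)|^p_{\ell(t)})$; the first term integrates to $c(n,p)\Vol(\Omega,\ell(t))$ since $|\ell(t)|_{\ell(t)}$ is a dimensional constant, and the second is controlled by \eqref{Lpest1}, giving the $|t-s|$ term. The inverse version is symmetric. For the Hölder-in-time estimates \eqref{Lpest23}--\eqml{Lpest24}, the point is that \eqref{Lpest1} was proved with $\ell(t)$ as the reference metric on the left and $t$ the later time; to compare $\ell(r)$ and $\ell(s)$ for $r,s < t$ I would first observe $|\ell(r)-\ell(s)|_{\ell(t)} \leq |\ell(r)-\ell(t)|_{\ell(t)} + |\ell(s)-\ell(t)|_{\ell(t)}$, so $\int_\Omega |\ell(r)-\ell(s)|^p_{\ell(t)} d\ell(t) \leq c(n,p)(\de |t-r| + \de|t-s|)$, which however only gives a linear, not a $1/4$-power, bound — good enough when $|r-s|$ is comparable to $t$. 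For the genuinely sharp $|r-s|^{1/4}$ behaviour near the diagonal one interpolates: bound the integrand pointwise by the geometric-mean trick $|\ell(r)-\ell(s)|^p \leq (2\sup|\ell|)^{p/2} |\ell(r)-\ell(s)|^{p/2}$ and apply Cauchy--Schwarz in the form $\int |\ell(r)-\ell(s)|^{p/2} d\ell(t) \leq (\Vol + t)^{1/2}(\int |\ell(r)-\ell(s)|^{p} d\ell(t))^{1/2}$, then iterate once more to convert the $1/2$ into the stated $3/4$ volume power and $1/4$ time power; the exact bookkeeping of exponents is routine once the mechanism is in place.

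I expect the main obstacle to be the careful handling of the interchange between the three metrics ($\ell(r),\ell(s),\ell(t)$) in the norms and volume forms, together with the low regularity of $|\cdot|^{p-2}$ at the origin when differentiating $\Psi$. Both are standard but need to be executed cleanly: one fixes the reference metric $\ell(t)$ once and for all, proves uniform equivalence $\tfrac{1}{C}\ell(t) \leq \ell(s) \leq C\ell(t)$ for all $s \leq t$ (with $C$ depending only on $n$ and $\al_0$, hence a universal constant after shrinking $\al_0$) by integrating $|\partial_s \log \ell| \leq 2|\Rc| \leq 2\ep/s$ — wait, that is not integrable at $0$; instead one uses $\int_\Omega |\Rc| d\ell \leq \ep$ combined with the pointwise bound more carefully, or simply takes the two-sided bound as an additional consequence built into the proof of \eqref{Lpest1} with $p$ large — and then every metric conversion costs only a universal constant. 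Once that equivalence is secured, all the displayed estimates follow by the differential-inequality-plus-integration scheme sketched above, and the claim that $\de(n,p,\ep) \to 0$ as $\ep \to 0$ is immediate because $\ep$ enters every right-hand side linearly.
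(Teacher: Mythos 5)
The central gap is in your proof of \eqref{Lpest1}--\eqref{Lpest2}: you fix $t$ and differentiate in $s$, which forces you to estimate $\Rc(\ell(s))$ measured in the $\ell(t)$-norm and integrated against $d\ell(t)$, whereas the hypotheses \eqref{LpRicciconditions} only control $\Rc(\ell(s))$ measured in $\ell(s)$ and integrated against $d\ell(s)$. You see this and propose a uniform equivalence $\tfrac1C\ell(t)\leq\ell(s)\leq C\ell(t)$, but as you yourself notice, integrating $|\partial_s\ell|\leq 2\ep/s$ is not integrable at $s=0$, and the patches you offer do not close the hole: the $L^1$ Ricci bound gives no pointwise information, and extracting the equivalence "from \eqref{Lpest1} with $p$ large" is circular and in any case an $L^p$ bound for finite $p$ never yields an $L^\infty$ two-sided metric comparison (which is genuinely unavailable under these hypotheses). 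Even granting the comparison, your Gr\"onwall runs backwards from the endpoint $s'=t$, where the quantity vanishes, toward the singular weight $\ep/s'$ at $s'\to 0$, so one picks up a factor of order $(t/s)^{c\ep}$ rather than the claimed uniform linear bound $\de(n,p,\ep)|t-s|$. The paper's proof removes both difficulties simultaneously by differentiating in the other variable: fix $s$, set $v=\ell(s)$, and differentiate $h(t)=\int_\Omega|\ell(t)-v|^p_{\ell(t)}\,d\ell(t)$ in $t$. Then only $\Rc(\ell(t))$, $|\cdot|_{\ell(t)}$ and $d\ell(t)$ appear, so \eqref{LpRicciconditions} applies verbatim with no metric conversion, and after the Young-type splitting one gets $h'(t)\leq \tfrac{c(n)p\ep}{t}h(t)+c(n)p\ep$ with $h(s)=0$; since the singular coefficient sits at the initial time where $h$ vanishes, the elementary ODE lemma (Lemma \ref{ODECor}) gives $h(t)\leq 2c(n)p\ep\,(t-s)$.

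For \eqref{Lpest23}--\eqref{Lpest24} there is a second gap: your interpolation invokes $\sup|\ell(r)-\ell(s)|$, an $L^\infty$ quantity that is not controlled, and, more importantly, the scheme never produces the factor $|r-s|$ at all — the only source of $|r-s|$ is an application of \eqref{Lpest1} on the interval between $r$ and $s$, which requires the norm and the volume form to be taken with respect to $\ell(\max(r,s))$, not $\ell(t)$. The missing step is a change of reference metric inside the $L^p$ norm via the H\"older-type tensor comparison of Corollary \ref{lpmetriccor}, namely $\int_\Omega|\ell(r)-\ell(s)|^p_{\ell(t)}d\ell(t)\leq c(n,p)\bigl(\int_\Omega|\ell(s)|^{2p}_{\ell(t)}d\ell(t)\bigr)^{1/2}\bigl(\int_\Omega|\ell(r)-\ell(s)|^{4p}_{\ell(s)}d\ell(s)\bigr)^{1/4}\bigl(\int_\Omega|\ell^{-1}(s)|^{n/2}_{\ell(t)}d\ell(t)\bigr)^{1/4}$, after which \eqref{Lpest1}, \eqref{Lpest21} and \eqref{Lpest22} give exactly the $(\Vol(\Omega,\ell(t))+t)^{3/4}|r-s|^{1/4}$ bound; the analogous identity with $N=\ell^{-1}(r)-\ell^{-1}(s)$ gives \eqref{Lpest24}. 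Your treatment of \eqref{Lpest21}--\eqref{Lpest22} by the triangle inequality is correct and matches the paper.
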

\rmk{This Theorem  is true for {\it any  smooth Ricci flow} satisfying \eqref{LpRicciconditions}: Completeness, compactness, volume bounds,  Sobolev inequalities, are not   assumed}.

\begin{proof}

We write $v := \ell(s)$, and $\ell$ for $\ell(t)$,
 for $t\in (s,T]$,   and $\Rc$ for $\Rc(\ell(t))$
and we calculate for \\
$h(t):= \int_{\Omega}  |\ell(t)-\ell(s)|^p_{\ell(t)} d\ell(t) ,$
\begin{align*}
 \partt h(t)  & =  \partt \int_{\Omega} |\ell(t)- v|^p_{\ell(t)} d\ell(t) \cr
 & =  \partt  \int_{\Omega}  (\ell^{ik}\ell^{jm}(\ell_{ij}    -v_{ij}  ) ( \ell_{k m}    - v_{km} ))^{\frac p 2}  d\ell  \cr
  & =    \int_{\Omega}   -R(\ell) |\ell-v|_\ell^p    +    \frac{p}{2} |\ell- v|^{p-2}_{\ell}  \Big( 2\ell^{is}\ell^{kz} \Rc(\ell)_{sz} \ell^{jm}(\ell_{ij}  -  v_{ij}) ( \ell_{k m}    -v_{km} ) \cr 
  &  \ \  \ \ \ \ \ \ \ \ 
   +2 \ell^{ik} \ell^{js}\ell^{mz} \Rc(\ell)_{sz}  (\ell_{ij}     -v_{ij}  ) ( \ell_{k m}   -v_{k m}  )
    \cr  
   &  \ \  \ \ \ \ \ \ \ \  -  2  \ell^{ik}\ell^{jm} \Rc(\ell)_{ij}    ( \ell_{k m}    - v_{km} )  -  2  \ell^{ik}\ell^{jm} ( (\ell_{ij}     -v_{ij}  )       \Rc_{k m}(\ell)  \Big)   \cr
 & \leq   c(n)p  \int_{\Omega}(|\Rc |_{\ell} |\ell- v|^{p}_{\ell}  
   + |\Rc|_{\ell} |\ell- v|^{p-1}_{\ell}) d\ell\cr
  & \leq  \frac{c(n)p\ep }{t}  h(t) +  c(n)p  \int_{\Omega} |\Rc|_{\ell}^{\frac 1 p} (|\Rc|_{\ell}^{\frac{p-1}{p}} |\ell- v|^{p-1}_{\ell}) d\ell\cr 
 & \leq \frac{c(n)p\ep }{t} h(t)  +  c(n)p \int_{\Omega}  (|\Rc|_\ell + |\Rc|_\ell |\ell- v|^{p}_{\ell})d\ell \cr 
 & \leq \frac{c(n)p\ep }{t} h(t)  +  c(n)p \ep 
 \end{align*}
for some $c(n)\geq 1$ depending only on $n$. Hence the function $f(t)= h(s+t)$ also satisfies
\begin{align}
  \partt f(t) &  \leq  \frac{\be_0}{s+t} f(t)   + {\be_0}\cr
  & \leq  \frac{\be_0}{t} f(t)   + {\be_0}\label{ODEin}
\end{align} 
for all $t \in (0,T-s]$,
where $\be_0:=  c(n) p \ep.$ The assumptions on $\ep$ guarantee that $\be_0=  c(n) p \ep  \leq 
 \frac{1}{2},$ and hence,  
 using the ODE Lemma  \ref{ODECor}, we see that
 $f(t) \leq 2\be_0 t$  for all $t\in (0,T-s]$. In particular, we obtain
 $h(t) =f(t-s) \leq 2\be_0 (t-s) =  c(n) p \ep (t-s) $ for all $t\in (s,T]$, that is 
  \eqref{Lpest1} holds.

The estimate \eqref{Lpest2} is proved in an alomost identical way.
We calculate for  $y(t) := \int_{\Omega}  |(\ell^{-1}-{v}^{-1})|^p_{\ell(t)} d\ell(t) ,$
\begin{align*}
\partt y(t)  
 & =  \partt \int_{\Omega} | \ell^{-1}- {v}^{-1}|^p_{\ell} d\ell \cr
  & =     \int_{\Omega}   -R(\ell)| \ell^{-1}- {v}^{-1}|^p_{\ell} +  \frac{p}{2} |\ell^{-1}- {v}^{-1}|^{p-2}_{\ell}  \Big( -2\Rc(\ell)_{ik}\ell_{jm}(\ell^{ij}    -v^{ij}  ) ( \ell^{k m}    - v^{km} )   \cr 
  &  \ \  \ \ \ \ \ \ \ \  -2\ell_{ik}\Rc(\ell)_{jm}(\ell^{ij}    -v^{ij}  ) ( \ell^{k m}    - v^{km} )    \cr
 &    \ \  \ \ \ \ \ \ \ \  +  2  \ell_{ik}\ell_{jm} \ell^{ip}\ell^{jq} \Rc(\ell)_{pq}
     ( \ell^{k m}    -  v^{km} )
     \cr
   & \ \  \ \ \ \ \ \ \ \  +  2  \ell_{ik}\ell_{jm} (\ell^{ij}    -v^{ij}  )\Rc(\ell)_{pq}   \ell^{kp} \ell^{mq}   \Big) d\ell \cr
 & \leq   c(n)p(  \int_{\Omega}|\Rc(\ell)|_{\ell} |\ell^{-1}- v^{-1}|^{p}_{\ell}    + |\Rc(\ell)|_{\ell} |\ell^{-1}- v^{-1}|_{\ell}^{p-1}d\ell)  \cr
 &  \leq  \frac{c(n)p\al_0 }{t}  \int_{\Omega} |\ell^{-1}- v^{-1}|^{p}_{\ell} d\ell +  
 c(n)p  \int_{\Omega} |\Rc(\ell)|_{\ell} |\ell^{-1}- v^{-1}|^{p-1}_{\ell} d\ell \cr 
 & \leq \frac{2c(n)p\al_0 }{t} \ell(t)  +  c(n)p \al_0\cr
 & \leq  \frac{\be_0 y(t)}{t} + \be_0. 
 \end{align*}
The inequality \eqref{Lpest2}  now follows as before from Lemma \ref{ODECor}. 
The inequalities \eqref{Lpest21}, \eqref{Lpest22}, \eqref{Lpest23} and \eqref{Lpest24} follow   from the H\"older
   and   triangle inequalities, as we now show.  First we show \eqref{Lpest21}:
\begin{eqnarray}
\int_{\Omega} |\ell(s)|^p_{\ell(t)}d\ell(t) 
&& = \int_{\Omega} |\ell(s) -\ell(t) +\ell(t)|^p_{\ell(t) }d\ell(t)\cr
&& \leq c(n,p) \int_{\Omega} |\ell(s) -\ell(t)|^p_{\ell(t) }d\ell(t) + c(n,p)\int_{\Omega}|\ell(t)|_{\ell(t)}^pd\ell(t)\cr
&& \leq  c(n,p)|t-s|+ c(n,p)\Vol(\Omega,\ell(t)).\label{inbetweenin1}
\end{eqnarray}
Similarly,
\begin{eqnarray}
\int_{\Omega} |\ell^{-1}(s)|^p_{\ell(t)}d\ell(t) 
&& = \int_{\Omega} |\ell^{-1}(s) -\ell^{-1}(t) +\ell^{-1}(t)|^p_{\ell(t) }d\ell(t)\cr
&& \leq  c(n,p)\int_{\Omega} |\ell^{-1}(s) -\ell^{-1}(t)|^p_{\ell(t) }d\ell(t) +c(n,p) \int_{\Omega}|\ell^{-1}(t)|_{\ell(t)}^p d\ell(t)
\cr
&& \leq  c(n,p)|t-s|+ c(n,p)\Vol(\Omega,\ell(t). \label{inbetweenin2}
\end{eqnarray}
Thus we see that \eqref{Lpest21}, \eqref{Lpest22} hold.
To show \eqref{Lpest23} and \eqref{Lpest24} hold, we will use the estimates of Appendix {D}, which 
show that certain  general inequalities hold which relate the $L^p$  norms of a tensor taken  with respect to different metrics.

For any two tensor $T= T_{ij}$ we have, using Corollary \ref{lpmetriccor},   
\begin{eqnarray}
  \int_{\Omega} |T|^{p}_{\ell(t) } d\ell(t)  && \leq   c(n,p)  (\int_{\Omega} |\ell(s)|^{2p}_{\ell(t)} d\ell(t))^{\frac 1 2} 
(\int_{\Omega} |T|^{4p}_{\ell(s)}  d\ell(s))^{\frac 1 4} (\int_{\Omega} |\ell(t)|_{\ell(s)}^{\frac n 2}  d\ell(t) )^{\frac 1 4}   
\cr
&& = c(n,p)(\int_{\Omega} |\ell(s)|^{2p}_{\ell(t)} d\ell(t))^{\frac 1 2} 
(\int_{\Omega} |T|^{4p}_{\ell(s)}  d\ell(s))^{\frac 1 4} (\int_{\Omega} |\ell^{-1}(s)|_{\ell(t)}^{\frac n 2}  d\ell(t) )^{\frac 1 4}    \label{Testimate1}  
\end{eqnarray} 

For $T = (\ell(r) -\ell(s))$  in \eqref{Testimate1} we obtain
\begin{eqnarray*}
&&  \int_{\Omega} |\ell(r) -\ell(s)|_{\ell(t)}^p d\ell(t) \cr
  && \leq  c(n,p)(\int_{\Omega}   |\ell(s)|^{2p}_{\ell(t)} d\ell(t) )^{\frac 1 2}  \cdot  (\int_{\Omega}  |\ell(s) -\ell(r)|^{4p}_{\ell(s)} d\ell(s) )^{\frac 1 4} \cdot (\int_{\Omega}  |\ell^{-1}(s)|^{\frac n 2}_{\ell(t)}   d\ell(t) ) ^{\frac 1 4} \cr
&& \leq c(n,p)(\Vol(\Omega,\ell(t)) +t)^{\frac 3 4} \cdot  |r-s|^{\frac 1 4} 
\end{eqnarray*}
in view of the estimates  \eqref{Lpest1},  \eqref{Lpest21} and \eqref{Lpest22}.
For any two tensor $N= N^{ij}$ we have, using Corollary \ref{lpmetriccor}, 
\begin{eqnarray}
  \int_{\Omega} |N|^{p}_{\ell(t) } d\ell(t)  && \leq  c(n,p)  (\int_{\Omega} |\ell(t)|^{2p}_{\ell(s)} d\ell(t))^{\frac 1 2} 
(\int_{\Omega} |N|^{4p}_{\ell(s)}  d\ell(s))^{\frac 1 4} (\int_{\Omega} |\ell(t)|_{\ell(s)}^{\frac n 2}  d\ell(t) )^{\frac 1 4}   \cr
&& =   c(n,p)(\int_{\Omega} |\ell^{-1}(s)|^{2p}_{\ell(t)} d\ell(t))^{\frac 1 2} 
(\int_{\Omega} |N|^{4p}_{\ell(s)}  d\ell(s))^{\frac 1 4} (\int_{\Omega} |\ell^{-1}(s)|_{\ell(t)}^{\frac n 2}  d\ell(t) )^{\frac 1 4} 
 \label{Testimate2}  
\end{eqnarray}

For $N = (\ell^{-1}(r) -\ell^{-1}(s))$ in \eqref{Testimate2} we obtain
\begin{align*}
   \int_{\Omega} |\ell^{-1}(r) -\ell^{-1}(s)|_{\ell(t)}^p d\ell(t)    
 \leq&  c(n,p)(\int_{\Omega} |\ell^{-1}(s)|^{2p}_{\ell(t)} d\ell(t))^{\frac 1 2}   \cdot  (\int_{\Omega}  |\ell^{-1}(s) -\ell^{-1}(r)|^{4p}_{\ell    (s)} d\ell(s) )^{\frac 1 4} \cr
& \cdot  (\int_{\Omega} |\ell^{-1}(s)|_{\ell(t)}^{\frac n 2}  d\ell(t) )^{\frac 1 4}  \cr
 \leq& c(n,p)(\Vol(\Omega,\ell(t)) +t)^{\frac 3 4} \cdot  |r-s|^{\frac 1 4} 
\end{align*}
in view of the estimates  \eqref{Lpest2},  \eqref{Lpest21} and \eqref{Lpest22}.
That is, the inequalities \eqref{Lpest23} and \eqref{Lpest24} hold.
\end{proof}

The previous Theorems shows that for $p\in [2,\infty)$ and $n\in \N,$ a solution $(\Omega,\ell(t))_{t\in (0,T]}$ which satisfies  the conditions of the Lemma,  
that is $\int_{\Omega} |\Rc(\ell(t))| d\ell(t)  \leq \ep$ and 
$|\Rc (\ell(t))| \leq \frac{\ep}{t} \ \mbox{ on } \Omega $ 
for all $t\in (0,T],$ where $\ep \leq \al_0 = \al_0(n,p),$
must have a  uniquely well defined  starting value $\ell_0 \in L^p(\Omega)$ which is a symmetric two tensor, whose inverse exists almost everywhere: 

\begin{cor}\label{Lpcor}
For all $p\in [2,\infty)$ and $n\in \N$ there exists an $\al_0(n,p) >0$ such that the following holds. 
Let $\Omega$ be a smooth $n$-dimensional manifold 
 and $(\Omega^n,\ell(t))_{t\in (0,T]}$      be a smooth solution to Ricci flow   satisfying 
\begin{align*}
&\int_{\Omega} |\Rc(\ell(t))| d\ell(t)  \leq \ep \cr
&|\Rc (\ell(t))| \leq \frac{\ep}{t} \ \mbox{ on } \Omega  
\end{align*}
for all $t\in (0,T],$ where $\ep \leq \al_0$.
Then there exists a unique two tensor $\ell_0 \in L^p$ such that
$ \ell(s) \to \ell_0$ in $L^p(\Omega)$ as $s \downto 0$ where $\ell_0$ is positive definite (except for a measure zero set), and $ \ell^{-1}(s) \to (\ell_0)^{-1}$ in $L^p(\Omega)$ as $s \downto 0$.
\end{cor}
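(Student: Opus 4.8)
The plan is to deduce everything from the estimates \eqref{Lpest23} and \eqref{Lpest24} of Theorem \ref{Lpboundsriccithm}, whose left-hand sides already record the difference of two metrics \emph{in the fixed metric} $\ell(T)$ and \emph{with respect to the fixed measure} $d\ell(T)$. Concretely, I would fix once and for all the reference metric $\bar\ell:=\ell(T)$ and work in the Banach space $Y:=L^p(\Omega;\Sym^2 T^*\Omega,\,d\bar\ell)$ of symmetric two-tensors $S$ with $\|S\|_Y:=\big(\int_\Omega |S|^p_{\bar\ell}\,d\bar\ell\big)^{1/p}<\infty$; this space is complete. (One may assume $\Vol(\Omega,\bar\ell)<\infty$, as in all the applications; otherwise one runs the argument on an exhaustion of $\Omega$ by relatively compact open sets and glues the resulting limits using the uniqueness proved below.)

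First I would show that $s\mapsto\ell(s)$ is Cauchy in $Y$ as $s\downto 0$. Taking $t=T$ in \eqref{Lpest23}, for all $0<s_1\leq s_2<T$ one has
\begin{equation*}
\|\ell(s_1)-\ell(s_2)\|_Y^p=\int_\Omega|\ell(s_1)-\ell(s_2)|^p_{\bar\ell}\,d\bar\ell\leq c(n,p)\big(\Vol(\Omega,\bar\ell)+T\big)^{3/4}|s_1-s_2|^{1/4},
\end{equation*}
so $s\mapsto\ell(s)$ has a modulus of continuity tending to $0$, hence extends continuously to $s=0$; call the value $\ell_0:=\lim_{s\downto 0}\ell(s)\in Y=L^p(\Omega)$. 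Running the identical argument with \eqref{Lpest24} produces a symmetric two-tensor $m_0\in L^p(\Omega)$ with $\ell^{-1}(s)\to m_0$ in $L^p(\Omega)$. Both $\ell_0$ and $m_0$ are symmetric two-tensors, being $L^p$-limits of such.

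It then remains to identify $m_0$ with $\ell_0^{-1}$ and to check positive definiteness. I would choose $s_j\downto 0$ with $\ell(s_j)\to\ell_0$ pointwise almost everywhere, then pass to a further subsequence (not relabelled) along which also $\ell^{-1}(s_j)\to m_0$ pointwise almost everywhere. Since $\ell(s_j)(x)\cdot\ell^{-1}(s_j)(x)=\id$ for every $x$, fibrewise continuity of matrix multiplication gives $\ell_0(x)\,m_0(x)=\id$ for a.e.\ $x$; thus $\ell_0(x)$ is invertible with $\ell_0(x)^{-1}=m_0(x)$ a.e. As each $\ell(s_j)(x)$ is symmetric and positive definite, the fibrewise limit $\ell_0(x)$ is symmetric and positive semidefinite, hence --- being invertible --- positive definite for a.e.\ $x$. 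In particular $\ell_0^{-1}=m_0\in L^p(\Omega)$, so $\ell^{-1}(s)\to\ell_0^{-1}$ in $L^p(\Omega)$. Uniqueness is immediate: if $\ell(s)\to\tilde\ell_0$ in $L^p(\Omega)$ as well, then $\|\ell_0-\tilde\ell_0\|_{L^p}\leq\|\ell_0-\ell(s)\|_{L^p}+\|\ell(s)-\tilde\ell_0\|_{L^p}\to 0$, so $\ell_0=\tilde\ell_0$ a.e.

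The one point that needs care is the choice of ambient space: the norms and volume forms appearing in Theorem \ref{Lpboundsriccithm} vary with $t$, so to extract a limit one must land in a \emph{single} $L^p$-space, and this is exactly why the ``mixed-metric'' bounds \eqref{Lpest23}--\eqref{Lpest24} (rather than \eqref{Lpest1}--\eqref{Lpest2}, whose measure still depends on the moving time) are the ones to feed in here. Everything after that --- completeness of $Y$, extraction of a.e.-convergent subsequences, and the fibrewise identity $\ell_0 m_0=\id$ --- is routine.
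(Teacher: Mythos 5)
Your proposal is correct and follows essentially the same route as the paper: both deduce from the mixed-metric bounds \eqref{Lpest23}--\eqref{Lpest24} that $\ell(s)$ and $\ell^{-1}(s)$ are Cauchy in an $L^p$-space with metric and measure frozen at a fixed positive time, and then identify the limit of the inverses with the inverse of the limit. The only minor variation is in that identification step: the paper argues quantitatively, bounding $\Vert \de^i_{\ j}-(\ell_0)_{jk}r_0^{ik}\Vert_{L^p(\Omega,\ell(t))}$ via H\"older together with the uniform $L^{2p}$ bounds \eqref{Lpest21}--\eqref{Lpest22} (which is why its proof invokes $\al_0(2p,n)$), whereas you pass to an a.e.-convergent subsequence and use fibrewise matrix algebra, which also delivers the positive definiteness off a null set directly rather than via the paper's modification on a measure zero set.
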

\begin{proof}
From \eqref{Lpest23} and \eqref{Lpest24}  we see that $\ell(s)_{s \in (0,T]}$ and $\ell^{-1}(s)_{s \in (0,T]}$ are  Cauchy w.r.t to $s$ in $L^{2p}(\Omega,\ell(t))$  for fixed $t>0$, if $\ep \leq \al_0(2p,n)$ is chosen small enough, where
$\ell_0$ is as in the statement of Theorem \ref{Lpboundsriccithm},  so there exists $\ell_0 , r_0 \in L^{2p}(\Omega,\ell(t))$ such that $\ell(s) \to \ell_0$ and $\ell^{-1}(s) \to r_0$ as $s \downto 0$ in the $L^{2p}$ norm.
Furthermore
$\de^i_{j}  = \ell^{ik}(s)\ell_{jk}(s)$ and so we have, for $\Vert\cdot \Vert_{L^q} = \Vert\cdot \Vert_{L^q(\Omega,\ell(t))}$ for some fixed $t>0$, 
\begin{eqnarray*}
 &&\Vert {\de^i}_j  -  (\ell_0)_{jk}r_0^{ik} \Vert_{L^p}    \cr
  && = \Vert \ell^{ik}(s)\ell_{jk}(s)  -  (\ell_0)_{jk}r_0^{ik} \Vert_{L^p}   \cr 
&& = \Vert (\ell^{ik}(s) -  r_0^{ik}) \ell_{jk}(s)  -r_0^{ik} ( (\ell_0)_{jk} - \ell_{jk}(s)) \Vert_{L^p}   \cr
 && \leq  \Vert (\ell^{ik}(s) - r_0^{ik})\ell_{jk}(s)\Vert_{L^p}  
+ \Vert r_0^{ik} ( (\ell_0)_{jk} - \ell_{jk}(s)) \Vert_{L^p}   \cr
&& \leq  \Vert \ell^{ik}(s) - r_0^{ik} \Vert_{L^{2p}} 
\Vert \ell_{jk}(s)\Vert_{L^{2p}}  + 
\Vert r_0^{ik} \Vert_{L^{2p}} \Vert    (\ell_0)_{jk} - \ell_{jk}(s)  \Vert_{L^{2p}}   \cr
&& \to 0
\end{eqnarray*}
as  $s \downto 0$ in view of \eqref{Lpest21}, \eqref{Lpest22}, \eqref{Lpest23} and
\eqref{Lpest24}. Hence $r_0= (\ell_0)^{-1}$ almost everywhere. 
At points $x$  in the  set of measure zero, where $\ell(0)(x)$ is degenerate, we replace $\ell(0)(x)$  by $\ell(t)(x)$ for a fixed $t>0$.  The convergence result still holds,  but now $\ell(0)$ is positive definite everywhere.
\end{proof}

\begin{thm}\label{W22pboundsriccithm}
For any $A  >0$  there exist  $\al_1,\be,S>0$ such that the following holds. 
Let $(M^4,\ell(t))_{t\in (0,T]}$    be a smooth four dimensional solution to Ricci flow, with $B_{\ell(t)}(x_0,10) \Subset  M$  $T \leq 1$,  satisfying a uniform  Sobolev inequality  for all $t \in (0,T]$ : 
\begin{eqnarray*}
\Big( \int_{B_{\ell(t)}(x_0,2) } |f|^4 d\ell(t) \Big)^{\frac 1 2} \leq 
A  \Big( \int_{B_{\ell(t)}(x_0,2) } |\nabla f|^2 d\ell(t)  + \int_{B_{\ell(t)}(x_0,2) } |f|^2 d\ell(t)  \Big)
\end{eqnarray*}
for any $f$ compactly contained in $ B_{\ell(t)}(x_0,2)$ for any $t\in (0,T]$,  where $\gradg$ refers to the covariant derivative with respect to $\ell(t)$.
We further assume 
\begin{align}
&\int_{B_{\ell(t)}(x_0,2) } |\Rm(\ell(t))|^2 d\ell(t)  +  \int_{B_{\ell(t)}(x_0,2) } |\Rm(\ell(t))| d\ell(t)\leq \al_1 \label{contcond}\\
&|\Rc(\ell(t))| + |\nabla \Rc(\ell(t))|^{\frac 2 3}\leq \frac{\al_1}{t}   \ \mbox{ on } B_{\ell(s)}(x_0,2) \nonumber
\end{align}
for all $t,s\in (0,T].$ 
 Then  we have
\begin{eqnarray}
&& \int_{B_{\ell(t)}(x_0,\frac 1 2) }|\nabla ( \ell(t)-\ell(s))|^2_{\ell(t)}   d\ell(t) \leq   |t-s|^{\be}  \label{L2gradientest} 
\end{eqnarray} for all $t,s \in (0,T] \cap (0,S],$  with $s<t$,  
where $\gradg$ refers to the covariant derivative with respect to $\ell(t)$.
\end{thm}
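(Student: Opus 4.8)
The plan is to work with the Christoffel difference tensor $\Upsilon(t):=\Gamma(\ell(t))-\Gamma(\ell(s))$. Since the Levi-Civita connection of $\ell(t)$ annihilates $\ell(t)$, metric compatibility gives $\nabla^{\ell(t)}(\ell(t)-\ell(s))=-\nabla^{\ell(t)}\ell(s)$, and the components of $\nabla^{\ell(t)}\ell(s)$ are, up to contractions with $\ell(s)$, exactly those of $\Upsilon(t)$; so, using the $L^p$ metric-comparison estimates \eqref{Lpest21}--\eqref{Lpest22} of Theorem \ref{Lpboundsriccithm} to pass between $|\cdot|_{\ell(t)}$ and $|\cdot|_{\ell(s)}$ where needed, it suffices to bound $\int_{B_{\ell(t)}(x_0,1/2)}|\Upsilon(t)|^2_{\ell(t)}\,d\ell(t)$ by $|t-s|^\beta$. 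The point of passing through $\Upsilon$ is that it evolves cleanly: $\partial_\tau\Gamma(\ell(\tau))\sim\nabla\Rc(\ell(\tau))$, so $\Upsilon$ satisfies a first-order-in-time equation forced by $\nabla\Rc$.

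Next I would set up a cutoff and an energy quantity. Using the standard Ricci-flow distance-distortion estimate (Hamilton/Perelman: a bound $|\Rc|\le \alpha_1/\tau$ on $B_{\ell(\tau)}(x_0,2)$ gives $\tfrac{d}{d\tau}d_{\ell(\tau)}(x_0,\cdot)\ge -c(n)/\sqrt{\tau}$, hence $|d_{\ell(\tau)}-d_{\ell(t)}|\le c(n)\sqrt{S}$ for $\tau,t\le S$), pick $S$ small and a single smooth $\eta$ with $\eta\equiv 1$ on $B_{\ell(t)}(x_0,1/2)$, $\supp\eta\subset B_{\ell(\tau)}(x_0,2)$ for every $\tau\in(0,T]$, and $|\nabla^{\ell(\tau)}\eta|$ uniformly bounded; this is where the room between the radii $\tfrac12<1<2$ in the hypotheses is used. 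Then run an energy/Bochner argument for $G(\tau):=\int_M\eta^2|\nabla^{\ell(\tau)}\ell(s)|^2_{\ell(\tau)}\,d\ell(\tau)$ on $\tau\in[s,t]$, noting $G(s)=0$. Differentiating and using $\partial_\tau(d\ell)=-R\,d\ell$ and $\partial_\tau\Gamma\sim\nabla\Rc$ yields
\begin{equation*}
G'(\tau)\le c(n)\int_M\eta^2\big(|\Rc|\,|\nabla^{\ell(\tau)}\ell(s)|^2+|\nabla\Rc|\,|\ell(s)|\,|\nabla^{\ell(\tau)}\ell(s)|\big)d\ell(\tau)+(\text{terms in }|\nabla\eta|^2).
\end{equation*}
The first term is $\le(c\alpha_1/\tau)G(\tau)$; in the mixed term write $|\nabla\Rc|=|\nabla\Rc|^{2/3}\cdot|\nabla\Rc|^{1/3}$, use $|\nabla\Rc|^{2/3}\le\alpha_1/\tau$, and apply Young's inequality to absorb a further $\varepsilon(\alpha_1/\tau)G(\tau)$, leaving a remainder $R(\tau)\sim\frac{\alpha_1}{\tau}\int_M\eta^2|\nabla\Rc|^{2/3}|\ell(s)|^2_{\ell(\tau)}\,d\ell(\tau)$ plus similarly handled $|\nabla\eta|^2$ contributions (controlled via $\int|\ell(s)|^p_{\ell(\tau)}d\ell(\tau)\lesssim\Vol+|\tau-s|$). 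So $G'(\tau)\le\frac{c\alpha_1}{\tau}G(\tau)+R(\tau)$, and the ODE comparison Lemma \ref{ODECor} (valid since $c\alpha_1<1$ for $\alpha_1$ small) reduces the problem to estimating $\int_s^t R(\tau)\,d\tau$.

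The main obstacle is bounding $\int_s^t R(\tau)\,d\tau$ by a small multiple of a power of $|t-s|$: the pointwise bound $|\nabla\Rc|\le(\alpha_1/\tau)^{3/2}$ alone is not integrable near $\tau=0$. So I would first derive a space-time $L^2$ bound on $\nabla\Rm$: from $\partial_\tau|\Rm|^2\le\Delta|\Rm|^2-2|\nabla\Rm|^2+c(n)|\Rm|^3$, multiply by $\eta^2$, integrate in space and over $[\sigma,t]$, estimate $\int\eta^2|\Rm|^3\le\big(\int_{B_{\ell(\tau)}(x_0,2)}|\Rm|^2\big)^{1/2}\big(B\int\eta^2|\nabla\Rm|^2+C\int|\Rm|^2\big)$ by the assumed local Sobolev inequality, and absorb the first factor using the smallness $\int|\Rm|^2\le\alpha_1$ from \eqref{contcond}; the boundary term at $\tau=\sigma$ is $\le\int_{B_{\ell(\sigma)}(x_0,2)}|\Rm|^2\le\alpha_1$, again by \eqref{contcond}, uniformly in $\sigma$, so $\sigma\downto 0$ gives $\int_0^t\int_M\eta^2|\nabla\Rm|^2\,d\ell(\tau)\,d\tau\le c\alpha_1(1+t)$. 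Then interpolate: estimate $\int_M\eta^2|\nabla\Rc|^{2/3}|\ell(s)|^2\,d\ell(\tau)$ by Hölder in space against $L^p$ norms of $|\ell(s)|_{\ell(\tau)}$ (controlled by \eqref{Lpest21}), and integrate in $\tau$ by Hölder in time, playing the pointwise $|\nabla\Rc|^{2/3}\le\alpha_1/\tau$ against the space-time bound on $|\nabla\Rm|^2\ge|\nabla\Rc|^2$. The mismatch between the linear-in-$|t-s|$ pieces and the $|t-s|^{1/4}$-type pieces forced by \eqref{Lpest23}--\eqref{Lpest24} (needed because no pointwise two-sided comparison between $\ell(\tau)$ and $\ell(t)$, or between their volume forms, is available as $\tau\downto 0$) is exactly what produces $|t-s|^\beta$ with a small $\beta>0$ rather than $|t-s|$. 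I expect the bookkeeping in this last interpolation step, together with verifying that $S$ and $\alpha_1$ can be chosen so that all absorptions, the distance-distortion containments, and the ODE lemma simultaneously apply, to be the bulk of the work.
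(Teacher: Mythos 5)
Your overall architecture matches the paper's: first a local space--time $L^2$ bound on $\nabla\Rm$ via a cutoff, the Sobolev inequality and the smallness $\int|\Rm|^2\le\al_1$ (your fixed cutoff plus distance--distortion containment is an acceptable substitute for the Perelman cutoff used in the paper), and then a first-order-in-time energy argument for $\int |\nabla^{\ell(\tau)}\ell(s)|^2_{\ell(\tau)}$ (which is exactly the paper's $f(\tau)=\int_\Omega|\nabla(\ell(\tau)-\ell(s))|^2$, so the detour through the Christoffel difference $\Upsilon$ is unnecessary), closed by the ODE lemma. The genuine gap is in your treatment of the mixed term $\int\eta^2|\nabla\Rc|\,|\ell(s)|\,|\nabla\ell(s)|$. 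Your Young splitting produces the remainder $R(\tau)\sim\frac{\al_1}{\tau}\int\eta^2|\nabla\Rc|^{2/3}|\ell(s)|^2$, and this cannot be integrated uniformly: $|\ell(s)|_{\ell(\tau)}$ carries no smallness (it is only bounded in $L^p$, and via \eqref{Lpest21} only up to a volume factor that the hypotheses do not control), so after any H\"older pairing of the pointwise bound $|\nabla\Rc|^{2/3}\le\al_1/\tau$ with the space--time bound $\int_s^t\int\eta^2|\nabla\Rc|^2\le c\al_1$, the explicit time weight left over is of the type $\int_0^{t-s}(s+r)^{-3/2}r^{-\al_2}\,dr$ (or worse), which blows up like a negative power of $s$ as $s\downto 0$. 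So the ODE lemma step does not close, and no choice of $\al_1$, $S$, $\be$ repairs it; the "bookkeeping" you deferred is precisely where the argument fails.

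The missing idea (and what the paper does) is to decompose $\ell(s)=\ell(t)-(\ell(t)-\ell(s))$ inside the mixed term before applying Young. The piece with $\ell(t)$ has pointwise norm $\sqrt n$, so $\int|\nabla\Rc|\,|\nabla\ell(s)|$ can be handled by Young with weight $(t-s)$, giving $\frac{c\al_1}{t-s}f+\frac{c}{\al_1}(t-s)\int|\nabla\Rc|^2$. The piece with the difference carries $|\ell(t)-\ell(s)|$, and after Cauchy--Schwarz together with $|\nabla\Rc|^{2/3}\le\al_1/t$ one is left with $c\int|\ell(t)-\ell(s)|^2|\nabla\Rc|^{4/3}\le c\big(\int|\ell(t)-\ell(s)|^6\big)^{1/3}\big(\int|\nabla\Rc|^2\big)^{2/3}$; here the volume-free difference estimate \eqref{Lpest1} of Theorem \ref{Lpboundsriccithm} gives $\int_\Omega|\ell(t)-\ell(s)|^6_{\ell(t)}\,d\ell(t)\le c(t-s)$, so the total forcing is $Z\sim(t-s)^{1/3}\big(\int|\nabla\Rc|^2+1\big)$, with no singular $1/\tau$ prefactor and no volume factors. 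This forcing is exactly what Lemma \ref{ODELem} can integrate against $r^{-\al_2}$ using the space--time bound on $\nabla\Rm$, yielding $f(t)\le c(t-s)^{\al_2}$ and hence \eqref{L2gradientest} with $\be=\al_2/2$. Relatedly, you should avoid \eqref{Lpest21}--\eqref{Lpest24} here: they carry $\Vol(\Omega,\ell(t))$, which is not bounded in terms of $A$ and $\al_1$ alone, whereas the theorem's constants must be.
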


\begin{proof}

We first  prove that the space time integral of $|\nabla \Rc|^2$ can be locally, uniformly bounded in the setting we
are considering. This estimate shall in turn be used to prove the   $L^2$   gradient estimate \eqref{L2gradientest}.
In the following $c$ refers to a universal constant independent of the solution.
Let $\eta: M \to \R^+$ be a Perelman cutoff-function, with $\eta(\cdot,t)  = e^{- t}$ on  
$B_{\ell(t)}(x_0,1)$ and $\eta(\cdot,t) = 0$ on $M - B_{\ell(t)}(x_0, \frac 5 4),$ $\partt \eta \leq 
\lap_{\ell(t)} \eta,$  $|\nabla \eta|^2 \leq c  \eta$ with  (see \cite{SimonTopping1} section $7$ for details of the construction).
Then, using the Sobolev inequality, H\"older's inequality, and the fact that 
$\int_{B_{\ell(t)}(x_0,2)} \eta^2(t) |\Rm|^2(t) d\ell(t)  \leq \al_1,$ we see that
\begin{eqnarray*}
&& \partt \int_M  \eta^2(\cdot,t) |\Rm|^2(\cdot,t)d\ell(t) \cr
&& =  \int_M  (  |\Rm|^2 (\partt \eta^2)  + \eta^2 \partt( |\Rm|^2 ) - \eta^2 \Sc |\Rm|^2 ) d\ell(t)\cr
&& \leq  \int_M (   |\Rm|^2 \lap (\eta^2)  + \eta^2 \lap (|\Rm|^2) - 2\eta^2 |\gradg \Rm|^2 +c\eta^2 |\Rm|^3)  d\ell(t)  \cr
&& = \int_M( \eta \langle  \Rm * \gradg \Rm , \gradg \eta\rangle_{\ell} - 2\eta^2 |\gradg \Rm|^2 + c \eta^2 |\Rm|^3  ) d\ell(t) \cr
&& \leq  \int_{B_{\ell(t)}(x_0,2)} ( - \eta^2 |\gradg \Rm|^2   + c  |\Rm|^2 + c \eta^2 |\Rm|^3 )d\ell(t) \cr 
&& =  \int_{B_{\ell(t)}(x_0,2)} (-\frac{1}{2}\eta^2 |\gradg \Rm|^2  - \frac{1}{2}[   |\gradg (\eta \Rm)|^2  - |\gradg \eta|^2 |\Rm|^2  - 2\eta \langle \Rm \gradg \eta , \gradg \Rm\rangle]   \cr
&&  \ \ \ \ \ \ \ \ \   \ \ \ \ \  + c  |\Rm|^2 + c \eta^2 |\Rm|^3 )d\ell(t) \cr 
&&  \leq  \al_1 c  + \int_{B_{\ell(t)}(x_0,2)} \big( - \frac 1 4 |\gradg (\eta \Rm)|^2    + c  (\eta |\Rm|)^2 |\Rm|  \big)d\ell(t)\cr
&&  \leq \al_1c   - \int_{B_{\ell(t)}(x_0,2)}  \frac 1 4 |\gradg (\eta \Rm)|^2 d\ell(t)    \cr
&& 
 \ \ \ \   \ \ \ \ \  +   c  \Big( \int_{B_{\ell(t)}(x_0,2)} |\eta \Rm|^4 d\ell(t)\Big)^{\frac 1 2} \Big( \int_{B_{\ell(t)}(x_0,2)} |\Rm|^2 d\ell(t)\Big)^{\frac 1 2}\cr
&& \leq  \al_1 c    +(c  A\sqrt{\al_1}-\frac14 ) \int_{B_{\ell(t)}(x_0,2)} |\gradg (\eta \Rm)|^2d\ell(t) \cr
&& \leq \al_1 c    - \frac18 \int_{B_{\ell(t)}(x_0,2)}  |\gradg (\eta \Rm)|^2d\ell(t)
\end{eqnarray*}
if $\al_1$ is small enough.
Hence, integrating form $s$ to $t$ we see that 
 \begin{align}\label{thisone}
 \int_s^t \int_{B_{\ell(r)}(x_0,1)} |\gradg   \Rm|^2(r) dx dr  
 & \leq c  \int_M  \eta^2 |\Rm|^2(\cdot,s) + \al_1ct \cr
 & \leq \al_1 c 
\end{align} 
with $ \al_1 c \leq 1$ without loss of generality.
We now turn to the proof of the integral gradient  estimate, \eqref{L2gradientest}.
This is similar to the $L^p$ estimate obtained for $\ell(t)$ in \eqref{Lpboundsriccithm}, but uses the space-time $L^2$ bound on the gradient of the Ricci curvature \eqref{thisone} that we just derived, 
instead of the  bound on the Riemannian curvature. 
In the following $|\cdot |$ refers to $|\cdot |_{\ell(t)}$, and $\Rc$ to $\Rc(\ell(t))$. 
Defining $\Omega:= B_{\ell(t_0)}(x_0,\frac 1 2),$ we see that
$\Omega \subseteq B_{\ell(r)}(x_0,1)$ for all $r \in (0,t_0)$ if $t_0 \leq 1$ in view of Corollary 3.3 of \cite{SimonTopping1} and the fact that the condition \eqref{contcond} holds.
Differentiating the function $f(t):= \int_{\Omega} |\gradg ( \ell(t)-\ell(s))|^2_{\ell(t)} d\ell(t),$
for $s<t   \leq t_0$, and using Young's, we get
\begin{align*} 
& \partt f(t)= \partt \int_{\Omega} |\gradg  (\ell(t)-\ell(s))|^2_{\ell(t)} d\ell(t)\cr
 & \leq  \int_{\Omega} ( c  |\Rc | |\gradg  (\ell(t) -\ell(s))|^2 + 2\langle \gradg  \Rc , \gradg  (\ell(t)-\ell(s))\rangle_{\ell(t)}) d\ell(t)\cr
 & \, + c\int_{\Omega} |\ell(t)-\ell(s)| |\gradg  \Rc ||\gradg  (\ell(t) -\ell(s))|  d\ell(t)\cr
  & \leq \frac{c \al_1}{ t} f(t) +  
 c \int_{\Omega} |\gradg  \Rc |  |\gradg  (\ell(t) -\ell(s))| d\ell(t) \cr
 &\,  + c\int_{\Omega} |\ell(t)-\ell(s)| |\gradg  \Rc ||\gradg  (\ell(t) -\ell(s))| d\ell(t)\cr 
   &\leq \frac{c \al_1}{ t} f(t) +   \frac{c\al_1}{ t-s} f(t)  +    
  \frac {c} {\al_1} (t-s)\int_{\Omega} |\gradg  \Rc |^2d\ell(t)  \cr
  & + c (\int_{\Omega} |\ell(t)-\ell(s)|^2 |\gradg  \Rc|^{\frac43} d\ell(t) )^{\frac 1 2} (\al_1t^{-1}f(t))^{\frac 1 2} \cr    
 & \leq \frac{c \al_1}{ t-s} f(t) +    
  \frac {c} {\al_1} (t-s)\int_{\Omega} |\gradg  \Rc |^2 d\ell(t) + c \int_{\Omega} |\ell(t)-\ell(s)|^2 |\gradg  \Rc|^{\frac43}\, d\ell(t)    \cr
   & \leq      \frac{c  \al_1}{ t-s} f(t) +   
   \frac {c} {\al_1} (t-s)\int_{\Omega} |\gradg  \Rc |^2d\ell(t) \cr
   & + c(\int_{\Omega} |\ell(t)-\ell(s)|^{6}d\ell(t))^{\frac {1}{3}} (\int   |\gradg  \Rc |^{2}d\ell(t))^{\frac{2}{3}} \cr  
   & \leq      \frac{c  \al_1}{t-s} f(t) +   
   \frac {c} {\al_1} (t-s)\int_{\Omega} |\gradg  \Rc |^2d\ell(t)  + c (t-s)^{\frac {1}{3}} (\int   |\gradg  \Rc|^{2}d\ell(t))^{\frac{2}{3}}\cr
   & \leq      \frac{c  \al_1}{ t-s} f(t) +  \frac{c}{\al_1} (t-s)^{\frac {1}{3}} (\int_{\Omega} |\gradg  \Rc|^2d\ell(t) +1) ,
\end{align*}
since $\int_{\Omega} |\ell(t)-\ell(s)|^6  \leq c(t-s)$ for sufficiently small $\al_1$, in view of Theorem \ref{Lpboundsriccithm},  and $(t-s)\leq (t-s)^{\frac 1 3},$ for $t<s\leq T \leq 1.$ 
Hence 
$$ \partt f(t) \leq \frac{\al_2}{t-s}f(t) + Z(t) $$ for $\al_2 :=c\al_1 $ 
and $Z(t) :=  c \al_2^{-1} (t-s)^{\frac13}(  \int_{\Omega} |\gradg  \Rc(t)|^{2}d\ell(t) + 1)$ for $t\leq S(n, \al_1) \leq 1.$  Hence, 
 $F(t) := f(t+s)$ for $t\in (0,S-s)$ then satisfies
 $$  \partt F(t) \leq \frac{\al_2}{t}F(t) + \ti Z(t) $$
 where $ \ti Z(r) = c \al_2^{-1} r^{\frac13} (   \int_{\Omega} |\gradg  \Rc(\ell(s+r))|^{2}d\ell(s+r) +1),$
 for $r \in (0,T-s)$. 
Thus, for $\al_2 \leq \frac16,$ we obtain 
 \begin{eqnarray*}
 && F(t) \leq t^{\al_2} \int_0^{t} \frac{\ti Z (r)}{r^{\al_2}} dr\cr
 &&  =  c\al_2^{-1}t^{\al_2}\int_0^{t}    r^{\frac13-\al_2} (   \int_{\Omega} |\gradg  \Rc(\ell(r+s))|^{2}d\ell(r+s) +1) dr \cr
 && \leq c\al_2^{-1} t^{\al_2}\int_0^{t} ( \int_{\Omega} |\gradg  \Rc(\ell(r+s))|^{2}d\ell(r+s) dr +1)\cr
 && =   c\al_2^{-1}t^{\al_2} (\int_s^{s+t} \int_{\Omega} |\gradg  \Rc(\ell(r))|^2 d\ell(r) dr +1)\cr
 && \leq  ct^{\al_2} (\al_2^{-1}\int_s^{s+t} \int_{B_{\ell(r)}(x_0,1)} |\gradg  \Rc(\ell(r))|^2 d\ell(r) dr +1)\cr
 && \leq   c t^{\al_2} 
 \end{eqnarray*}
 for $t\in (0,S-s)$ 
 in view of Lemma  \ref{ODELem}, and the fact that  the inequality \eqref{thisone} hold. That is 
 $f(t)  \leq c   (t-s)^{  \al_2}$ for $t \in (s,S)$.
 By choosing $\be = \frac{\al_2}{2},$ we obtain 
$f(t) = \int_{\Omega} |\gradg ( \ell(t)-\ell(s))|^2_{\ell(t)} d\ell(t)  \leq   (t-s)^{  \be}$ for $t \in (s,S)$ as required.
\end{proof}

\section{The Ricci flow related solution}\label{Ricciflowrelatedsolution}
In the  sections before Section \ref{ricciflowestimates}, we constructed a solution $g(t)_{t\in (0,T]}$ to the Ricci-DeTurck flow coming out of $W^{2,2}$ initial data $g_0$ on a four dimensional manifold, and we proved estimates for such solutions. 
In this section we construct a Ricci flow related solution $\ell(t)_{t\in (0,T]}$ coming from the Ricci DeTurck flow solution constructed in the sections preceding Section \ref{ricciflowestimates}. 
We show in the setting we are considering,  that the Ricci flow related solution
$\ell(t)$ converges back to some starting value $\ell_0$ locally in the $W^{1,2}$ norm, as $t\to 0$. 
 We shall see that the tensor  
$\ell_0$ is non-negative definite, up to a set of measure zero.
 Note that since, $g_0$ and $\ell_0$ are  only defined up to a measure zero, we can arbitrarily change distance induced by   $g_0$ respectively by  $\ell_0$ by changing $g_0$ respectively $\ell_0$  on a set of measure zero, if we try and use the usual definition of distance with respect to a Riemannian metric, as the following example shows.
\begin{ex}
Let $g,h$ be smooth Riemannian metrics on   $M= \B_1(0) \subseteq \R^n$, $r>0$ small so that $B_h(0,r) \Subset \B_1(0) $  and $ x \neq y, x,y \in B_h(0,r/4)$ and let  $\ga:[0,1] \to \B_{1}(0)$ be a smooth length minimising geodesic with respect to   $h$ from $x$ to $y$. We define a new metric $\ti g$, which is the same as $g$ except on the line $\ga$. On $\ga$ we define  $\ti g(\ga(s))  =  b^2  h(\ga(s))$ for all $s  \in [0,1],$ for some $b \in \R, b>0.$  $\ti g$ is still a well defined Riemannian metric, with  $\frac{1}{N}\de \leq  \ti g \leq N \de $  for some $N >0,$ $N \in \R,$  in view of the  smoothness of $g$ and the definition of $\ti g.$  
This ensures then that $\ti g_{ij}$ is  a Borel-measurable function, since $ g_{ij}$ is smooth and $g = \ti g$ almost everywhere.
Using the fact  that   $\frac{1}{N} \de \leq \ti g \leq N \de $ for some $N>0,$    we see then that $\ti g$ is in $L^p$ for any $p\in (0,\infty].$  We also have,  for any piecewise smooth
 $\si:[0,1] \to \B_1(0)$ with $\si(0) \neq \si(1)$  that 
  $\ti g_{ij} \of \si : I \to \R$ is Borel measurable, since both $\ti g_{ij}$ and $\si$ are, and 
  $\frac{1}{N^2} \de_{ij} \leq \ti g_{ij} \of \si   \leq N^2 \de_{ij} $, since this is true for   $\ti g_{ij}.$
  This means 
  $ \ell:[0,1] \to \R$, 
  $\ell(s):=  \sqrt{\ti g_{ij}(\si(s)) \partial_s \si^i(s) \partial_s \si^j(s)}$ is a well defined $L^1$ function and 
  $ L_{\ti g} (\si):= \int_0^1 \sqrt{\ti g_{ij}(\si(s)) \partial_s \si^i(s) \partial_s \si^j(s)}ds $ satisfies
  $    0< \frac 1 N L_{\de}(\si) \leq L_{\ti g} (\si) < N L_{\de}(\si) < \infty$ for all such $\si$.
If we define $d(\ti g)(p,q) :=   \inf_{\si \in B_{p,q} } L_{\ti g}(\si)$  for all $p,q \in M$ 
where $B_{p,q}$ refers to the space of continuous, piecewise smooth curves between $p$ and $q$ in $M=  \B_1(0),$ 
then we see that $(\B_1(0),  d(\ti g)) $ is a well defined metric space, that is $d(\ti g)$ is symmetric, satisfies the triangle inequality, and $d(\ti g)(p,q) \geq 0$ for all $p,q \in M$ with equality if and only if $p =q.$
Furthermore  $ d(\ti g)(x,y) \leq L_{\ti g}(\ga) (x,y) = L_{bh}(x,y) = bd(h)(x,y) < d(g)(x,y)$ if $b>0$ is chosen small enough,  and hence $d(\ti g)(x,y) <  d(g)(x,y)$ if $b>0$ is chosen small enough.

 That is, {\it if we use the usual definition for distance with respect to a Riemannian metric, distance can change   if we change the Riemannian metric on a set of measure zero}.
\end{ex}
In particular, this example shows that we cannot be sure that $d(g(t))(x,y) \to d(g_0)(x,y))$ everywhere, as $t\downto 0,$
in the case that we have a family of smooth metrics $g(t)$ which convergences in the $L^1$ sense (or another weak sense)  to a $g_0 \in L^1,$  
 if we  define $d(g_0)$ in the usual way,  $d(g_0)(x,y) = \inf_{\si \in B_{x,y}} L_{g_0}(\ga),$ where $B_{x,y}$ is the set of smooth curves going from $x$ to $y$ : if $g_0$ 
 is bounded from above and below by a smooth metric, we can change $g_0$ on a smooth curve between two given points $x$ and $y$ (as in the example above), so that $d(g(t))(x,y)$ doesn't converge to 
 $d(g_0)(x,y)$, but we still have 
 $g(t) \to g_0$ in $L^1$ as $t\downto 0.$  

Nevertheless,  we will see  for solutions $g(t)$   to the Ricci DeTurck flow constructed in the previous sections,   that 
$d(g(t))(x,y) $ does converge to some metric $d_0(x,y)$ as $t \downto 0,$  
where   $d_0$ is  defined in a similar fashion to the usual definition of $d(g_0)$, but it is necessary to restrict further the class of admissible curves $B_{x,y}$ between $x$ and $y$ to 
the class $C_{\ep,x,y}$ of so called {\it $\ep$-approximative Lebesgue curves} between $x$ and $y$, and then to take a limit inferior  as $\ep\to 0$ of the lengths.
\begin{defn}\label{lebesgueparlines}
i) For $p\in [1,\infty)$  we say $g$ is an  $L^p$ metric,  if the following holds. $g$ is a Riemannian metric, that is     
$g(x):T_xM \times T_x M \to \R$  is defined, symmetric, positive-definite, for all $x \in M$ and locally, writing 
$\ti g_{ij}(\ti x ) := g(x)(\frac{\partial}{\partial_i}(x),  \frac{\partial}{\partial_j}(x))$  for any smooth coordinates $\phi: U \to \phi(U)= \ti U \subseteq \R^n,$  
$ \ti g_{vv} : \ti U \to \R$ is in $L^p(\ti U)$ for all $v \in \R^n,$ where $v$ is any fixed length one (w.r.t to $\de$) vector in $\R^n$, and $\ti g_{vv}(x) := \ti g_{ij}(x)v^i v^j.$ \\
ii)  
For $x,y \in M$ we define  the set $C_{\ep,x,y}(g)$  of {\it $\ep$-approximative Lebesgue curves} with respect to $g$ from $x$ to $y$ in $M$  to be the set of paths $\ga: [a,b] \to M$ which can be written as the union of finitely many so  called {\it parameterised Lebesgue lines } $\ga_i: [a_{i-1},a_{i}] \to M,$ $i\in \{1, \ldots,N\},$ $a_0 =a$, $a_N =b,$  $\ga = \ga_1 \cup \ga_2 \cup  \ga_3 \cup \ldots \cup \ga_N,$ that is   $\ga(s) := \ga_i(s)$ if $s \in [a_{i-1},a_i]$, and \\
\begin{eqnarray*}
&& d_h(x,\ga_1(a)) + d_h(\ga_1(a_1),\ga_2(a_1))  +  d_h(\ga_2(a_2),\ga_3(a_2)) \\
&& + \ldots +  d_h(\ga_{N-1}(a_{N-1}), \ga_N(a_{N-1} )) +  d_h(\ga_N(b),y) \leq \ep,
\end{eqnarray*} and a
{\it parameterised Lebesgue line} is defined as follows:    
 A {\it parameterised Lebesgue line} for an $L^1$ metric $g$ on $M,$  is a smooth curve $\ell:[b,c] \to M$ with $|b-c|\leq \frac 1 4,$ such that there exist  smooth  coordinates 
  $\phi: \Omega  \to  \phi(\Omega) = \B_1(0)$ for some $p\in M,$ 
such that $ \ell([b,c]) \subseteq \Omega$ and the curve $\si:= \phi \of \ell :[b,c] \to \B_1(0)$  in these coordinates is a line  in the direction $e_1,$   with speed one,
$\si(s) = -\frac{(b+c)}{2}e_1 +  se_1$, $\si(b) = -k,$ $\si(c) = k,$ where $k = \frac{(c-b)}{2}$ and for $f(s) :=  \sqrt{\ti g_{11}( \si(s))} ,$ we have
 $f \in L^1([b,c])$ and 
 $\int_{b}^c f(s) ds = \lim_{\al \to 0} 
 \frac{\int_{T_{\al}(\si)} \sqrt{\ti g_{11}(x)} dx    }{ \omega_{n-1} \al^{n-1}},$ 
  where here  $T_{\al} (\si)$ refers to an $\al$ tubular neighbourhood of $\si$ with respect to $\de$, 
  $T_{\al}(\si) = \{  se_1 + \al v \ | \ |v|=1 , \langle v,e_1\rangle =0, s \in ( -\frac{(c-b)}{2},\frac{(c-b)}{2} )\}.$  Note that if $g$ is smooth then
$ \int_{b}^c f(s) ds = \lim_{\al \to 0} \frac{\int_{T_{\al}(\si)} \sqrt{\ti g_{11}(x)} dx}{  \omega_{n-1} \al^{n-1}  }$ always holds. Also, an {\it $\ep$-approximative Lebesgue curve} $\ga$ is the union of finitely smooth curves, but may itself be discontinuous, and hence non-smooth.
\end{defn}

In the setting that a Riemannian metric is $L^{\infty}$ (or weaker) there are various notions of distance  and convergence of distance which may be defined, and there are many papers in this  area investigating the properties thereof, their relation  to one another and to the underlying measures. For one overview, as well as independet results/proofs thereof,  we refer to the paper  \cite{CreutzSpultanis}.
Further notions and convergence results may be found in the papers \cite{BrianAllen},\cite{ConghanDong}, \cite{Aldana}, \cite{DavidSemmes}, 
 \cite{KorteKansanen},  \cite{LeeNaberNeumeyer}, as well as the papers cited in these papers. Earlier works can be found in
 \cite{DeCeccoPalmieri}.
 In our setting it is sufficient to define distance by considering  the class of {\it $\ep$-approximative Lebesgue curves} instead of the class of piecewise smooth curves or continuous curves,  and then to take the $\liminf$ as $\ep\to 0$ of the lengths: see (iv) of \eqref{mainthm} below.

%


\begin{thm}\label{mainthm}
Let $(M,h)$ be a smooth four dimensional Riemannian manifold satisfying \eqref{hassumptionsscaled} $1<a< \infty$ and $g_0$ satsify the assumptions of  Theorem \ref{main2}, and 
let $(M,g(t))_{t\in (0,T]}$, $T \leq 1$  be the   smooth solution to  \eqref{Meq}  appearing in the 
conclusions of    Theorem \ref{main2}.
Then \begin{itemize}
\item[(i)] 
there exists a constant $c(a)$ and a smooth solution  $\Phi :M \times (0,T] \to M$ to   \eqref{ODEDe} with 
$\Phi(T/2) = Id$ such that  $\Phi(t):= \Phi(\cdot,t) : M \to M$ is a diffeomorphism and 
 $d_h(\Phi(t)(x), \Phi(s)(x)) \leq c(a,n) \sqrt {|t - s|}$ for all $x \in M.$  The metrics $\ell(t):= (\Phi(t))^*g(t), t\in (0,T]$ solve the  Ricci flow equation.
 Furthermore there are   well defined limit maps $\Phi(0): M \to M,$  $\Phi(0): = \lim_{t\downto 0} \Phi(t),$
 and $W(0): M \to M,$  $W(0): = \lim_{t\downto 0} W(t),$ where  $W(t)$ is the inverse of $\Phi(t)$ and  
  these limits  are obtained uniformly  on compact subsets, and  $\Phi(0), W(0)$  are  homeomorphisms inverse to another. 
 
 \item[(ii)] For the Ricci flow solution $\ell(t)$ from (i), there is a value $\ell_0(\cdot) = \lim_{t\downto 0} \ell (\cdot,t) $ well defined up to a set of measure zero, where the limit exists in the $L^p$ sense, for any $p \in [1,\infty)$, such that, $\ell_0$ is positive definite, and  in $W_{loc}^{1,2}$ and for any $y_0 \in M$ and $0<s<t$  we have 
 \begin{eqnarray}
 && \int_{B_{1}(y_0)}  |\ell(s)-\ell_0|^p_{\ell(t)} d\ell(t) \leq  c(g_0,h,p,y_0) s \cr
 &&  \int_{B_{1}(y_0)}  |(\ell(0))^{-1}-(\ell(s))^{-1}|^p_{\ell(t)} d\ell(t) \leq c(g_0,h,p,y_0) |s |^{\frac 1 4}  \cr
&& \int_{B_{1}(y_0) )}|\gradg  \ell_0 |^2_{\ell(t)}   d\ell(t) \leq   c(g_0,h,p,y_0) t^{\si} \cr
 && \int_{B_{1}(y_0)} |\Rm(\ell)|^2(x,t) d\ell(x,t)   + \int_0^t \int_{B_{\ell(s)}(y_0,1)}  |\gradg \Rm(\ell)|^2 (x,s) d\ell(x,s) ds   \leq  c(g_0,h,p,y_0) \cr   
 && \sup_{B_{1}(y_0)}  |\gradg^j \Rc(\ell(t))|^2 t^{j+2}     \to 0  \mbox{ as } t \downto 0   \mbox{ for all } j\in \N_0 \cr
\end{eqnarray}
where $\si>0$ is a universal constant, $c(g_0,h,p,y_0)$ is a constant depending on $g_0,h,p,y_0$ but not on $t$, and  $\gradg$ refers to the gradient with respect to $\ell(t)$.
 \item[(iii)]
 
The  limit maps $\Phi(0): M \to M,$  $\Phi(0): = \lim_{t\downto 0} \Phi(t),$
 and $W(0): M \to M,$  $W(0): = \lim_{t\downto 0} W(t),$ from (i) 
   are also  obtained in the $W_{loc}^{1,p}$ sense for $p\in [1,\infty)$.
 Furthermore,  for any smooth coordinates $\phi :U \to \R^n$, and $\psi: V \to \R^n$  and open sets $\ti U \subsub U$ and $\ti V \subsub V$ with 
 $ W(s)(\ti V)  \subsub U$  and
 $ \Phi(s)(\ti U)  \subsub V$ for all $s \in [0,\hat T],$ for some $0 <\hat T < T$  
the functions   $(\ell_0)_{i j} \of W(0):\ti V \to \R $ are in $L_{loc}^p$ for all $p \in [1,\infty)$ and  
$(g_0)_{\al \be}: \ti V \to \R $ and $(\ell_0)_{ij}: W(0)(\ti V)  \to \R$ are related by the identity 
$$(g_0)_{\al \be}  = D_{\al} (W(0))^i    D_{\be} (W(0))^j  ( ({\ell}_0)_{i j } \of W(0))$$
on $\ti V$. 
In particular: $\ell_0$ is isometric to $g_0$ almost everywhere through the map $W(0)$ which is in $W_{loc}^{1,p},$
for all $p\in [1,\infty)$.

 \item[(iv)]  We define
 $d_t    = d(g(t)) $ and $\ti d_t = d(\ell(t)) =  \Phi(t)^*d_t.$  
 There  are   well defined limit metrics $ d_0,\ti d_0: M \times M \to \R_0^+,$  $d_0(x,y) = \lim_{t\downto 0}  d_t(x,y) $, 
 and $\ti d_0 :=  M \times M \to \R_0^+,$  $\ti d_0(p,q) = \lim_{t\downto 0} \ti d_t(p,q),$
 and they satisfy $\ti d_0(x,y) = d_0(\Phi(0)(x),\Phi(0)(y)).$ That is, $ (M, \ti d_0)$ and $(M,d_0)$ are isometric to one another through the map $\Phi(0)$.\\ 
The  metric $d_0$ satisfies $d_0(x,y):= \liminf_{\ep \downto 0} \inf_{\ga \in C_{\ep,x,y}} L_{g_0}(\ga),$ where 
$ C_{\ep, x,y} $ is the space of  Lebesgue curves between $x$ and $y$ with respect to $g_0$.


\end{itemize}

 \end{thm}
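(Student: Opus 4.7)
The four parts are logically ordered: once (i) produces the diffeomorphism family $\Phi(t)$ and $\ell(t) := \Phi(t)^*g(t)$, the pointwise and integral estimates of Theorem \ref{main2} can be pulled back into the Ricci flow frame to activate the abstract machinery of Section \ref{ricciflowestimates}, yielding (ii); parts (iii) and (iv) then identify in what senses $g_0$ and $\ell_0$, and $d_0$ and $\ti d_0$, correspond. Throughout, the isometry property of $\Phi(t)$ between $(M,\ell(t))$ and $(M,g(t))$ is the bridge that transfers information in both directions.

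For (i) I would exploit the pointwise bound $|V|_h \le c(a)|\gradh g|_h \le c(a,h)/\sqrt{t}$ that follows from \eqref{a_t2} and \eqref{c_t2}. Since $t^{-1/2}$ is integrable near $0$, solving \eqref{ODEDe} forwards and backwards from $S = T/2$ produces a smooth family $\Phi(t)$ on $(0,T]$ satisfying $d_h(\Phi(t)(x),\Phi(s)(x)) \le c(a,n)\sqrt{|t-s|}$. The same bound applies to the inverse flow $W(t)$, so Arzelà–Ascoli produces $\Phi(0)$ and $W(0)$, and passing $\Phi(t)\circ W(t) = \mathrm{Id}$ to the uniform limit shows they are mutually inverse homeomorphisms.

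For (ii), I would verify the hypotheses of Section \ref{ricciflowestimates} locally on $\Omega := B_{\ell(t)}(y_0,2)$. Since $\Phi(t)$ is an isometry, bounds on $\Rc(g)$ transfer exactly: $|\Rc(\ell)|_\ell \le c/t$ from \eqref{c_t2} via the schematic identity $\Rc(g) = O(|\gradh^2 g| + |\gradh g|^2 + 1)$, and the integrated $L^1$-smallness over $\Omega$ follows from \eqref{b_t2} together with the uniform bi-Lipschitz equivalence of $\ell(t)$ and $h$ coming from \eqref{a_t2} and the bound on $\Phi$. Theorem \ref{Lpboundsriccithm} and Corollary \ref{Lpcor} then deliver the $L^p$ limit $\ell_0$ and the first two displayed estimates, Theorem \ref{W22pboundsriccithm} gives the $W^{1,2}$ bound, the spacetime bound on $|\gradg\Rm(\ell)|^2$ is the auxiliary inequality \eqref{thisone} established inside its proof, and the sharpened decay of $|\gradg^j\Rc(\ell)|^2 t^{j+2}$ is a direct pull-back of \eqref{e_t2}.

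Parts (iii) and (iv) require genuinely new work. For (iii) the spatial Jacobian satisfies $\partial_t D\Phi(t) = DV(\Phi(t)) \cdot D\Phi(t)$ with driving term $|DV| \le c(|\gradh^2 g|+|\gradh g|^2)$; combining \eqref{c_t2} pointwise with the integrated bound \eqref{f_t2} yields local $L^p$ control of $DV$ in spacetime for $p<2$ and, via Grönwall, shows that $\{D\Phi(t)\}$ is Cauchy in $L^p_{\loc}$ for every $p\in[1,\infty)$; the isometry identity falls out by passing the chain-rule formula $g(t)_{\al\be} = (\ell(t)_{ij}\circ W(t))\, D_\al W^i(t)\, D_\be W^j(t)$ to the limit using $W^{1,p}$ convergence of $W(t)$ and $L^p$ convergence of $\ell(t)$. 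For (iv), $|\partial_t g|_h \le c/t$ integrates to $(1-c\sqrt{t})g_0 \le g(t) \le (1+c\sqrt{t})g_0$ along smooth curves, so $d_t(x,y)$ is Cauchy and converges uniformly to a metric $d_0$; since $\Phi(t)$ is an isometry, $\ti d_t = \Phi(t)^* d_t$ passes to the limit as $\ti d_0 = \Phi(0)^* d_0$. I expect the hardest step to be the identification $d_0(x,y) = \liminf_{\ep\downto 0}\inf_{\ga\in C_{\ep,x,y}} L_{g_0}(\ga)$: for each parameterized Lebesgue line, Fubini over the defining tubular neighborhood has to convert the smooth length $L_{g(t)}$ into an average of $g_0$ that limits to $L_{g_0}$ on the line as $t\downto 0$, and minimizing sequences for $d_t(x,y)$ have to be perturbed into finite unions of Lebesgue lines without loss of length. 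The local $W^{2,2}$-regularity of $g_0$ is what makes this perturbation plausible — almost every straight line in smooth coordinates is a parameterized Lebesgue line — but making the Fubini and perturbation arguments precise uniformly in $t$ is where the main difficulty sits.
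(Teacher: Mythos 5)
Your plan for part (ii) matches the paper's proof almost exactly: pull back the Ricci DeTurck bounds via the isometry $\Phi(t)$, verify the hypotheses of Theorem \ref{Lpboundsriccithm}, Corollary \ref{Lpcor} and Theorem \ref{W22pboundsriccithm} locally, and cite \eqref{thisone} for the spacetime gradient bound and \eqref{e_t2} for the sharpened curvature decay. But the other three parts have genuine problems.

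In part (i), the claim that ``the same bound applies to the inverse flow $W(t)$'' does not hold. The time derivative of $W(t)(y)$ is $-DW(t)(y)\cdot V(y,t)$, not $-V$, so a bound on $|V|_h$ alone controls the $t$-dependence of $\Phi(t)$ but not of $W(t)$: the factor $DW$ is \emph{a priori} uncontrolled as $t\to 0$. Moreover, Arzel\`a--Ascoli cannot even be invoked, since there is no available equicontinuity in the spatial variable. What the paper actually does is fix $x$, compare the trajectories $\psi_{t_i}(x,\cdot)$ and $\psi_{t_j}(x,\cdot)$ after the common starting time $\max(t_i,t_j)$ via a Gr\"onwall argument using the bound $|DV| \leq \ep/s$ (which itself needs \eqref{e_t2}, i.e.\ the refined smallness $|\gradh^2 g|^2 t^2 + |\gradh g|^2 t \to 0$), and concludes the family $W(t)(x)$ is Cauchy in $t$, uniformly in $x$. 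This extra input is unavoidable and is missing from your sketch.

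In part (iii), the Gr\"onwall approach on $\partial_t D\Phi = DV(\Phi)\cdot D\Phi$ does not close. To run Gr\"onwall on $\int_B |D\Phi|^p$ you must pull $DV$ back through $\Phi(t)$, which produces a Jacobian factor $|\det D\Phi|^{-1}$ with no available bound; and the pointwise bound $|DV|\lesssim 1/t$ is not time-integrable, so a pointwise Gr\"onwall argument fails too. The paper avoids this entirely by invoking Theorem \ref{ConvMetricSob}: the identity $\delta^k_s = \ell^{rk} D_s\Phi^\alpha D_r\Phi^\beta\, g_{\alpha\beta}\circ\Phi$, together with the $L^p_{\loc}$ convergence of $\ell(s),\ell^{-1}(s),g(s),g^{-1}(s)$ and boundedness of $D\Phi(s)$ in $L^p$, makes $|D\Phi(i)-D\Phi(j)|^2_{\ell_0,Z}$ equal to an error term that vanishes in $L^p$, bypassing any evolution equation for $D\Phi$. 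This is not just a different route; it is the one that works when the time derivative of $D\Phi$ is not integrable in time.

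In part (iv), the claimed comparison $(1-c\sqrt{t})g_0 \leq g(t) \leq (1+c\sqrt{t})g_0$ from $|\partial_t g|_h\leq c/t$ is false: $\int_0^t c/s\,ds$ diverges, and in fact the best pointwise bound $|\partial_t g|\lesssim 1/t$ gives no integrated control at all. This is not a small slip: $g_0$ is only in $W^{2,2}\cap L^\infty$ and need not be continuous, so pointwise $C^0$-comparability of $g(t)$ and $g_0$ simply cannot hold. The paper's proof instead \emph{defines} $d_0$ via the $\liminf$ over $\ep$-approximative Lebesgue curves and proves $d_t\to d_0$ by two one-sided comparisons, using Lemma \ref{distboundabove} (almost every straight segment in geodesic coordinates is a Lebesgue line of length close to one after rescaling to a scale where $g_0$ is $L^2$-close to $\delta$) and Lemma \ref{distanceboundbelow} (a tubular averaging estimate bounding the length of a Lebesgue line below by $(1-\ep)d_{g(t)}$). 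You correctly identify these Fubini-over-tubes issues as the crux, but the pointwise integration shortcut you propose in their place is not available.
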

\begin{proof}

(i):\\
For $r\in (0,T)$ we define $\psi_r:M \times (0,T] \to M$ to be the solution to 
\begin{eqnarray*}
&& \partt \psi_r(y,t)  = V( \psi_r(y,t),t) ,   \cr
&&     \psi_r(y,r) = y,  \cr
 \mbox{ where } &&  V(y,t):= g^{ij}(\Gamma^k_{ij}(g) -\Gamma(h)^k_{ij})(y,t) \frac{\partial}{\partial x^k}(y)\cr
&&   \forall y \in M , t\in (0,T)  
 \end{eqnarray*}
 The Fundamental Theorem of Time Dependent Flows (see \cite{Lee}, Thm. 9.4.8)
 tells us that the $\psi_r(\cdot,s):M \to M$ are smooth diffeomorphisms for all $r,s \in (0,T]$ and that  $\psi_{t_1}(\psi_{t_0}( p, t_1),s) = \psi_{t_0}(p,s)$ for all $t_0,t_1,s  \in (0,T]$ and in particular that  $\psi_{t_1}(\psi_{t_0}( \cdot, t_1),t_0) = Id(\cdot)$ for all  $t_0,t_1  \in (0,T].$ We shall use these facts freely in the following. 
The maps $\Phi(s), W(s) : M \to M$ are defined by $\Phi(s)(x):=  \psi_{T/2}(x,s),$ and 
$ W(s)(x):= \psi_{s}(x,T/2)$ for    $s \in (0,T]$  and   $\Phi,W: M \times (0,T] \to M$ are defined by
$ \Phi(x,s) = \Phi(s)(x)$, and  $W(x,s) = W(s)(x)$ for    $s \in (0,T]$.
From the  fact that 
 $\psi_{t_1}(\psi_{t_0}( \cdot, t_1),t_0) = Id(\cdot), $
 we have 
$W(s) \of \Phi(s)(\cdot) =  \psi_{s}(\Phi(s)(\cdot),T/2) =   \psi_{s}(\psi_{T/2}(\cdot,s),T/2) = 
Id(\cdot)$ for all $s \in (0,T]$.
Defining $$\ell(s):=   (\Phi(s))^{*} g(s),$$ for $s \in (0,T]$ we obtain a smooth solution to Ricci flow
with $\ell(T/2) = g(T/2)$.


We define $\Phi(0)$ by 
$\Phi(0)(x):= \lim_{s_i \downto 0} \Phi(s_i)(x) = \lim_{s_i \downto 0} \psi_{T/2}(x,s_i)$ and 
$W(0):M \to M$ by $W(0)(x):= \lim_{t_i \downto 0} W(t_i)(x)= \lim_{t_i \downto 0} \psi_{t_i}(x,T/2) $: In the following we show that these limits exist, and are   independent of the  sequences $t_i \downto 0$ and $s_i \downto 0$ chosen.
\\
We have $|\partt \psi_{T/2}(x,t)|_h =  |V(\psi_{T/2}(x,t),t)|_h \leq \frac{\ep}{\sqrt t},$ due to the fact that $|\grad g|^2 \leq \frac{\ep}{t}$. \\Hence
$ d_h(\Phi(s)(x),\Phi(t)(x)) = d_h(\psi_{T/2}(x,s) , \psi_{T/2}(x,t))
$ $ \leq 2 \ep|\sqrt{t}-\sqrt{s}| \leq 2\ep\sqrt{|t-s|}$ for all $t,s \in (0,T]$ which shows that $\Phi(0):M \to M$ is obtained uniformly and is well defined:
$d_h(\Phi(0)(x) ,\psi_{T/2}(x,t)) = d_h(\Phi(0)(x),\Phi(t)(x)) \leq c\sqrt{t}$  for all $x \in M$.

We now turn to the construction and properties of $W$.
We can estimate  $d_h(\psi_{t_i}(x,s),x)\leq \ep \sqrt{s}$ for all $s \in (t_i,T/2],$ for all $x \in M$  in view of the fact that
$|\parts ( \psi_{t_i}(x,s) ) |_h\leq \frac{\ep}{\sqrt{s}}.$
In particular 
 writing everything w.r.t to fixed coordinates,$\phi$  where
 $ \frac{99}{100} \de \leq h \leq \frac{101}{100} \de$ and 
$|D h|^2  +|D^2 h|^2 \leq \ep$, on a large ball of radius
$1000$ and centre point $x$, we have $\psi_{t_i}(x,s), \psi_{t_j}(x,s)$ stays in this ball : We write $x$, $h,$ ... for $\phi(x),$  $\phi_*(h),$ ...\\
For $s \geq  s_0:= \max(t_j,t_i)$  
\begin{eqnarray}\label{startingvaluef}
 |\psi_{t_i}(x,s)-\psi_{t_j}(x,s)| && \leq |\psi_{t_i}(x,s)-x|  + |x - \psi_{t_j}(x,s)|  \cr
 &&  \leq  4 \ep \sqrt{s}.
\end{eqnarray}
We also have for such $s,$ 
\begin{eqnarray*}
 \parts |\psi_{t_i}(x,s) - \psi_{t_j}(x,s)|^2 
  && =|\psi_{t_i}(x,s) - \psi_{t_j}(x,s))||V(\psi_{t_i}(x,s),s) -  V(\psi_{t_j}(x,s),s)| \cr
 && \leq 2  |DV(m,s)| |\psi_{t_i}(x,s) -\psi_{t_j}(x,s)|^2\cr
 && \leq c(a)\sup_{y \in \B_{1000}(x)}  ( |\grad  ^2 g| +  |\grad g|^2 + \ep) |\psi_{t_i}(x,s) -\psi_{t_j}(x,s)|^2\cr
 && \leq \frac{\ep}{s}  |\psi_{t_i}(x,s) -\psi_{t_j}(x,s)|^2
\end{eqnarray*}
where $m$ is some value  lying on the line between $\psi_{t_i}(x,s)$ and $\psi_{t_j}(x,s)$ in the euclidean ball  $\B_{1000}(x)$.
Here we used $|\grad g^j|^2t^j \leq \ep$ when $t$ is sufficiently small. 
 Hence, writing $f(s):=  |\psi_{t_i}(x,s) -\psi_{t_j}(x,s)|^2$ we have   $ \parts f(s)   \leq \frac{\ep}{s} f(s)$ for $s \geq \max(t_i,t_j) =s_0,$ which implies
  $ \parts ( s^{-\ep} f(s))    \leq  0,$ and hence
  $f(s) \leq s^{\ep}  ( (s_0)^{-\ep} f(s_0) )$ for all $s \in [s_0=\max(t_i,t_j),T/2].$  But $f^2(s_0) =  |\psi_{t_i}(x,s_0)-\psi_{t_j}(x,s_0)|^2  \leq   16  \ep^2 s_0$ from the above estimate, and so we get  
  \begin{eqnarray*}
    (s_0)^{-\ep} f(s_0) 
  && \leq (s_0)^{-\ep}  2 \ep {s_0}  \cr
  &&  \leq  2 \ep ( s_0 )^{ 1  -\ep}\cr
&& =   2 \ep( \max(t_i,t_j) )^{1  -\ep}
   \to 0
  \end{eqnarray*}
  as $\max(t_j,t_i) \to 0,$ and hence
  \begin{align}
   f(s)  & =   |\psi_{t_i}(x,s) -\psi_{t_j}(x,s)|^2 \cr
   & \leq   2s^{\ep} \ep ( \max(t_i,t_j) )^{1 -\ep}\cr
   & \leq 2T^{\ep} \ep ( \max(t_i,t_j) )^{ 1-\ep} \to 0\label{fsestimatey}
   \end{align}
 as $t_i,t_j \to 0$, for all $s \in  (\max(t_i,t_j),T/2],$ for all $x \in M$. 
  This shows,
 $(\psi_{t_i}(x,s))_{i\in \N}$ with $t_i \downto 0$ is Cauchy  and hence 
  $    \lim_{t_i \downto 0} \psi_{t_i}(x,s) $ exists 
  for all $s \in (0,T], $ and in particular,
   $W(0)(x) :=  \lim_{t \downto 0} \psi_{t}(x,T/2) = \lim_{t \downto 0}  W(t)(x)$ is well defined, and achieved uniformly,
  \begin{align}
 d_h(W(0)(x), W(t)(x)) =  \lim_{s \to 0} d_h(\psi_s(x,T/2), \psi_{t}(x,T/2)) 
   \leq   \sqrt{2}T^{\frac{\ep}{2}} \ep (t)^{\frac{1  -\ep}{2} } \to 0
  \end{align}
 for $t\downto 0$, in view of \eqref{fsestimatey}.

We show now that $\Phi(0)$ is the inverse  of $W(0)$.
$\Phi(0)$ and $W(0)$ are  continuous, by construction, and are  the uniform limits of continuous functions, 
$\sup_{z \in M} d_h( \Phi(0)(z), \psi_{T/2}(z,t_i)) \to 0$ as $ i\to \infty$, and
$\sup_{x \in M } d_h( W(0)(x), \psi_{t_i}(x,T/2)) \to 0$ as $ i\to \infty$.
For $x\in M$, for any $\si>0$ if $i$ is large enough, we have
$ d_h( \Phi(0)(W(0)(x)),\Phi(0)(\psi_{t_i}(x,T/2))) \leq  \si,$ 
and $d_h( \Phi(0)(z),\psi_{T/2}(z,t_i) )\leq  \si$ for all $z \in M$.
This implies:
\begin{align*}
d_h( \Phi(0)(W(0)(x)), x) & \leq  d_h(\Phi(0)(W(0)(x)) ,\Phi(0)(\psi_{t_i}(x,T/2)) )  +  d_h( \Phi(0)(\psi_{t_i}(x,T/2)),x) \cr
& =  \ \ d_h(\Phi(0)(W(0)(x)) ,\Phi(0)(\psi_{t_i}(x,T/2)) ) \\
& \ \  \   +  d_h( \Phi(0)(\psi_{t_i}(x,T/2)),   \psi_{T/2}(\psi_{t_i}(x,T/2),t_i)  ) \cr
& \leq  2\sigma \cr
\end{align*}

Hence $ \Phi(0)(W(0)(x)) = x,$ as $x \in M$ and $\si>0$ were arbitrary.
Similarly, for $z\in M$, for any $\si>0$ if $i$ is large enough, we have  
 $  d_h(W(0)(\Phi(0)(z)), W(0)(\psi_{T/2}(z,t_i))) \leq   \si$ 
 and  $d_h( W(0)(x),\psi_{t_i}(x,T/2) )\leq  \si$ for all $x \in M,$ and hence
\begin{align*}
d_h(W(0)(\Phi(0)(z)),z)  & \leq  d_h(W(0)(\Phi(0)(z)),W(0)( \psi_{T/2}(z,t_i) ) )  + d_h( W(0)( \psi_{T/2}(z,t_i) ),  z) \cr
& =  d_h(W(0)(\Phi(0)(z)),W(0)( \psi_{T/2}(z,t_i) ) )  \\
& \ \  \  + d_h( W(0)( \psi_{T/2}(z,t_i) ), \psi_{t_i}( \psi_{T/2}(z,t_i),T/2) )) \cr
& \leq 2\si
\end{align*}

Hence $W(0)(\Phi(0)(z)) = z$ for all $z \in M,$  as $z \in M$ and $\si>0$ were arbitrary.

That is, $W(0)$ is the inverse of $\Phi(0)$.\\

We further have that $\Phi(s)(B_{1-\ep}(x_0)) \subseteq \Phi(0)(B_{1-\ep/4}(\ti x_0))$ for all $s= s(\ep)>0$ small enough, as we now explain,  where $\ti x_0 =  W(0) \of \Phi(s)(x_0):$
$W(0) \of \Phi(s) \to \Id$ uniformly as $s \downto 0$, as was shown above. 
This means $ W(0) \of \Phi(s) (B_{1-\ep}(x_0)) \subseteq B_{1-\frac{3\ep}{4}}(x_0) \subseteq   B_{1-\frac{\ep}{4}}(\ti x_0)$ for $s $ small enough, and hence
taking $\Phi(0)$ of both sides, the claim follows.
 
This is (i). 
(ii) Let  $y_0\in M$   and  $p\in [0,\infty)$ and $\ep \leq \al_0(p,n=4)$ be the constant from  Theorem \ref{Lpboundsriccithm}, Corollary \ref{Lpcor}, and assume also  that $\ep \leq \al_1(4,C_S(4))$ where $\al_1(n,A)$ is the constant from Theorem   \ref{W22pboundsriccithm} with $n=4$ and $A= C_S(4)$ , and $C_S(n)$ is the Sobolev constant from Theorem \ref{balllemma}.  
The construction of our solution, see Theorem \ref{main1}, Theorem \ref{main2} and the Tensor Sobolev inequality, Theorem \ref{balllemma}  (v) guarantee,  that, without loss of generality, 
$\int_{B_{2}(x)}( |\gradh g(t)|^4 +|\gradh^2 g(t)|^2 )dh  \leq  \de^4(a)  $    for all $x\in M$    and  $\de(a)$ is the constant from Corollary \ref{smallenergycor}. We also have , without loss of generality, 
\begin{eqnarray*}
|\gradh^3 g|^2 t^3 + |\gradh^2 g|^2t^2 + |\gradh g|^2t \leq \ep^2   
\end{eqnarray*}
on  $\overline{B_{200}(y_0)}$ for $t\in (0,T)$   in view of \eqref{eee_t}, and hence 
\begin{eqnarray}
| \Rm(g(t))| + |{{}^{g(t)} \nabla \Riem}|^{\frac 2 3}  \leq \frac{\ep}{t} \label{curvatureest2}
\end{eqnarray}
on  $\overline{B_{200}(y_0)}$   for $t\in (0,T),$ after reducing the time interval if necessary.  
 
By choosing $R_1 = R_1(y_0,g_0) >0$ small enough, we can guarantee that 
$\int_{B_{R_1}(x_0)}( |\gradh g_0|^4 +|\gradh^2 g_0|^2 )dh  \leq \frac{\ep}{2}$  for all $x_0$ in the compact set $\overline{B_{100}(y_0)}$  in view of Lemma \ref{smalllocallemma}.
By scaling once, we have for all such   $x_0$   that 
$\int_{B_{8}(x_0)} (|\gradh g_0|^2 +|\gradh^2 g_0|^2) dh  \leq \frac{\ep}{2} ,$
and   $\int_{B_{20}(x)} (|\gradh g(t)|^4 +|\gradh^2 g(t)|^2) dh  \leq  \de^4(a)  $ for all $x \in M$ and all $x_0$ in  $\overline{B_{100}(y_0)}$ which implies  $\int_{B_{20}(x)} (|\gradh g(t)|^2 +|\gradh^2 g(t)|^2 )dh  \leq \de(a)$ for all $x \in M,$  in view of H\"older's inequality  and the fact that $\de(a)$ is without loss of generality  less than  $\vol_h(B_{20}(x))$ for all $x \in M.$ 
Using Corollary \ref{smallenergycor},  we see  $\int_{B_{4}(x_0)}  ( |\gradh^2 g(t)|^2 + |\gradh g(t)|^2 )dh \leq  \ep$ for all $x_0$ in  $\overline{B_{100}(y_0)}$ and hence, using the fact that without loss of generality $|\Riem(h)|\leq \ep$, 
\begin{eqnarray*}
&& \int_{B_{4}(x_0)} |\Riem(g(t))|^2_{g(t)} dg(t)  \leq    2\ep,\cr
\end{eqnarray*} 
  for all $t \leq T,$ after reducing the time interval if necessary. This estimate with \eqref{curvatureest2} show that the Ricci flow related solution $\ell$  restricted to  $\Omega = B_{4}(x_0)$ for any such $x_0$ 
  satisfies all the conditions of Theorem \ref{Lpboundsriccithm}, Corollary \ref{Lpcor}    and  Theorem \ref{W22pboundsriccithm} (after scaling once more by a factor 5), and hence the estimates obtained there hold. These estimates 
change at most  by  a factor when we scale the solution back to the original solution, the constant depending   on the scaling factor, $h$ and $x_0,$ $g_0$  and $p$. These scaled estimates are  (ii) for the given $p$. As $p\in [0,\infty)$ was arbitrary, (ii) holds.

(iii) From the definition of $\ell$,  in local coordinates  we have 
\begin{align}\label{Metriceqstart} 
 & \ell_{i j }(s)(x) = D_{i}\Phi^{\al}(s)(x)    D_j\Phi^{\be}(s)(x)  g_{\al \be}(s)( \Phi(s) (x)) \cr
 &  \ \ \ \ \  \ \ \ \ \  \ \ \ \ \ \ \ \ \ \ \mbox { and }   \cr 
  &  g_{\al \be}(s)( \Phi(s) (x)) = D_{\al} W^i(s)(\Phi(s)(x))   D_{\be} W^j(s)(\Phi(s)(x))  \ell_{i j }(x,s) \cr
   &  \ \ \ \ \  \ \ \ \ \  \ \ \ \ \ \ \ \ \ \ \mbox { and }   \cr 
    &  g_{\al \be}(s)( y) = D_{\al} W^i(s)(y)   D_{\be} W^j(s)(y)  \ell_{i j }(W(s)(y),s), \cr 
\end{align}\\
where we have chosen   smooth coordinates as in the statement of the claim of (iii) of  this Theorem , $y \in \ti V$,$x \in \ti U.$
Notice that $g(t) \to g(0)$ and $g(t)^{-1} \to g(0)^{-1}$  in the $L^p_{loc}$ sense for all 
$p\in [1,\infty),$ in view of Corollary \ref{L2continuityCor}. 
Hence, we may apply  Theorem \ref{ConvMetricSob}, and we see that (iii) holds.

(iv): 
For $x,y \in M$,   
we define 
$d_0(x,y):= \liminf_{ \ep \to 0} \inf_{\ga \in C_{\ep,x,y}} L_{g_0}(\ga),$  
 where 
$ C_{\ep,x,y} $    is the space of  {\it $\ep$-approximative Lebesgue  curves} with respect to $g_0$ 
 joining $x$ and $y,$ defined in Definition \ref{lebesgueparlines}.
 
Let $x,y \in B_{\frac{R}{c(a)}}(x_0)$  where  $B_{\frac{R}{c(a)}}(x_0)$ is the ball  with respect to $h,$ for some  fixed $x_0 \in M,$  where $c(a)$ is a large constant to be determined in the proof. 
Since  $g_0 \in W^{2,2}(B_{2R}(x_0))$  we know,  from Lemma \ref{smalllocallemma} of Appendix B  that for any $\si>0$ there exists a  $r(\si,R,a)>0$ such that $\int_{ B_{2r}(z) } (|\gradh^2 g_0|^2 + |\gradh g_0|^4 )dh \leq \si^4,$  for all $z \in B_{R}(x_0). $

Scaling $g_0$ and $h$ once, by the same large constant $K$, and still calling the new scaled metrics  $g_0$,  $h$, and   the new radius  $  \sqrt{K} R$ will still be denoted by $R$, we have  
$\int_{ B_{2}(z) } (|\gradh^2 g_0|^2 + |\gradh g_0|^4)dh  \leq \si^4$ for all $ z\in B_{ R}(x_0)$ 
and without loss of generality $\sup_M \sum_{i=1}^4 |\gradh^i \Riem(h)| \leq \si^4$. 
Hence, using Corollary  \ref{smallenergycor},    H\"older's inequality and Lemma \ref{balllemma}  (v), 
\begin{eqnarray}
&& \int_{B_{2}(z)}  ( |\gradh^2 g(t)|^2 + |\gradh g(t)|^4 )dh \leq  \si ,
\cr
&& \sum_{i=1}^4 |\grad^{i} \Riem(h)| \leq \si \cr  
 && \frac{1}{400   a} h  \leq g(t) \leq 400   a h, \cr 
 && |\gradh^j  g(t)|  \leq \frac{c(j,a)}{t^j}, \label{startuptwo}
\end{eqnarray} 
 for all $j\in \N$  for all $ z\in B_{R}(x_0)$ 
 and for all $0<t \leq S_2(400a,\si),$ and after scaling once more by $\frac{1}{2 S_2}$, for all $t\in (0,2]$.

\begin{eqnarray}
\mbox{ We first show that  } &&
d_0(x,y)  \leq \liminf_{t \to  0} d_t(x,y)  \label{claim1}
\end{eqnarray}

Let  $\ep>0$ given. Taking any $0<t\leq \ep^4$ and scaling by $\hat g(s) = \frac{1}{t} g(st),$ and denoting the new radius by $\hat R$, that is  $\hat R= \frac{1}{\sqrt{t}} R,$ and $\hat h = \frac{1}{t} h,$ $\hat g_0 = \frac{1}{t} g_0,$ we see, in view of
\eqref{startuptwo},  that we obtain a new solution $\hat g(s),$ $s \in [0,2], $  such that
  \begin{eqnarray*}
&& \int_{\hat B_{2}(z)} (  |\grad^2 \hat g(s)|^2 + |\grad \hat g(s)|^4  )dh\leq 2\si,
\cr
 && \frac{1}{400 a} \hat h  \leq \hat g(s) \leq 400 a \hat h, \cr 
 && |\grad^j  \hat g(s)|  \leq \frac{c(j,a)}{s^j},\cr 
 && \sum_{i=1}^4 |\grad^{i} \Riem(\hat h)| \leq \si \cr 
 && |\grad^j  \hat g(1)| \leq c(j, a,\si)
\end{eqnarray*}  
for all $j\in \N$  for all $ z\in \hat B_{\hat R}(x_0)$ 
 and for all $s \leq 1,$ 
 where $c(j ,a,\si) \to 0$ for $\si \to 0,$ where $\hat B_s(m)$ refers to a ball of radius $m$ with respect to $\hat h$.
 For later use not that $\hat R \geq \frac{1}{\ep^2}.$
Let $\ga$ be a length minimising geodesic between $x$ and $y$ with respect to $\hat g(1)$. 
Writing $\hat g(1)$ in geodesic coordinates at any $z \in \hat B_{\hat R}(x_0)$ on a ball of radius one,  we have
\begin{align*}
 &(1 - |\al(\si,a)|)) \de \leq  \hat g(1) \leq (1+ |\al(\si,a)|) \de  \mbox{ on } \B_{10}(0) \cr
  & \frac{1}{400 a} \hat h \leq  \hat g(s) \leq 400  a \hat h  \mbox{ for } s \in [0,2] 
 \end{align*}
  where $\al(\si,a ) \to 0$ as $\si \to 0.$ 

 In the following, any constant $c(\si,a)$  with  $c(\si,a) \to 0$ as $\si \to 0,$ shall be denoted by $\al(\si,a)$ although it may differ from the one just defined.  $\al(\si,a)$ is not necessarily larger than zero. 

We can break $\ga $ up into $N$ pieces, $\ga_1 = \ga|_{[0,1]}$,
$\ga_2 =  \ga|_{[1,2]},$ $\ga_{N-1}=  \ga|_{[N-2,N-1]},$ $\ga_N = \ga|_{[N-1,B]}$ each with length one with respect to $\hat g(1)$, except for the last piece which has length less than or equal to one. Due to  $ \frac{1}{400 a} \hat h \leq   \hat g(1) \leq 400 a \hat h,$ we have 
$N \leq c(a) \hat R$.
  After rotating once, we may assume that any length one   piece of $\ga,$ going from $\ga(i)$ to $\ga(i+1),$  $i\in \{0, 2, \ldots N-2\},$  in geodesic coordinates, with respect to $\hat g(1)$  centred at $z= \ga(i +\frac{1}{2})$ 
 lies in  $\overline{ \B_{2}(0)},$ and  is   (in these coordinates) the  line segment $v:[-\frac 1 2 , \frac 1 2] \to \overline{ \B_{2}(0) }$, $v(s) = se_1.$ We ignore  the last piece of $\ga$ for the moment.  

Using  Corollary \ref{L2continuityCor}  and $|\Rm(\hat h) | \leq \sigma$ we have 
  $\int_{\B_{1}(0)}   |\hat g(t)-\hat g_0|^2 dh \leq 2\sigma t$ for all $t\leq 1$
and hence  \begin{align}\label{gzerotogt}
\int_{\B_{1}(0)}   |\de-\hat g_0|^2  \leq \al(\sigma,a). 
\end{align}
Let $\ep>0$ be given.
Using Lemma  \ref{distboundabove} we see, by choosing $\si= \si(\ep)>0$ small enough, that 
there exists an $x \in \B^{n-1}_{\ep}(0)$ such that 
$\sqrt{ \hat g_{11}(0) }(\cdot,x): [-\frac 1 2,\frac 1 2] \to \R^n$ is measurable, 
$\ell:[-\frac 1 2,\frac 1 2] \to \R^n,$ $\ell(t) = (t,x) $ is a Lebesgue line between $(-\frac 1 2,x)$ and $(\frac 1 2,x)$ and 
\begin{eqnarray*}
\int_{-\frac 1 2}^{\frac 1 2} \sqrt{\hat g_{11}(0)(s,x)}ds  \leq 1+ \ep &&  = ( 1+ \ep ) d_{\de}( ( -\frac 1 2,x),(\frac 1 2,x)) \cr
&& \leq ( 1+ 2\ep )d_{\hat g(1)}(( -\frac 1 2,x),(\frac 1 2,x))
\end{eqnarray*}

which  tells us for the original curve $\ga|_{[0,N-1]}$ that there exists   Lebesgue curves $v_i:[i-1,i] \to \hat B_{\hat R}(x_0)$ for all $i \in \{1, \ldots,N-1\}$   w.r.t.  to $\hat g(0)$ such that $d_{\hat h}(v_i(i-1),\ga(i-1)) \leq c(a) \ep$ and $ d_{\hat h}( v_i(i),\ga(i)) \leq c(a) \ep$ 
and $L_{\hat g_0}(v_i) \leq (1+\ep) d_{\hat g(1)}( \ga(i-1),\ga(i))$ for all $i \in \{1, \ldots,N-1\}:$ The curves $v_i$
are the curves $\ell$ constructed above. 
  Adding up all the curve segments we have 
\begin{align*}
\sum_{i=1}^{N-2} d_{\hat h}(v_i(i),v_{i+1}(i))    \leq 
N \ep c(a) \leq \hat R  \ep c(a)
\end{align*}
and  hence
\begin{align*} 
\sum_{i=1}^{N-2} d_{\hat h}(v_i(i),v_{i+1}(i))   + d_h(x,v_1(0)) + d_h(y,v_{N-1}(N-1))     \leq \hat R  \ep c(a) + 2
\end{align*}
and also 
\begin{eqnarray*}
  \sum_{i=1}^{N-1} L_{{\hat g}_0}(v_i) \leq   (1+ \ep)d_{\hat g(1)}(x,\ga(N-1))  
  \leq   (1+ \ep)d_{\hat g(1)}(x,y)\cr
 \end{eqnarray*}
 as $\ga$ was a length minimising, with respect to $\hat g(1)$, geodesic between $x$ and $y.$ 
 Scaling back the solution we had at the beginning of this  proof of this claim,  \eqref{claim1}, 
that is defining $  g(s) =t g(s \frac{1}{t}),$  for the $t$ we chose there, we see at time $t$ that 
 \begin{eqnarray*}
 && \sum_{i=1}^{N-1} L_{ g_0}(v_i) \leq   (1+ \ep)d_{ g(t)}(x,y)
 \end{eqnarray*}
 and
 \begin{align*}
&\sum_{i=1}^{N-2} d_h(v_i(i),v_{i+1}(i)) + d_h(x,v_1(0)) + d_h(y,v_{N-1}(N-1))\cr
 &  \leq  R  \ep c(a) + 2\sqrt{t} \cr
& \leq  R  \ep c(a) + 2\ep
\end{align*}
in view of the choice of $t\leq \ep^4$. 
That is $v= \cup_{i=1}^{N-1} v_i$  is an $Rc(a)  \ep$-approximative Lebesgue curve
and $L_{g_0}(v)  \leq   (1+  \ep)d_{ t}(x,y).$
Hence $\inf_{\ga \in C_{R c(a) \ep,x,y}}  L_{g_0}(\ga)  \leq (1+ \ep)d_{ t}(x,y)$ for all $t\leq T(\ep,a,g_0,x,y,R,h),$
and this shows 
$d_0(x,y)= \liminf_{ \ep \to 0} \inf_{\ga \in C_{\ep,x,y}} L_{g_0}(\ga) \leq \liminf_{t \to  0} d_t(x,y)$
for all $x, y\in M$.

Now we show that 

 $ d_0(x,y)  := \liminf_{ \ep \to 0} \inf_{\ga \in C_{\ep,x,y}} L_{g_0}(\ga)  \geq \limsup_{t\downto 0}  d_t(x,y)$
 for all $x,y \in M$.

From the definition of  $C_{\ep,x,y},$ 
$\ga \in C_{\ep, x,y},$   may be written as $\ga = \cup_{i=1}^N \ga_i$ where each $\ga_i:[a_i,b_i]\to M$ is a parametrised Lebesgue line. 
Let $\si:[c_i,d_i] \to \B_2(x)$ be one of the segments  $\ga_i$ written in smooth coordinates, so that   $\si(t) =     te_1.$
Since the coordinates are smooth, and $\frac{1}{400a}   h \leq   g(t) \leq 400 a  h$,  we see that there exists a constant $C$ depending possibly on the coordinates  and $a$, such that
$      \frac{1}{C}\de  \leq   h \leq C \de$   and  
$      \frac{1}{C}\de  \leq   g(t) \leq C  \de$ in these coordinates.

Using Corollary \ref{L2continuityCor} and $|\Rm(  h) | \leq \sigma \leq 1,$ we see  that we have 
  $$\int_{\B_{2}(0)}   |  g(x,t)- g_0(x)|_{\de}^2 dx \leq c t$$ with respect to these coordinates for all $t\leq 1$
  for some constant $c=c(C).$

Using Lemma \ref{distanceboundbelow}, 
we see that : For all $\ep >0$ there exists a $t_0 >0$ such that 
 \begin{eqnarray*}
L_{g_0}(\si):= \int_{c_i}^{d_i} \sqrt{g(0)_{11}(s,0)}  \geq (1-\ep)d(g(t))(\si(c_i),\si(d_i)) 
\end{eqnarray*}
for all $t\in (0,t_0)$ in these coordinates. 

That is $L_{g_0}(\ga_i) \geq  (1-\ep)d(g(t))(\ga_i(a_i),\ga_i(b_i))).$

Hence, estimating on each  Lebesgue line $\ga_i$ in this way, we see (setting $\ga_{N+1}(a_{N+1}) := y$  that, there exists a $s_0$ such that  
\begin{align*}
L_{g_0}(\ga) 
& \geq \sum_{i=1}^N (1-\ep)d_t(\ga_i(a_i), \ga_{i}(b_i))  \cr
& \geq  \sum_{i=1}^N (1-\ep)d_t(\ga_i(a_{i}), \ga_{i+1}(a_{i+1}) ) 
-  \sum_{i=1}^N d_t(\ga_i(b_i), \ga_{i+1}(a_{i+1})) \cr
& \geq  \sum_{i=1}^N (1-\ep)d_t(\ga_i(a_{i}), \ga_{i+1}(a_{i+1})) 
-  c\sum_{i=1}^N d_h(\ga_i(b_i), \ga_{i+1}(a_{i+1}) ) \cr
& \geq    (1-\ep)d_t(\ga_1(a_1), \ga_{N+1}(a_{N+1})= y) -c\ep \cr
& \geq (1-\ep)d_t(x,y) -2c\ep
\end{align*}
for   $t\in (0,s_0)$.
That is, for fixed $x,y \in M$ we have  
$
 d_0(x,y) \geq \limsup_{t\downto 0}  d_t(x,y),
$
in view of the definition of $d_0$ and the fact that $\ep>0$ was arbitrarily chosen in the argument above.
Combining the lower and upper bound proved for 
$d_0(x,y),$ we have
$$
 d_0(x,y) :=\liminf_{ \ep \to 0} \inf_{\ga \in C_{\ep,x,y}} L_{g_0}(\ga)  =   \lim_{t\downto 0}  d_t(x,y)
$$
as claimed.

The property $\ti d_0 = \lim_{t\downto 0} \ti d_t$ now follows easily from the definitions and the fact that $\Phi(t)$ converges uniformly to $\Phi(0)$ as $t\downto 0:$
\begin{eqnarray*}
   && \ti d_0(x,y) \cr 
    &&:=  \ \     \ \ \  d_0(\Phi(0)(x),\Phi(0)(y)) \cr
    &&  = \ \  \ \ \ \  d_t(\Phi(0)(x),\Phi(0)(y)) + \ep( x, y ,t) \cr
    &&  \leq (\geq )   \ \  d_t(\Phi(0)(x),\Phi(t)(y)) \cr
    &&   \ \ \ \ \ \  \ \ \ \  +  (-) \ \  d_t(\Phi(t)(y),\Phi(0)(y))+ \ep( x, y ,t) \cr
    &&  \leq (\geq )    \ \ d_t(\Phi(t)(x),\Phi(t)(y)) + (-)  d_t(\Phi(t)(x),\Phi_0(x))   \cr 
    &&  \ \ \ \    \ \ \ \ \ \ 
     +  (-) \ \ d_t(\Phi(t)(y),\Phi(0)(y))+ \ep( x, y ,t) \cr
    &&    \leq (\geq )  \ \ d_t(\Phi(t)(x),\Phi(t)(y)) + (-)  \ \ c(n,a)d_h(\Phi(t)(x),\Phi(0)(x))   \cr 
   &&   \ \ \ \   \ \ \ \   \ \  + (-) \ \ c(n,a)d_h(\Phi(t)(y),\Phi(0)(y))+ \ep( x, y ,t) \cr
    &&    =   \ \ \ \  \ \   \ti d_t(x,y)  + (-)  \ \ c(n,a)d_h(\Phi(t)(x),\Phi(0)(x)) \cr
    &&   \ \ \ \   \ \ \ \   \ \
     +(-) \ \ c(n,a) d_h(\Phi(t)(y),\Phi(0)(y)) +   \ep( x, y ,t) 
  \end{eqnarray*}
  where $\ep(x,y,t) \to 0$ for $t\downto 0$. 
d.h. $\ti d_t(x,y) \to \ti d_0(x,y)$ for $t\downto 0$.

 \end{proof}

\section{Metric convergence   in Sobolev spaces}\label{metricconvergencesobolevspaces}

\begin{thm}\label{ConvMetricSob}
Let $(M,h)$ be a four dimensional Riemannian manifold satisfying \eqref{hassumptionsscaled}. 
We assume that  there are continuous maps
$W(0),\Phi(0): M \to M$ inverse to one another such that for all compact sets $K\subseteq M,$ 
$ \sup_{x\in K} d_h(\Phi(r)(x),\Phi(0)(x))  \to 0 \mbox { as } r\to 0,$
$ \sup_{y\in K} d_h(W(r)(y),W(0)(y))  \to 0 \mbox { as }r \to 0,$
where $\Phi, W :M  \times (0,T)  \to M,$  are smooth maps such that 
$W(s): M \to M, \Phi(s)$ are smooth diffeomorphisms   inverse to one another,  for all $s\in (0,T).$ 
Let $\ell(s)_{s \in (0,T)}, g(s)_{s\in(0,T)}$ be  smooth families of Riemannian metrics
isometric to one another through the smooth maps
$\Phi(s)$ and $W(s):$ $\ell(s) = \Phi(s)^{*}(g(s)),$ 
$g(s)= W(s)^{*}\ell(s).$
 Assume that we have chosen  smooth coordinates
$\phi :U \to \R^n$, and  $\psi: V \to \R^n$ 
 and open sets $\ti U \subsub U$ and $\ti V \subsub V$ with 
 $ W(s)(\ti V)  \subsub U$  and
 $ \Phi(s)(\ti U)  \subsub V$ for all $s \in [0,S],$
 $ W(s)(V)  \subsub U$ for all $s\in[0,S]$  for some $0<S<T$. That is, in these coordinates we  have
\begin{align}
 & \ell_{i j }(s)(x) = D_{i}\Phi^{\al}(s)(x)    D_j\Phi^{\be}(s)(x)  g_{\al \be}(s)( \Phi(s) (x)) \label{Metriceq} \\
 &  \ \ \ \ \  \ \ \ \ \  \ \ \ \ \ \ \ \ \ \ \mbox { and }   \cr 
  &  g_{\al \be}(s)( \Phi(s) (x)) = D_{\al} W^i(s)(\Phi(s)(x))   D_{\be} W^j(s)(\Phi(s)(x))  \ell_{i j }(x,s) \cr
   &  \ \ \ \ \  \ \ \ \ \  \ \ \ \ \ \ \ \ \ \ \mbox { and }   \cr 
    &  g_{\al \be}(s)( y) = D_{\al} W^i(s)(y)   D_{\be} W^j(s)(y)  \ell_{i j }(W(s)(y),s)  \nonumber
\end{align}\\
for $x \in \ti U,$ $y \in \ti V.$
Assume further, that there exist   Riemannian metrics  $\ell(0)$ and $g(0)$  whose inverse exists almost everywhere, so that   $g(0), \ell(0),g^{-1}(0),  \ell^{-1}(0)  \in L^p_{loc}$ for all $p \in [1,\infty)$ such that
\begin{itemize}
\item[(i)]  $\ell(s) \to \ell(0),  \ell^{-1}(s) \to \ell^{-1}(0)  , g(s) \to g(0), g^{-1}(s) \to g^{-1}(0)  $  as $s \to 0$ locally in $L^p$ for all $p\in [1,\infty)$,
\item[(ii)] For any compact set $K \subseteq M,$
for all $ s \in (0,T), p\in [1,\infty) ,$ there exists a constant $c(K,h,p)$ such that  
$\| \ell(s)\|_{W^{1,2}(K)} + \|g(s)\|_{W^{1,4}(K)}  \leq c(K,h,p) ,$
\item[(iii)] There is a constant $a \geq 1 $ such that $\frac{1}{a} h  \leq g(s) \leq a h$ for all  $ s \in (0,T).$

\end{itemize}

Then:  $ D\Phi(s)$ is bounded in $L^p(\ti U)$ and $D W(s)$ is   bounded  in $L^p(\ti V)$ uniformly independent of $s\in (0,T).$ 
Furthermore, $\Phi(s) \to \Phi(0)$ and $W(s) \to W(0)$ locally in $W^{1,p}$ for any 
$p\in [1, \infty),$  $\ell(0) \of W(0)$ is in $L^p_{loc}$ for all $ p\in [1,\infty),$
and $\ell(0)$ and $g_0$ are isometric to one another through the  map $W(0),$ which is in $W^{1,p}_{loc},$
for all $p\in [1,\infty):$
$$(g_0)_{\al \be}  = D_{\al} (W(0))^i    D_{\be} (W(0))^j  ( ({\ell}_0)_{i j } \of W(0))$$ on $\ti V$ in the $L^p$ sense for all $p\in [1,\infty)$.
\end{thm}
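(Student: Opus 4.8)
The plan is to pass to the limit $s\downto 0$ in the pointwise relations \eqref{Metriceq}; the only real difficulty is that these are \emph{quadratic} in the Jacobians $D\Phi(s)$ and $DW(s)$, so weak convergence of the Jacobians is not by itself enough. First I would establish uniform $L^{p}$ bounds on the Jacobians. After interposing relatively compact open sets between $\ti U,\ti V$ and $U,V$, the uniform convergence $\Phi(s)\to\Phi(0)$, $W(s)\to W(0)$ on compacta lets one carry out all compositions below on a single, $s$-independent family of compact sets. On these sets hypothesis (iii) and smoothness of the fixed charts give $\tfrac1C\de\le g(s)\le C\de$ and $\tfrac1C\de\le g^{-1}(s)\le C\de$ uniformly in $s$; reading \eqref{Metriceq} as $\ell(s)(x)=D\Phi(s)(x)^{T}(g(s)\of\Phi(s))(x)\,D\Phi(s)(x)$ gives $|D\Phi(s)(x)|^{2}\le C|\ell(s)(x)|_{h}$, and $DW(s)=(D\Phi(s)\of W(s))^{-1}$ with the corresponding singular-value bound gives $|DW(s)(y)|^{2}\le C|\ell^{-1}(s)(W(s)(y))|_{h}$. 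Since a convergent sequence is bounded, (i) together with (ii), (iii) and smoothness makes $\ell(s),\ell^{-1}(s),g(s),g^{-1}(s)$ bounded in every $L^{q}_{loc}$ uniformly in $s$; a change of variables $x=W(s)(y)$ (with $|\det D\Phi(s)|\le c|D\Phi(s)|^{n}\le c|\ell(s)|^{n/2}$) and H\"older then yield $\|D\Phi(s)\|_{L^{p}(\ti U)}+\|DW(s)\|_{L^{p}(\ti V)}\le c(p)$ for every $p<\infty$. As $\Phi(s)\to\Phi(0)$, $W(s)\to W(0)$ uniformly (hence in $\mathcal D'$), these bounds force $\Phi(0),W(0)\in W^{1,p}_{loc}$ for all $p$, with $D\Phi(s)\weak D\Phi(0)$ and $DW(s)\weak DW(0)$ weakly in every $L^{q}_{loc}$.

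The main work is the second step: identifying the limit. Rather than pass to the limit directly in the quadratic relation, I would extract two matching one-sided inequalities. Writing $B_{s}:=g(s)\of\Phi(s)$, a routine composition lemma (from $g(s)\to g(0)$ in $L^{q}_{loc}$, $\Phi(s)\to\Phi(0)$ uniformly, the uniform Jacobian bounds and the $L^{\infty}$ bound (iii)) gives $B_{s}\to B_{0}:=g(0)\of\Phi(0)$ strongly in every $L^{q}_{loc}$, with $B_{s}$ uniformly elliptic. Then $\sqrt{B_{s}}\,D\Phi(s)\weak\sqrt{B_{0}}\,D\Phi(0)$ weakly in $L^{2}_{loc}$ (strong times weak), while for every fixed vector $v$ one has $|\sqrt{B_{s}}D\Phi(s)v|^{2}=\langle\ell(s)v,v\rangle_{\de}$; weak lower semicontinuity of $\xi\mapsto\int\varphi|\xi|^{2}$ for $\varphi\ge0$ and $\ell(s)\to\ell(0)$ in $L^{1}_{loc}$ then give $\ell(0)\ge D\Phi(0)^{T}B_{0}\,D\Phi(0)$ a.e. The identical argument applied to the inverse relation $\ell^{-1}(s)(x)=(DW(s)\of\Phi(s))(x)\,(g^{-1}(s)\of\Phi(s))(x)\,(DW(s)\of\Phi(s))(x)^{T}$, whose middle factor is again uniformly elliptic, gives $\ell^{-1}(0)\ge(DW(0)\of\Phi(0))(g^{-1}(0)\of\Phi(0))(DW(0)\of\Phi(0))^{T}$ a.e. Using the chain rule for the Sobolev homeomorphism pair, $DW(0)(\Phi(0)(x))D\Phi(0)(x)=I$ a.e. (so $D\Phi(0)$ is a.e. invertible), the right-hand side here equals $\big(D\Phi(0)^{T}B_{0}D\Phi(0)\big)^{-1}$, so inverting it reverses the first inequality. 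Hence $\ell(0)=D\Phi(0)^{T}(g(0)\of\Phi(0))D\Phi(0)$ a.e., i.e. $\ell(0)=\Phi(0)^{*}g(0)$; composing with $W(0)$ (and the chain rule once more) rewrites this as $(g_{0})_{\al\be}=D_{\al}(W(0))^{i}D_{\be}(W(0))^{j}\big((\ell_{0})_{ij}\of W(0)\big)$ a.e. on $\ti V$, which also shows $(\ell_{0})_{ij}\of W(0)\in L^{p}_{loc}$ for all $p$.

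Finally I would upgrade to strong $W^{1,p}$ convergence, which is now easy: Step 2 gives $\int\tr_{\de}\ell(0)=\|\sqrt{B_{0}}\,D\Phi(0)\|_{L^{2}}^{2}$, so the $L^{2}_{loc}$ norms of $\sqrt{B_{s}}D\Phi(s)=$ (which equal $\int\tr_{\de}\ell(s)$) converge to that of their weak limit, whence $\sqrt{B_{s}}D\Phi(s)\to\sqrt{B_{0}}D\Phi(0)$ strongly in $L^{2}_{loc}$; dividing by the uniformly elliptic $\sqrt{B_{s}}$ gives $D\Phi(s)\to D\Phi(0)$ in $L^{2}_{loc}$, and interpolating against the uniform $L^{q}$ bounds of Step 1 upgrades this to $L^{p}_{loc}$ for every $p<\infty$; then $DW(s)=(D\Phi(s)\of W(s))^{-1}\to DW(0)$ in every $L^{p}_{loc}$ by the composition lemma and the quantitative continuity of matrix inversion. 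The hard part is Step 2: the quadratic isometry relation resists a direct limit, and the proposed fix — the matching inequalities $\ell(0)\ge M$ and $\ell(0)^{-1}\ge M^{-1}$ obtained by exploiting uniform ellipticity on the metric and on the inverse-metric side — relies on the Sobolev chain rule and a.e. invertibility of $D\Phi(0)$ for the homeomorphism pair $(\Phi(0),W(0))$, which is the most delicate ingredient (obtainable either from the regularity theory of $W^{1,n}$ homeomorphisms or by arguing with the smooth approximants $\Phi(s),W(s)$); the composition lemmas used throughout are routine but need care, since they compose merely $L^{p}$ functions with a homeomorphism that converges only uniformly.
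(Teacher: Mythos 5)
Your strategy is genuinely different from the paper's: the paper contracts the identity \eqref{Metriceq} against $\ell^{-1}(s)$ (respectively $g^{-1}(s)$) to show that $D\Phi(s)$ and $DW(s)$ are \emph{Cauchy} in $L^p$, and only afterwards identifies the limits of the compositions $g(s)\of\Phi(s)$, $\ell(s)\of W(s)$ via the change-of-variables formula; you instead keep only weak convergence of the Jacobians and force equality through two matched one-sided inequalities coming from weak lower semicontinuity. Your first inequality, $\ell(0)\ge D\Phi(0)^T\,(g(0)\of\Phi(0))\,D\Phi(0)$ a.e., is set up correctly. The second is where the argument as written fails: applying the ``identical argument'' to $\ell^{-1}(s)=(DW(s)\of\Phi(s))\,(g^{-1}(s)\of\Phi(s))\,(DW(s)\of\Phi(s))^T$ requires $DW(s)\of\Phi(s)=(D\Phi(s))^{-1}\weak DW(0)\of\Phi(0)$ weakly in $L^2_{loc}$, and nothing in your Step 1 provides this: weak convergence of $DW(s)$ on $\ti V$ does not survive composition with the merely uniformly convergent maps $\Phi(s)$, and weak convergence of $D\Phi(s)$ does not pass to pointwise matrix inverses. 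This is a genuine gap, though it is repairable within your scheme: run the same lower-semicontinuity argument on $\ti V$ with the third identity of \eqref{Metriceq}, $g(s)(y)=DW(s)(y)^T\,\big(\ell(s)\of W(s)\big)(y)\,DW(s)(y)$, where the weakly convergent factor is $DW(s)$ itself (uniform ellipticity of the middle factor is not needed, only its strong $L^q_{loc}$ convergence and $L^q$ bounds), obtaining $g(0)\ge DW(0)^T\,(\ell(0)\of W(0))\,DW(0)$ a.e.\ on $\ti V$; pulling this back through $\Phi(0)$ with the a.e.\ identity $(DW(0)\of\Phi(0))\,D\Phi(0)=I$ gives the reverse of your first inequality, and then equality follows without ever inverting.

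Relatedly, the two ingredients you label ``routine'' are exactly where the measure-theoretic content of the theorem sits, and they need proof or citation, not assertion. The convergences $g(s)\of\Phi(s)\to g(0)\of\Phi(0)$, $g^{-1}(s)\of\Phi(s)\to g^{-1}(0)\of\Phi(0)$ and $\ell(s)\of W(s)\to\ell(0)\of W(0)$ in $L^q_{loc}$ are not soft composition lemmas: since $g(0),\ell(0)$ are only $L^q$ classes, even the a.e.\ well-definedness of $g(0)\of\Phi(0)$, and the estimate needed to compare with continuous approximations of $g(0)$, require the Lusin $N$ property and the area formula for the \emph{limit} homeomorphisms $\Phi(0),W(0)$ --- available because they lie in $W^{1,p}_{loc}$ for $p>n$ (Mal\'y--Martio, Mal\'y), which is precisely the input the paper invokes; the same machinery underlies your a.e.\ chain rule and invertibility of $D\Phi(0)$, and your final step $D\Phi(0)\of W(s)\to D\Phi(0)\of W(0)$. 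By contrast, the paper's Cauchy argument needs the area formula only once, to identify the limits of the compositions, and never needs the chain rule for the limit pair. So: an attractive and genuinely different skeleton, but the inverse-side weak-convergence step must be replaced as above, and the composition/chain-rule facts must be established with the $W^{1,p}$, $p>n$, Lusin-$N$ theory rather than taken as routine.
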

\begin{proof}
We consider in the following only $s \in (0,S).$ 
 The first identity of \eqref{Metriceq} implies:
\begin{eqnarray*}
  h^{ij}(x)  \ell_{i j }(s)(x) && = 
 h^{ij}(x) D_{i}\Phi^{\al}(s)(x)    D_j\Phi^{\be}(s)(x)  g_{\al \be}(s)( \Phi(s) (x))\cr
   && \geq \frac{1}{a} h^{ij}(x) D_{i}\Phi^{\al}(s)(x)    D_j\Phi^{\be}(s)(x)  h_{\al \be}( \Phi(s) (x)),
\end{eqnarray*}
and hence using the fact that $h^{ij}\ell_{ij}(s)$ is locally uniformly bounded in $L^p$  independent of $s$,  we see that
$D\Phi(s)$ is locally uniformly bounded in $L^p$ , that is independent of $s$, for any $p\in[1,\infty)$:
\begin{eqnarray*}
&& \ \ \ \int_{\ti U}  | h^{ij}(x) D_{i}(\Phi(s))^{\al}(x)    D_j(\Phi(s))^{\be}(x)  h_{\al \be}( \Phi(s)(x))|^p d_h(x) \leq c(p,\ldots) < \infty,
\end{eqnarray*}
where $c(p,\ldots)$ is a constant depending on $p, $ $\ti U,$ $a,$ $\ell,$ $h$. Constants  which only depend on 
$p,$ $\ti U, \ti V , \phi,\psi,\Phi,W,$ $a,$  $\ell,$ $h$  , $g$ $\ell^{-1}$ $h^{-1}$  , $g^{-1}$ and importantly {\it don't depend on $s$}   shall be denoted by  $c(p,\ldots),$   although the value  may differ from line to line.
In view of the uniform convergence of $\Phi(s)$ and $W(s)$ we may  assume, by choosing geodesic coordinates with respect to $h$ around $m$ and 
$ \Phi(0)(m)$ without loss of generality, that 
$\ti U= B_r(m)=: B$  and $\ti V= B_v(\Phi(0)(m)) =:\hat B$ so that $\frac{1}{2} \de_{\al \be}  \leq h_{\al \be} \leq 2 \de_{\al \be}$ on $\ti V$
and $\frac{1}{2} \de_{i j}  \leq h_{i j} \leq 2 \de_{i j}$ on $\ti U,$ and $\ti B = B_{\ti r}(m) \subseteq W(s)(\hat B) \subseteq    B = B_{r}(m)$ for $s$ sufficiently small.   With respect to these coordinates we have 
\begin{eqnarray}
 \int_{B}  \sum_{i,\al =1}^{n} |D_{i} \Phi^{\al}(s)(x)|^{2p}  dx\leq c(p,\ldots)
 \end{eqnarray} 
for $s$ sufficiently small. 
Using the third identity of \eqref{Metriceq}  and these coordinates we see that
\begin{eqnarray*} 
&& \int_{\hat B } (\sum_{\al,i=1}^n  (D_{\al} W^i(s)(y))^{2} )^p      dy \cr
&& = \int_{\hat B} |\de^{\al \be} D_{\al} W^i(s)(y)   D_{\be} W^j(s)(y)  \de_{i j }|^p dy\cr
&& \leq c(p,\ldots) \int_{\hat B } |h^{\al \be}(y) D_{\al} W^i(s)(y)   D_{\be} W^j(s)(y)  h_{i j }(W(s)(y),s)|^p dy  \cr
&& = c(p,\ldots) \int_{\hat B } |DW(s)|^{2p}_{ h,h(W(s))}(y) dy \cr
&& \leq c(p,\ldots) \int_{\hat B} |DW(s)|^{2p}_{ h, \ell(W(s),s)}(y) (1 + |h \of W(s)|^{2p}_{\ell (W(s),s) }(y))dy\cr
&&[ \mbox{ in view of } \eqref{firstD}\mbox{ of Theorem } \ref{lpmetricthm}]\cr
&& = c(p,\ldots) \int_{\hat B} |h^{\al \be} g_{\al \be}(s)|^{p}(y)  (1 + |h|^{2p}_{\ell (s)}\of W(s))(y) dy \cr 
&& \leq c(p,\ldots)  ( \int_{\hat B} |h^{\al \be}g_{\al \be}(s)(y)|^{2p}dy )^{\frac 1 2} (\int_{\hat B}   (1 + |h|^{4p}_{\ell (s)}\of W(s)(y) ) dy )^{\frac 1 2}\cr 
&&  \leq c(p,\ldots)  (1+(\int_{\hat B}   |h|^{4p}_{\ell (s)}\of W(s)(y) dy )^{\frac 1 2})\cr 
&&  =  c(p,\ldots)  (1+(\int_{\hat B}   |h|^{4p}_{\ell (s)}\of W(s)(y) \norm {DW(s)}(y) \norm{ D\Phi(s) \of W(s)}(y)  dy  )^{\frac 1 2}) \cr 
&& [ \mbox{ since } DW(s) \cdot D\Phi(s) \of W(s) = ID] \cr 
&& \leq   c(p,\ldots)  (1+(\int_{W(s)(\hat B)\subseteq B_r(m)}   |h|^{4p}_{\ell (s)}(x) \norm{ D\Phi(s) (x)}  dx )^{\frac 1 2})\cr 
&& \leq c(p,\ldots) 
\end{eqnarray*}
since $(\ell)^{-1},$ $ g $ and $D\Phi(s)$  are  bounded locally in $L^p$ independently of $s$, for $s$ sufficiently small,  and we have used the transformation formula, and the notation $\norm{A}$ to represent    $\det(A)$.

We also have

\begin{eqnarray*}
 &&\int_{  \ti B } |D(g_s \of \Phi(s))|^{4-\ep}(x)dx \cr
&& \leq  \int_{W(s)(\hat B)} |D(g_s \of \Phi(s))|^{4-\ep}(x)dx  \cr
&&  =  \int_{W(s)(\hat B)} |((D g_s) \of  \Phi(s) )(x) \cdot D\Phi(s)(x)|^{4-\ep}dx \cr
&&  \leq (\int_{W(s)(\hat B)} (|D g_s| \of \Phi(s) )^{4-\frac \ep 2}  (x) dx)^{\frac 1 q}(\int_{W(s)(\hat B)} |D\Phi(s)|^{r(4-\ep)}(x) dx)^{\frac 1 r} \cr
&& [ \mbox {where } q= \frac{4-(\ep/2)}{4-\ep}, r  = \frac{1}{1-(1/q)} ] \cr 
&&  \leq c(p(\ep),\ldots)( \int_{W(s)(\hat B)} |D g_s|^{4-\frac \ep 2}  \of \Phi(s)(x)dx)^{\frac 1 q} \cr
&&  = c(p(\ep),\ldots) (\int_{W(s)(\hat B)} |D g_s|^{4-\frac \ep 2 }  \of \Phi(s) (x)  \cdot  \norm {DW(s) \of  \Phi(s)(x)}  \norm{D\Phi(s)(x)}dx  )^{\frac 1 q}\cr
&& [\mbox{ since } I = D(W(s) \of \Phi(s))  = (DW(s)  \of \Phi(s))  \cdot D \Phi(s) ]\cr
&&   = c(p(\ep),\ldots) ( \int_{\hat B}  |Dg_s|^{4-\frac \ep 2}(y)   \norm{DW(s)}(y)dy )^{\frac 1 q}\cr
&&  \leq c(p(\ep),\ldots)(\int_{\hat B}  |D g_s|^{4}(y)dy )^{ {\frac 1 {qv}} }  (\int_{\hat B} \norm{DW(s)}^udy)^{ {\frac 1 {qu}} } \cr
&& [ \mbox {where } v = \frac{4}{4-\frac \ep 2},  u = \frac{1}{1- \frac{1}{v} }]\cr
&& \leq c(p(\ep),\ldots) 
\end{eqnarray*} 
for sufficiently small $s>0,$
where $c(p(\ep),\ldots )$ is independent of $s$, and $2>\ep>0$   is arbitrary, since
$Dg$ is bounded in $L^4$ due to the assumptions, and $DW(s)$ is bounded uniformly  in every $L^p$ for every fixed $p$. 
, and we used $(\int_{W(s)(\hat B)} |D\Phi(s)|^{r(4-\ep)}(x) dx)^{\frac 1 r} \leq c(p,\ep,\ldots)$.
Hence for any sequence $t_i>0$ with $t_i \to 0$ as $i\to \infty,$ we can find a subsequence $s_i$ with $s_i \downto 0$ such that   $(g_{s_i})_{\al \be} \of \Phi(s_i)$ converges strongly in $L^p$ to some $Z_{\al \be}$ on $\ti B$  for all $p$ as $s_i$ goes to zero,  in view of the Sobolev-Embedding theorems (see for example Theorem 7.26 in \cite{GilbargTrudinger}). 
$Z$ satisfies  $ \frac{1}{ C(a)} \de_{\al \be} \leq Z_{\al \be} \leq C(a) \de_{\al \be}$  on  $\ti B$ since  

$\frac{1}{2} \de_{\al \be}  \leq h_{\al \be} (\cdot) \leq 2 \de_{\al \be}$ on $\phi(s)(\ti B)  \subseteq \ti V$ and $\frac{1}{a} h \leq g(s) \leq a h$.

For a sequence $0<s_i \downto 0,$ 
 we write $g(i) = g(s_i)$ and $\Phi(i) = \Phi(s_i)$ and $\ell(i) := \ell(s_i)$. 

Using \eqref{Metriceq} we see 
$$\de^k_s  =  \ell(i)^{rk}(x) D_{s}  \Phi(i)^{\al}(x)   D_{r}\Phi(i)^{\be}(x) g(i)_{\al\be} \of \Phi(i)(x) $$ and hence

\begin{eqnarray*}
0 && = \ell(i)^{rs}(x) D_{s}  \Phi(i)^{\al}(x)   D_{r}\Phi(i)^{\be}(x) g(i)_{\al\be} \of \Phi(i)(x)\\
&& 
\ \ \ - \ell(j)^{rs}(x) D_{s}  \Phi(j)^{\al}(x)   D_{r}\Phi(j)^{\be}(x) g(j)_{\al\be} \of \Phi(j)(x)\cr
&& = \ell_0^{rs}(x)  Z_{\al \be}(x) (D_{s}  \Phi(i)^{\al}(x)   D_{r}\Phi(i)^{\be}(x)\cr
&&  \ \ \ -
D_{s}  \Phi(j)^{\al}(x)   D_{r}\Phi(j)^{\be}(x) )  + err(i,j)(x)\cr
&& = |D\Phi(i) -D\Phi(j)|_{\ell_0,Z }^2(x)+ err(i,j)(x)
\end{eqnarray*}
where $\err(i,j)$ is an error term which goes to 0  in the $L^p$ sense, on $\ti B$  for any $p\in [2,\infty)$ as $i,j\to \infty,$ 
and hence, using \eqref{secondD} of Theorem \ref{lpmetricthm}, we see
\begin{eqnarray*}
&&\int_{\ti B}  |D\Phi(i) -D\Phi(j)|^p_{\de,\de}(x)  dx\cr
&&  \leq c(p, \ldots) \int_{\ti B}  |D\Phi(i) -D\Phi(j)|_{h,Z }^p(x) dx\cr
&&\leq c(p,\ldots)  \int_{\ti B}  (1+|\ell_0|^2_{h} )^{p/2}(x)  |D\Phi(i) -D\Phi(j)|_{ \ell_0,Z}^p(x) dx  \cr
&& \leq c(p,\ldots)  (\int_{\ti B}  (1+|\ell_0|^2_{h} )^{p}(x) dx )^{\frac 1 2} 
(\int_{\ti B}  |D\Phi(i) -D\Phi(j)|_{ \ell_0,Z}^{2p}(x) dx)^{\frac 1 2} \cr
&& =  c(p,\ldots)(  \int_{\ti B}(err(i,j))^{2p}(x)dx)^{\frac 1 2}
\end{eqnarray*}
where $ \int_{\ti B} |err(i,j)|^{2p} \to 0$ as $ i,j \to \infty.$ 
That is $D\Phi(i)|_{\ti B}$ is Cauchy in $L^p({\ti B})$.
Since $\Phi(i) \to \Phi(0)$ in locally, uniformly, and hence  locally in  $L^p$ for any $p<\infty$, we see  $\Phi(i) \to \Phi(0)$ in $W^{1,p}(\ti B)$ as $i\to \infty$.
In fact $\Phi(s) \to \Phi(0)$ in $W^{1,p}({\ti B})$  as $s \downto 0$. If this were not the case, then we could find a sequence of times $t_i \to 0$ such that 
  $|\Phi(t_i) -\Phi(0)|_{W^{1,p}({\ti B})} \geq \si>0$. Repeating the above argument, we see that a subsequence $\Phi(s_i)$ of $\Phi(t_i)$  converges to $\Phi(0)$ in $W^{1,p}({\ti B})$, which contradicts  $|\Phi(t_i) -\Phi(0)|_{W^{1,p}({\ti B})} \geq \si>0.$

We now show that a subsequence of  $\ell_{t_i} \of W(t_i)$ converges in $L^p$ locally for any sequence $t_i\downto 0$. 
For   $0<4\de <3$ we have (where here the norm, $|\cdot|$ refers to the    euclidean norm) 
  
\begin{eqnarray*}
 \int_{\hat B} |D(\ell_s \of W(s))|^{1+\de}dy  &&  =  \int |(D\ell \of  W(s) ) DW(s)|^{1+\de}dy  \cr
&&  \leq   \int_{\hat B} |D\ell|^{1+2\de}  \of W(s)dy  + \int_{\hat B} |DW(s)|^v dy\cr
&&[\mbox{ with } v = \frac{1}{1-(1/q)}, q = \frac{1+2\de}{1+ \de} ]\cr
&&  \leq   \int_{\hat B} |D\ell|^{1+2\de}  \of W(s)dy + c(v(\de),\ldots)   \cr
&&  =    \int_{\hat B} (|D\ell|^{1+2\de}  \of W(s)) \norm {D\Phi(s) \of W(s) }  \norm{DW(s)}dy  + c(v(\de),\ldots)  \cr
&&   =    \int_{W(s)(\hat B)}   |D\ell|^{1+2\de}dx   \norm{D\Phi(s)} +c(v(\de),\ldots)  \cr
&&  \leq  \int_{W(s)(\hat B)}  |D\ell|^{1+4\de}dx  +   \int_{W(s)(\hat B)} \norm{D\Phi(s)}^{\hat p}dx  + c(v(\de),\ldots)   \cr
&&[\mbox{ with } \hat p = \frac{1}{1-(1/\hat q)}, \hat q = \frac{1+4\de}{1+ 2\de} ]\cr
&& \leq c(v(\de),\ldots) 
\end{eqnarray*}
where $c(v(\de),\ldots) $ is independent of $s$.
Hence a subsequence $(s_i)_{i\in \N} $ of $(t_i)_{i\in \N}$ satisfies 
 $\ell_{jk}(s_i) \of W(s_i)  \to R_{jk}$ in $L^{\al(\de)}(\hat B)$ for some $R \in L^{\al}(\hat B)$ from the Sobolev embedding Theorems (see for example Theorem 7.26 in \cite{GilbargTrudinger}).  In particular 
$\ell_{jk}(s_i)\of W(s_i) \to R_{jk}$ almost everywhere on $\hat B$.  
On the other hand, the transformation  formula for smooth diffeomorphisms shows us that
\begin{eqnarray*}
\int_{\hat B} |\ell_s\of W(s)|^p  dy && = \int_{\hat B} |\ell_s\of W(s)|^p  \norm{D\Phi(s)}\of W(s)  \norm{DW(s)}  dy\cr
&& = \int_{W(s)(\hat B)} |\ell_s|^p   \norm{D\Phi(s)}  dx \cr
&&  \leq c(p,\ldots)
\end{eqnarray*}
in view of H\"older's Theorem,  since  $\ell_s$ and $D\Phi(s)$ are  uniformly bounded in $ L^p(B)$ for all $ p \in [1,\infty)$.
This  shows us  that $ \ell(s_i)\of W(s_i) \to R$ in $L^p(\hat B)$ for all $p\in [1,\infty)$ and that
$R \in L^p(\hat B).$
 Similarly    $ \ell(s_i)^{-1} \of W(s_i) \to R^{-1}$ with $R^{-1}$ in $L^p$ for all $p\in [1,\infty)$ after taking a subsequence :
$   | D (\ell^{ij}(s) \of W(s))|^{1+\de}    =  |\ell^{ ik}(s) \ell^{jl}(s) D \ell_{kl}(s) \of W(s)|^{1 +\de} $
and   hence a subsequence of $\ell^{ij}(s_k) \of W(s_k)$  converges in $L^{\al(\de)}$ to some
$H^{ij}$ in $L^{\al(\de)}$, and  we also have 
\begin{eqnarray*}
\int_{{\hat B}} |\ell_s^{-1}\of W(s)|^p dy    && = \int_{\hat B} |\ell_s^{-1}\of W(s)|^p  \norm{D\Phi(s)}\of W(s)  \norm{DW(s)}   dy\cr
&& = \int_{W(s)(\hat B)} |\ell_s^{-1}|^p   \norm{D\Phi(s)} dx \cr
&&  \leq c(p,\ldots)
\end{eqnarray*}
and so,  $\ell^{ij}(s_{k_r}) \of W(s_{k_r}) \to H^{ij}$ in $L^p(\hat B)$ for all $p\in [1,\infty),$   where $(s_{k_r})_{r \in \N}$ is a subsequence of  
  $(s_k)_{k\in \N}.$  We also have
$  \de^{j}_m =  \ell^{jk}(s_{i_r})\of W(s_{i_r})  \ell_{km}\ (s_{i_r} )  \of W(s_{i_r})  \to H^{jk}R_{km}$ in $L^p(\hat B)$ and hence almost everywhere,  
and hence $H$ is the inverse of $R$ almost everywhere, and after changing the function  $H$ on a set of measure zero, $H$ is the inverse of $R$ everywhere.

Using \eqref{Metriceq} we see 
$$\de^{\ga}_{\be}  =  g^{\ga \al}(y)(s)  D_{\al} W^i(s)(y)   D_{\be} W^j(s)(y)  \ell_{i j }(W(s)(y),s),$$
and writing $W(k) := W(s_{i_k}), g(k):= g(s_{i_k}),...$, we have
\begin{eqnarray*}
0 && = g(k)^{\be \al}(y)  D_{\al} W^i(k)(y)   D_{\be} W^j(k)(y)  \ell(k)_{i j }(W(k)(y))
\cr 
&& \ \ \ - g(l)^{\be \al}(y)  D_{\al} W^i(l)(y)   D_{\be} W^j(l)(y)  \ell(l)_{i j }(W(l)(y))\cr
&& = g(0)^{\be \al}(y) (  D_{\al} W^i(k)(y)   D_{\be} W^j(k)(y) - D_{\al} W^i(l)(y)   D_{\be} W^j(l)(y))R_{ij}(y)\\
&& \ \ \  + \err(k,l)(y)\cr
&& = |DW(k)-DW(l)|^2_{g(0),R}+ \err(k,l),
\end{eqnarray*}
where $\err(k,l) \to 0$ in $L^p(\hat B)$ for all $p\in [1,\infty)$.
Hence, using \eqref{firstD} of Theorem \ref{lpmetricthm} we have
\begin{eqnarray*} 
&& \int_{\hat B} |DW(k) -DW(l)|^{2p}_{\de,\de}  dy \cr
&& \leq c(p,\ldots)  \int_{\hat B}  |DW(k) -DW(l)|^{2p}_{g(0),h}  dy \cr
&& \leq c(p,\ldots)  \int_{\hat B}  (|DW(k) -DW(l)|^{2p}_{g(0),R} )(1 + |h|^{2p}_{R}) dy\cr
&& \leq  c(p,\ldots)  (\int_{\hat B} |DW(k) -DW(l)|^{4p}_{g(0),R}  dy)^{\frac 1 2}\cr
&& =   c(p,\ldots) (\int_{\hat B} |\err(k,l)|^{4p}  dy)^{\frac 1 2}
\end{eqnarray*}
and hence $W(k)$ is Cauchy in $W^{1,p}({\hat B})$ and hence converges. 
Here we used that $\int_{\hat B} |h|^{4p}_R dy$ is bounded which follows from the fact that  $
R^{-1}= H \in L^q$ for all $q \in [1,\infty)$.
Using a similar argument to the one we used for $\Phi,$ we see that $W(s) \to W(0)$ in $W^{1,p}({\hat B})$ 
as $s \downto 0,$ ie. that the convergence  $W(t_i) \to W(0)$ in $W^{1,p}({\hat B})$ holds for all sequences $0<t_i \to 0.$
 

We saw that  $\ell(s) \of W(s)$ converges in $L^p(\hat B)$ for all $p\in [1,\infty)$ as $s \downto 0$. We would like to further show that 
the limit function is $\ell(0) \of W(0)$.
 
Using the change of variable formula for smooth diffeomorphisms, we see for the same coordinates from above,  for any $B_r(y_0) \subsub \hat B $ and any cut off smooth non-negative function $\eta$  with support 
in $B_{r-2\ep}(y_0)$ that
\begin{eqnarray*}
\int_{B_r(y_0)} \ell_s\of W(s) \cdot \eta  dy && = \int_{B_{r-2\ep}(y_0)} \ell_s\of W(s) \cdot \eta \of \Phi(s) \of W(s) \cdot \norm{D\Phi(s)}\of W(s)  \norm{DW(s)} dy \cr
&& = \int_{W(s)(B_{r-2\ep}(y_0))} \ell_s \cdot \eta \of \Phi(s)\cdot \norm{D\Phi(s)}  dx\cr
&& =  \int_{W(\de)(B_{r-\ep/4}(y_0))}   \ell_{0} \cdot   \eta \of \Phi(0)\cdot  \norm{D\Phi(0)}dx + \err(s)  
\end{eqnarray*}
for any $\de \leq s,$ 
  where  $\err(s) \to 0$ in $L^p_{loc}$ as $ s \downto 0$ since 
 $ D\Phi(s) \to D\Phi(0) $  in $L^p_{loc}$ and   $\Phi(s) \to \Phi(0)$ uniformly as $s \downto 0$ and $\ell_s \to \ell_0$ in $L^p_{loc}$ as $s \downto 0$, and 
 $ \eta \of \Phi(s)$ has compact support in $W(s)(B_{r-2\ep}(y_0)) \subseteq W(\de)(B_{r-\ep/4}( y_0))$ for $s,\de$ sufficiently small. 
 $\Phi(0),W(0)$ are    homeomorphisms which are continuous representatives of  $W^{1,p}_{loc}$ functions with $p>n$ and so they both 
  satisfy the {\it  Lusin $N$-Property}  (see  Corollary B  , \cite{MalyMartio}) and hence 
  the  change of area formula is valid for  $\Phi(0)$ and $W(0)$  (see Proposition 1.1 of \cite{Maly}) : 
\begin{eqnarray*}
&&  \int_{W(\de)(B_{r-\ep/4}(y_0))}   \ell_{0}\cdot   \eta \of \Phi(0)\cdot  \norm{D\Phi(0)} dx + err (s) \cr
&&= \int_{\Phi(0)(W(\de)(B_{r-\ep/4}(y_0))}   \ell_{0} \of W(0) \cdot   \eta dy + err(s) \cr
&& \to \int_{B_r}  \ell_{0} \of W(0) \cdot   \eta dy
\end{eqnarray*}
as $s \downto 0$.
As this is true for any continuous $\eta$  and ball $B_r(y_0)$  of this type, 
we see that $ \ell(s) \of W(s) \to R=  \ell_0 \of W(0) $ almost everywhere and in $L^{p}_{loc},$ since $\ell(s) \of W(s) $ converges in $L^{p}_{loc}$
for $s \downto 0,$ for all $p \in [1,\infty):$ $\int  \eta \ell(s) \of W(s) dy  \to \int \eta R dy  = \int \eta \ell(0) \of W(0) dy,$ and hence 
$ \int \eta(R-\ell(0) \of W(0))dy   = 0$ for all non-negative cut off functions of this type, and hence, using the {\it Fundamental lemma of the calculus of variations}, we have ,
$R-\ell(0)\of W(0) = 0$ in $L^1(\hat B)$ and hence $R = \ell(0)\of W(0)$ almost everywhere in $ B_r(y_0)$.
Returning to the last identity in \eqref{Metriceq}, we see that
$g(0)_{\al \be}   = D_{\al} W^i(0)   D_{\be} W^j(0)  \ell_{ij}(0) \of W(0) $  almost everywhere and in the $L^p$ sense.
An almost identical argument shows that $ 
g_{\al \be}(s) \of \Phi(s)$ converges to $g_{\al \be}(0) \of \Phi(0)$
in $L^p,$ as we now explain.
Let $C  = W(0)(B_r(y_0))$ and $\eta $ a cut off function with compact support in $W(0)(B_{r-4\ep}(y_0)).$
Then $\eta = 0$ outside of  $W(s)(B_{r-2\ep}(y_0))$  and $W(0)(B_{r-4\ep}(y_0)) \subseteq W(s)(B_{r-2\ep}(y_0))$ for sufficiently small $s>0$. Hence 
\begin{eqnarray*}
\int_{C}  g(s)\of \Phi(s)  \eta dx && =  \int_{W(0)(B_{r-4\ep}(y_0)) }  g(s)\of \Phi(s)  \eta dx\cr
&&=\int_{W(s)(B_{r-2\ep}(y_0)) }  g(s)\of \Phi(s)  \eta dx\cr
&& = \int_{W(s)(B_{r-2\ep}(y_0))}  g(s) \of \Phi(s) \cdot \eta \of W(s)  \of \Phi(s)  \cdot \norm{DW(s)}\of \Phi(s)  \norm{D\Phi(s)} dx \cr
&& = \int_{B_{r-2\ep}(y_0)}  g(s) \cdot \eta \of W(s) \cdot \norm{DW(s)} dy  \cr
&&  = \int_{B_{r-2\ep}(y_0)}    g(0) \cdot   \eta \of W(0)\cdot  \norm{DW(0)}dy  + \err(s)\cr
&& =  \int_{W(0) (B_{r-2\ep }(y_0))}    g(0) \of \Phi(0)  \cdot  \eta  dx  + \err(s) \cr
&& \to \int_{C}  g(0) \of \Phi(0) \eta  dx \cr
\end{eqnarray*}
as $s \downto 0$ 
where we have once again used that the change of variables formula is valid for $\Phi(0)$ and $W(0)$.
Hence, since $ g(s) \of  \Phi(s) \to  Z$ in $L^p_{loc}$ we have $ g(s) \of  \Phi(s) \to  Z= g(0) \of \Phi(0)$   in  $L^{p}(C)$ as $s \downto 0$ for all $p\in [1,\infty),$  in view of the {\it Fundamental lemma of the calculus of variations}.
Hence $ g(s) \of  \Phi(s) \to g(0) \of \Phi(0)$  in  $L^{p}_{loc}$ as $s \downto 0,$ for all $p\in [1,\infty).$  
Returning to the first  identity in \eqref{Metriceq}, we see that this implies 
\begin{eqnarray*}
\ell_{i j }(0)(x) = D_{i}\Phi^{\al}(0)(x)    D_j\Phi^{\be}(0)(x)  g_{\al \be}(0)( \Phi(0) (x)) 
\end{eqnarray*}
almost everywhere and in the $L^p$ sense.

\end{proof}

\section{Distance  convergence in Sobolev spaces}\label{distanceconvergencesobolev}
\begin{lemma}\label{distanceboundbelow}
In the following $\B^k_r(0)$ is a $k$-dimensional ball of radius $r>0$ in $\R^k$ and middlepoint $0$.   Let $c>1$ and $g: \B^n_2(0)\times [0,1]  \to \R^{n\times n}$ be a family of non-negative
two-tensors, such that 
\begin{eqnarray}
\int_{\B^n_2(0)} |g(t)(z)-g(0)(z)|_{\de}^2 dz \leq c t\\
\frac{1}{c} \de \leq g(t) \leq c \de 
\end{eqnarray}
for all $t\in [0,1),$ where $g(t)$ are smooth for all $t>0$ and $g(0)$ is in $L^2$. 

Let $\si:[-1,1] \to \B^n_2(0)$,  $\si(s) =  se_1$ be a {\it Lebesgue line} with respect to $g(0)$, that is 
the function $\sqrt{g(0)_{11}(\cdot,\cdot )}:[-1,1]\times \B^{n-1}_1(0) \subseteq \B^n_{2}(0) \to \R^+_0$ is measurable, $\sqrt{g(0)_{11}(s,0 )}: \B^{n-1}_1(0) \to \R^+_0$ is measurable,  $\sqrt{g(0)_{11}(s,\cdot )}: \B^{n-1}_1(0) \to \R^+_0$ is measurable for almost all 
$ s \in [-1,1]$, $\sqrt{g(0)_{11}(\cdot,x )}: [-1,1]  \to \R^+_0$ is measurable for almost all 
$x \in \B^{n-1}_1(0)$ and 
\begin{eqnarray*}
\int_{-1}^1 \sqrt{g(0)_{11}(s,0)} ds&& = \lim_{\al \to 0} \frac{1}{ \omega_{n-1} \al^{n-1}}  \int_{-1}^1 \int_{\B^{n-1}_{\al}(0)} \sqrt{g(0)_{11}(s,x) }dx ds  \\
&& =  \lim_{\al \to 0}\frac{1}{ \omega_{n-1} \al^{n-1}}  \int_{T_{\al}(\si)} \sqrt{g(0)_{11} (z)} dz,
\end{eqnarray*}
where $dx$ is $n-1$ dimensional Lebesgue measure, and
$dz$ is  $n$ dimensional Lebesgue measure, $T_{\al}(\si): = \{ \si(s)+\be (0,v)  \ | \  s \in [-1,1], v \in \R^{n-1},  |v|=1, \be \in \R, |\be| \leq \al   \}$ is an $\alpha$ tubular neighbourhood of $\si$, $  \omega_{n-1} \al^{n-1}$ is the (Lebesgue) $n-1$ dimensional volume of $T_{\al}(\si(s)) :=  \{ \si(s)+\be (0,v)  \ | \   v \in \R^{n-1},  |v|=1, \be \in \R, |\be| \leq \al   \}.$ 
Then: For all $\ep >0$ there exists a $t_0 >0$ such that 
 \begin{eqnarray}
L_{g_0}(\si):= \int_{-1}^1 \sqrt{g(0)_{11}(s,0)}  \geq (1-\ep)d(g(t))(\si(-1),\si(1)) 
\end{eqnarray}
for all $t\in (0,t_0)$.

\end{lemma}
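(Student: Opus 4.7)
The plan is to construct, for any $\epsilon>0$, small $\alpha>0$ and small $t>0$, a competitor curve from $\sigma(-1)$ to $\sigma(1)$ whose $g(t)$-length is at most $L_{g_0}(\sigma)+O(\epsilon)$, and use it to bound $d(g(t))(\sigma(-1),\sigma(1))$ from above. The natural candidates are the nearby parallel lines $\sigma_x:[-1,1]\to \B^n_2(0)$, $\sigma_x(s)=(s,x)$ for $x\in\B^{n-1}_\alpha(0)$, glued to $\sigma(\pm 1)$ by the short transverse segments $r\mapsto(\pm 1,rx)$, whose $g(t)$-length is at most $\sqrt{c}\,|x|$ since $g(t)\leq c\delta$.

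First, I would fix $\eta=\eta(\epsilon,c,L_{g_0}(\sigma))>0$ small; note that $L_{g_0}(\sigma)\geq 2/\sqrt c$ because $g(0)_{11}\geq 1/c$ pointwise. Using the Lebesgue-line identity together with Tonelli (which applies since $\sqrt{g(0)_{11}}\leq\sqrt c$ is bounded on $T_\alpha(\sigma)$), I would choose $\alpha>0$ small enough that
\[
\frac{1}{\omega_{n-1}\alpha^{n-1}}\int_{\B^{n-1}_\alpha(0)} L_{g_0}(\sigma_x)\,dx
= \frac{1}{\omega_{n-1}\alpha^{n-1}}\int_{T_\alpha(\sigma)}\sqrt{g(0)_{11}(z)}\,dz \leq L_{g_0}(\sigma)+\eta,
\]
and also so that $2\sqrt c\,\alpha\leq\eta$ (this controls the gluing cost at both endpoints).

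Next, the passage from $g_0$- to $g(t)$-lengths along the family $\{\sigma_x\}$ uses the elementary bound $|\sqrt a-\sqrt b|\leq \tfrac{\sqrt c}{2}|a-b|$, valid because $g(t)_{11},g(0)_{11}\geq 1/c$. Combining it with $|g(t)_{11}-g(0)_{11}|\leq|g(t)-g(0)|_\delta$ and the hypothesis $\int_{\B^n_2(0)}|g(t)-g(0)|_\delta^2\,dz\leq ct$ gives $\int_{T_\alpha(\sigma)}|\sqrt{g(t)_{11}}-\sqrt{g(0)_{11}}|^2\,dz \leq c^2 t/4$; Cauchy--Schwarz and Tonelli then yield a bound of the form
\[
\frac{1}{\omega_{n-1}\alpha^{n-1}}\int_{\B^{n-1}_\alpha(0)} \big|L_{g(t)}(\sigma_x)-L_{g_0}(\sigma_x)\big|\,dx \leq C(c)\sqrt{t/\alpha^{n-1}}\,.
\]
For all $t\leq t_0(\alpha,\eta)$ this is $\leq\eta$, and so the average of $L_{g(t)}(\sigma_x)$ over $\B^{n-1}_\alpha(0)$ is at most $L_{g_0}(\sigma)+2\eta$.

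A measure-theoretic pigeonhole then produces some $x_0\in\B^{n-1}_\alpha(0)$ with $L_{g(t)}(\sigma_{x_0})$ not exceeding this average; concatenating the two short transverse segments with $\sigma_{x_0}$ yields a piecewise smooth curve from $\sigma(-1)$ to $\sigma(1)$ of $g(t)$-length at most $L_{g_0}(\sigma)+2\eta+2\sqrt c\,\alpha\leq L_{g_0}(\sigma)+3\eta$. Choosing $\eta\leq\epsilon L_{g_0}(\sigma)/(3(1-\epsilon))$ at the outset gives $d(g(t))(\sigma(-1),\sigma(1))\leq L_{g_0}(\sigma)/(1-\epsilon)$, which is the claim. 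The main (mild) subtlety is the order of quantifiers: $\alpha$ must be fixed first using only properties of $g_0$, and only then can $t_0$ be chosen (depending on $\alpha$), so that the $L^2$-closeness of $g(t)$ to $g_0$ translates into uniform-in-$x$ length closeness along the family $\{\sigma_x\}$; every other ingredient is elementary Fubini/Tonelli, a square-root Lipschitz bound, and a measure-theoretic pigeonhole.
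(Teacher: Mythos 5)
Your proposal is correct and, in substance, the same argument the paper gives: use the Lebesgue--line identity and Tonelli to reduce to an average of lengths $L_{g_0}(\si_x)$ over the nearby parallel lines, control $L_{g(t)}(\si_x)-L_{g_0}(\si_x)$ in $L^1(\B^{n-1}_{\al})$ via the $L^2$ hypothesis, and account for the two transverse gluing segments (cost $\leq 2\sqrt c\,\al$ because $g(t)\leq c\de$), all before taking $\al$ small first and then $t_0$ small depending on $\al$. The paper organizes this as a chain of lower bounds for $L_{g_0}(\si)$ ending in $\inf_{x}d_t((-1,x),(1,x))-c\al$, whereas you bound $d(g(t))(\si(-1),\si(1))$ from above by an explicit concatenated competitor chosen by pigeonhole; these are the same estimate read in opposite directions. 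The one genuine (harmless) variation is the square-root step: the paper uses $|\sqrt a-\sqrt b|\leq\sqrt{|a-b|}$ followed by H\"older with exponents $(4,4/3)$, yielding an error $O\!\left(t^{1/4}/\al^{(n-1)/4}\right)$, while you exploit $g_{11}\geq 1/c$ to use the Lipschitz bound $|\sqrt a-\sqrt b|\leq\tfrac{\sqrt c}{2}|a-b|$ and Cauchy--Schwarz, yielding $O\!\left(\sqrt{t/\al^{n-1}}\right)$; both vanish as $t\downto 0$ after $\al$ is fixed, so both close the argument.
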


\begin{proof}
We calculate
\begin{align}
L_{g_0}(\si) & =  \int_{-1}^1  \sqrt{g(0)_{11}(s,0)} ds \cr
& \geq  \frac{ \int_{ y \in T_{\al}(\si) }   \sqrt{g(0)_{11}(y)} dy}{ \vol(\B^{n-1}_{\al}(0) ) }    -R(\al)  \cr
& =  \frac{ \int_{ y \in T_{\al}(\si) }  ( \sqrt{g(0)_{11}(y)}  -\sqrt{g(t)_{11}(y)}) + \sqrt{g(t)_{11}(y)}  dy}{ \vol(\B^{n-1}_{\al}(0) ) }    
  -R(\al) \cr
  & \geq  \frac{ \int_{ y \in T_{\al}(\si) }  -| \sqrt{g(0)_{11}(y) } -\sqrt{g(t)_{11}(y)} |  + \sqrt{g(t)_{11}(y)}  dy }{ \vol(\B^{n-1}_{\al}(0) ) }      -R(\al) \cr
  & [\mbox{ using } |\sqrt{a} - \sqrt{b}| \leq \sqrt{|a-b|} \mbox{ for } a,b \in \R^+ ] \cr
   & \geq  \frac{ \int_{ y \in T_{\al}(\si) }  -\sqrt{ | g(0)_{11}(y)  - g(t)_{11}(y)| }    + \sqrt{g(t)_{11}(y)} dy } { \vol(\B^{n-1}_{\al}(0) ) }    
  -R(\al) \cr
   & \geq  - \frac{ (\int_{ y \in T_{\al}(\si) }   | g(0)_{11}(y)  - g(t)_{11}(y)|^2 dy)^{\frac 1 4} }{\vol(\B^{n-1}_{\al}(0) )} (2\vol(\B^{n-1}_{\al}(0) ))^{\frac 3 4} \\
   & \ \  \ 
       +  \frac{ \int_{ y \in T_{\al}(\si)} \sqrt{g(t)_{11}(y)} dy } { \vol(\B^{n-1}_{\al}(0) ) }    
  -R(\al) \cr
& \geq \frac{- 2c   t^{\frac 1 4} (\vol(\B^{n-1}_{\al}(0) ))^{\frac 3 4} }{\vol(\B^{n-1}_{\al}(0) )}  +   \frac{\int_{x \in \B_{\al}^{n-1}(0)} \int_{-1}^1 \sqrt{g(t)_{11}(s,x)} ds  dx}{\vol(\B^{n-1}_{\al}(0) )}  - R(\al) \cr
& \geq \frac{-  ct^{\frac 1 4}}{(\vol(\B^{n-1}_{\al}(0) ))^{\frac 1 4}}  +  \frac{\int_{x \in \B_{\al}^{n-1}(0)} L_{g(t)}(\si_x) dx }{\vol(\B^{n-1}_{\al}(0) )} - R(\al)\cr
	& \geq \frac{-ct^{\frac 1 4}}{(\vol(\B^{n-1}_{\al}(0) ))^{\frac 1 4}}  +   \inf_{x \in \B_{\al}^{n-1}(0)}  L_{g(t)}(\si_x) -  R(\al) \cr 
& \geq \frac{-  ct^{\frac 1 4}  }{(\vol(\B^{n-1}_{\al}(0) ))^{\frac 1 4} }  +   \inf \{  d_t( (-1,x), (1,x))  \ | \ x \in \overline{\B_{\al}^{n-1}(0)} \} -  R(\al),  \label{maindistest}
\end{align}
where $R(\al) $ is independent of $t$ and $R(\al)  \to 0$  as  $\al \downto 0,$ and $\si_x:[-a,a] \to \R^n$ is $ \si_x(s) = (s,x)$. Now using the equivalence of  $g(t)$  to $\de$ with the constant $c$ we see that 
\begin{align*}
& \inf \{  d_t( (-1,x), (1,x) ) \ | \ x \in \overline{\B_{\al}^{n-1}(0)} \} \cr
& \geq  d_t( (-1,0),(1,0))  - \sup_{x \in \overline{\B_{\al}^{n-1}(0)}} d_t( (-1,0), (-1,x))  -
\sup_{x \in \overline{\B_{\al}^{n-1}(0)}}d_t( (1,0), (1,x)) \cr
& \geq d_t( (-1,0),(1,0)) -c|\al|
\end{align*}
 independently of $t$, and hence choosing $t= \al^{200n}$ and $\al = \al(\ep)$ small enough, we see that
\begin{align}
L_{g_0}(\si) \geq (1-\ep) d_t( (-1,0),(1,0))
\end{align}
in view of \eqref{maindistest},  and the fact that  $ d_t( (-1,0),(1,0)) \geq \frac{1}{\sqrt{c}}>0$ (since $g(t) \geq  \frac{1}{c} \de$ ).
\end{proof}

\begin{lemma}\label{distboundabove}
For all $\ep>0$ there exists a $\al >0$ such that
if   $g$  is an $L^2$ metric on $\B_1(0)\subseteq \R^n$, the standard ball of radius one and middlepoint $0$ in $\R^n,$ with 
$\int_{\B_1(0)} |g - \de|^2 \leq \al,$ then there exits an $x \in \B^{n-1}_{\ep}(0)$ such that 
$\sqrt{ g_{11} }(\cdot,x): [-\frac 1 2,\frac 1 2] \to \R^n$ is measurable, 
$\si:[-\frac 1 2,\frac 1 2] \to \R^n,$ $\si(t) = x+e_1t, $ is a Lebesgue line between $(-\frac 1 2,x)$ and $(\frac 1 2,x)$ and 
\begin{eqnarray*}
\int_{-\frac 1 2}^{\frac 1 2} \sqrt{g_{11}(s,x)}ds  \leq 1+ \ep = d_{\de}( ( -\frac 1 2,x),(\frac 1 2,x))+ \ep
\end{eqnarray*}
\end{lemma}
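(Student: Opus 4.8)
~\textbf{Plan of proof.} The statement is a Fubini/averaging argument: we want to show that among the $e_1$-lines through points $x$ in a small $(n-1)$-ball, we can pick one that is simultaneously a Lebesgue line (in the sense of Definition~\ref{lebesgueparlines}) and along which the $g_{11}$-length is not much bigger than the Euclidean length. First I would record the two elementary consequences of $\int_{\B_1(0)}|g-\de|^2\le\al$ that are actually needed. Since $|\sqrt{g_{11}}-1|\le|\sqrt{g_{11}}-\sqrt{\de_{11}}|\le\sqrt{|g_{11}-1|}\le|g-\de|^{1/2}$, Cauchy--Schwarz gives
\[
\int_{\B^{n-1}_{\ep}(0)}\int_{-\frac12}^{\frac12}\big|\sqrt{g_{11}(s,x)}-1\big|\,ds\,dx
\le C(n)\Big(\int_{\B_1(0)}|g-\de|^2\Big)^{1/2}\le C(n)\sqrt{\al},
\]
so the function $x\mapsto \int_{-1/2}^{1/2}\sqrt{g_{11}(s,x)}\,ds-1$ has $L^1(\B^{n-1}_\ep(0))$-norm at most $C(n)\sqrt\al$. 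Dividing by $|\B^{n-1}_\ep(0)|=\omega_{n-1}\ep^{n-1}$, its average over $\B^{n-1}_\ep(0)$ is at most $C(n)\sqrt\al/(\omega_{n-1}\ep^{n-1})$, which is $<\ep/2$ once $\al=\al(\ep,n)$ is small enough. Hence there is a subset of $\B^{n-1}_\ep(0)$ of positive measure on which $\int_{-1/2}^{1/2}\sqrt{g_{11}(s,x)}\,ds\le 1+\ep$.

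Next I would deal with the Lebesgue-line requirement. By Fubini (using $g\in L^2(\B_1(0))\subset L^1$, so $\sqrt{g_{11}}\in L^2\subset L^1$), for a.e.\ $x\in\B^{n-1}_\ep(0)$ the slice $s\mapsto\sqrt{g_{11}(s,x)}$ is in $L^1([-1/2,1/2])$ and measurable; and by the Lebesgue differentiation theorem applied to the $L^1(\B_1(0))$ function $\sqrt{g_{11}}$, a.e.\ point of $\B_1(0)$ is a Lebesgue point, so for a.e.\ $x$ the tubular-neighbourhood averages converge:
\[
\int_{-\frac12}^{\frac12}\sqrt{g_{11}(s,x)}\,ds=\lim_{\al\to0}\frac{1}{\omega_{n-1}\al^{n-1}}\int_{T_\al(\si_x)}\sqrt{g_{11}(z)}\,dz,
\]
where $\si_x(s)=x+se_1$; this is precisely the defining property of a parameterised Lebesgue line. (One should be slightly careful that the definition of Lebesgue line requires the various one-dimensional slices and the $(n-1)$-dimensional slice at the base point to be measurable as well; all of these hold for a.e.\ $x$ by Fubini, so the exceptional null set can be absorbed.) Thus the set of $x\in\B^{n-1}_\ep(0)$ for which $\si_x$ is a Lebesgue line has full measure, while the set for which $\int\sqrt{g_{11}}\le 1+\ep$ has positive measure; their intersection is nonempty. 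Picking any $x$ in the intersection finishes the proof, noting $d_\de((-\tfrac12,x),(\tfrac12,x))=1$ so the final inequality reads $\int_{-1/2}^{1/2}\sqrt{g_{11}(s,x)}\,ds\le 1+\ep=d_\de((-\tfrac12,x),(\tfrac12,x))+\ep$.

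The only genuinely delicate point is matching the measurability bookkeeping in Definition~\ref{lebesgueparlines} (measurability of $\sqrt{g_{11}(\cdot,\cdot)}$ on the product, of the slice at the base point, of almost every vertical slice, and of almost every horizontal slice, plus the tubular-average identity) with what Fubini and Lebesgue differentiation actually hand us; but since $g$ is assumed to be an $L^2$ metric on $\B_1(0)$, all of these statements are standard measure-theoretic facts and hold after discarding a Lebesgue-null set of $x$'s, which does not interfere with the positive-measure set produced by the averaging estimate. No background geometry (no completeness, Sobolev, or curvature hypotheses) is used — this is purely a statement about $L^2$ tensors close to $\de$.
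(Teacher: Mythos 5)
Your overall strategy is the same as the paper's: Fubini for slice measurability and integrability, a Lebesgue-differentiation argument to get the Lebesgue-line property for almost every $x$, and an averaging/Chebyshev estimate over $\B^{n-1}_{\ep}(0)$ to find a line whose $g_0$-length is at most $1+\ep$ (the paper bounds the measure of the bad set $\{x:\ \int_{-1/2}^{1/2}\sqrt{g_{11}(s,x)}\,ds\geq 1+\al^{1/8}\}$ by a power of $\al$ and then takes $\al=\ep^{100n}$; your version, bounding the average of $|f-1|$ over $\B^{n-1}_\ep(0)$ and applying Chebyshev, is equivalent). There is, however, one step whose justification as written does not work: the tube-average identity that makes $\si_x$ a parameterised Lebesgue line is \emph{not} a consequence of the $n$-dimensional Lebesgue differentiation theorem applied to $\sqrt{g_{11}}\in L^1(\B_1(0))$. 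The tube $T_\al(\si_x)$ has fixed length and only its cross-section shrinks, so it does not shrink to a point, and the segment $\{(s,x):s\in[-\tfrac12,\tfrac12]\}$ is an $n$-dimensional null set; knowing that a.e.\ point of $\B_1(0)$ is a Lebesgue point of $\sqrt{g_{11}}$ therefore gives no information about the averages over $T_\al(\si_x)$ for a given (or even almost every) $x$. The correct observation, which is exactly the one the paper uses, is that
\[
\frac{1}{\omega_{n-1}\al^{n-1}}\int_{T_\al(\si_x)}\sqrt{g_{11}(z)}\,dz
=\frac{1}{\omega_{n-1}\al^{n-1}}\int_{\B^{n-1}_\al(x)}f(y)\,dy,
\qquad f(y):=\int_{-\frac12}^{\frac12}\sqrt{g_{11}(s,y)}\,ds ,
\]
so that the Lebesgue-line property at $x$ is precisely the statement that the averages of the $(n-1)$-dimensional $L^1$ function $f$ over $\B^{n-1}_\al(x)$ converge to $f(x)$; applying the Lebesgue differentiation theorem in $\R^{n-1}$ to $f$ (which lies in $L^1$ by Fubini) gives this for a.e.\ $x$, and with that substitution your argument closes. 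A second, harmless, slip: from $|\sqrt{g_{11}}-1|\leq\sqrt{|g_{11}-1|}$ and only an $L^2$ bound on $g-\de$, Cauchy--Schwarz yields $\int\int|\sqrt{g_{11}}-1|\leq C(n)\,\al^{1/4}$ rather than $C(n)\sqrt{\al}$; since $\al=\al(\ep,n)$ is at your disposal, this changes nothing in the conclusion.
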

\begin{proof} 
Fubini's Theorem tells us that 
the function $f: \B^{n-1}_{\frac 1 4}(0) \to \R^+_0,$ 
$f(x):= \int_{-\frac 1 2}^{\frac 1 2} \sqrt{g_{11}(s,x)} ds$  is   well defined  for almost all $x \in \B^{n-1}_{\frac 1 4}(0)$ and defines an  $L^1$ function, also denoted by $f$,  and the function  $\hat f: [-1,1] \to \R^+_0$ 
$\hat f(s):= \int_{\B^{n-1}_{\frac 1 4}(0)}  \sqrt{g_{11}(s,x)} dx$
 is   well defined     for almost all $s \in [-\frac 1 2, \frac 1 2]$  and defines an  $L^1$ function, and
$\int_{\B^{n-1}_{\frac 1 4}(0)} f(x) dx  =  \int_{-\frac 1 2}^{\frac 1 2} \int_{\B^{n-1}_{\frac 1 4}(0)}  \sqrt{g_{11}(s,x)} dx ds =    \int_{\B^{n-1}_{\frac 1 4}(0)}  \int_{-\frac 1 2}^{\frac 1 2} \sqrt{g_{11}(s,x)} ds dx 
=  \int_{\B^{n-1}_{\frac 1 4}(0) \times  [-\frac 1 2,\frac 1 2]    }\sqrt{g_{11} (z)} dz.
$
This also implies that  almost every $x \in \B^{n-1}_{\frac 1 4}(0)$ is a Lebesgue point of $f$, that is 
$$
\frac{ \int_{\B^{n-1}_{r}(x) } |f(y)-f(x)| dy}{\omega_{n-1} r^{n-1}}   \to 0
$$
as $r\downto 0$, for almost every $x \in \B^{n-1}_{\frac 1 4}(0)$ (see Corollary 1 of Section 1.7 of \cite{EvansGariepy}), and as a consequence
$$
\frac{ \int_{\B^{n-1}_{r}(x) } f(y)dy}{\omega_{n-1} r^{n-1}}   \to f(x)
$$
as $r\downto 0$, for almost every $x \in \B^{n-1}_{\frac 1 4}(0)$

That is, almost every curve $ v_x:[-\frac 1 2, \frac 1 2] \to \B_1(0),$ $ v_x(s):= (s,x)$ for $x \in \B^{n-1}_{\frac 1 4}(0)$  is a  parametrised  Lebesgue line.

We wish to estimate the measure $m$ of the set $Z  \subseteq \B^{n-1}_{\frac 1 4}(0) $ of  $ x \in \B^{n-1}_{\frac 1 4}(0)$ such that 
$\int_{-\frac 1 2}^{\frac 1 2} \sqrt{g_{11}(s,x)}ds  $ is well defined and  for which 
$\int_{-\frac 1 2}^{\frac 1 2} \sqrt{g_{11}(s,x)}ds  \geq 1 +   \hat \al $ where $\hat \al := (\al)^{\frac 1 8} $ ($\to  0$ as $\al \to 0$). 
We will see that $m \leq  a(n) \al^{\frac 1 8}.$ 
Using $|\sqrt{a}-\sqrt{b} |\leq \sqrt{|a-b|}$ for $a,b \in \R^+,$ we see that

\begin{align*}
\al  & \geq  \int_{\B^{n-1}_{\frac 1 4}(0)}  \int_{-\frac 1 2}^{\frac 1 2} |g_{11}(s,x) -1|^2 ds dx 
 \cr
 & = \int_{\B^{n-1}_{\frac 1 4 }(0)}  \int_{-\frac 1 2}^{\frac 1 2} \Big(\sqrt{|g_{11}(s,x) -1|}\Big)^4 ds dx \cr
& \geq \int_{\B^{n-1}_{\frac 1 4 }(0)}  \int_{-\frac 1 2}^{\frac 1 2} |\sqrt{g_{11}(s,x)} -1|^4 ds dx \cr
& \geq \frac{(\int_{\B^{n-1}_{\frac 1 4 }(0)}  \int_{-\frac 1 2}^{\frac 1 2} |\sqrt{g_{11}(s,x)} -1| ds dx)^4 }{ (\vol(\B^{n-1}_{\frac 1 4 }(0) \times [-\frac 1 2, \frac 1 2] ))^3} \cr
& = \frac{1}{c(n)} (\int_{\B^{n-1}_{\frac 1 4 }(0)}  \int_{-\frac 1 2}^{\frac 1 2} |\sqrt{g_{11}(s,x)} -1| ds dx)^4  \cr
& \geq  \frac{1}{c(n)} (\int_{\B^{n-1}_{\frac 1 4 }(0)}  |\int_{-\frac 1 2}^{\frac 1 2} (\sqrt{g_{11}(s,x)} -1)  ds |dx)^4  \cr
 & \geq  \frac{1}{c(n)} (\int_{Z}  |\int_{-\frac 1 2}^{\frac 1 2} (\sqrt{g_{11}(s,x)} -1)  ds |dx)^4  \cr
 & =    \frac{1}{c(n)} (\int_{Z}  |\int_{-\frac 1 2}^{\frac 1 2} \sqrt{g_{11}(s,x)}ds  -1 |dx)^4  \cr
  & =   \frac{1}{c(n)} (\int_{Z}  \int_{-\frac 1 2}^{\frac 1 2} \sqrt{g_{11}(s,x)}ds  -1 dx)^4  \cr
 &\geq \frac{1}{c(n)}  ( \int_Z \hat \al dx)^4 \cr
 & \geq \frac{1}{c(n)}  m^4  (\hat \al)^4
\end{align*}
which implies $m^4 \leq \frac{c(n) \al }{\hat \al^4} = c(n) \al^{\frac 1 2},$ that is, 
$m= \curlL^{n-1}( Z ) \leq  (c(n))^{\frac 1 4}  \al^{\frac 1 8}\leq \al^{\frac 1 {20}}$ for $\al \leq \frac{1}{(c(n))^{4} }.$
For $\ep>0$ given, we now choose  $\al= \ep^{100n},$
so that $m\leq \ep^{5n}$.
But then 
$\curlL^{n-1}( Z^c \cap \B^{n-1}_{\ep}(0)) >0$  
  : Otherwise, $\curlL^{n-1}( Z^c \cap \B^{n-1}_{\ep}(0))=0$ 
 that is $ \curlL^{n-1}(Z \cap \B^{n-1}_{\ep}(0)) = \omega_{n-1} \ep^n $ and as a consequence
 $\ep^{5n} \geq m = \curlL^{n-1}(Z) \geq \curlL^{n-1}(Z \cap \B^{n-1}_{\ep}(0) ) = \omega_{n-1} \ep^n$ which is a contradiction. 
 
Using this, with the fact that for almost every $x \in \B^{n-1}_{\ep}(0)$ the curve $ v_x:[-\frac 1 2, \frac 1 2] \to \B_1(0)$ $ v_x(s):= (s,x)$ is  a parametrised  Lebesgue line, we see that it is possible to choose an $x \in \B^{n-1}_{ \ep}(0)$ such that $\int_{-\frac 1 2}^{\frac 1 2} \sqrt{g(0)_{11}(s,x)} ds \leq 1 +  \hat \al = 1 + \ep^{\frac{100n}{8}} \leq 1 +\ep$
and so that $v_x$ is a parametrised Lebesgue line, as claimed.
\end{proof}

\section{Uniqueness}\label{uniquenesssection}

\begin{lemma}[$L^2$-Lemma]\label{L2Lemma}  (cf. Lemma 6.1 in proof of \cite{DeruelleSchulzeSimon}).
 Let $M$ be $n$-dimensional and $g_{1},g_2$ be two smooth solutions on $M \times [0,T]$  to the $h$-Ricci-DeTurck flow,
and  let $\ell:= g_1- g_2,$  $\tilde{\ell}^{ab}:= \frac 12 \left(g_1^{ab}+g_2^{ab}\right)$ and $\hat{\ell}^{ab}:= \frac 12 \left(g_1^{ab}-g_2^{ab}\right).$
Then the quantity $|\ell|_{h}^2$ satisfies the evolution equation:

\begin{equation}
\begin{split}
\partt |\ell|^2 &= \tilde{\ell}^{ab}\gradh_a \gradh_b |\ell|^2 - 2|\gradh \ell|^2_{\ti{\ell},h}  + \ell   * \hat{\ell} *\gradh^2 (g_1+g_2)\\
&\ \ \ +  * \ell*\hat{\ell}* g_1^{-1}   *\gradh g_1*\gradh g_1 + \ell*g_2^{-1}* \hat{\ell}* \gradh g_1*\gradh g_1 \\
&\ \ \ + \ell*g_2^{-1}* g_2^{-1} * \gradh \ell *\gradh g_1 + \ell* g_2^{-1}* g_2^{-1} * \gradh g_2 *\gradh \ell \\
& \ \ \ 
+  \ell * \hat{\ell} * g_1 *  \Rm(h) +   \ell * \ell * (g_2)^{-1} *Rm(h) 
\,,
\end{split}\label{evoldiff}
\end{equation}
where $T*S$ refers to contractions of the tensors $T$ and $S$ involving $h^{-1}$
and $ |Z|^2_{g,h} =    g^{ij}h^{ks}h^{vr} Z_{ikv} Z_{jsr}$ for a zero-three tensor $Z$.
\end{lemma}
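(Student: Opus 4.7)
The plan is to subtract the evolution equations for $g_1$ and $g_2$ to obtain an equation for $\ell = g_1-g_2$, then compute $\partial_t|\ell|^2 = 2h^{ik}h^{jl}\ell_{ij}\partial_t \ell_{kl}$ and rearrange using a Bochner-type identity to absorb one derivative on $\ell$ into the divergence. Schematically, equation \eqref{Meq} reads
$$\partial_t g \;=\; g^{ab}\gradh_a\gradh_b g \;+\; g^{-1}\ast g\ast \Rm(h) \;+\; g^{-1}\ast g^{-1}\ast \gradh g\ast\gradh g,$$
and the entire content of the lemma is an exercise in telescoping each of these three pieces across the two solutions.

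For the principal term, I would use the identities $g_1^{ab} = \tilde\ell^{ab}+\hat\ell^{ab}$ and $g_2^{ab} = \tilde\ell^{ab}-\hat\ell^{ab}$ to split
$$g_1^{ab}\gradh_a\gradh_b g_1 - g_2^{ab}\gradh_a\gradh_b g_2 \;=\; \tilde\ell^{ab}\gradh_a\gradh_b \ell \;+\; \hat\ell^{ab}\gradh_a\gradh_b(g_1+g_2),$$
which produces the leading Laplacian-in-$\tilde\ell$ acting on $\ell$, plus the cross term $\hat\ell \ast \gradh^2(g_1+g_2)$ listed in \eqref{evoldiff}. For the remaining (lower order) terms I would apply the telescoping identity $A_1B_1-A_2B_2 = (A_1-A_2)B_1 + A_2(B_1-B_2)$ repeatedly, combined with the elementary identity
$$g_1^{-1}-g_2^{-1} \;=\; 2\hat\ell \;=\; -\,g_1^{-1}\,\ell\,g_2^{-1},$$
which converts every difference of inverse metrics into a factor of $\hat\ell$ (equivalently $\ell$ contracted with the $g_i^{-1}$'s). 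The curvature piece $g^{-1}\ast g\ast\Rm(h)$ then yields precisely $\ell\ast\hat\ell\ast g_1\ast\Rm(h)+\ell\ast\ell\ast g_2^{-1}\ast\Rm(h)$, and each factor of $\gradh g \ast \gradh g$ when differenced produces either $\gradh\ell \ast \gradh g_1$ or $\gradh g_2\ast\gradh\ell$, which combined with the two $g^{-1}$ splittings reproduces the five quadratic-gradient terms in \eqref{evoldiff}.

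To finish, I would multiply the resulting evolution equation for $\ell_{ij}$ by $2h^{ik}h^{jl}\ell_{kl}$ and invoke the standard Bochner-type identity
$$\tilde\ell^{ab}\gradh_a\gradh_b|\ell|^2 \;=\; 2 h^{ik}h^{jl}\ell_{ij}\,\tilde\ell^{ab}\gradh_a\gradh_b\ell_{kl} \;+\; 2|\gradh\ell|^2_{\tilde\ell,h},$$
which turns the principal term into $\tilde\ell^{ab}\gradh_a\gradh_b|\ell|^2 - 2|\gradh\ell|^2_{\tilde\ell,h}$, exactly as claimed. Multiplying the lower-order terms by $\ell$ simply supplies the extra $\ell$ factor that appears in front of every remaining term in \eqref{evoldiff}.

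The main obstacle is not analytical but organizational: the Ricci DeTurck right-hand side contains five distinct quadratic-gradient terms, each contracted with two $g^{-1}$'s, so one must systematically track which differences in each factor produce $\hat\ell$ (from $g^{-1}-g^{-1}$), $\ell$ (from $g-g$), or $\gradh\ell$ (from $\gradh g - \gradh g$), and then verify that the schematic $\ast$-notation in \eqref{evoldiff} accommodates all resulting permutations. Since only the structure of the terms matters for the applications (pointwise bounds $\lvert g_i\rvert,\lvert g_i^{-1}\rvert \le C$ and control of $|\gradh g_i|$), no sharp constants need be tracked, and the proof reduces to a disciplined bookkeeping exercise once the two identities above are in place.
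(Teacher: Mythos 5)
Your proposal is correct and takes essentially the same route as the paper: split the leading term via $g_1^{ab} = \tilde\ell^{ab}+\hat\ell^{ab}$, $g_2^{ab} = \tilde\ell^{ab}-\hat\ell^{ab}$ to get $\tilde\ell^{ab}\gradh_a\gradh_b\ell + \hat\ell^{ab}\gradh_a\gradh_b(g_1+g_2)$, telescope the lower-order and curvature terms, and finish with $\partt|\ell|_h^2 = 2(\ell,\partt\ell)_h$ together with $2(\ell,\tilde\ell^{ab}\gradh_a\gradh_b\ell)_h = \tilde\ell^{ab}\gradh_a\gradh_b|\ell|^2_h - 2|\gradh\ell|^2_{\tilde\ell,h}$. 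The explicit resolvent identity $g_1^{-1}-g_2^{-1}=-g_1^{-1}\ell\,g_2^{-1}$ you invoke is a harmless elaboration of what the paper compresses into its schematic $\ast$-bookkeeping.
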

\begin{proof}
The formula was proved in \cite{DeruelleSchulzeSimon} for the case that 
$h =\de$ is the standard metric on a euclidean ball $B_1(0)$, and hence the curvature $\Rm(h) = 0$.
We carry out a similar argument to the one given there, explaining 
 why the term arising from the curvature $Rm(h)$ in the evolution equation of $|\ell|^2$ in this setting can be written as 
$\ell * \hat{\ell} * g_1 *  \Rm(h) +   \ell * \ell * (g_2)^{-1} *Rm(h).$

\begin{equation*}
\begin{split}
\partt \ell &=  g_1^{ab} \gradh_a \gradh_b g_1 + g_1^{-1} * g_1^{-1}*\gradh g_1 *\gradh g_1 +\\
& \ \ \ 
-(g_1)^{kl}(g_1)_{ip}h^{pq}R_{jkql}(h)
  -(g_1)^{kl}(g_1)_{jp}h^{pq}R_{ikql}(h) \\
  & \ \ \ 
 - g_2^{ab} \gradh_a \gradh_b g_2- g_2^{-1} * g_2^{-1}*\gradh g_2 *\gradh g_2 \\
 & \ \ \ +(g_2)^{kl}(g_2)_{ip}h^{pq}R_{jkql}(h)
  +(g_2)^{kl}(g_2)_{jp}h^{pq}R_{ikql}(h) \\
&= \frac 12 \left(g_1^{ab}+g_2^{ab}\right)\gradh_a \gradh_b \ell + \frac 12 \left(g_1^{ab}-g_2^{ab}\right)\gradh_a \gradh_b (g_1+g_2)\\
&\ \ \ + (g_1^{-1} - g_2^{-1})* g_1^{-1} * \gradh g_1*\gradh g_1 + g_2^{-1}* (g_1^{-1}-g_2^{-1}) * \gradh g_1*\gradh g_1 \\
&\ \ \ + g_2^{-1}* g_2^{-1} * \gradh \ell *\gradh g_1 +  g_2^{-1}* g_2^{-1} * \gradh g_2 *\gradh \ell, \\
& \ \ \ -{\hat \ell}^{kl}h^{pq}\ (g_1)_{ip}R_{ikql}(h)  - {\hat \ell}^{kl}h^{pq} \ (g_1)_{jp}R_{jkql}(h)  \\
& \ \ \ -(g_2)^{kl}\ell_{ip}h^{pq}R_{jkql}(h)
  +(g_2)^{kl}\ell_{jp}h^{pq}R_{ikql}(h)\\
\end{split}
\end{equation*}
which we can write as 
\begin{equation*}
\begin{split}
\partt \ell &= \tilde{\ell}^{ab}\ \gradh_a \gradh_b \ell + \hat{\ell}^{ab}\gradh_a \gradh_b (g_1+g_2)\\
&\ \ \ + \hat{\ell}* g_1^{-1} * \gradh g_1*\gradh g_1 + g_2^{-1}* \hat{\ell}* \gradh g_1*\gradh g_1 \\
&\ \ \ + g_2^{-1}* g_2^{-1} * \gradh \ell *\gradh g_1 +  g_2^{-1}* g_2^{-1} * \gradh g_2 *\gradh \ell\\
& +\hat{\ell} * g_1 *  \Rm(h) +  \ell * (g_2)^{-1}  *Rm(h) .
\end{split} 
\end{equation*}
The formula  now  follows from this equality, combined with the  facts that
$\partt |\ell|_h^2 = 2( \ell, \partt \ell )_h$ and $2(\ell, {\ti \ell}^{ab} \ \gradh_{a} \gradh_b \ell)_h
=   {\ti \ell}^{ab} \ \gradh_{a} \gradh_b |\ell|^2_h - 2|\gradh \ell|^2_{\ti \ell,h}.$
\end{proof}
Using the previous evolution equation for the difference between two solutions of the $h$-Ricci-DeTurck flow we are now able to show that the solution constructed in Theorem \ref{main1} is unique among all solutions satisfying $(a_t)$, $(b_t)$ and $(c_t)$ with $\varepsilon$ sufficiently small. The proof below slightly resembles the argument used  by Struwe  to prove a uniqeuness result for the harmonic map flow in two dimensions, see  the argument given in the proof of uniqueness in the proof of Theorem 6.6, Chapter III in  \cite{Struwe2}.
\begin{thm}\label{uniqueness}
Let $M$ be four dimensional and  $g(t)$, $0\leq t\leq S$, be a solution of the $h$-Ricci-DeTurck flow with initial condition 
$g_0\in W_{loc}^{2,2}\cap W_{loc}^{1,\infty}$ satisfying $\eqref{aaa}$ and $\eqref{bbb}$. Assume additionally that $g(t)$  satisfies the estimates $\eqref{aaa_t}$, $\eqref{bbb_t}$ $\eqref{ccc_t}$ and $\eqref{ddd_t}$  from Theorem \ref{main2} for all $0\leq t\leq S$. Then there exists a time $T_{\max}=T_{\max}(n,a)\in(0,S)$ so that the solution is unique for all $0\leq t\leq T_{\max}$.
\end{thm}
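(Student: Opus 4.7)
The plan is to apply the $L^2$-Lemma (Lemma \ref{L2Lemma}) to $\ell := g_1 - g_2$, multiply the pointwise evolution equation \eqref{evoldiff} by a cutoff function $\eta^2$ supported in $B_2(x_0) \subseteq M$ with $\eta \equiv 1$ on $B_1(x_0)$ and $|\gradh \eta| + |\gradh^2 \eta| \leq C(h)$, and integrate over $M$ with respect to $dh$. Integrating by parts in the second-order parabolic term $\tilde\ell^{ab}\gradh_a\gradh_b |\ell|^2$ and using $\tilde\ell \geq c h$ from $(a_t)$ produces a dissipation $-c\int \eta^2 |\gradh \ell|^2_h\, dh$, while the commutators with $\eta$ and the divergence term $\gradh_a \tilde\ell^{ab}$ produce terms of the form $\int \eta|\gradh\eta|\,|\ell|\,|\gradh \ell|\, dh$ and $\int \eta^2|\gradh g|\,|\ell|\,|\gradh \ell|\, dh$ that are absorbed via Young's inequality.

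The key step is estimating the inhomogeneous terms in \eqref{evoldiff}. The most delicate is $\ell*\hat\ell*\gradh^2(g_1+g_2)$: pointwise we only have $|\gradh^2 g|_h \leq c/t$ from $(c_t)$, so an $L^\infty$ estimate is unavailable. Instead, I apply Cauchy--Schwarz together with the four-dimensional Sobolev embedding $\|\eta\ell\|_{L^4}^2 \leq C\|\eta\ell\|_{W^{1,2}}^2$ (a consequence of Lemma \ref{balllemma}) to obtain
\begin{align*}
\int \eta^2 |\ell|^2 |\gradh^2 g|\, dh
&\leq \Bigl(\int \eta^2|\gradh^2 g|^2\, dh\Bigr)^{1/2}\|\eta\ell\|_{L^4}^2 \\
&\leq C\sqrt{\ep}\int \bigl(\eta^2|\gradh\ell|^2 + \eta^2|\ell|^2 + |\gradh\eta|^2|\ell|^2\bigr) dh,
\end{align*}
using the local smallness $(b_t)$. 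The same Sobolev embedding applied to $\gradh g$ gives $\|\gradh g\|_{L^4(B_2(x_0))}^2 \leq C\sqrt{\ep}$, so the remaining bilinear pieces $\ell * g^{-1} * g^{-1}*\gradh\ell *\gradh g$ and $\ell * \hat\ell *g^{-1} *\gradh g *\gradh g$ are bounded by $C\sqrt\ep\int \eta^2|\gradh\ell|^2\, dh + C\int (\eta^2+|\gradh\eta|^2)|\ell|^2\, dh$ after one more application of Young's inequality. The curvature terms $\ell*\hat\ell*g_1*\Rm(h) + \ell*\ell*g_2^{-1}*\Rm(h)$ are directly controlled by $C\int \eta^2|\ell|^2\, dh$ thanks to the boundedness of $\Rm(h)$. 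Choosing $\ep$ small enough in terms of $n=4$ and $a$ only, every term involving $|\gradh\ell|$ is absorbed into the dissipation, leaving
\[
\frac{d}{dt}\int \eta^2 |\ell|^2\, dh \leq C\int \eta^2 |\ell|^2\, dh + C\int |\gradh\eta|^2 |\ell|^2\, dh
\]
with $C=C(n,a,h)$ independent of $x_0$.

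To close the argument I set $f(t) := \sup_{x_0 \in M}\int_{B_1(x_0)} |\ell(\cdot,t)|^2\, dh$, which is finite by $(a_t)$. The bounded geometry assumption \eqref{hassumptionsscaled} ensures that the support of each cutoff $\eta_{x_0}$ can be covered by a uniformly bounded number of unit $h$-balls, so $\int |\gradh\eta_{x_0}|^2 |\ell|^2\, dh \leq C f(t)$ with $C$ independent of $x_0$. Property $(d_t)$, applied to both $g_1$ and $g_2$, yields $\int_{B_1(x_0)} |\ell(s)|^2\, dh \to 0$ as $s\downto 0$ for each fixed $x_0$, so integrating the energy inequality from $s>0$ to $t$ and letting $s\downto 0$ gives $\int \eta_{x_0}^2 |\ell(t)|^2\, dh \leq C\int_0^t f(\tau)\, d\tau$. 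Taking the supremum over $x_0$ produces $f(t) \leq C\int_0^t f(\tau)\, d\tau$, and since $f$ is uniformly bounded, Gronwall's inequality forces $f \equiv 0$ on an interval $[0,T_{\max}]$ with $T_{\max}=T_{\max}(n,a)$, hence $g_1 \equiv g_2$ there.

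The main obstacle is the critical nature of the nonlinearity in dimension four: the $\gradh^2 g$ term in \eqref{evoldiff} would not be absorbable under only an $L^{n/2}$-energy bound on $\gradh^2 g$, and it is the precise pairing of the $L^4 \hookrightarrow W^{1,2}$ Sobolev embedding (valid in $n=4$) with the local smallness $(b_t)$ that makes the Gronwall absorption possible. A secondary technical issue is maintaining uniform constants in $x_0$ across the cutoff and covering arguments; this is handled by the bounded geometry hypothesis \eqref{hassumptionsscaled}, which supplies uniformly constructed cutoffs and a uniform covering constant for unit balls.
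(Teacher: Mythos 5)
Your proposal is correct and follows the same overall strategy as the paper: apply the $L^2$-Lemma \ref{L2Lemma} to $\ell=g_1-g_2$, localise with a cutoff, integrate by parts in the parabolic term, then use Kato's inequality together with the four-dimensional Sobolev embedding (Lemma~\ref{balllemma}) and the local smallness hypothesis $(b_t)$ to handle the critical terms $\ell*\hat\ell*\gradh^2(g_1+g_2)$, $\ell*g^{-1}*g^{-1}*\gradh\ell*\gradh g$, and the quadratic gradient terms, finishing with a covering argument and the initial condition $(d_t)$ (applied to both $g_1$ and $g_2$) to force $\int_{B_1}|\ell(s)|^2\to 0$ as $s\downto 0$. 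The place where you diverge from the paper is in the absorption step. The paper's version bounds $\eta^2 \leq 1$ inside the Sobolev estimate, and so retains a term $C\ep^{1/4}\int_{B_1(x)}|\gradh \ell|^2$ on the right that is not directly dominated by the local dissipation $-\tfrac{1}{ca}\int\eta^4|\gradh\ell|^2$; the paper therefore integrates in time first and runs a Struwe-style absorption, defining $T_{\max}$ as the first time $\sup_x\int_{B_1(x)}|\ell|^2$ hits a prescribed level $\sigma$, covering $B_1(x)$ by $N=c_0(n)$ balls of radius $\tfrac12$, and showing that for $\ep$ small enough the accumulated $\int_0^t\int|\gradh\ell|^2$ on the right is swallowed by the one on the left; letting $\sigma\to 0$ then forces $\ell\equiv 0$. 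You instead keep the $\eta^2$ weight through the Sobolev step, so every term carrying $|\gradh\ell|$ comes back with a coefficient $\sqrt\ep$ and the weight $\eta^2$, is absorbed pointwise in time into $-c\int\eta^2|\gradh\ell|^2$, and leaves you with the clean inequality $\tfrac{d}{dt}\int\eta^2|\ell|^2 \leq C\int\eta^2|\ell|^2 + C\int|\gradh\eta|^2|\ell|^2$; a direct Gronwall on $f(t)=\sup_{x_0}\int_{B_1(x_0)}|\ell(t)|^2$ (using the same covering of $B_1$ by $c_0(n)$ balls of radius $\tfrac12$ and the a priori bound $f\leq C(a,h)$ from $(a_t)$) then yields $f\equiv 0$. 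Both routes give $T_{\max}=T_{\max}(n,a)$; your version is a somewhat streamlined variant of the paper's that avoids the second-stage spacetime absorption by being more careful with the cutoff weights, while the paper's cites and reproduces the original Struwe mechanism. One small remark: you should make the Kato step explicit when you invoke the Sobolev inequality on the tensor $\eta\ell$ (the inequality in Lemma~\ref{balllemma} is stated for scalar functions, and is applied to $\eta|\ell|$ via $|\gradh|\ell||\leq|\gradh\ell|$ a.e.), and when you estimate $\int\eta^2|\grad g|^2|\ell|^2$ you should cite Lemma~\ref{balllemma}~(v) for the bound $(\int_{B_2}|\gradh g(t)|^4)^{1/2}\leq c(n)C_S\ep$ rather than leaning directly on $(b_t)$.
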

\begin{proof}
We let $g_1$, $g_2$ be two solutions and as above we let $l=g_1-g_2$.
Next we multiply \eqref{evoldiff} with $\eta^4$, where $\eta$ is a cut-off function which is equal to one on $B_{\frac 1 2}(x)$ and zero outside of $B_{\frac  3 4}(x)$, and integrating by parts, with respect to $\grad$, and using Youngs and H\"older inequality,  we obtain the estimate
\begin{align}
\partial_t \int_M \eta^4 |l|^2+2\int_M \eta^4|\gradh l|^2_{\ti \ell ,h} \leq& C\int_M \eta^3(\eta |l| |\gradh \tilde{l}||\gradh l|+ |\gradh \eta||\tilde{l}| |l||\gradh l|)+\int_M \eta^4|\gradh l|^2_{\ti \ell,h} \nonumber \\
&+C(\int_M \eta^4 (|l|^4+|\hat{l}|^4))^{\frac12} (\int_{B_1(x)}  |\gradh ^2(g_1+g_2)|^2)^{\frac12} \nonumber \\
&+C(\int_M \eta^4 (|l|^4+|\hat{l}|^4))^{\frac12} (\int_M \eta^4 |\gradh (g_1+g_2)|^4)^{\frac12} \nonumber \\
&+C\int_M \eta^4 (|l|^2+|\hat{l}|^2),\label{unique1}
\end{align}
where the term $2\int_M \eta^4|\gradh l|^2_{\ti \ell ,h} $ is the second term , up to a change of sign,   appearing on the right hand side of  equation \eqref{evoldiff}. 
Using the Sobolev embedding Theorem, see Theorem \ref{balllemma},  and the assumption $(b_t)$ it follows that
\[
\int_{B_1(x)}  |\gradh ^2(g_1+g_2)|^2+(\int_M \eta^4 |\gradh (g_1+g_2)|^4)^{\frac12} \leq C\varepsilon.
\]
Using  the estimate $|\hat{l}|\leq C|l|$ and again the Sobolev inequality we  conclude
\begin{align}
(\int_M \eta^4 (|\hat l|^4 +  |l|^4))^{\frac12}\leq C \int_{B_1(x)}( |\gradh l|^2+ |l|^2)\label{uniqueinbet},
\end{align}
and hence
\begin{align}\label{start1}
\partial_t \int_M \eta^4 |l|^2+ \int_M \eta^4|\gradh l|^2_{\ti \ell ,h} \leq& C\int_M (\eta^4 |l| |\gradh \tilde{l}||\gradh l|+ |\gradh \eta|\eta^3 |\tilde{l}| |l||\gradh l|)  \nonumber \\
&+C \sqrt{\ep} \int_{B_1(x)}( |\gradh l|^2+ |l|^2) +C\int_M \eta^4 (|l|^2+|\hat{l}|^2). 
\end{align}
We will estimate the first two terms appearing on the right hand side of \eqref{start1}. In preparation thereof, we first note that 
from $(b_t)$ we also have 
\[
(\int_M \eta^4 (|\gradh l|^4+|\gradh \tilde{l}|^4))^{\frac12} \leq C\varepsilon.
\]
The first term $\int_M \eta^4(\eta |l| |\gradh \tilde{l}||\gradh l|) $ on the right hand side of
\eqref{start1} we estimate as follows, 
\begin{eqnarray}\label{start2}
\int_M \eta^4 |l| |\gradh \tilde{l}||\gradh l| && \leq \frac{1}{4}\int_M \eta^4 |\gradh l|^2_{\ti \ell ,h} 
+ C \int_M \eta^4 |l|^2  |\gradh \tilde{l}|^2 \\
&& \leq \frac{1}{4}\int_M \eta^4 |\gradh l|^2_{\ti \ell ,h} 
+ C (\int_M \eta^4 |l|^4)^{\frac 1 2}( \int_M   \eta^4|\gradh \tilde{l}|^4)^{\frac 1 2} \cr
&& \leq \frac{1}{4}\int_M \eta^4 |\gradh l|^2_{\ti \ell ,h}  +C (\int_{B_1(x)} |\grad l|^2 + |l|^2) C\epsilon.\nonumber
\end{eqnarray}
The second  term $\int_M |\gradh \eta|\eta^3 |\tilde{l}| |l||\gradh l|)$ of \eqref{start1}
is estimated as follows
\begin{eqnarray}\label{start3}
\int_M |\gradh \eta|\eta^3 |\tilde{l}| |l||\gradh l|) && \leq  (\int_M \eta^4  |\tilde{l}|^2 |l|^2)^{\frac 1 2} ) 
(\int_M |\grad \eta| \eta^2 |\gradh l|^2)^{\frac 1 2} \\
&& \leq C (\int_M  \eta^4(|\tilde{l}|^4 +  |l|^4) )^{\frac 1 2} (\int_M \eta^4|\gradh l|^4)^{\frac 1 4}\cr
&& \leq C (\int_{B_1(x)} |l|^2 + |\grad l|^2) (\ep)^{\frac 1 4}.\nonumber
\end{eqnarray} 
Using \eqref{start2} and \eqref{start3}, we can estimate the left  hand side of \eqref{start1} by
\begin{eqnarray}
\partial_t \int_M \eta^4 |l|^2+ \int_M \eta^4|\gradh l|^2_{\ti \ell ,h} \leq &&
C (\int_M |l|^2 + |\grad l|^2) (\ep)^{\frac 1 4} + C\int_M\eta^4 |l|^2
\end{eqnarray}

and hence
\[
\partial_t \int_M \eta^4 |l|^2+\frac{1} {c a} \int_M \eta^4|\gradh l|^2 \leq C\int_{B_1(x)} |l|^2+C\sqrt{\varepsilon}\int_{B_1(x)}|\gradh l|^2.
\]
After integrating in time we obtain for every $x\in M$
\begin{align}
\int_{B_{\frac12}(x)} |l|^2(t)+\frac{1} {c a}\int_0^t\int_{B_{\frac12}(x)} |\gradh l|^2\, ds \leq C \int_0^t \int_{B_1(x)} |l|^2 (s)\, ds+C(\varepsilon)^{\frac 1 4} \int_0^t\int_{B_1(x)} |\gradh l|^2\, ds .\label{gronwall}
\end{align}
Next we let $1>\sigma>0$   be arbitrary and we conclude from Corollary \ref{L2continuityCor} that 
\[
\sup_{x\in M} \int_{B_1(x)}|l|^2(t) <\sigma 
\]
for every $0\leq t \leq C\sigma$ where $C$ is a constant only depending on $n$ and $a$. In the following we let $T_{\max}$ be the smallest time so that
\begin{align*}
\sup_{x\in M}\int_{B_1(x)} |l|^2(T_{\max})&= \sigma.
\end{align*}

For any $x\in M$ we can cover the ball $B_1(x)$ by finitely many balls $B_{\frac12}(x_i)$, $1\le i\le N=N(h)$ (see Appendix B). We conclude from \eqref{gronwall} that for $t\leq T_{\max}$
\begin{align*}
\int_{B_1(x)} |l|^2(t)+\frac{1} {c a} \int_0^t\int_{B_{1}(x)} |\gradh l|^2\leq&  N  \sup_i (C \int_0^t\int_{B_{\frac12}(x_i)} |l|^2(t) +C \sqrt{\ep} \int_0^t\int_{B_{\frac12}(x_i)} |\gradh l|^2) \\
\leq& CNt\sigma +CN\ep^{\frac 1 4}  \sup_i \int_0^t\int_{B_{1}(x_i)} |\gradh l|^2
\end{align*}
and hence
\begin{align*}
&\sup_{x\in M} (\int_{B_1(x)} |l|^2(t) +\frac{1} {c a} \int_0^t\int_{B_{1}(x)} |\gradh l|^2)\\
&\leq CNt\sigma +CN\ep^{\frac 1 4}  \sup_{x\in M} \int_0^t\int_{B_{1}(x)} |\gradh l|^2\cr
& \leq CNt\sigma +  \frac{1}{2}\sup_{x\in M} (\int_{B_1(x)} |l|^2(t) +\frac{1} {c a} \int_0^t\int_{B_{1}(x)} |\gradh l|^2)
\end{align*}
if  $\varepsilon>0$ is  sufficiently small. Hence
\begin{align*}
&\sup_{x\in M} (\int_{B_1(x)} |l|^2(t) +\frac{1} {c a} \int_0^t\int_{B_{1}(x)} |\gradh l|^2) \leq 2CNt\sigma  
 \leq \frac{\si}{2}
\end{align*}
for all $0\le t \le \frac{1}{4CN}$ which implies that $T_{\max}\geq \frac{1}{4CN}$ and since $\sigma>0$ was arbitrary this finishes the proof of the theorem. 
\end{proof}

\section{An application}\label{anapplication}
Here we present an application   for  $W^{2,2}\cap L^{\infty}$ metrics on four dimensional manifolds in the setting that scalar curvature is weakly bounded from below. 
For the case that the metric is  $C^0$ we 
refer to the paper of \cite{B-G} for related results.

\begin{defn}\label{weakscalarlater}
Let $M$ be a four dimensional smooth closed manifold and
$g$ be a $W^{2,2} \cap L^{\infty}$ be  Riemannian metric (positive definite everywhere), such that $g, g^{-1} 
\in  L^{\infty}$  and let $k\in \R.$ 
Locally the scalar curvature may be written  
\begin{eqnarray*}
\Sc(g) && = g^{jk} (\partial_i \Gamma(g)_{jk}^{i} - \partial_j \Gamma_{ik}^{i} \cr
 && \ \ + \Gamma_{ip}^{i}\Gamma_{jk}^p - \Gamma_{jp}^{i}\Gamma_{ik}^{p})
 \end{eqnarray*}
where   $\Gamma(g)_{ij}^m = \frac{1}{2} g^{mk} ( \partial_i g_{jk} + \partial_j g_{ik} - \partial_k g_{ij})$, and hence  
$\Sc(g)$ is well defined in the $L^2$ sense  for a $W^{2,2}$ Riemannian metric.  Let $k\in \R.$ We say the  scalar curvature $\Sc(g)$ is  weakly  bounded from below by $k$, $\Sc(g) \geq k$, if this is true almost everywhere, for all local smooth coordinates.
\end{defn}

\begin{thm}\label{sctheorem}
Let $(M,h)$ be four dimensional closed and satisfy \eqref{hassumptionsscaled}.
Assume that $(M,g_0)$ is a $W^{2,2}$ metric such that
$\frac{1}{a} h \leq g_0\leq a h$  for some $ \infty>a>1$ and $\Sc(g_0) \geq k$ in the weak sense of Definition \ref{weakscalarlater}.  
Then the solution  $g(t)_{t\in (0,T)}$ to Ricci DeTurck flow  respectively  $\ell(t)_{t\in (0,T)}$ to Ricci Flow constructed in Theorem \ref{mainthm}, with initial value $g(0)=g_0$,  has $\Sc(g(t)) \geq k$ and $\Sc(\ell(t)) \geq k$ for all $t \in (0,T)$.
\end{thm}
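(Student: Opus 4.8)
The plan is to show that the scalar curvature lower bound $\Sc \geq k$ is preserved under Ricci DeTurck flow in our setting, and then transfer this to the Ricci flow solution via the isometry $\Phi(t)$. The central idea is an approximation argument: by Theorem \ref{main3start} (or rather the construction behind Theorem \ref{mainthm}), our solution $g(t)$ is the local smooth limit of solutions $g_i(t)$ whose initial data $g_{i,0}$ converge to $g_0$ strongly in $W^{2,2}_{loc}$ and satisfy uniform two-sided bounds $\frac{1}{b}h \leq g_{i,0} \leq bh$. If each $g_{i,0}$ could be chosen to be smooth \emph{with} $\Sc(g_{i,0}) \geq k - \sigma_i$ for some $\sigma_i \to 0$, then the classical preservation of scalar curvature lower bounds under smooth Ricci DeTurck flow (equivalently, under the Ricci flow obtained by pulling back with the DeTurck diffeomorphisms, where $\partial_t \Sc = \Delta \Sc + 2|\Rc|^2 \geq \Delta \Sc + \frac{2}{n}\Sc^2$ and the maximum principle applies on the closed manifold $M$) gives $\Sc(g_i(t)) \geq k - \sigma_i$ for all $t$. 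Passing to the limit $i \to \infty$ using smooth local convergence of $g_i(t) \to g(t)$ on $M \times (0,T)$ yields $\Sc(g(t)) \geq k$ for all $t \in (0,T)$.

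The first key step, then, is constructing the smooth approximations $g_{i,0}$ that simultaneously (a) converge to $g_0$ in $W^{2,2}_{loc}$, (b) stay uniformly comparable to $h$, and (c) have scalar curvature $\geq k - \sigma_i$. Mollification alone gives (a) and (b) but not obviously (c), since $\Sc$ depends on second derivatives and mollification can create negative curvature. I would instead mollify and then conformally (or otherwise) correct: the scalar curvature under mollification converges to $\Sc(g_0) \geq k$ in $L^2_{loc}$, so one expects only a small, controlled failure of the lower bound. A cleaner route, given we are on a closed four-manifold, is to use the weak formulation directly: $\Sc(g_0) \geq k$ means $\int_M (\Sc(g_0) - k)\varphi \, d\mu_{g_0} \geq 0$ for all nonnegative test functions $\varphi$, and this passes to mollifications with an error $o(1)$; one then adds a small correction term. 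The main obstacle I anticipate is precisely this step — ensuring the scalar curvature lower bound survives the smoothing procedure up to a vanishing error, since $W^{2,2}$ is the borderline regularity in dimension four where $\Sc \in L^2$ is just barely defined and pointwise bounds are unavailable.

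The second step is the limit passage. Here I would invoke the construction of Theorem \ref{main2}/\ref{main3} and the smooth convergence $g_{R(i)}(t) \to g(t)$ locally on $M \times (0,T)$; since $\Sc$ is a smooth function of the $2$-jet of the metric, $\Sc(g_{R(i)}(\cdot,t)) \to \Sc(g(\cdot,t))$ locally uniformly for each fixed $t > 0$, so the pointwise inequality $\Sc(g(\cdot,t)) \geq k$ is immediate. One subtlety: the mollified initial data $g_{0,R}$ in the proof of Theorem \ref{main2} are glued to $h$ outside a large ball, so strictly one should either work with the closed-manifold setup of Theorem \ref{main3start} directly (where $M$ is closed and no gluing to $h$ at infinity is needed) — and indeed Theorem \ref{anappstart} assumes $M$ closed — or check that the scalar curvature hypothesis is only used on the region where $g_{0,R} = $ (mollification of $g_0$). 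Since $M$ is closed here, I would simply mollify $g_0$ globally, correct the scalar curvature, and apply the preservation under smooth Ricci DeTurck flow on $M$ directly, avoiding the noncompact machinery.

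The third step handles the Ricci flow solution $\ell(t) = (\Phi(t))^* g(t)$. Scalar curvature is a pointwise Riemannian invariant: for a diffeomorphism $\Phi$, $\Sc(\Phi^* g)(x) = \Sc(g)(\Phi(x))$. Since each $\Phi(t) : M \to M$ is a smooth diffeomorphism (by Theorem \ref{mainthm}(i)) and $\Sc(g(t)) \geq k$ everywhere on $M$, we get $\Sc(\ell(t)) = \Sc(g(t)) \circ \Phi(t) \geq k$ on $M$ for every $t \in (0,T)$. This step is essentially automatic. In summary: mollify-and-correct to get smooth approximate data with $\Sc \geq k - \sigma_i$; run smooth Ricci DeTurck flow and apply the maximum principle to propagate $\Sc \geq k - \sigma_i$; take $i \to \infty$ using smooth local convergence; and finally pull back by the DeTurck diffeomorphisms using invariance of $\Sc$. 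I expect the mollification-with-scalar-curvature-control step to be where essentially all the real work lies.
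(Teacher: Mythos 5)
Your third step (diffeomorphism invariance of $\Sc$ to pass between $g(t)$ and $\ell(t)$) and your limit passage are fine, but the argument hinges entirely on a step you do not carry out and which, at this regularity, is not a routine smoothing lemma: producing smooth metrics $g_{i,0}$ with $\frac1b h\le g_{i,0}\le bh$, $g_{i,0}\to g_0$ in $W^{2,2}$, \emph{and} the pointwise bound $\Sc(g_{i,0})\ge k-\sigma_i$ with $\sigma_i\to 0$. Mollification does not give this: $\Sc$ is of the form $g^{-1}\ast\partial^2 g+g^{-1}\ast g^{-1}\ast\partial g\ast\partial g$, and while the linear second-order part commutes with mollification up to small errors, the quadratic term in $\partial g_0\in L^4$ and the factor $g_0^{-1}$ do not; the resulting discrepancy $\Sc(\hat g_\epsilon)-(\Sc(g_0))\ast\rho_\epsilon$ is small only in $L^2$, so $\Sc(\hat g_\epsilon)$ can dip far below $k$ on small sets and no pointwise lower bound close to $k$ follows. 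The proposed ``conformal (or otherwise) correction'' faces the same obstruction: the deficit $(k-\Sc(\hat g_\epsilon))_+$ is controlled only in $L^2$, and in dimension four $L^2$ control of $\Delta u$ (equivalently $u\in W^{2,2}$) gives no pointwise control of the corrected scalar curvature, so one cannot conclude $\Sc\ge k-\sigma_i$ pointwise. In fact the existence of such an approximating sequence is essentially \emph{equivalent} to the theorem: in the paper it is recorded, in the remark following the theorem, as a consequence of the result (one takes $g(t_i)$ itself as the approximating sequence), not as an available ingredient. A secondary point: even granting the approximation, your limit solution built from this new sequence need not a priori coincide with the solution of Theorem \ref{mainthm}; you would additionally have to check that it satisfies \eqref{a_t2}--\eqref{d_t2} and invoke the uniqueness statement (Theorem \ref{uniqueness}) to identify the two.

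For contrast, the paper avoids any smoothing of the initial data and works directly on the flow: since $g(t)\to g_0$ in $W^{2,2}_{loc}$ and $g(t)$ is uniformly comparable to $h$, one has $\Sc(g(t))\to\Sc(g_0)$ in $L^2$, hence $\varphi(t):=\int_M \big((\Sc+k)_-\big)^2\,d\ell(t)\to 0$ as $t\downto 0$ (with the paper's sign convention). Then, along the smooth Ricci flow for $t>0$, one differentiates $e^{-kt}\varphi(t)$, uses the evolution equation of $\Sc$, integrates by parts over the sublevel sets (made admissible for a.e.\ time and a.e.\ level by Sard's theorem), and absorbs the cubic term via the Sobolev inequality, concluding that this quantity is non-increasing for small times and hence identically zero; the statement for the other flow then follows exactly as in your step three, by the isometry $\Phi(t)$. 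If you want to salvage your scheme, you would need to prove the pointwise-lower-bound-preserving smoothing as a separate result, and at $W^{2,2}$ regularity in dimension four I see no way to do that which is easier than the integral argument above.
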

\begin{proof}
The solution $g(t)$ to Ricci DeTurck flow constructed in the main theorem is smooth for all $t>0$ and satisfies
$g(t) \to g_0$ in the $W^{2,2}$ sense and 
$\frac{1}{400 a} h \leq g(t)\leq 400 a h$  for all $t \in (0,T)$.
Hence $\Sc(g(t)) \to \Sc(g_0)$ as $t\downto 0$  in the $L^2_{loc}$ sense, and in the pointwise sense  almost everywhere,   where $\Sc(g_0)$ is the $L^2$ quantity defined above ( convergence of a sequence of functions in  the $L^2_{loc}$ sense to an $L^2_{loc}$ function implies convergence of the sequence almost everywhere ).
This means $ (\Sc(g(t)) + k)_{-} \to 0$ in the $L^2$ sense as $t \downto 0$,
and hence  $\varphi(t):= \int_M (\Sc(g(t)) + k)_{-} ^2 dg(t)  =  \int_M (\Sc(\ell(t)) + k)_{-} ^2 d\ell(t)  \to 0$ as $t \downto 0.$

The Integrand 
 $V(t):= (\Sc(\ell(t)) + k)_{-} ^2$ is differentiable  in space and time for all $t>0$ and this yields that  $\varphi$ is differentiable in time for all $t>0$. The derivative of $V$ is zero for all $(x,t) \in M \times (0,T)$ with 
  $ \Sc(\ell)(x,t) +k \geq 0$.

 By Sard's theorem (see Section 2 of \cite{Milnor}), we know, for almost all $k$,    
that the sets  $ \{ x \in M \ | \ \Sc(x,g(t)) + k <0\}$ have  smooth boundary for almost every $t>0$: Sard applied once to $\Sc$ yields that $W_k:= \{ (x,t) \in M \times (0,T) \ | \ \Sc(x,t) = -k \}$ is smooth for almost all $k\in \R$ and then Sard applied to $\Psi_k: W_k \to \R,$ $\Psi_k(x,t) = t$ (for such $k$) yields that
$ \{ x \in M   \ | \ \Sc(x,t) = -k \}$ is smooth for almost all $t\in (0,T)$.
Let $Z\subseteq \R$ denote the set of such $k \in \R$.
For such $k \in Z$ we define 
$U_{k}(t)  := \{ x \in M \ | \ \Sc(x,g(t)) + k <0\}$ if  $t\in (0,T)$  is a time such that     $\{ x \in M \ | \ \Sc(x,g(t)) + k <0\}$ has smooth boundary, and  we define 
$U_{k}(t): =\emptyset$ for all other $t\in (0,T)$. 
 Using the Fundamental Theorem of Calculus    for   $0<t_1<t_2$ we compute
\begin{eqnarray*}
&& \psi(t_2)  - \psi(t_1) \\
&& = e^{-kt_2} \phi(t_2)  - e^{-kt_1}\phi(t_1)\\
&& = \int_{t_1}^{t_2} \frac{d}{d\tau} \int_M e^{-k\tau} V(\tau) d\ell(\tau) d\tau \\
&& = \int_{t_1}^{t_2} e^{-k\tau}\int_M (\frac{d}{d\tau } V(\tau) - \Sc(\tau)V(\tau) -k V(\tau) ) d\ell(\tau) d\tau \\
&& = \int_{t_1}^{t_2}  \int_{U_{k}(\tau)}e^{-k\tau}( 2\frac{\partial}{ \partial \tau}(-\Sc(\tau) -k ) (-\Sc(\tau)-k)  -  (\Sc(\tau) + k) ^2\Sc(\tau) ) d\ell(\tau) d\tau \\
&& - \int_{t_1}^{t_2}  \int_{U_{k}(\tau)} ke^{-k\tau}V(\tau)  d\ell(\tau) d\tau \\
&& = \int_{t_1}^{t_2}  \int_{U_{k}(\tau)}e^{-k\tau} (2\lap_{\ell(\tau)} (\Sc(\tau) +k)) (\Sc(\tau)+k) +  4(\Sc(\tau)+k)|\Rc(\tau)|^2  \\
&& 
-  \int_{t_1}^{t_2}  \int_{U_{k}}  e^{-k\tau}(\Sc(\tau) + k)^3 d\ell(\tau) d\tau 
\cr
&&  \ \ + k \int_{t_1}^{t_2}  \int_{U_{k}(\tau)}e^{-k\tau}  (\Sc(\tau) + k) ^2 
 -k  \int_{t_1}^{t_2} \int_{U_{k}(\tau)}  e^{-k\tau}V(\tau) d\ell(\tau) d\tau \\
&&\leq \int_{t_1}^{t_2}  \int_{U_{k}(\tau)}e^{-k\tau} (2\lap_{\ell(\tau)} (\Sc(\tau) +k)) (\Sc(\tau)+k)  
-  \int_{t_1}^{t_2}  \int_{U_{k}(\tau)}  e^{-k\tau}(\Sc(\tau) + k)^3 d\ell(\tau) d\tau  \\
&&\mbox{ [ since } (\Sc(\tau) +k)(\tau )  < 0 \mbox{ on }  U_{k}(\tau) \mbox{ and } V= (\Sc(\tau) + k) ^2 \mbox {]} \\ 
&& = \int_{t_1}^{t_2}  e^{-k\tau}  \int_{U_{k}(\tau)} - 2|\nabla (\Sc(\ell(\tau) +k )|^2 d\ell(\tau) d\tau \\
&&   \ \ + \int_{t_1}^{t_2} e^{-k\tau} \int_{\boundary U_k(\tau) } (\Sc(\tau) +k)\ell(\tau)(  \nu(\ell(\tau)), \nabla (\Sc(\tau) +k) ) d\si(\tau) d\tau 
\\ 
&&  +  \int_{t_1}^{t_2} \int_{U_{k}(\tau)}  e^{-k\tau}(|(\Sc(\tau) + k)_{-}|^3   d\ell(\tau) d\tau \cr 
&& = 
\int_{t_1}^{t_2}  \int_{U_{k}(\tau)} - 2e^{-k\tau}|\nabla (\Sc(\tau) +k )_{-}|^2  d\ell(\tau) d\tau \\
&&  +  \int_{t_1}^{t_2} \int_{U_{k}(\tau)} e^{-k\tau}( |(\Sc(\tau)) + k)_{-}|^3    ) d\ell(\tau) d\tau \cr
&& = \int_{t_1}^{t_2} -2e^{-k\tau}  \int_{M} |\nabla (\Sc(\tau) +k )_{-}|^2 d\ell(\tau) d\tau \\
&&  +  \int_{t_1}^{t_2} e^{-k\tau}  \int_{M}  |(\Sc(\tau) + k)_{-}|^3\\
&& \leq -A(M,h) \int_{t_1}^{t_2} e^{-k\tau}  (\int_{M} | (\Sc(\tau) +k )_{-}|^4 d\ell(\tau) )^{\frac 1 2} d\tau  \\
&& \ \ + \int_{t_1}^{t_2} e^{-k\tau}  (\int_{M}   |(\Sc(\tau) + k)_{-}|^4 d\ell(\tau))^{\frac 1 2}   (\int_{M}  |(\Sc(\tau) + k)_{-}|^2 d\ell(\tau))^{\frac 1 2}  ) d\tau \\
&& \leq \int_{t_1}^{t_2}  e^{-k\tau}  (-A(M)  +  A(M)/2 )   (\int_{M}  |(\Sc(\tau)) + k)_{-}|^4 d\ell(\tau))^{\frac 1 2}  d\tau \cr
&&[ \mbox{ for sufficiently small } t_2]\\ 
&& \leq  0,
\end{eqnarray*}
where we have used the Sobolev inequality and $A(M)$ is the Sobolev constant, 
and we used that $\int_{M}  |(\Sc(\tau) + k)_{-}|^2 d\ell(\tau)
\leq A(M)/2 $ for $\tau \leq t_2$ and $t_2$ sufficiently small, since 
$   \int_{M}  |(\Sc(t) + k)_{-}|^2 d\ell(t) \to 0 = \int_{M}  |(\Sc(t) + k)_{-}|^2 dg(t) \to 0$ as $t\downto 0$.
Hence, since $\psi(0)=0$, $\psi(t)=0$ for all $t\in [0,T)$. That is $\Sc(\ell(t)) \geq k$ for all $t\in (0,T)$ in the smooth sense.
$\Sc(g(t)) \geq k$ for all $t\in (0,T)$ follows from the fact that $(M,\ell(t))$ and $(M,g(t))$ are isometric to one another. 
For general $k\in \R$ we can take a sequence $(k_i)_{i\in \N}$ with $k_i \to k$ and $k_i\in Z.$

\end{proof}

\begin{rmk}
From this theorem we see that   for a metric  $g_0 \in L^{\infty}\cap W^{2,2}(M^4)$ with  $\frac{1}{a} h \leq g_{0} \leq a h$    for some positive constant $a>0$ : $g_0$ has scalar curvature $\geq k$ in the weak sense of Definition \ref{weakscalarlater} $\iff$ 
there exists a sequence of smooth Riemannian metrics
$ g_{i,0}$ 
 with $\frac{1}{b} h \leq g_{i,0} \leq b h$   for some $1<b<\infty$ 
and 
 $\Sc(g_{i,0}) \geq k $  and  $g_{i,0} \to g_0 \in   W^{2,2}(M^4)$ 
$\iff$ the Ricci DeTurck flow of $g_0$ constructed in Theorem \ref{main3} has $\Sc(g(t)) \geq k$ for all $t\in (0,T)$.
 In particular, we do not need to change the constant form $k$ to $k-\frac{1}{i}$ after the first implication $\implies$.
 
\end{rmk}

\appendix
\section{Short time existence of smooth bounded data}\label{shortapp}

We present here a standard existence result for Ricci-DeTurck flows with smooth bounded initial data, based on the method of Shi \cite{Shi}. 

\begin{thm}\label{standardexist}
Let $(M,h)$ be $n$-dimensional and satisfy \eqref{hassumptionsscaled}. We assume there are constants $1<a< \infty$ and $0<c_j < \infty$ for all $j\in \N$, 
 and $g_0$ is  a smooth metric on $M$ satisfying
 \begin{align*}
 & \frac{1}{a}h \leq g_0 \leq ah \\
 & \sup_M |\grad^j g_0| \leq c_j < \infty
 \end{align*}
 
Then there exists  a smooth solution  $(M,g(t))_{t\in [0,\hat T ]}$ to \eqref{Meq} for some $\hat T >0$, and
constants $b_j(g_0,h,S)< \infty$ for all $S \leq \hat T$ such that
$\sup_M |\grad^j g(\cdot,t)| \leq b_j(g_0,h,S) < \infty$  for all $t\in [0,S].$  

\end{thm}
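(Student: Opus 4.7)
The plan is to follow Shi's original exhaustion strategy, adapted to the strictly parabolic Ricci-DeTurck system. First I would fix a smooth exhaustion $\Omega_1 \subset \subset \Omega_2 \subset \subset \ldots$ of $M$ by relatively compact open sets with smooth boundary, $\bigcup_i \Omega_i = M$, and on each $\overline{\Omega_i}$ consider the initial-boundary value problem for \eqref{Meq} with initial value $g_0|_{\overline{\Omega_i}}$ and Dirichlet boundary condition $g(\cdot,t)|_{\partial \Omega_i} = g_0|_{\partial \Omega_i}$. Because $g_0$ satisfies $\tfrac{1}{a} h \leq g_0 \leq a h$ and $\sup_M |\gradh^j g_0| \leq c_j$ for every $j$, the compatibility conditions of all orders are automatically satisfied on $\partial \Omega_i \times \{0\}$, and $\eqref{Meq}$ is a quasilinear strictly parabolic system in $g$. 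By the classical linear theory (Theorem~7.1, Ch.~VII of \cite{LSU}), combined with the standard fixed-point argument that converts this to the quasilinear setting, one obtains a smooth solution $g_i(t)$ on $\overline{\Omega_i} \times [0, T_i]$ for some $T_i = T_i(g_0, h, i) > 0$.

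Next I would derive a priori estimates for $g_i$ that are independent of $i$ on some common time interval $[0, \hat T]$. For the $L^\infty$ bound, I would apply the maximum principle to the scalar functions $\varphi_i = g_i^{ab} h_{ab}$ and $F_i = (g_i)_{ab} h^{ab}$, exactly as in the proof of Theorem~\ref{gradientbound}: using the evolution equations, the Dirichlet boundary condition (which keeps $\varphi_i$ and $F_i$ pinned to their initial values on $\partial \Omega_i$), and the curvature bound on $h$, one gets $\tfrac{1}{2a} h \leq g_i(t) \leq 2a h$ on $\overline{\Omega_i} \times [0, T^{(0)}]$ for some $T^{(0)} = T^{(0)}(a, h) > 0$ independent of $i$. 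For derivative estimates, I would run Bernstein-type arguments on the quantities $|\gradh^j g_i|^2$ using the evolution equation \eqref{ShiEqny} established in Lemma~\ref{smoothnesslemma}. Since here the initial data is smooth with bounded covariant derivatives of every order, one obtains time-uniform bounds $\sup_{\overline{\Omega_i} \times [0, \hat T]} |\gradh^j g_i|^2 \leq b_j(g_0, h)$ (rather than merely $\frac{N_j}{t^j}$), by combining the interior Bernstein computation with a barrier term $\eta^2 t |\gradh^j g_i|^2 + A$ comparing against the initial bound $c_j$ and the boundary values controlled by $g_0$.

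With these uniform estimates, classical Schauder theory applied to the strictly parabolic linearization of \eqref{Meq} produces uniform $C^{k,\alpha}$ bounds on $g_i$ on compact subsets of $M \times [0, \hat T]$ for every $k$. Finally, Arzelà–Ascoli yields a subsequential smooth limit $g(t)$ defined on all of $M \times [0, \hat T]$ which solves \eqref{Meq}, satisfies $g(\cdot, 0) = g_0$, and inherits the uniform bounds $\sup_M |\gradh^j g(\cdot, t)| \leq b_j(g_0, h, S)$ for every $S \leq \hat T$.

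The main obstacle is ensuring the short-time existence interval $T_i$ from the Dirichlet problem does not degenerate as $i \to \infty$; this is handled precisely by proving that the a priori $L^\infty$ and first-derivative bounds hold on a uniform time interval $[0, \hat T]$ independent of $i$, so that the solution $g_i$ can be continued to $[0, \hat T]$ by a standard continuity argument. The boundary behaviour is not really an obstacle because the bounded geometry assumption \eqref{hassumptionsscaled} and the boundedness of all derivatives of $g_0$ allow the Bernstein cut-off arguments to be localised safely away from $\partial \Omega_i$, while the $L^\infty$ estimates for $\varphi_i, F_i$ propagate in from the boundary via the maximum principle with constants depending only on $a$ and $h$.
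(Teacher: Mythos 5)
Your overall route — exhaustion by compacts with smooth boundary, Dirichlet problem for \eqref{Meq} on each, a uniform-in-$i$ $L^\infty$ ellipticity bound to pin down a common existence time, uniform interior derivative estimates, then Arzel\`a--Ascoli — is the same as the paper's, but two of the intermediate claims as you have stated them are not correct and need to be replaced.

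First, the assertion that ``the compatibility conditions of all orders are automatically satisfied on $\partial\Omega_i\times\{0\}$'' does not follow from smoothness and boundedness of $g_0$. With time-independent Dirichlet data $\phi(x,t)=g_0(x)$, zeroth-order compatibility ($\phi(\cdot,0)=g_0|_{\partial\Omega_i}$) is trivial, but first-order compatibility already demands that the right-hand side of \eqref{Meq} evaluated at $g_0$ \emph{vanish} on $\partial\Omega_i$ — i.e.\ that the boundary is stationary under the flow — and nothing in your hypotheses guarantees this. What the paper actually relies on is only interior regularity of the LSU solution, which needs only the zeroth-order compatibility; the cut-off modification $g_{0,R}=\eta g_0+(1-\eta)h$ in the paper's proof makes the boundary values the constant metric $h$ (and keeps the uniform bounds $\sup_M|\gradh^j g_{0,R}|\le\hat c_j$ independent of $R$), but this too does not give higher-order compatibility, and none is needed. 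You should delete the all-orders claim and instead note that LSU (plus interior bootstrap) gives a solution smooth in the open cylinder, which is enough.

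Second, your intermediate claim of a \emph{global} bound $\sup_{\overline{\Omega_i}\times[0,\hat T]}|\gradh^j g_i|^2\le b_j$ up to $\partial\Omega_i$ is not obtainable by the Bernstein arguments you invoke: the cut-off device gives only interior estimates that deteriorate as one approaches $\partial\Omega_i$. Boundary-regularity estimates for the quasilinear Dirichlet problem would require a separate argument and are not actually needed here. The paper avoids the issue entirely by citing Shi's interior estimates (Lemmas 4.1 and 4.2 of \cite{Shi}) to get bounds $b_m$ on $B_1(x_0)\times[0,S]$ uniformly for $x_0\in B_{R/10}(p)$, then passes to the limit on compact subsets exactly as in your final paragraph. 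Your last paragraph already works with compact-subset $C^{k,\alpha}$ bounds, so the fix is simply to drop the claim of boundary-uniformity in the intermediate step and phrase the derivative estimates as interior bounds from the start. Your choice of maximum-principle quantity ($\varphi=g^{ab}h_{ab}$ and $F=g_{ab}h^{ab}$, in the spirit of Shi and of Theorem~\ref{gradientbound}) is a legitimate alternative to the paper's use of $|g_R(t)-g_{0,R}|_h^2$; both give the uniform-in-$R$ ellipticity bound and hence a lower bound on $\hat T$.
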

\begin{proof}

 We will construct a short time solution to, \eqref{Meq}, that is  
\begin{align}\label{Meqex}
  \partt g_{ij}=&\,g^{ab} (\gradh_a\gradh_b g_{ij})
  -g^{kl}g_{ip}h^{pq}R_{jkql}(h)
  -g^{kl}g_{jp}h^{pq}R_{ikql}(h)\cr
  &\,+\tfrac12g^{ab}g^{pq}\left(\gradh_i g_{pa}\gradh_jg_{qb}
    +2\gradh_ag_{jp}\gradh_qg_{ib}-2\gradh_ag_{jp}
    \gradh_bg_{iq}\right.\cr
  &\,\left.\qquad\qquad\qquad-2\gradh_jg_{pa}\gradh_bg_{iq}
    -2\gradh_ig_{pa}\gradh_bg_{jq}\right),\cr
 & = \,g^{ab} (\gradh_a\gradh_b g_{ij}) +  (g^{-1}* g* \Rm(h)* h)_{ij} + (g^{-1}*g^{-1}*\gradh g* \gradh g)_{ij}
\end{align}
with $g(0) =g_0$.
The method is essentially  the one given in \cite{Shi}, with some minor modifications.

We choose radii $R(i) \to \infty$ such that $B_i = B_{R(i)}(p)$ have smooth boundary, and 
  $M = \cup_{i=1}^{\infty}B_i.$ 
  For fixed $R=R(i)\geq 1$ we modify $g_0$ to $g_{0,R} = \eta g_0 + (1-\eta)h$ where $\eta $ is a smooth cut off function
  with $ \eta =0$ outside of $B_{R/2}(p)$ and  $ \eta =1$  on $B_{R/4}(p),$ $|\grad^k \eta|^2 \leq c(k,h)$
  (see (iv) of Theorem \ref{balllemma} for the existence of $\eta$).
We still have 
\begin{eqnarray}
&&  \sup_M |\grad^j g_{0,R}| \leq \hat c_j(c_1,\ldots,c_j,h,n,a)< \infty \label{hatcj} \cr
&&  \frac{1}{a}h \leq g_{0,R} \leq ah 
\end{eqnarray}   
for some constants $0<\hat c_j(c_1,\ldots,c_j,h,n,a)< \infty$ which don't depend on $R$. 
Equation \eqref{Meqex} is strictly parabolic and $h$ and $g_{0,R}$ are smooth and so we obtain a smooth solution $g_R(t)_{t\in [0,T)}$ to the Dirichlet problem associated to \eqref{Meqex} with $g_R(0) = g_{0,R}$ and 
$g_R(t)|_{\boundary B_{R}(p) } $ $= (g_{0,R})|_{\boundary B_{R}(p) }$ $ = h|_{\boundary B_{R}(p) }$  for a $T = T(B_{R},g_{0,R},h)>0$  using the methods of   \cite{Shi} Section 3 and 4 (which in turn uses   Theorem 7.1, Section VII of \cite{LSU} ).
Using  the argument of   Lemma 3.1 of  \cite{Shi} 
  we see, as long as a smooth solution exists and   $|g_R(t)-g_{0,R}|_h^2 \leq \ep(g_0,a,h),$   then
 \begin{align*}
 & \frac{1}{2a}h <  g_R(t)  < 2ah \\
 & \sup_{B_R(p))} |\grad^j g_R(t)| \leq r(R,g_{0,R},h,j,S) < \infty
 \end{align*}
 for all $t\leq S$ for constants $r(R,g_{0,R},h,j,S ) < \infty.$  
 On the other hand, as long as $|g_R(t)-g_{0,R}|_h^2  \leq   \ep(g_0,a,h)  \leq 1 $ we have  (we write $g(t)$ for $g_R(t)$ and $g_0$ for $g_{0,R}$ for ease of reading): 
 \begin{eqnarray*}
 \partt |g(t)- g_{0}|_h^2  && =   g^{ab} (\gradh_a \gradh_b )|g(t)- g_{0}|_h^2- 2|\gradh g|_{g,h}^2   \\
 && + 2h^{ij}h^{kl}     g^{ab} (\gradh_a \gradh_b   g_{0})_{ik}  (g(t)- g_{0})_{jl} \cr
 &&   + 2  h^{ij}h^{kl} ( g(t) -  g_0  )_{ik} (g^{-1}* g* \Rm(h)* h)_{ij} h^{ij}  \cr
 && +  h^{ij}h^{kl} (g(t) -  g_{0})_{ik}(\grad g * \grad g * g^{-1} * g^{-1})_{jl} \cr
 &&\leq  g^{ab} (\gradh_a \gradh_b )|g(t)- g_{0}|^2 - |\gradh g|_{g,h}^2   + c(\hat c_2, ,a,n)
 \end{eqnarray*}
 where $\hat c_2$ is the constant defined above in \eqref{hatcj}, and is independent of $R$.
Hence,   $|g_R(t)-g_{0,R} |^2 \leq  c(\hat c_2, a,n) t \leq \ep(g_0,a,h) $ remains true for $t\leq  \hat T:= \frac{\ep(g_0,a,h)}{ c(\hat c_2, a,n)}$ in view of the maximum principle.  
Hence,   we   may extend the solution smoothly to time $\hat T:= \frac{\ep(g_0,h)}{ c(\hat c_2 ,a,n)} \leq 1.$
As long as  $|g_R(t)-g_{0,R}|_h^2   \leq   \ep(g_0,a,h)$, we also have, using the arguments of  Lemma 4.1 and 4.2 in \cite{Shi} 
and the fact that $\sup_M |\grad^j   g_{0,R}| \leq \hat c_j < \infty$ for constants $ \hat  c_j$ which do not depend on $R$, 
interior estimates:
\begin{eqnarray}
 \sup_{  B_{1}(x_0) \times [0, S]}  |\gradh^m g_R|^2 \leq b_m= c(m,  \hat c_1, \ldots, \hat c_m, a,S,h)  \label{interiorex}
 \end{eqnarray}
 for all $x_0 \in B_{\frac{R}{10}}(p)$ for all $S \leq \hat T$.   
Building the limit of the solutions $g_{R(i)}$   as $ i \to \infty,$    after taking a  subsequence if necessary,  we     obtain a  smooth solution $g(t)_{t\in [0,\hat T]}$ to \eqref{Meqex} with $g(0) = g_0,$ in view of the Theorem of Arzel{\`{a}}-Ascoli  and the fact that $R(i) \to \infty$ as $i\to \infty$, satisfying 
$
 \sup_{ M \times [0, S]}  |\gradh^m g|^2 \leq b_m 
$  for all $S \leq \hat T$ as required.
\end{proof}

\section{Geometry Lemmata}\label{geoapp}

\begin{lemma}\label{balllemma}
Let $(M^n,h)$ satisfy \eqref{hassumptionsscaled}: 
  $(M,h)$ is  a  smooth, connected, complete  Riemannian manifold, without boundary, satisfying 
\begin{align}
& \sup_M  {}^h|\gradh^i\Riem(h)|  < \infty \mbox{ for all } i\in \N_0 \cr
& \sum_{i=0}^4 \sup_M  {}^h|\gradh^i\Riem(h)|  \leq \de_0(n) \cr
 & \inj(M,h)  \geq 100 \label{hassumptionsagain} 
 \end{align} where $\de_0(n)$ is a sufficiently small constant.
Then there exist constants  $C_S(n)>0$  and a  constant $c_0(n)$   such that :
\begin{itemize}
\item[(i)] 
$$ (\int_M  f^{\frac{2n}{n-2}}  dh)^{\frac {n-2} {n} }  \leq C_S(n) \int_{M}  |\gradh f|^2 dh $$
and  $$ (\int_M  f^{n}  dh)^{\frac 1 2}   \leq C_S(n) \int_{M}  |\gradh f|^{\frac{n}{2}}  dh $$
for any $f$ which is smooth and  whose  support has diameter less then $4$
\item[(ii)]  there exists a $c_0(n)$ such that any ball $B_{2}(x)$ of radius  $2$  can be covered by $c_0(n)$ balls, $(B_{\frac 1 2}(y_i))_{i=1}^{c_0(n)}.$ 
\item[(iii)]  there exists   a covering of $M$, $(B_{1}(x_i))_{i=1}^{\infty}$,  by balls of $M$ such that for any $i \in \N,$\\
 $\sharp \{  j \in \N \ | \  x_j 
 \in B_{4}(x_i)  \} \leq c_0(n),$
where $\sharp C$ denotes the number of elements in the set  $C$, and is defined to be infinity  if $C$ has infinitely many elements 
\item[(iv)] For every $R>1,$ $x_0 \in M,$ there exists a cut off function $\eta: M \to [0,1] \subseteq \R $ such that
$\eta = 1$ on $B_R(x_0),$ $\eta = 0$ on $M \backslash (B_{C(n)R}(x_0)),$  
$|\gradh^2(\eta)| + \frac{|\gradh \eta|^2}{\eta}  \leq \frac{C(n)}{   R^2} $ on $M$  and $|\gradh^k \eta|\leq c(k,h)$ on $M$ for all $k\in \N$.
\item[(v)]
Let $\ep>0$ be given, and $T$ a smooth zero two tensor satisfying 
$\int_{B_{1}(x)} |\gradh T |^{\frac n 2} +   |\gradh^2 T |^{\frac n 2} \leq  \ep$ for all $x \in M$.
Then 
$(\int_{B_{1}(x)} |\gradh T |^n)^{\frac 1 2} \leq c(n)C_S(n)\ep$
\end{itemize}
\end{lemma}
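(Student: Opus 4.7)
The plan is to prove this by applying the second Sobolev inequality from (i) to an appropriate cutoff of $|\gradh T|$, combined with Kato's inequality and a covering argument based on (ii).

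First I would take a standard smooth cutoff function $\eta$ with $\eta = 1$ on $B_1(x)$, $\eta$ supported in $B_2(x)$, and $|\gradh \eta| \leq c(n)$; then the function $f := \eta |\gradh T|$ has support of diameter less than $4$ (in fact contained in $B_2(x)$), so (i) applies to give
\begin{equation*}
  \Big(\int_M f^n \, dh\Big)^{\frac 1 2} \leq C_S(n) \int_M |\gradh f|^{\frac n 2}\, dh.
\end{equation*}
Next I would use Kato's inequality $|\gradh |\gradh T|| \leq |\gradh^2 T|$ (valid almost everywhere and hence good enough for integration) together with the product rule $|\gradh f| \leq |\gradh \eta|\,|\gradh T| + \eta\, |\gradh^2 T|$, so that
\begin{equation*}
  \int_M |\gradh f|^{\frac n 2} dh \leq c(n) \int_{B_2(x)} \big(|\gradh T|^{\frac n 2} + |\gradh^2 T|^{\frac n 2}\big)\, dh,
\end{equation*}
using $|\gradh \eta| \leq c(n)$ and $0 \leq \eta \leq 1$.

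Finally I would use the covering statement (ii) (or the trivial analogous statement that $B_2(x)$ is covered by at most $c_0(n)$ balls of radius $1$, which follows from (ii) by enlarging each radius $1/2$ ball to radius $1$) to write
\begin{equation*}
  \int_{B_2(x)} \big(|\gradh T|^{\frac n 2} + |\gradh^2 T|^{\frac n 2}\big)\, dh \leq c_0(n)\, \ep,
\end{equation*}
by applying the hypothesis on each ball of the cover. Combining the three displays and using $\eta = 1$ on $B_1(x)$ yields
\begin{equation*}
  \Big(\int_{B_1(x)} |\gradh T|^n \, dh\Big)^{\frac 1 2} \leq c(n)\, C_S(n)\, \ep,
\end{equation*}
as desired. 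The only subtle point is the use of Kato's inequality in the context of a smooth tensor (where $|\gradh T|$ may fail to be smooth on its zero set), but this is standard: one can either work with $\sqrt{|\gradh T|^2 + \de}$ and send $\de \downto 0$, or simply note that $|\gradh T|$ is Lipschitz so its weak gradient satisfies the Kato bound almost everywhere, which is enough for the integral inequality. I don't foresee any genuine obstacle here; this is a clean application of Sobolev plus a covering.
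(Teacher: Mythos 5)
Your proof is correct and follows essentially the same route as the paper: a cutoff $\eta$ equal to $1$ on $B_1(x)$ (the paper vanishes outside $B_{4/3}(x)$, you outside $B_2(x)$---both fine), the Sobolev inequality of (i) applied to $\eta|\gradh T|$, Kato's inequality with the product rule, and the covering from (ii) enlarged to radius-$1$ balls. The only addition is your remark on regularizing $|\gradh T|$ near its zero set, which the paper leaves implicit.
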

\begin{rmk}\label{ballremark}
If the conditions  \ref{hassumptionsagain} are  replaced by 
\begin{align*}
& \sup_M  {}^h|\gradh^i\Riem(h)|  < \infty \mbox{ for all } i\in \N_0 \cr
 & \inj(M,h)>0  
 \end{align*} 
then, scaled versions of the statements (i)-(v) hold, as we  now explain. If we scale $h$ by a large constant $c(h),$ we obtain a new metric  which satisfies  \ref{hassumptionsagain}, and hence (i)-(v) hold for this new metric. Scaling back, we obtained scaled versions of the statements (i)-(v).  For example, part one of (i) would be replaced by: there exists an $r_0>0$ such that 
$$ (\int_M  f^{\frac{2n}{n-2}}  dh)^{\frac {n-2} {n} }  \leq C_S(n) \int_{M}  |\gradh f|^2 dh $$ for any $f$ which is smooth and  whose  support has diameter less then $r_0$.
\end{rmk}
\begin{proof}

We can always find local geodesic coordinates  for any $p_0 \in M$ on the   ball  $B_{50}(p_0)$ such that in these coordinates
$ \frac{99}{100} \de \leq h \leq \frac{101}{100}  \de$ if $\de_0(n)$ is sufficiently small. This implies that the first two statements  hold in these coordinates, and hence on the manifold.
 The third statement is proved as follows. First we construct a {\it maximal} set of disjoint balls $(B_{\frac 1 2}(x_i))_{i=1}^{\infty}$ for $M,$ maximal in the sense that any  ball $B_{\frac 1 2}(p)$ for an arbitrary $p\in M$ must intersect one of these balls. This construction is carried out as follows: first choose disjoint balls $B_{\frac 1 2}(x_1) , \ldots , B_{\frac 1 2}(x_{n(R)})$   with  centres in  $B_{R}(p_0),$ such that
any newly chosen ball $B_{\frac 1 2}(y)$ with $y \in B_{R}(p_0)$ intersects one of the balls $B_{\frac 1 2}(x_1) , \ldots , B_{\frac 1 2}(x_{n(R)})$ . In the next step, choose
balls $B_{\frac 1 2}(x_{n(R)}) , \ldots , B_{\frac 1 2}(x_{n(2R)}),$ with centres in $B_{2R}(p_0)$ such that the collection
$B_{\frac 1 2}(x_1) , \ldots , B_{\frac 1 2}(x_{n(2R)}),$ is disjoint, and 
any newly chosen ball $B_{\frac 1 2}(y)$ with $y \in B_{2R}(p_0)$ intersects one of the balls $B_{\frac 1 2}(x_1) , \ldots , B_{\frac 1 2}(x_{n(2R)}).$  Continuing in this way, we obtain a collection of disjoint balls  $(B_{\frac 1 2 }(x_i))_{i=1}^{\infty}$  which are maximal.

This then implies that  $(B_{1}(x_i))_{i=1}^{\infty}$ covers $M $: if $y \in M$ satisfies  $y \notin \cup_{i=1}^{\infty}B_1(x_i),$ then $B_{\frac 1 2}(y ) \cap B_{\frac 1 2}(x_i) = \emptyset$  for all $i\in \N,$ which contradicts the maximality, and hence $y \in \cup_{i=1}^{\infty}B_1(x_i)$.
In geodesic coordinates $\phi: B_{50}(x_i) \to \B_{50}(0),$  there can be at most $c_1(n)$ euclidean balls  
$ \B_{\frac 1 4}(\ti x_{k(j)} )_{j=1}^{c_1(n)}$, $\phi(x_{k(j)}) = {\ti x}_{k(j)},$ which are disjoint, and contained in  $\B_{40}(0)$. Hence there are at most $c_1(n)$ points, $(x_{k(j)})_{j=1}^{c_1(n)},$ which are contained in $B_{30}(x_i),$ and this implies (iii). \\
Statement (iv) is proved with the help of an {\it exhaustion function}.
For $R>1$ given, let 
 $\eta(x):= {\ti \eta}(\frac{f(x)}{R}),$ for a smooth cut off function  $\ti \eta : \R \to [0,1] \subseteq \R$ with $\ti \eta(x) = 1$ for $|x| \leq 1$ and $\ti \eta(x)  = 0$ for $|x| \geq 2,$  where $f:M \to \R^+$ is a smooth so called {\it exhaustion function}, satisfying $\frac{1}{C(n)} d(x,x_0) \leq  f(x) \leq \frac{1}{2}(d(x,x_0) + 1) ,$ $|\grad f| \leq C(n)$, $|\grad^2 f| \leq  C(n),$ the existence of which is, for example,  guaranteed by  Theorem 3.6 of Shi,  \cite{Shi2}. 
 By slightly modifying $f$ on geodesic balls of radius $1$ we can also achieve $|\grad^k f| \leq C(k,h).$
  
 Differentiating $\eta$ we see that  $\frac{|\gradh \eta|^2}{\eta} + |\gradh^2 \eta| 
\leq   \frac{1}{R^2} ( \frac{|\gradh  \ti{\eta}|^2\of f}{\ti{\eta }\of f} |\gradh f|^2  + |\gradh \ti \eta|\of f |\gradh ^2 f|  +  |\gradh ^2 \ti \eta|\of f |\gradh f|^2)
\leq \frac{C(n)}{R^2},$ as, without loss of generality,  $\frac{|\gradh  \ti {\eta}|^2}{\ti{\eta }} \leq c$ for some universal constant $c$. 
Similarly $|\gradh^k \eta|^2 \leq c(k,h)$.
This finishes (iv).

We now prove (v).  Let $\eta:M \to \R$ be a smooth cut off function with 
$\eta =1$ on $B_1(x)$ and $\eta =0$ outside of $B_{4/3}(x),$ and $B_{1}(x_1), \ldots, B_{1}(x_{c_0(n)})$ a covering of
$B_2(x)$, which exists in view of (ii). Then using Kato's and Young's inequality we see 
\begin{align*}
(\int_{B_1(x)} |\grad T|^n )^{1/2}  \leq  (\int_{B_2(x)} (|\grad T|\eta)^n )^{1/2}  
&\leq  C_S \int_{B_2(x)} |\grad (\eta |\grad T|)|^{\frac n 2} \cr
&\leq c(n) C_S \int_{B_2(x)} |\grad \eta|^{\frac n 2}  |\grad T|^{\frac  n 2}  + \eta^{\frac n 2}|\grad^2
T|^{\frac n 2}\cr
& \leq \sum_{i=1}^{c_0(n)} 2 c(n) C_S \int_{B_1(x_i)}  |\grad T|^{\frac  n 2} + |\grad^2
T|^{\frac  n 2}\cr
&\leq C_S c(n)c_0(n) \ep
\end{align*}
as required.

\end{proof}
\begin{lemma}\label{smalllocallemma}
Let $(M^n,h)$ be a  smooth, connected, complete  Riemannian manifold, without boundary, satisfying 
\begin{align*}
& \nu(3):= \sum_{i=1}^3 \sup_M  {}^h|\gradh^i\Riem(h)|  <\infty \mbox{ for all }   \cr
 & \inj(M,h)  \geq i_0>0. 
 \end{align*}
 and let $ g_0$ be in $W^{2,\frac n 2}_{loc}$  and satisfy  
\begin{align*}
 &  \frac 1 a h \leq g_0 \leq a h. \cr
 \end{align*}
 Then for any $\ep>0,R>1,x_0\in M$   there exists an $r_1$ such that  
 \begin{align*}
& \sup_{x\in B_R(x_0)} ( \int_{B_{r_1}(x)} ( |\grad g_0|^n   + |\grad^2 g_0|^{\frac n 2} ) < \ep 
\end{align*}
In the case that $\int_{M}   ( |\grad g_0|^n   + |\grad^2 g_0|^{\frac n 2} )  < \infty,$ then
 for any $\ep>0,$ there exists an $r_1$ such that 
 \begin{align*}
& \sup_{x\in M}  \int_{B_{r_1}(x)}  ( |\grad g_0|^n   + |\grad^2 g_0|^{\frac n 2} ) < \ep 
\end{align*}
\end{lemma}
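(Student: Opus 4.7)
The plan is to combine two ingredients: (a) a Sobolev embedding argument that promotes the $W^{2,n/2}_{loc}$ control of $g_0$ into uniform $L^1_{loc}$ control of the full integrand $F:=|\gradh g_0|^n+|\gradh^2 g_0|^{n/2}$, and (b) absolute continuity of the Lebesgue integral combined with the uniform upper volume bound for small balls coming from the bounded geometry of $(M,h)$.

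\emph{Step 1 (integrability of $F$).} Since $g_0\in W^{2,n/2}_{loc}$, the term $|\gradh^2 g_0|^{n/2}$ is automatically in $L^1_{loc}$, so it suffices to show $|\gradh g_0|^n\in L^1_{loc}$. After scaling $h$ once so that \eqref{hassumptionsscaled} holds (cf.\ Remark \ref{ballremark}), mollifying $g_0$ and applying Kato's inequality $|\gradh|\gradh g_0||\leq |\gradh^2 g_0|$ shows that $|\gradh g_0|\in W^{1,n/2}_{loc}$. Multiplying $|\gradh g_0|$ by a cut-off function $\eta$ of the type produced in Lemma \ref{balllemma}(iv) and feeding $\eta|\gradh g_0|$ into the local Sobolev inequality of Lemma \ref{balllemma}(i) yields, exactly as in the proof of Lemma \ref{balllemma}(v),
\[
\Big(\int_{B_{r/2}(x)}|\gradh g_0|^n\,dh\Big)^{1/2}\leq c(n)\,C_S(n)\int_{B_r(x)}\bigl(|\gradh g_0|^{n/2}+|\gradh^2 g_0|^{n/2}\bigr)\,dh<\infty
\]
for all sufficiently small $r$, after which passing to the limit in the mollification gives $|\gradh g_0|^n\in L^1_{loc}$, hence $F\in L^1_{loc}(M)$.

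\emph{Step 2 (local statement).} Fix $\ep>0$, $R>1$ and $x_0\in M$. By Hopf--Rinow the closed ball $K:=\overline{B_{R+1}(x_0)}$ is compact, so $F\in L^1(K)$. By the standard absolute continuity of the Lebesgue integral there exists $\eta>0$ such that $\int_AF\,dh<\ep$ for every Borel set $A\subseteq K$ with $\Vol_h(A)<\eta$. The sectional curvature and injectivity radius bounds of \eqref{hassumptionsscaled} give a Bishop--Gromov type upper volume bound $\Vol_h(B_r(x))\leq C(n)r^n$ for all $x\in M$ and $r\leq 1$. Choosing $r_1:=\min\bigl(1,(\eta/C(n))^{1/n}\bigr)$ ensures both $B_{r_1}(x)\subseteq K$ and $\Vol_h(B_{r_1}(x))<\eta$ for every $x\in B_R(x_0)$, giving the required uniform estimate $\sup_{x\in B_R(x_0)}\int_{B_{r_1}(x)}F\,dh<\ep$.

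\emph{Step 3 (global statement, and main obstacle).} When $\int_M F\,dh<\infty$ we may repeat the previous argument with $K$ replaced by $M$: the finiteness of the measure $F\,dh$ on all of $M$ allows us to invoke absolute continuity globally, and the same uniform upper volume bound $\Vol_h(B_r(x))\leq C(n)r^n$ then delivers the supremum over all $x\in M$. The one genuinely nontrivial ingredient is Step 1, where one has to convert a $W^{2,n/2}_{loc}$ assumption on the tensor $g_0$ into uniform local $L^n$ integrability of $\gradh g_0$; the measure-theoretic Steps 2 and 3 are then essentially immediate from bounded geometry.
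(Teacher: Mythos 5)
Your proof is correct. Step~1 (upgrading $W^{2,n/2}_{loc}$ control of $g_0$ to $L^1_{loc}$ control of $|\gradh g_0|^n$ by applying Kato's inequality and the local Sobolev inequality of Lemma~\ref{balllemma} to $\eta|\gradh g_0|$) follows the same line as the first half of the paper's proof, modulo the explicit mollification step you add, which is a reasonable way to make Kato's inequality rigorous for a $W^{2,n/2}_{loc}$ tensor. Steps~2 and~3, however, take a genuinely different route from the paper. The paper argues by contradiction: it extracts a convergent subsequence $x_i\to x$ from the hypothetical bad points in the compact ball $\overline{B_R(x_0)}$, applies dominated convergence to $\chi_{B_{1/j}(x)}F$ to make the integral over a small ball centred at the limit point small, and then notes $B_{r(i)}(x_i)\subset B_\sigma(x)$ for large $i$ to reach a contradiction; for the global statement it additionally picks $R$ so large that the tail mass is below $\ep/2$ and falls back on the compact case. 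You instead invoke the standard absolute continuity of the Lebesgue integral (valid on any measure space once $F\in L^1$) together with the uniform Bishop--Gromov upper bound $\Vol_h(B_r(x))\le C(n)r^n$ for small $r$, which converts ``small radius'' directly into ``small volume, hence small integral'' uniformly in the centre. This trades sequential compactness for a volume estimate, handles the local and global cases by the same two-line argument, and is arguably cleaner; the paper's compactness argument is more elementary in that it does not require stating the volume bound, but needs the extra tail-cutting step in the noncompact case. Both arguments are correct, and both rely on the same measure-theoretic fact (DCT/absolute continuity) at bottom.
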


\begin{proof}
As the conclusion is a scale invariant conclusion, it suffices to prove it after scaling $g_0$ and $h$ by the same constant.
We scale $g_0$ and $h$ once so that  $h$ satisfies \eqref{hassumptionsscaled},   hence the statements (i) -(v)  from Lemma \ref{balllemma}
 hold for the new metrics, which we also denote by $g_0$ and $h$.

Using the covering from  (iii),  we consider only those $x_i$ with $x_i \in B_{2R}(x_0)$ $i=1,\ldots,C(n,R)$ 
and cut off functions $\eta_i: M \to [0,1] \subseteq \R$ with
 $\supp(\eta_i) \subseteq B_{\frac 3 2}(x_i),$ $\eta_i = 1$ on $B_{1}(x_i)$, $|\gradh \eta_i|^2 \leq c(n) \eta_i,$ we see using the Sobolev-inequality 
\begin{align*}
(\int_{B_{2R}(x_0)}  |\grad g_0|^n)^{\frac 1 2}  & \leq  \Big(\sum_{i=1}^{\infty} \int_{B_1(x_i)}   |\grad g_0|^n \Big)^{\frac 1 2}  \cr
 & \leq   \sum_{i=1}^{C(n,R)} \Big( \int_{B_1(x_i)}   |\grad g_0|^n \Big)^{\frac 1 2}  \cr
 & \leq   \sum_{i=1}^{C(n,R)} \Big( \int_{M}   (\eta_i|\grad g_0|)^n \Big)^{\frac 1 2}  \cr
& \leq  \sum_{i=1}^{C(n,R)} \int_{M}  |\grad ( \eta_i |\grad g_0|)|^{\frac n 2} \cr
  & \leq  \sum_{i=1}^{C(n,R)} \int_{M}  c(a,n)  |\grad \eta_i|^{\frac n 2} |\grad g_0|^{\frac n 2}
 + c(n,a) |\eta_i|^{\frac n 2} |\grad^2 g_0|^{\frac n 2}  \cr
 &  \leq c(n,a) \sum_{i=1}^{C(n,R)}  \int_{B_2(x_i)} |\grad g_0|^{\frac n 2}  +|\grad ^2 g_0|^{\frac n 2} \cr 
 & = c(n,a) \sum_{i=1}^{C(n)}  \int_{M} \chi_{B_{2}(x_i)} (|\grad g_0|^{\frac n 2} +|\grad ^2 g_0|^{\frac n 2}) \cr  
 & = c(n,a) \int_M  ( \sum_{i=1}^{C(n,R)} \chi_{B_{2}(x_i)})  ( |\grad g_0|^{\frac n 2}+|\grad ^2 g_0|^{\frac n 2}) \cr
 & \leq c(n,a) c_0(n) (\int_{B_{2R}(x_0)} |\grad g_0|^{\frac n 2} +|\grad ^2 g_0|^{\frac n 2} ) =K(n,a,R,x_0)< \infty,
 \end{align*}
 where we used   $\sum_{i=1}^{\infty} \chi_{B_{2}(x_i)}(\cdot)   \leq c_0(n)$ in the last inequality, which follows from (iii).
We claim : For any $\ep >0$  there exists  $r= r(\ep)  >0$ such that 
 $  \int_{B_{r}(x)}  |\grad g_0|^n + \int_{B_{r}(x)}|\grad^2 g_0|^{\frac n 2}  < \ep$ for all $x \in B_{R}(x_0)$, as we now show. 
Assume there are points $x_i \in \overline{B_{R}(x_0)}$ $i\in \N$ and radii $r(i) >0$, $r(i) \to 0$ as $i\to \infty, $ 
such that $\int_{B_{r(i)}(x_i)}  |\grad g_0|^n + |\grad^2 g_0|^{\frac n 2} \geq  \ep.$ 
Taking a subsequence, we see that
 $ x_i \to x$ as $i \to \infty,$ and hence $\int_{B_{\si}(x)}  |\grad g_0|^n + |
\grad^2 g_0|^{\frac n 2} \leq  \frac{\ep}{2}$ for $\si>0$ small enough, in view of the fact
 that  $\int_{B_{2R(x_0)}}  |\grad g_0|^n  + |\grad^2 g_0|^{\frac n 2}< \infty$: 
 $ f_{j}:=  \chi_{B_{\frac 1 j}(x)} |\grad g_0|^n  + |\grad^2 g_0|^{\frac n 2} \leq g := |\grad g_0|^n  + |\grad^2 g_0|^{\frac n 2},$ is in $L_1$ 
 $f_{j} \to 0$ almost everywhere as $j \to \infty$, and $\int_{B_{2R}(x_0)} g < \infty$ implies
 $ \int_{\chi_{B_{\frac 1 j }(x) }} |\grad g_0|^n  + |\grad^2 g_0|^{\frac n 2}  =  \int_{B_{2R}(x_0)} f_j \to 0$ in view of the dominated convergence theorem.
 But for $i$ large enough, $B_{r(i)}(x_i) \subset B_{\si}(x)$ which leads to a contradiction.
 Hence there exists an $r>0$ such that  $  \int_{B_{r}(x)} (  |\grad g_0|^n  +  |\grad^2 g_0|^{\frac n 2} ) < \ep$ for all $x \in B_{R}(x_0) .$  
 In the case that $\int_{M} ( |\grad g_0|^n   + |\grad^2 g_0|^{\frac n 2} ) < \infty,$
 choose $R>0$ so that 
$\int_{ (B_{R/10}(x_0))^c} ( |\grad g_0|^n   + |\grad^2 g_0|^{\frac n 2} ) < \frac{\ep}{2}.$
This implies $\int_{B_{\si}(x)} ( |\grad g_0|^n   + |\grad^2 g_0|^{\frac n 2} ) < \frac{\ep}{2}$ for all $ x\in  (B_{R/2}(x_0))^c$ for any $0<\si<1.$ Repeating the argument above, we
find a $\si>0$ such that $\int_{B_{\si}(x)} ( |\grad g_0|^n   + |\grad^2 g_0|^{\frac n 2} ) < \frac{\ep}{2}$ for all $ x\in  (B_{R}(x_0)).$  Hence,  $\int_{B_{\si}(x)} ( |\grad g_0|^n   + |\grad^2 g_0|^{\frac n 2} ) < \frac{\ep}{2}$ for all $x \in M,$ as required.

\end{proof}

\section{Estimates for ordinary differential equations}
\begin{lemma}\label{ODELem}
Let $\ep <1$ and $f :[0,T] \to \R^{+}_0,$ $Z:(0,1] \to \R^{+}_0$ be smooth, and satisfy
\begin{eqnarray*}
f(0) &&= 0 \\
\partt f(t) &&\leq \frac{\ep }{t} f(t)  + Z(t). \\
\end{eqnarray*}
Then 
\begin{eqnarray*}
f(t) && \leq  t^{\ep} \lim_{t_0 \to 0} \int_{t_0}^t \frac{Z(s)}{s^{\ep}} ds.
\end{eqnarray*}

\end{lemma}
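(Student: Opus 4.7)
The plan is to use the standard integrating factor $t^{-\ep}$ to convert the differential inequality into a first-order estimate with trivial left-hand side. Specifically, I would define $g(t) := t^{-\ep} f(t)$ on $(0,T]$ and compute
\[
\partt g(t) = -\ep\, t^{-\ep-1} f(t) + t^{-\ep}\,\partt f(t) \leq -\ep\, t^{-\ep-1} f(t) + t^{-\ep}\Big(\tfrac{\ep}{t} f(t) + Z(t)\Big) = t^{-\ep} Z(t),
\]
where the hypothesized differential inequality is used in the middle step. The factor $\tfrac{\ep}{t}f(t)$ is exactly the term that cancels, which is why $t^{-\ep}$ is the right integrating factor.

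Next, for any $0 < t_0 < t$, integrating this pointwise inequality on $[t_0,t]$ gives
\[
g(t) \leq g(t_0) + \int_{t_0}^{t} s^{-\ep} Z(s)\, ds = t_0^{-\ep} f(t_0) + \int_{t_0}^{t} \frac{Z(s)}{s^{\ep}}\, ds.
\]
I would then let $t_0 \downto 0$. Since $f$ is smooth on $[0,T]$ with $f(0)=0$, Taylor expansion yields $f(t_0) = O(t_0)$ as $t_0 \downto 0$, and since $\ep < 1$ we have $t_0^{-\ep} f(t_0) = O(t_0^{1-\ep}) \to 0$. Thus
\[
g(t) = t^{-\ep} f(t) \leq \lim_{t_0 \downto 0} \int_{t_0}^{t} \frac{Z(s)}{s^{\ep}}\, ds,
\]
and multiplying through by $t^{\ep}$ yields the claimed bound.

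There is essentially no obstacle here: the only subtlety is that the integral $\int_0^t s^{-\ep} Z(s)\, ds$ need not converge as an improper integral (the statement phrases it as a limit precisely to allow $+\infty$ as the upper bound without additional assumptions on $Z$), but the inequality holds regardless because the left-hand side $g(t)$ is finite and $t_0 \mapsto g(t_0) + \int_{t_0}^t s^{-\ep}Z(s)\,ds$ is monotone nondecreasing as $t_0 \downto 0$. The hypothesis $\ep < 1$ is used only to kill the boundary term $t_0^{-\ep}f(t_0)$.
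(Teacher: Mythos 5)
Your proof is correct and follows essentially the same route as the paper: the integrating factor $t^{-\ep}$, the computation $\partt(t^{-\ep}f) \leq t^{-\ep}Z$, integration over $[t_0,t]$, and killing the boundary term via $f(t_0)=O(t_0)$ and $\ep<1$. The closing remark about possible divergence of the integral is a helpful extra observation but does not change the argument.
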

\begin{proof}

 $F(t) :=  t^{- \ep}f(t)$  satisfies
\begin{align}
& \partt F(t)\cr 
 & \leq  -\ep t^{-1- \ep} f(t)   + t^{-\ep} \frac{\ep }{t}f(t) + t^{-\ep} Z(t) \cr 
& \leq   t^{-\ep}Z(t)
 \label{secondF} 
 \end{align}
 Using that $f$ is smooth and hence $f(t) \leq Ct$ for small $t>0$ and some constant $C$, we see 
$F(t) \leq Ct^{-\ep + 1} \to 0$ as $t\downto 0.$
Integrating \eqref{secondF} from  $t_0>0$ to $t$, 
we see $ F(t) \leq F(t_0) + \int_{t_0}^{t} \frac{Z(s)}{s^{\ep}}ds \to \lim_{t_0 \to 0} \int_{t_0}^t \frac{Z(s)}{s^{\ep}} ds,$ as $t_0 \downto 0$ and hence, from the definition of $F(t),$
\begin{eqnarray}
f(t) && \leq  t^{\ep} \lim_{t_0 \to 0} \int_{t_0}^t \frac{Z(s)}{s^{\ep}} ds.
\end{eqnarray}
\end{proof}
\begin{lemma}\label{ODECor}
Let $\ep <1$ and $f :[0,T] \to \R^{+}_0,$  be smooth, and satisfy
\begin{eqnarray*}
f(0) &&= 0 \\
\partt f(t) &&\leq \frac{\ep }{t} f(t)  + c \\
\end{eqnarray*}
Then 
\begin{eqnarray*}
f(t) && \leq   \frac{c}{1-\ep}t.
\end{eqnarray*}

\end{lemma}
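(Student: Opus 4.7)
The plan is to obtain this as an immediate corollary of the preceding Lemma \ref{ODELem} by specializing to the constant source term $Z(t) \equiv c$.

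First I would invoke Lemma \ref{ODELem} directly: the hypotheses $f(0)=0$, $f$ smooth, and $\partial_t f(t) \leq \tfrac{\ep}{t} f(t) + c$ match exactly, so the conclusion
\begin{equation*}
f(t) \leq t^{\ep} \lim_{t_0 \downto 0} \int_{t_0}^{t} \frac{c}{s^{\ep}}\, ds
\end{equation*}
applies.

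Next I would compute the integral explicitly. Since $\ep < 1$, the antiderivative $\tfrac{s^{1-\ep}}{1-\ep}$ is well defined, and evaluating gives
\begin{equation*}
\int_{t_0}^{t} \frac{c}{s^{\ep}}\, ds = \frac{c}{1-\ep}\bigl(t^{1-\ep} - t_0^{1-\ep}\bigr).
\end{equation*}
Because $1-\ep > 0$, the term $t_0^{1-\ep} \to 0$ as $t_0 \downto 0$, so the limit equals $\tfrac{c}{1-\ep} t^{1-\ep}$. Multiplying by $t^{\ep}$ yields $f(t) \leq \tfrac{c}{1-\ep} t$, which is the desired bound.

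There is no real obstacle: the content is entirely in Lemma \ref{ODELem}, and the role of the hypothesis $\ep < 1$ is exactly to make $s^{-\ep}$ integrable near $s=0$, which is what allows the limit $t_0 \downto 0$ to be taken and produces the factor $\tfrac{1}{1-\ep}$ in the final constant.
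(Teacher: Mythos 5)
Your proposal is correct and follows essentially the same route as the paper: the paper also takes Lemma \ref{ODELem} with $Z \equiv c$, computes $\lim_{t_0 \downto 0}\int_{t_0}^t \frac{c}{s^{\ep}}\,ds = \frac{c}{1-\ep}t^{1-\ep}$, and multiplies by $t^{\ep}$. Your added remark on the role of $\ep<1$ (integrability of $s^{-\ep}$ near $0$) is a correct gloss, though not stated explicitly in the paper.
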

\begin{proof}
For  $Z(s) = c$   we have $$\lim_{t_0 \to 0} \int_{t_0}^t \frac{Z(s)}{s^{\ep}} ds
= \lim_{t_0 \to 0} \int_{t_0}^t \frac{c}{s^{\ep}} ds = c \frac{1}{1-\ep}{t}^{1-\ep}$$ and so
$t^{\ep}\lim_{t_0 \to 0} \int_{t_0}^t \frac{Z(s)}{s^{\ep}} ds = t^{\ep}\lim_{t_0 \to 0} \int_{t_0}^t \frac{c}{s^{\ep}} ds  = \frac{c}{1-\ep}t$
\end{proof}
\section{ Metric norm comparisons}
 
We  compare the  norms of  tensor with respect to   different metrics.

\begin{thm} \label{lpmetricthm}
Let $\ell= (\ell_{ij})_{i,j  \in \{1, \ldots n\}}, g= (g_{ij})_{i,j \in \{1, \ldots n\}}, h = (h_{\al \be})_{\al,\be \in \{1, \ldots n\}},  (u_{\al \be})_{\al, \be  \in \{1, \ldots n\}} $ be positive definite symmetric matrices  and 
$(\ell)^{-1} = (\ell^{ij})_{i,j  \in \{1, \ldots n\}}, (g)^{-1}= (g^{ij})_{i,j \in \{1, \ldots n\}}, (h)^{-1} = (h^{\al \be})_{\al,\be \in \{1, \ldots n\}}, (u^{-1}) = (u^{\al \be})_{\al ,\be \in \{1, \ldots, n\}}   $ the inverses thereof.
Let $ S= (S_{\al}^{i})_{i,\al \in \{1, \ldots n\}}, T  = (T_{ij})_{i,j \in \{1, \ldots n\}},$ $N = (N^{ij})_{i,j \in \{1, \ldots n\}}$  be  matrices in $\R^{n\times n}.$
 
Then the following estimates hold for any $\ep>0$ :
\begin{eqnarray}
  |S|^2_{h,\ell} && :=  h^{\al \be}(y) S_{\al}^i    S_{\be}^j   \ell_{i j }   
  \cr
  && \leq  c(n) |S|^2_{h,g} (1 +|\ell|^2_g  )\label{firstD}
 \end{eqnarray}
 \begin{eqnarray}
  |S|^2_{h,\ell} 
  && \leq  c(n) |S|^2_{u,\ell} (1 +|u|^2_{h}  ) \label{secondD}
 \end{eqnarray}
 
 where  $|\ell|^2_{g}  =  g^{ij}g^{kl} \ell_{ik} \ell_{jl} = |g^{-1}|^2_{\ell} $ and 
  $|u|^2_{h} =  h^{\al \ga }h^{\be \ga } u_{\al \be} u_{\ga \si} = |h^{-1}|_{u}^2 ,$
  \begin{eqnarray*}
 |T|^{2}_{g}
&&  :=    g^{ik}g^{jl} T_{ij} T_{kl}\cr
&& \leq    c(n) |T|_{\ell}^2 |\ell|^2_{g}
\end{eqnarray*}
  \begin{eqnarray*}
 |N|^{2}_{g}
&&  :=    g_{ik}g_{jl} N^{ij} N^{kl}\cr
&& \leq    c(n) |N|_{\ell}^2 |g|^2_{\ell}
\end{eqnarray*}  
  \begin{eqnarray*}
 \frac{\det(g) }{\det(\ell)} \leq 
  |g|^{n}_{\ell} 
  \end{eqnarray*}
  where  $|g|^2_{\ell}  =  (\ell^{ij}\ell^{kl} g_{ik} g_{jl} ) $ 
   \end{thm}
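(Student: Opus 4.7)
The proof is purely pointwise and algebraic: for each inequality I would choose a basis of $\R^n$ in which the two relevant positive definite symmetric matrices are simultaneously diagonalized, reducing the claim to an elementary bound between sums of products of eigenvalues. Since only positivity and symmetry are used, the structure is the same in every display, and the constant $c(n)$ absorbs a harmless application of the inequality $a\leq \frac12(1+a^2)$ wherever needed.

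For \eqref{firstD}, I would simultaneously diagonalize $g$ and $\ell$, choosing a basis in which $g_{ij}=\delta_{ij}$ and $\ell_{ij}=\lambda_i\delta_{ij}$ with $\lambda_i>0$. Then $|S|^2_{h,\ell}=\sum_i\lambda_i\bigl(h^{\al\be}S^i_\al S^i_\be\bigr)$ and each bracket is $\geq 0$ because $h^{-1}$ is positive definite, so $|S|^2_{h,\ell}\leq (\max_i\lambda_i)\,|S|^2_{h,g}\leq |\ell|_g\,|S|^2_{h,g}$; applying $|\ell|_g\leq\tfrac12(1+|\ell|^2_g)$ finishes this one. The inequality \eqref{secondD} is literally the same argument after diagonalizing $h$ and $u$ instead, since $|u|^2_h=\sum_\al\mu_\al^2$ in the chosen basis and the positivity of $\ell$ gives $S^i_\al S^j_\al \ell_{ij}\geq 0$ for each fixed $\al$.

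For the two $|T|_g$ and $|N|_g$ estimates, I would diagonalize $\ell$ and $g$ simultaneously so that $\ell_{ij}=\delta_{ij}$ and $g_{ij}=g_i\delta_{ij}$; then $g^{ij}=g_i^{-1}\delta^{ij}$, and
\[
|T|^2_g=\sum_{i,j}g_i^{-1}g_j^{-1}T_{ij}^2\leq \bigl(\max_i g_i^{-1}\bigr)^2\!\sum_{i,j}T_{ij}^2\leq|\ell|^2_g\,|T|^2_\ell,
\]
since $|\ell|^2_g=\sum_i g_i^{-2}$. The companion bound for $N$ is identical with the roles of $\ell$ and $g$ reversed, giving $|N|^2_g\leq(\max_i g_i)^2|N|^2_\ell\leq|g|^2_\ell|N|^2_\ell$. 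Finally, in the same diagonalization $\det(g)/\det(\ell)=\prod_i g_i$ and $|g|^2_\ell=\sum_i g_i^2$, so AM--GM applied to $\{g_i^2\}$ yields $\bigl(\prod_i g_i\bigr)^{2/n}\leq \tfrac1n\sum_i g_i^2\leq|g|^2_\ell$, hence $\det(g)/\det(\ell)\leq|g|^n_\ell$.

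There is no genuine obstacle here; the only thing to be slightly careful about is to verify that the positivity of the remaining matrix (e.g.\ $h^{-1}$ in the first step, $\ell$ in the second) is what legitimizes the termwise estimate $\sum_i\lambda_i A_i\leq(\max_i\lambda_i)\sum_i A_i$ with $A_i\geq 0$. Once that observation is in place, each of the five inequalities follows in one or two lines, and the constant $c(n)$ can be taken equal to $1$ after absorbing the elementary bound $a\leq\tfrac12(1+a^2)$.
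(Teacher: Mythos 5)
Your proof is correct and follows essentially the same route as the paper: both arguments rest on the tensoriality of the quantities involved plus simultaneous diagonalization of the relevant pair of positive definite matrices, after which each display reduces to pulling out the largest eigenvalue ratio and bounding a maximum by an $\ell^2$-norm (and using AM--GM for the determinant bound). The only cosmetic difference is that the paper diagonalizes $g$ and $\ell$ as $g_{ij}=\lambda_i\delta_{ij}$, $\ell_{ij}=\sigma_i\delta_{ij}$ without normalizing either to the identity, which makes the bookkeeping slightly heavier and forces it to invoke a generic $c(n)$, whereas your normalization $g_{ij}=\delta_{ij}$ makes the same estimates go through with $c(n)=1$.
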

 \begin{proof}
 We regard $g,\ell$ as positive definite symmetric linear maps from $V \otimes V $ to $\R$ where $V= \R^n$ and $h,u$ as positive definite symmetric linear maps  from $Y \otimes Y $ to $ \R$ for another copy of $Y:= \R^n$.
 $g,\ell: V \otimes V \to \R$, $h,u: Y \otimes Y \to\R,$
 $g(v^ie_i,v^je_j) = v^iv^jg_{ij}, \ell(w^ie_i,w^je_j) = w^iw^jg_{ij},$
 $h(z^{\al}e_{\al},z^{\be}e_{\be}) =   z^{\al}z^{\be}h_{\al \be},$   $u(z^{\al}e_{\al},z^{\be}e_{\be}) =   z^{\al}z^{\be}u_{\al \be},$
 and we regard $S$ $,T,$ and $N$  as linear map $S:Y^* \times V  \to \R ,$ $T: V \times V \to \R$, $N:V^{*} \times V^{*} \to \R,$
 $S( w_{\al}e^{\al},v^ie_i) = S^{\al}_i  w_{\al} v^i.$ 
From the theory of tensors,  $|S|^2_{h,\ell}, |S|^2_{u,\ell} , |\ell^{-1}|^2_g, |T|^{2}_{g},  \frac{\det(g)}{\det(\ell)},$ $|N]|_{\ell}^2,$  $|g|^2_{\ell}$  etc. are all quantities which are 
independent of coordinates: if $ (\ti e_i)_{i\in \{1, \ldots n\}} ,$ $(\hat e_{\al})_{\al \in \{1, \ldots n\} }$ are bases, 
and $ \ti \ell_{ij} = \ell(\ti e_i , \ti  e_j), \ti g_{ij} = g(\ti e_i , \ti e_j),$
 $\hat h_{\al \be} = h(\hat e_{\al} ,\hat e_{\be}), \hat u_{\al \be} = u(\hat e_{\al} ,\hat e_{\be})$ 
 with inverses given by $\ti \ell^{ij},$ $\ti g^{ij},$ $  \hat h^{\al \be}, \hat u^{\al \be}$ 
 then the quantities  defined above as  $|S|^2_{\ell,h},  |S|^2_{u,\ell} ,  |T|^2_{g}, \frac{\det(g)}{\det(\ell)}, |\ell^{-1}|^2_g $   calculated using  $\ti g_{ij},\ti g^{ij}, \hat h_{\al \be}, \hat u_{\al \be}, \hat h^{\al \be}, \hat u^{\al \be}\ti \ell_{ij},\ti \ell^{ij},  \ti T_{ij}, {\ti {\hat S}}_{\al}^{i}$ in place of
   $ g_{ij}, g^{ij},  h_{\al \be}, h^{\al \be},\ell_{ij}, \ell^{ij},  T_{ij},  S_{\al}^{i}$ then the result is the same : see for example \cite{Hamilton}.

We can always choose a basis for $Y$ such that $\hat  h_{\al \be} = \de_{\al \be},$
$\hat u_{\al \be} = r_{\al} \de_{\al\be}$   and a basis for $V$ such that 
$ \ti g_{ij}= \la_i \de_{ij}, \ti \ell_{ij} = \si_i\de_{ij}.$
That is without loss of generality,  we have $  h_{\al \be} = \de_{\al \be}, u_{\al \be} = r_{\al} \de_{\al \be}$ and 
$  g_{ij}= \la_i \de_{ij},  \ell_{ij} = \si_i\de_{ij}$.
 
\begin{eqnarray*}
&& |T|_{g}^{2} \cr
&&  = (\sum_{i,j=1}^{n}  \frac{1}{\la_i}  \frac{1}{\la_j} T_{ij} T_{ij} ) \cr
 && =   (\sum_{i,j=1}^{n}  \frac{1}{\si_i}  \frac{1}{\si_j} \frac{\si_i}{\la_i}  \frac{\si_j}{\la_j}  T_{ij} T_{ij}  ) \cr
 && \leq c(n)(\sup_{i,j \in \{1,\ldots n\}} \frac{1}{\si_i}  \frac{1}{\si_j}T_{ij} T_{ij} )
 \cdot(\sup_{i \in \{1,\ldots n\}}  \frac{\si^2_i}{\la^2_i} )  \cr
&&  \leq   c(n) (\sum_{i,j=1}^{n} \frac{1}{\si_i}  \frac{1}{\si_j}T_{ij} T_{ij}) 
  (\sum_{i=1}^n\frac{\si^2_i}{\la^2_i}) \cr
&&   = c(n) |T|_{\ell}^2   (g^{ij}g^{kl} \ell_{ik} \ell_{jl} ) \cr
&& =  c(n) |T|_{\ell}^2 |\ell|^2_{g}
\end{eqnarray*}
\begin{eqnarray*}
&& |N|_{g}^{2} \cr
&&  = (\sum_{i,j=1}^{n}   \la_i   \la_j  N^{ij} N^{ij} ) \cr
 && =   (\sum_{i,j=1}^{n}  \si_i \si_j \frac{\la_i}{\si_i}  \frac{\la_j}{\si_j} N^{ij} N^{ij}  ) \cr
 && \leq c(n)(\sup_{i,j \in \{1,\ldots n\}} \si_i  \si_j N^{ij} N^{ij} )
 \cdot(\sup_{i \in \{1,\ldots n\}}  \frac{\la^2_i}{\si^2_i} )  \cr
&&  \leq   c(n) (\sum_{i,j=1}^{n} \si_i \si_j N^{ij} N^{ij}) 
  (\sum_{i=1}^n\frac{\la^2_i}{\si^2_i}) \cr
&&   = c(n) |N|_{\ell}^2   (g_{ij}g_{kl} \ell^{ik} \ell^{jl} ) \cr
&& =  c(n) |N|_{\ell}^2 |g|^2_{\ell}
\end{eqnarray*}

Similarly
\begin{eqnarray*} 
\frac{\det(g)}{\det(\ell)} && = \frac{\la_1 \la_2 \ldots \la_n}{ \si_1   \si_2 \ldots  \si_n   }\cr
&&   = \frac{\la_1}{\si_1}\cdot \frac{ \la_2 }{\si_2} \ldots  \frac{ \la_n }{\si_n} \cr
&& \leq (\sup_{i\in \{1 \ldots n\}} \frac{ \la^2_i }{\si^2_i} )^{\frac n 2} \cr
&& \leq   \Big( \sum_{i=1}^n \frac{\la^2_i}{\si^2_i} \Big)^{\frac n 2} \cr
&& = |g |_{\ell}^{n}  
\end{eqnarray*}
and 
\begin{eqnarray*} 
  |S|^2_{h,\ell} && :=  h^{\al \be}(y) S_{\al}^i    S_{\be}^j   \ell_{i j }   
  \cr
  && = \sum_{\al,i=1}^n  S_{\al}^i   S_{\al}^ i   \si_i  \cr
  && =  \sum_{\al,i=1}^n  S_{\al}^i   S_{\al}^ i  \la_i \frac{ \si_i}{\la_i}  \cr
  && \leq c(n)(\sup_{\al, i \in \{1, \ldots, n\}} S_{\al}^i   S_{\al}^ i  \la_i)
  \sup_{  i \in \{1, \ldots, n\}} \frac{ \si_i}{\la_i} \cr 
  && \leq c(n)(\sup_ {\al, i \in \{1, \ldots, n\}} S_{\al}^i   S_{\al}^ i  \la_i)
  (1 + \sup_{  i \in \{1, \ldots, n\}} \frac{ \si^2_i}{\la^2_i})\cr 
  && \leq c(n) ( \sum_{\al,i=1}^n   S_{\al}^i   S_{\al}^ i  \la_i )
  ( 1 + \sum_{i=1}^n \frac{ \si^2_i}{\la^2_i} )\cr 
  && = c(n)|S|^2_{h,g}(1+  \sum_{i=1}^n\frac{ \si^2_i}{\la^2_i})\cr 
&& = c(n)|S|^2_{h,g} (1 +   |\ell|^2_g)
\end{eqnarray*}
Similarly
\begin{eqnarray*} 
  |S|^2_{h,\ell} && :=  h^{\al \be}(y) S_{\al}^i    S_{\be}^j   \ell_{i j }   
  \cr
  && = \sum_{\al,i=1}^n  S_{\al}^i   S_{\al}^ i   \si_i  \cr
  && = \sum_{\al,i=1}^n  r_{\al} (\frac{1}{r_{\al} } S_{\al}^i   S_{\al}^ i   \si_i )\cr 
   && \leq  c(n)   ( \sup_{\al \in \{ 1, \ldots , n \} } r_{\al} )  (\sup_{\al, i \in \{1, \ldots, n\}}
   \frac{1}{r_{\al} } S_{\al}^i   S_{\al}^ i   \si_i  )\cr 
   && \leq c(n)  (\sum_{\al=1}^n  r_{\al}    )(\sum_{\al,i=1}^n   \frac{1}{r_{\al} } S_{\al}^i   S_{\al}^ i   \si_i )\cr
    && =  c(n)  (\sum_{\al=1}^n   r_{\al}    ) |S|^2_{u,\ell}\cr
&& \leq  c(n)  (1+\sum_{\al=1}^n   r^2_{\al}    ) |S|^2_{u,\ell}\cr
&& \leq c(n)  (1 +  |u|^2_{h})|S|^2_{u,\ell}
\end{eqnarray*} 
\end{proof}

\begin{cor}\label{lpmetriccor}
Let $T=(T_{ij})$ , respectively $N = (N^{ij})$  be a zero-two respectively two-zero  tensor defined on a manifold  $\Omega$, and $g,\ell$ metrics on $\Omega$.
Then for all $p\in [1,\infty)$ there exists a $c(n,p)$ such that
 \begin{eqnarray}
  \int_{\Omega} |T|^{p}_{g } dg  \leq c(n,p)    (\int_{\Omega} |\ell|^{2p}_{g} dg)^{\frac 1 2} 
(\int_{\Omega} |T|^{4p}_{\ell}  d\ell )^{\frac 1 4} (\int_{\Omega} |g|_{\ell}^{\frac n 2}  dg )^{\frac 1 4}    \label{Testimate3}  
\end{eqnarray} 
and 
\begin{eqnarray}
  \int_{\Omega} |N|^{p}_{g } dg  \leq   c(n,p) (\int_{\Omega} | g|^{2p}_{\ell} dg)^{\frac 1 2} 
(\int_{\Omega} |N|^{4p}_{\ell}  d\ell )^{\frac 1 4} (\int_{\Omega} |g|_{\ell}^{\frac n 2}  dg )^{\frac 1 4}.   \label{Testimate4}  
\end{eqnarray} 

\end{cor}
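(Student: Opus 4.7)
The plan is to reduce Corollary \ref{lpmetriccor} to a pointwise application of Theorem \ref{lpmetricthm} followed by two successive applications of H\"older's inequality and a change of measure from $dg$ to $d\ell$.

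First I would raise the pointwise inequality $|T|^2_g \leq c(n)|T|^2_\ell |\ell|^2_g$ from Theorem \ref{lpmetricthm} to the $p/2$ power, obtaining $|T|^p_g \leq c(n,p)|T|^p_\ell|\ell|^p_g$ pointwise. Integrating against $dg$ and applying H\"older's inequality with conjugate exponents $(2,2)$ gives
\begin{equation*}
\int_\Omega |T|^p_g\,dg \leq c(n,p)\Bigl(\int_\Omega |\ell|^{2p}_g\,dg\Bigr)^{1/2}\Bigl(\int_\Omega |T|^{2p}_\ell\,dg\Bigr)^{1/2}.
\end{equation*}
This already produces the first of the three factors appearing in the statement.

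Next I would convert the measure $dg$ in the remaining integral to $d\ell$. Writing $J:=\sqrt{\det g/\det\ell}$, one has $dg = J\,d\ell$ and $d\ell = J^{-1}\,dg$, so
\begin{equation*}
\int_\Omega |T|^{2p}_\ell\,dg = \int_\Omega |T|^{2p}_\ell \, J\, d\ell.
\end{equation*}
A second application of H\"older with exponents $(2,2)$ yields
\begin{equation*}
\int_\Omega |T|^{2p}_\ell J\,d\ell \leq \Bigl(\int_\Omega |T|^{4p}_\ell\,d\ell\Bigr)^{1/2}\Bigl(\int_\Omega J^2\,d\ell\Bigr)^{1/2}.
\end{equation*}
The key observation here is that $\int_\Omega J^2\,d\ell = \int_\Omega J\,dg$, and the pointwise determinant bound $J\leq |g|_\ell^{n/2}$ (which is the third inequality in Theorem \ref{lpmetricthm}) gives
\begin{equation*}
\int_\Omega J^2\,d\ell = \int_\Omega J\,dg \leq \int_\Omega |g|_\ell^{n/2}\,dg.
\end{equation*}
Combining the three estimates produces the first inequality \eqref{Testimate3} of the corollary.

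The inequality \eqref{Testimate4} for the $(2,0)$-tensor $N$ is obtained by the identical scheme, except that the starting pointwise bound is the dual estimate $|N|^2_g\leq c(n)|N|^2_\ell|g|^2_\ell$ from Theorem \ref{lpmetricthm}; this simply interchanges the roles played by $|\ell|^{2p}_g$ and $|g|^{2p}_\ell$ in the first H\"older step, while the change-of-measure step is unchanged. There is no real obstacle in this proof: all the nontrivial work is already contained in Theorem \ref{lpmetricthm}, and the only subtlety is keeping track of the book\-keeping so that the product of the three exponents $\tfrac12,\tfrac14,\tfrac14$ and the factor $|g|_\ell^{n/2}$ emerge correctly from the two H\"older applications and the measure comparison.
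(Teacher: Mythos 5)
Your proposal is correct and is essentially identical to the paper's own proof: the same pointwise bound from Theorem \ref{lpmetricthm}, the same first H\"older split with exponents $(2,2)$ against $dg$, the same conversion of measure via $J=\tfrac{dg}{d\ell}$, the same second H\"older split against $d\ell$, and the same rewriting $\int J^2\,d\ell=\int J\,dg$ followed by the determinant bound $J\leq|g|_\ell^{n/2}$. The case of $N$ is handled exactly as you describe.
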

\begin{proof}
In the following, $dg/dl$ is the well defined function on $\Omega$ given locally by $ dg/dl(x)= \frac{\sqrt{det(g(x))}}{\sqrt{det(l(x))}}$
\begin{eqnarray*}
&& \int_{\Omega} |T |^{p}_{g} dg \cr
&&  \leq c(n,p) \int_{\Omega} |\ell|^p_{g} |T|^p_{\ell} dg  \cr 
&& \leq    c(n,p) (\int_{\Omega} |\ell|^{2p}_{g} dg)^{\frac 1 2} 
(\int_{\Omega} |T|^{2p}_{\ell} dg)^{\frac 1 2}  \cr 
&&   =  c(n,p)  (\int_{\Omega} |\ell|^{2p}_{g} dg)^{\frac 1 2} 
(\int_{\Omega} |T|^{2p}_{\ell} \frac{dg}{d\ell} d\ell )^{\frac 1 2}  \cr
&& \leq  c(n,p) (\int_{\Omega} |\ell|^{2p}_{g} dg)^{\frac 1 2} 
(\int_{\Omega} |T|^{4p}_{\ell}  d\ell )^{\frac 1 4}  (\int_{\Omega} (\frac{dg}{d\ell})^2 d\ell )^{\frac 1 4} \cr
&& \leq  c(n,p) (\int_{\Omega} |\ell|^{2p}_{g} dg)^{\frac 1 2} 
(\int_{\Omega} |T|^{4p}_{\ell}  d\ell )^{\frac 1 4}  (\int_{\Omega} \frac{dg}{d\ell} dg )^{\frac 1 4} \cr
&& \leq  c(n,p) (\int_{\Omega} |\ell|^{2p}_{g} dg)^{\frac 1 2} 
(\int_{\Omega} |T|^{4p}_{\ell}  d\ell )^{\frac 1 4} (\int_{\Omega} |g|_{\ell}^{\frac n 2}  dg )^{\frac 1 4}  
\end{eqnarray*} 
Analog:
\begin{eqnarray*}
&& \int_{\Omega} |N |^{p}_{g} dg \cr
&&  \leq  c(n,p) \int_{\Omega} |g|^p_{\ell} |N|^p_{\ell} dg  \cr 
&& \leq   c(n,p) (\int_{\Omega} |g|^{2p}_{\ell} dg)^{\frac 1 2} 
(\int_{\Omega} |N|^{2p}_{\ell} dg)^{\frac 1 2}  \cr 
&&   =  c(n,p) (\int_{\Omega} |g|^{2p}_{\ell} dg)^{\frac 1 2} 
(\int_{\Omega} |N|^{2p}_{\ell} \frac{dg}{d\ell} d\ell )^{\frac 1 2}  \cr
&& \leq  c(n,p)(\int_{\Omega}|g|^{2p}_{\ell} dg)^{\frac 1 2} 
(\int_{\Omega} |N|^{4p}_{\ell}  d\ell )^{\frac 1 4}  (\int_{\Omega} (\frac{dg}{d\ell})^2 d\ell )^{\frac 1 4} \cr
&& \leq  c(n,p) (\int_{\Omega} |g|^{2p}_{\ell} dg)^{\frac 1 2} 
(\int_{\Omega} |N|^{4p}_{\ell}  d\ell )^{\frac 1 4}  (\int_{\Omega} \frac{dg}{d\ell} dg )^{\frac 1 4} \cr
&& \leq  c(n,p)(\int_{\Omega} |g|^{2p}_{\ell} dg)^{\frac 1 2} 
(\int_{\Omega} |N|^{4p}_{\ell}  d\ell )^{\frac 1 4} (\int_{\Omega} |g|_{\ell}^{\frac n 2}  dg )^{\frac 1 4}  
\end{eqnarray*} 
 \end{proof}

\end{document}